\definecolor{darkblue}{rgb}{0,0,0.8}
\renewcommand{\a}{\alpha}
\renewcommand{\b}{\beta}
\newcommand{\F}{\mathbb{F}_{q}} 
\newcommand{\e}{\epsilon}
 \renewcommand{\L}{\Lambda}
\renewcommand{\l}{\lambda} 
 \renewcommand{\to}{\rightarrow}
 \newcommand{\C}{\mathcal{C}}
\newcommand{\la}{\langle}
\newcommand{\ra}{\rangle}
\newcommand{\leqs}{\leqslant}
\newcommand{\geqs}{\geqslant}
 \newcommand{\vs}{\vspace{3mm}}
\newcommand{\BB}{\mathcal{B}}
\newcommand{\imod}[1]{\allowbreak\mkern4mu({\operator@font mod}\,\,#1)}
\newtheorem{theorem}{Theorem} 
\newtheorem*{conj*}{Conjecture}
\newtheorem{coroll}{Corollary}
\newtheorem{thm}{Theorem}[section] 
\newtheorem{prop}[thm]{Proposition} 
\newtheorem{lem}[thm]{Lemma}
\newtheorem{cor}[thm]{Corollary}
\theoremstyle{definition}
\newtheorem{rem}[thm]{Remark}
\newtheorem{remk}{Remark}
\newtheorem{defn}[thm]{Definition}
\begin{document}

 \author{Timothy C. Burness}
 \address{T.C. Burness, School of Mathematics, University of Bristol, Bristol BS8 1TW, UK}
 \email{t.burness@bristol.ac.uk}
 
 \author{Michael Giudici}
 \address{M. Giudici, The Centre for the Mathematics of Symmetry and Computation, School of Mathematics and Statistics, The University of Western Australia, Crawley WA 6009, Australia}
 \email{michael.giudici@uwa.edu.au}
 
\title[Locally elusive classical groups]{Locally elusive classical groups}

\begin{abstract}
Let $G$ be a transitive permutation group of degree $n$ with point stabiliser $H$ and let $r$ be a prime divisor of $n$. We say that $G$ is $r$-elusive if it does not contain a derangement of order $r$. The problem of determining the $r$-elusive primitive groups can be reduced to the almost simple case, and the purpose of this paper is to complete the study of $r$-elusivity for almost simple classical groups. Building on our earlier work for geometric actions of classical groups, in this paper we handle the remaining non-geometric actions where $H$ is almost simple and irreducible. This requires a completely different approach, using tools from the representation theory of quasisimple groups. 
\end{abstract}


\date{\today}
\maketitle
 
\section{Introduction}\label{s:intro}

Let $G \leqs {\rm Sym}(\Omega)$ be a transitive permutation group on a finite set $\Omega$ of size at least $2$. By the Orbit-Counting Lemma, $G$ contains elements that act fixed-point-freely on $\Omega$. Such elements are called \emph{derangements}, and their existence turns out to have some interesting applications in many areas of mathematics, such as number theory and topology (see Serre's article \cite{Serre}). 

By a theorem of Fein, Kantor and Schacher \cite{FKS}, $G$ contains a derangement of prime power order (the proof requires the Classification of Finite Simple Groups). In fact, in  most cases, $G$ contains a derangement of prime order, but there are some exceptions, such as the $3$-transitive action of the smallest Mathieu group ${\rm M}_{11}$ on $12$ points. The transitive permutation groups with this property are called \emph{elusive} groups, and they have been extensively studied in recent years (see \cite{CGJKKMN, G, GK, GMPV, xu}, for example). 

A local notion of elusivity was introduced in \cite{BGW}. For a prime divisor $r$ of $|\Omega|$, we say that $G$ is \emph{$r$-elusive} if it does not contain a derangement of order $r$ (so $G$ is elusive if and only if it is $r$-elusive for all such primes $r$). In \cite{BGW}, 
the O'Nan-Scott theorem is used to essentially reduce the problem of determining the $r$-elusive primitive groups to the almost simple case, and the examples with an alternating or sporadic socle are identified in \cite{BGW}. Therefore, it remains to determine the $r$-elusive primitive almost simple groups of Lie type and our goal in this paper is to complete the picture for classical groups (the locally elusive exceptional groups of Lie type will be the subject of a future paper).

Let $G \leqs {\rm Sym}(\Omega)$ be a primitive almost simple classical group over $\mathbb{F}_{q}$ with socle $T$ and point stabiliser $H$. Let $V$ be the natural module for $T$ and write $n=\dim V$ and $q=p^f$, where $p$ is a prime. Note that $H$ is a maximal subgroup of $G$ with $G = HT$. Roughly speaking, Aschbacher's subgroup structure theorem \cite{asch} states that either $H$ belongs to one of eight natural, or \emph{geometric}, subgroup collections (denoted by $\C_1, \ldots, \C_8$), or $H$ is almost simple and acts irreducibly on $V$. The geometric subgroups include the stabilisers of appropriate subspaces and direct sum and tensor product decompositions of $V$ (see \cite[Table 1.4.2]{BG} for a brief description of the subgroups in each $\C_i$ collection). We write $\mathcal{S}$ for the collection of almost simple irreducible subgroups arising in Aschbacher's theorem (see Definition \ref{sdef} for the precise definition of $\mathcal{S}$), and we say that the action of $G$ on $\Omega$ is an \emph{$\mathcal{S}$-action} if $H \in \mathcal{S}$. We will write $S$ for the socle of a subgroup $H \in \mathcal{S}$.

A detailed analysis of the structure, maximality and conjugacy of the geometric subgroups of $G$ is provided in \cite{KL}. This is used extensively in our study of the $r$-elusive geometric actions of almost simple classical groups in \cite{BG} (see \cite[Section 1.5]{BG} for a summary of the main results), which is organised according to Aschbacher's theorem. This  approach relies on the fact that there is a concrete description of the embedding of each geometric subgroup $H$ in $G$, which permits a detailed study of the fusion of the conjugacy classes of $H$ in $G$. This sort of information is not readily available when $H \in \mathcal{S}$ is a non-geometric subgroup of $G$, so a different approach is required. For example, it is not even possible to list all the subgroups in $\mathcal{S}$ of a given classical group, in general (of course, we do not even know the dimensions of all irreducible representations of simple groups).  
However, detailed information is available for the low-dimensional groups with $n \leqs 12$ (see \cite{BHR}), which we use in \cite[Section 6.3]{BG}
to determine the $r$-elusive $\mathcal{S}$-actions for $n \leqs 5$.  
In this paper, our aim is to complete the study of $\mathcal{S}$-actions initiated in \cite{BG} by extending the analysis to all classical groups.

In order to state our main result (Theorem \ref{t:main} below), we need to introduce two subcollections of $\mathcal{S}$, which we denote by the symbols $\mathcal{A}$ and $\mathcal{B}$. A subgroup $H \in \mathcal{S}$ with socle $S$ belongs to the collection $\mathcal{A}$ if and only if $S$ is an alternating group, $q=p$ is prime and $V$ is the fully deleted permutation module for $S$ over $\mathbb{F}_{p}$ (see Table \ref{atab}). The collection $\mathcal{B}$ is recorded in Table \ref{btab}. We need to highlight these specific cases in order to state an important theorem of Guralnick and Saxl  \cite[Theorem 7.1]{GS} on irreducible subgroups of classical groups (see Theorem \ref{gursax}), which plays a key role in our proof of Theorem \ref{t:main}.  

\begin{remk}
Let us make a couple of comments on the cases in Tables \ref{atab} and \ref{btab}.
\begin{itemize}\addtolength{\itemsep}{0.2\baselineskip}
\item[{\rm (i)}] Consider Case $(\mathcal{A}1)$ in Table \ref{atab}. Here $S = A_d$ and $T = {\rm P\Omega}_{n}^{\e}(p)$, where $d \geqs 8$, $p$ is an odd prime and $n = d-\delta$, with $\delta = 2$ if $p$ divides $d$, otherwise $\delta=1$. If $n$ is even then $\e=+$ if and only if
$$\left(\frac{n+1}{p}\right) = (-1)^{\frac{1}{4}n(p-1)}$$
(see Section \ref{s:a}).
\item[{\rm (ii)}] In Table \ref{btab} we write $L(\l)$ for the unique irreducible $\F\hat{S}$-module of highest weight $\l$ (up to quasiequivalence, where $\hat{S}$ denotes the full covering group of $S$), and we follow Bourbaki \cite{Bou} in labelling the fundamental dominant weights $\l_{i}$. We also note that the conditions recorded in the final column of Table \ref{btab} are necessary, but not always sufficient, for the existence and maximality of $H$ in $G$; for the precise conditions, we refer the reader to the relevant tables in \cite[Section 8.2]{BHR}.
\end{itemize}
\end{remk}

Let $r \neq p$ be a prime and let $i \geqs 1$ be minimal such that $r$ divides $q^i-1$. As above, let $n$ be the dimension of the natural module for $T$ and set  
\begin{equation}\label{e:eqcc}
c = \left\{\begin{array}{ll}
2i & \mbox{if $i$ is odd and $T \neq {\rm PSL}_n(q)$} \\
i/2 &  \mbox{if $i\equiv 2\imod{4}$ and $T = {\rm PSU}_n(q)$} \\
i & \mbox{otherwise}
\end{array}\right.
\end{equation}
as in \cite{BG}. We also introduce the following notation:
\begin{equation}\label{e:kop}
\mbox{$\kappa(T,r)$ is the number of conjugacy classes of subgroups of order $r$ in $T$}
\end{equation}
and we highlight the following conditions
\begin{equation*}
\mbox{\emph{ 
$r \neq p$, $r>2$, $r$ divides $|H \cap T|$ and either $c > n/2$, or $c=n/2$ and $T = {\rm P\Omega}_{n}^{-}(q)$.} \label{e:star} \tag{$\star$}}
\end{equation*}
Note that if $r$ divides $|\Omega|$ and all the conditions in \eqref{e:star} hold then $\kappa(T,r)=1$ and thus $T$ is $r$-elusive (see Lemma \ref{l:floor} and Corollary \ref{p:star}).

\renewcommand{\arraystretch}{1.1}
\begin{table}
$$\begin{array}{lll} \hline\hline
\mbox{Case} & \multicolumn{1}{c}{T} & \mbox{Conditions} \\ \hline
(\mathcal{A}1) & \left\{ \begin{array}{ll}
{\rm P\Omega}_{d-1}^{\e}(p) & \mbox{if $(d,p)=1$}\\
{\rm P\Omega}_{d-2}^{\e}(p) & \mbox{otherwise} \end{array} \right. & d \geqs 8, \, p \ne 2 \\

(\mathcal{A}2) & \hspace{4.8mm} {\rm Sp}_{d-2}(2) & d \geqs 10,\, d \equiv 2 \imod{4},\, p = 2 \\

(\mathcal{A}3) & \left\{ \begin{array}{ll} \Omega_{d-2}^{+}(2) & \mbox{if $d \equiv 0 \imod{8}$}\\ \Omega_{d-2}^{-}(2) & \mbox{if $d \equiv 4 \imod{8}$} \end{array} \right. & d \geqs 12,\, d \equiv 0 \imod{4},\, p=2 \\

(\mathcal{A}4) & \left\{ \begin{array}{ll} \Omega_{d-1}^{+}(2) & \mbox{if $d \equiv \pm 1 \imod{8}$}\\ 
\Omega_{d-1}^{-}(2) & \mbox{if $d \equiv \pm 3 \imod{8}$} 
\end{array}\right. & d \geqs 9,\, \mbox{$d$ odd}, \, p=2 \\ \hline \hline
\end{array}$$
\caption{The collection $\mathcal{A}$, $S=A_{d}$}
\label{atab}
\end{table}
\renewcommand{\arraystretch}{1}

\renewcommand{\arraystretch}{1.1}
\begin{table} 
$$\begin{array}{llll} \hline\hline
\mbox{Case} & \hspace{1.6mm} T & \hspace{1.6mm} S & \mbox{Conditions} \\ \hline

(\mathcal{B}1)  & \hspace{1.6mm} {\rm PSp}_{10}(p) &  \hspace{1.6mm} {\rm PSU}_{5}(2) &  p \ne 2 \\ 

(\mathcal{B}2) &  \hspace{1.6mm} {\rm P\Omega}_{8}^{+}(q) & \hspace{-2.2mm}\left\{ \begin{array}{ll}
\Omega_{7}(q) & p>2\\
{\rm Sp}_{6}(q) & p=2 
\end{array} \right. &  \\

(\mathcal{B}3) & \hspace{1.6mm} {\rm P\Omega}_{8}^{+}(q)  & \hspace{1.6mm} 
{}^{3}D_{4}(q_{0}) & q=q_{0}^{3} \\

(\mathcal{B}4)  & \hspace{1.6mm} {\rm P\Omega}_{8}^{+}(p) &  \hspace{1.6mm} \Omega_{8}^{+}(2) & p \ne 2 \\

(\mathcal{B}5)  & \hspace{1.6mm} {\rm PSL}_{7}^{\e}(p) & \hspace{1.6mm} {\rm PSU}_{3}(3) & p\equiv \e\imod{3}, \, p \geqs 5 \\

(\mathcal{B}6) & \hspace{-3mm} \left\{ \begin{array}{ll}
\Omega_{7}(q) & \hspace{-1.5mm} p>2\\
{\rm Sp}_{6}(q) & \hspace{-1.5mm} p=2 
\end{array} \right. & \hspace{1.6mm} G_{2}(q) & q>2,\, V=L(\l_1) \\

(\mathcal{B}7)  &  \hspace{1.6mm} \Omega_{7}(q) & \hspace{1.6mm} G_{2}(q) & p=3, \, V=L(\l_{2}) \\ 

(\mathcal{B}8)  & \hspace{1.6mm} \Omega_{7}(p) &  \hspace{1.6mm} {\rm Sp}_{6}(2) & p \ne 2 \\

(\mathcal{B}9) &  \hspace{1.6mm} {\rm PSL}_{6}^{\e}(q) & \hspace{1.6mm} {\rm PSL}_{3}^{\e}(q) &  p \neq 2, \, V = L(2\l_1) \\

(\mathcal{B}10) &  \hspace{1.6mm} {\rm PSL}_{6}^{\e}(q) & \hspace{1.6mm} A_{7} & q \leqs p^2, \, p\equiv \e\imod{3}, \, p \geqs 5 \\

(\mathcal{B}11)  &  \hspace{1.6mm} {\rm PSL}_{6}^{\e}(q)  &  \hspace{1.6mm} A_{6} & q \leqs p^2, \, p\equiv \e\imod{3}, \, p \geqs 5 \\

(\mathcal{B}12)  & \hspace{1.6mm} {\rm PSL}_{6}^{\e}(p) & \hspace{1.6mm} {\rm PSL}_{3}(4) & p\equiv \e\imod{3}, \, p \geqs 5 \\

(\mathcal{B}13)  & \hspace{1.6mm} {\rm PSL}_{6}^{\e}(p) &  \hspace{1.6mm} {\rm PSU}_{4}(3) &  p\equiv \e\imod{3}, \, p \geqs 5 \\

(\mathcal{B}14)  & \hspace{1.6mm} {\rm PSL}_{6}(3) & \hspace{1.6mm} {\rm M}_{12} & \\

(\mathcal{B}15)  & \hspace{1.6mm} {\rm PSU}_{6}(2) &  \hspace{1.6mm} {\rm PSU}_{4}(3) & \\

(\mathcal{B}16)  & \hspace{1.6mm} {\rm PSU}_{6}(2) & \hspace{1.6mm} {\rm M}_{22} & \\

(\mathcal{B}17)  & \hspace{1.6mm} {\rm PSp}_{6}(q) & \hspace{1.6mm} {\rm J}_{2} & q \leqs p^2, \, p \geqs 3  \\

(\mathcal{B}18)  & \hspace{1.6mm} {\rm PSp}_{6}(p) &  \hspace{1.6mm} {\rm PSU}_{3}(3) & p \neq 3 \\ \hline\hline
\end{array}$$
\caption{The collection $\mathcal{B}$}
\label{btab}
\end{table}
\renewcommand{\arraystretch}{1}

\begin{theorem}\label{t:main}
Let $G \leqs {\rm Sym}(\Omega)$ be a primitive almost simple classical group with socle $T$ and point stabiliser $H \in \mathcal{S}$. Let $S$ denote the socle of $H$ and let $n$ be the dimension of the natural $T$-module. Let $r$ be a prime divisor of $|\Omega|$. Then $T$ is $r$-elusive if and only if one of the following holds:
\begin{itemize}\addtolength{\itemsep}{0.2\baselineskip}
\item[{\rm (i)}] $n<6$ and $(T,S,r)$ is one of the cases recorded in Table \ref{t:lowdim};
\item[{\rm (ii)}] $n \geqs 6$, $H \in \mathcal{A}$ and one of the following holds:

\vspace{1mm}

\begin{itemize}\addtolength{\itemsep}{0.2\baselineskip}
\item[{\rm (a)}] $r=2$, $p \neq 2$, $T = \Omega_n(p)$ and $\left(\frac{(n+1)/2}{p}\right)=1$;
\item[{\rm (b)}] $r=2$, $p \neq 2$, $T = {\rm P\Omega}_{n}^{\e}(p)$, $n \equiv 2\imod{4}$ and $p \equiv 5\e \imod{8}$;
\item[{\rm (c)}] $r \ne p$, $r>2$, $r$ divides $|H \cap T|$ and $c=r-1$;
\end{itemize}  
\item[{\rm (iii)}] $n \geqs 6$, $H \in \mathcal{B}$ and $(T,S,r)$ is one of the cases recorded in Table \ref{t:bex};
\item[{\rm (iv)}] $n \geqs 6$, $H \not\in \mathcal{A} \cup \mathcal{B}$ and all the conditions in \eqref{e:star} hold. 
\end{itemize}
\end{theorem}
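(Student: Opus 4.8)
The plan is to argue by stratifying on the dimension $n$ of the natural $T$-module. Since $G = HT$, the socle $T$ acts transitively on $\Omega$ with point stabiliser $H\cap T$, so $|\Omega| = |T:H\cap T|$ divides $|T|$ (in particular $r$ divides $|T|$, so $T$ has an element of order $r$), and $T$ is $r$-elusive if and only if every element of order $r$ in $T$ is $T$-conjugate into $H\cap T$. For $n \leqs 5$ the result follows from the analysis of low-dimensional classical groups in \cite{BG} (using \cite{BHR}), which produces Table \ref{t:lowdim} and hence part (i), so assume $n \geqs 6$. Here the key input is the theorem of Guralnick and Saxl (Theorem \ref{gursax}), which bounds from below the support $\dim[V,x]$ of a non-identity element $x \in H$: unless $H$ lies in one of the highlighted families $\mathcal{A}$ or $\mathcal{B}$, one has (essentially) $\dim[V,x] > n/2$ for all $1 \ne x \in H$. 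This splits the analysis into the three cases $H \notin \mathcal{A}\cup\mathcal{B}$, $H \in \mathcal{A}$ and $H \in \mathcal{B}$, which will give parts (iv), (ii) and (iii) respectively.

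First I would treat part (iv), so assume $H \notin \mathcal{A}\cup\mathcal{B}$. If all the conditions in \eqref{e:star} hold then $\kappa(T,r) = 1$ by Lemma \ref{l:floor}, so the unique $T$-class of subgroups of order $r$ meets $H\cap T$ (as $r$ divides $|H\cap T|$), and $T$ is $r$-elusive by Corollary \ref{p:star}. Conversely, suppose $T$ is $r$-elusive. If $r$ does not divide $|H\cap T|$, then $H\cap T$ has no element of order $r$ whereas $T$ does, a contradiction; so $r$ divides $|H\cap T|$. If $r = p$ or $r = 2$, then $T$ contains an element of order $r$ of small support --- a transvection when $T$ is linear, unitary or symplectic, and a suitable unipotent or semisimple element of support $2$ otherwise --- which for $n \geqs 6$ has $\dim[V,x] < n/2$ and hence, by Theorem \ref{gursax}, is not $T$-conjugate into $H\cap T$, contradicting $r$-elusivity. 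Thus $r > 2$ and $r \ne p$. Finally, the class of elements of order $r$ in $T$ whose nontrivial part has minimal dimension has support exactly $c$; if $c < n/2$, or if $c = n/2$ with $T \ne {\rm P\Omega}_{n}^{-}(q)$, then Theorem \ref{gursax} shows this class misses $H\cap T$ and $T$ is not $r$-elusive, whereas if $c = n/2$ and $T = {\rm P\Omega}_{n}^{-}(q)$ then $\kappa(T,r) = 1$ and $T$ is $r$-elusive by the first part of the argument. Hence $c > n/2$, or $c = n/2$ and $T = {\rm P\Omega}_{n}^{-}(q)$, so all the conditions in \eqref{e:star} hold.

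Next I would turn to part (ii), where $S = A_d$ and $V$ is the fully deleted permutation module over $\mathbb{F}_p$. An element $x$ of order $r$ in $A_d$ is a product of $k$ disjoint $r$-cycles, and on $V$ it acts with each primitive $r$-th root of unity of multiplicity $k$ when $r \ne p$, and as a direct sum of $k$ unipotent Jordan blocks of size $p$ together with trivial blocks when $r = p$. When $r = p$ one checks that $T$ always possesses a class of elements of order $p$ not of this uniform shape --- for the even-characteristic configurations $(\mathcal{A}2)$--$(\mathcal{A}4)$, where $r = p = 2$, using the standard invariants of involutions in symplectic and orthogonal groups over $\mathbb{F}_2$ --- so $T$ is not $p$-elusive. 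When $r > 2$ and $r \ne p$, the elements of $A_d$ above realise all the semisimple classes of order $r$ in $T$ exactly when the minimal dimension $c$ of a block carrying a nontrivial eigenvalue of an order-$r$ element of $T$ equals the value $r-1$ forced by an $r$-cycle --- and one then checks that the accompanying form data match --- which is condition (ii)(c). When $r = 2 \ne p$, so that $T = {\rm P\Omega}_{n}^{\e}(p)$, one computes the dimension $2m$ and discriminant of the $(-1)$-eigenspace of a product of $2m$ transpositions, namely the orthogonal $\mathbb{F}_p$-space isometric to $\langle 2\rangle^{2m}$ (whose isometry type is determined by $m$ and by $p$ modulo squares), and compares it with the two isometry types available for the $(-1)$-eigenspace of an involution of $T$ of each admissible dimension, taking into account the spinor-norm condition for membership in $\Omega$ and the overall form type $\e$; the matching succeeds precisely when (ii)(a) or (ii)(b) holds, the Legendre symbol and the congruence $p \equiv 5\e \imod{8}$ recording the relevant quadratic-residue conditions.

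It remains to consider part (iii), where $H \in \mathcal{B}$. Each of the eighteen configurations in Table \ref{btab} has $T$ of dimension at most $10$, hence lies within the scope of \cite{BHR}; for each one I would first determine the primes $r$ dividing $|\Omega| = |T:H\cap T|$, and then, using known conjugacy class data together with the relevant $p$-modular Brauer characters of the (quasi)simple groups appearing as $S$ --- including ${}^{3}D_4(q_0)$, $G_2(q)$, and the sporadic groups ${\rm M}_{12}$, ${\rm M}_{22}$, ${\rm J}_2$ --- decide directly which $T$-classes of elements of order $r$ meet $H\cap T$, thereby assembling Table \ref{t:bex}. I expect the main obstacle to lie in the case $r = 2$ within the collection $\mathcal{A}$, and above all in Case $(\mathcal{A}1)$: in contrast to odd-order and defining-characteristic elements, where the uniform eigenvalue- or Jordan-block structure of restricted elements makes the comparison with the classes of $T$ transparent, here one must determine exactly which isometry types of $(-1)$-eigenspaces can occur among products of transpositions, keeping careful track of discriminants modulo squares, of the constraints imposed by membership in $\Omega_{n}^{\e}(p)$ (spinor norm and overall form type), and of the two possibilities $n = d-1$ and $n = d-2$; it is precisely this bookkeeping that produces the delicate Legendre-symbol and congruence conditions recorded in (ii)(a)--(b).
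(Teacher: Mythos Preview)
Your overall architecture matches the paper's: stratify by $n$, handle $\mathcal{A}$ and $\mathcal{B}$ directly, and for $H\notin\mathcal{A}\cup\mathcal{B}$ use the Guralnick--Saxl theorem. Your treatments of parts (ii) and (iii) are in line with Propositions~\ref{p:a} and~\ref{p:b}. However, there is a genuine gap in your handling of part~(iv).

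You state that Theorem~\ref{gursax} gives ``(essentially) $\dim[V,x] > n/2$ for all $1\ne x\in H$''. It does not: the bound is $\nu(x) > \max\{2,\sqrt{n}/2\}$, which is much weaker. Consequently your converse argument in (iv) --- ``if $c<n/2$ \ldots\ then Theorem~\ref{gursax} shows this class misses $H\cap T$'' --- only works when $c\leqs\max\{2,\sqrt{n}/2\}$. The entire range
\[
\max\{2,\sqrt{n}/2\}<c\leqs n/2
\]
(with $c<n/2$ when $T={\rm P\Omega}_n^-(q)$) is not covered by Guralnick--Saxl, and this is precisely where the substance of the proof lies. In the paper this is Proposition~\ref{p:c}, whose proof occupies all of Sections~\ref{ss:spor}--\ref{ss:lie_def}: one runs through every possibility for $S$ (sporadic, alternating, Lie type in cross characteristic, Lie type in defining characteristic), and for each either exhibits a specific derangement of order~$r$ or shows $\kappa(T,r)>\kappa(H_0,r)$, using the Landazuri--Seitz bounds, L\"ubeck's tables, highest-weight theory, and detailed information on maximal tori. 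None of this machinery appears in your proposal, and there is no short-cut: the Guralnick--Saxl bound alone simply cannot close the gap between $\sqrt{n}/2$ and $n/2$.
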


\begin{remk}\label{r:main}
As previously remarked, the $r$-elusive $\mathcal{S}$-actions with $n<6$ are determined in \cite[Proposition 6.3.1]{BG}. The relevant cases are listed in Table \ref{t:lowdim}, where the final column records necessary and sufficient conditions for the $r$-elusivity of $T$ (in particular, the given conditions ensure that $r$ divides $|\Omega|$). These are additional to the conditions needed for the existence and maximality of $H$ in $G$, which can be read off from the relevant tables in \cite[Section 8.2]{BHR}, or from \cite[Table 6.3.1]{BG}.  Similarly, we refer the reader to Remark \ref{r:42} for further information on the conditions recorded in the final column of Table \ref{t:bex}.
\end{remk}

\begin{remk}\label{r:main2} 
Note that $r^2$ must divide $q^c-1$ if $(T,S,r)$ is an example arising in part (iv) of Theorem \ref{t:main}. It is easy to see that there are genuine examples. For example, take $T = {\rm P\Omega}_{12}^{+}(p)$, $S = {\rm PSL}_{2}(11)$ and $r=11$, where $p$ is a prime such that $p \equiv -1 \imod{605}$, so $c=10$ and \cite[Table 8.83]{BHR} indicates that $S$ is a maximal subgroup of $T$. Note that there are infinitely many primes of this form by Dirichlet's theorem. 
\end{remk}

\renewcommand{\arraystretch}{1.1}
\begin{table}[h]
$$\begin{array}{llcl} \hline\hline
T & S & r & \mbox{Conditions} \\ \hline
{\rm PSL}_{5}^{\e}(q) & {\rm PSU}_{4}(2) & 2 & \\
& & 5 & q^2 \equiv -1 \imod{25} \\
& {\rm PSL}_{2}(11) & 5 & q^2 \equiv -1 \imod{25} \\
& & 11 & q \not\equiv \e \imod{11},\, q^5 \equiv \e \imod{121} \\
& {\rm M}_{11} & 11 & (\e,q)=(+,3) \\ 
{\rm PSL}_{4}^{\e}(q) & {\rm PSU}_{4}(2) & 2 & q \not\equiv \e \imod{8} \\
& & 3 & q \equiv \e \imod{9} \\
& & 5 & q^2 \equiv -1 \imod{25} \\
& A_7 & 2 & q \equiv 5\e \imod{8} \\
& & 3 & q \equiv -\e \imod{9} \\
& & 5 & q^2 \equiv -1 \imod{25} \\ 
& & 7 & q(q+\e) \equiv -1 \imod{49} \\
& {\rm PSL}_{2}(7) & 2 & q \equiv 5\e \imod{8} \\
& & 7 & q(q+\e) \equiv -1 \imod{49} \\
& {\rm PSL}_{3}(4) & 2 & (\e,q) = (-,3) \\ 
{\rm PSp}_{4}(q)' & A_6 & 2 & q \equiv \pm 1 \imod{12} \\
& & 3 & q^2 \equiv 1 \imod{9} \\
& & 5 &  q^2 \equiv -1 \imod{25} \\ 
& A_7 & 5 & q=7 \\
{\rm PSL}_{3}^{\e}(q) & {\rm PSL}_{2}(7) & 2 & \\
& & 3 & q \equiv 4\e,7\e,8\e \imod{9} \\
& & 7 & \mbox{$q \equiv -\e \imod{49}$ or $q(q+\e) \equiv -1 \imod{49}$} \\
& A_6 & 2 & (\e,q) \ne (-,5) \\
& & 5 & q \equiv -\e \imod{25} \\
& A_7 & 2 & (\e,q) = (-,5) \\
{\rm PSL}_{2}(q) & A_5 & 2 & q \equiv \pm 1 \imod{8} \\
& & 3,5 & q \equiv \pm 1 \imod{r^2} \\ 
\hline\hline
\end{array}$$
\caption{The $r$-elusive $\mathcal{S}$-actions, $n<6$}
\label{t:lowdim}
\end{table}

\renewcommand{\arraystretch}{1.1}
\begin{table}
$$\begin{array}{lllcl}\hline\hline
\mbox{Case} & \hspace{1.6mm} T & \hspace{2.4mm} S &  r & \mbox{Conditions} \\ \hline

(\mathcal{B}1)  & \hspace{1.6mm} {\rm PSp}_{10}(p) & \hspace{2.4mm} {\rm PSU}_{5}(2) & 2 & p \equiv \pm 1 \imod{8} \\
& & & 11 & p^2 \not\equiv 1 \imod{11},\, p^5 \equiv \pm 1 \imod{121} \\ 

(\mathcal{B}4)  & \hspace{1.6mm} {\rm P\Omega}_{8}^{+}(p) & \hspace{2.4mm} \Omega_{8}^{+}(2) &  2 & p \geqs 7 \\
& & & 3 & p^2 \equiv 1 \imod{9} \\
& & & 5 & p^2 \equiv -1 \imod{25} \\
& & & 7 & p^2 \not\equiv 1 \imod{7},\, p^3 \equiv \pm 1 \imod{49} \\

(\mathcal{B}5)  & \hspace{1.6mm} {\rm PSL}_{7}^{\e}(p) & \hspace{2.4mm} {\rm PSU}_{3}(3) & 7 & p^2 \not\equiv 1 \imod{7},\,  p^3 \equiv -\e \imod{49} \\

(\mathcal{B}8)  & \hspace{1.6mm} \Omega_{7}(p) & \hspace{2.4mm} {\rm Sp}_{6}(2) & 2 & p \geqs 7 \\
& & & 3 & p^2 \equiv 1 \imod{9} \\
& & & 5 & p^2 \equiv -1 \imod{25} \\
& & & 7 & p^2 \not\equiv 1 \imod{7},\, p^3 \equiv \pm 1 \imod{49} \\

(\mathcal{B}10) &  \hspace{1.6mm} {\rm PSL}_{6}^{\e}(q) & \hspace{2.4mm} A_{7} & 5 & q^2 \equiv -1 \imod{25} \\
& & & 7 & q^2 \not\equiv 1 \imod{7},\, q^3 \equiv -\e \imod{49} \\

(\mathcal{B}11)  &  \hspace{1.6mm} {\rm PSL}_{6}^{\e}(q)  &  \hspace{2.4mm} A_{6} & 5 & q^2 \equiv -1 \imod{25} \\

(\mathcal{B}12)  & \hspace{1.6mm} {\rm PSL}_{6}^{\e}(p) & \hspace{2.4mm} {\rm PSL}_{3}(4) & 2 & p \equiv -5\e \imod{24} \\
& & & 5 & p^2 \equiv -1 \imod{25} \\
& & & 7 & p^2 \not\equiv 1 \imod{7},\, p^3 \equiv -\e \imod{49} \\

(\mathcal{B}13)  & \hspace{1.6mm} {\rm PSL}_{6}^{\e}(p) &  \hspace{2.4mm} {\rm PSU}_{4}(3) &  2 & p\equiv \e\imod{12} \\
& & & 5 & p^2 \equiv -1 \imod{25} \\
& & & 7 & p^2 \not\equiv 1 \imod{7},\, p^3 \equiv -\e \imod{49} \\

(\mathcal{B}14)  & \hspace{1.6mm} {\rm PSL}_{6}(3) & \hspace{2.4mm} {\rm M}_{12} & 2,11 & \\

(\mathcal{B}15)  & \hspace{1.6mm} {\rm PSU}_{6}(2) & \hspace{2.4mm} {\rm PSU}_{4}(3) & 2 & \\

(\mathcal{B}17)  & \hspace{1.6mm} {\rm PSp}_{6}(q) & \hspace{2.4mm} {\rm J}_{2} & 2 & q^2 \equiv 1 \imod{8} \\
& & & 5 & q^2 \equiv -1 \imod{125} \\
& & & 7 & q^2 \not\equiv 1 \imod{7},\, q^3 \equiv \pm 1 \imod{49}  \\ 

(\mathcal{B}18)  & \hspace{1.6mm} {\rm PSp}_{6}(p) & \hspace{2.4mm} {\rm PSU}_{3}(3) & 2 & p \equiv \pm 1 \imod{12} \\
& & & 7 & p^2 \not\equiv 1 \imod{7},\, p^3 \equiv \pm 1 \imod{49} \\ \hline\hline
\end{array}$$
\caption{The $r$-elusive $\mathcal{S}$-actions, $H \in \BB$}
\label{t:bex}
\end{table}
\renewcommand{\arraystretch}{1}

\begin{coroll}\label{c:main11}
Let $G \leqs {\rm Sym}(\Omega)$ be a primitive almost simple classical group with socle $T$ and point stabiliser $H \in \mathcal{S}$. Let $S$ denote the socle of $H$ and let $n$ be the dimension of the natural $T$-module. Let $r$ be a prime dividing $|\Omega|$ and $|H \cap T|$, and define $\kappa(T,r)$ as in \eqref{e:kop}. Then $T$ is $r$-elusive if and only if one of the following holds:
\begin{itemize}\addtolength{\itemsep}{0.2\baselineskip}
\item[{\rm (i)}] $\kappa(T,r)=1$;
\item[{\rm (ii)}] $r \geqs 5$, $r \ne p$, $H \in \mathcal{A}$ and $c=r-1$; 
\item[{\rm (iii)}] $r \in \{2,3\}$ and $(T,S,r)$ is one of the cases recorded in Table \ref{tab:11}.
\end{itemize}
In particular, if $n>10$ then $T$ is $r$-elusive only if $\kappa(T,r)=1$ or $H \in \mathcal{A}$.
\end{coroll}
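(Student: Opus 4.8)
The plan is to deduce the corollary from Theorem~\ref{t:main} by reorganising the classification according to the value of $\kappa(T,r)$, using two auxiliary facts. Firstly, if $r$ divides $|\Omega|$ and all the conditions in \eqref{e:star} hold, then $\kappa(T,r)=1$ and $T$ is $r$-elusive; this is recorded just after \eqref{e:star} and is a consequence of Lemma~\ref{l:floor} and Corollary~\ref{p:star}. Secondly, and conversely, if $\kappa(T,r)=1$ while $r$ divides both $|\Omega|$ and $|H\cap T|$, then $T$ is $r$-elusive: by Cauchy's theorem $H\cap T$ contains a subgroup of order $r$, and since all subgroups of order $r$ in $T$ are $T$-conjugate, every element of $T$ of order $r$ has a conjugate in $H$ and hence is not a derangement. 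We use throughout the standing hypothesis that $r$ divides $|H\cap T|$.

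For the forward implication, assume $T$ is $r$-elusive and apply Theorem~\ref{t:main}. If case~(iv) holds, then \eqref{e:star} holds, so $\kappa(T,r)=1$ by the first auxiliary fact and we are in part~(i) of the corollary. If case~(ii)(c) holds with $r\geqs 5$, then part~(ii) holds directly (note that $H\in\mathcal{A}$ forces $n\geqs 6$, so the hypotheses of Theorem~\ref{t:main}(ii) are met). In cases~(ii)(a), (ii)(b), and in case~(ii)(c) with $r=3$ we have $r\in\{2,3\}$ and $H\in\mathcal{A}$, and since $\kappa(T,r)>1$ in general here, the corresponding triples $(T,S,r)$ must be --- and are --- recorded among the rows of Table~\ref{tab:11}, giving part~(iii). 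This leaves the finitely many configurations arising in cases~(i) and~(iii) of the theorem, namely the rows of Tables~\ref{t:lowdim} and~\ref{t:bex}. For each such row I would determine $\kappa(T,r)$: for $n<6$ this information is available from the analysis in \cite[Section~6.3]{BG}, and for $H\in\mathcal{B}$ it follows from Lemma~\ref{l:floor} together with standard facts on conjugacy classes of prime order in classical groups. Those rows with $\kappa(T,r)=1$ fall into part~(i); the rest --- which one checks all have $r\in\{2,3\}$ --- are precisely the non-$\mathcal{A}$ entries of Table~\ref{tab:11}, and fall into part~(iii).

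For the reverse implication, part~(i) of the corollary yields $r$-elusivity by the second auxiliary fact; part~(ii) is a special case of Theorem~\ref{t:main}(ii)(c) (since $r\geqs 5>2$); and Table~\ref{tab:11} is by construction a sublist of the $r$-elusive configurations of Theorem~\ref{t:main}. Finally, for the concluding assertion, every configuration appearing in Tables~\ref{t:lowdim} and~\ref{t:bex} has $n\leqs 10$ (the largest being ${\rm PSp}_{10}(p)$ in case~$(\mathcal{B}1)$), so if $n>10$ then parts~(i) and~(iii) of Theorem~\ref{t:main} are vacuous; hence an $r$-elusive $T$ must arise in part~(ii), forcing $H\in\mathcal{A}$, or in part~(iv), in which case \eqref{e:star} holds and so $\kappa(T,r)=1$.

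The hard part will be the second paragraph. While each individual computation of $\kappa(T,r)$ is routine given the earlier results, one must work through every row of Tables~\ref{t:lowdim} and~\ref{t:bex} and, in particular, confirm the dichotomy that the sporadic exceptions with $\kappa(T,r)>1$ all have $r\leqs 3$ --- checking, for each of the remaining rows with $r\geqs 5$, that either \eqref{e:star} is satisfied (for instance $c=10>n/2$ in case~$(\mathcal{B}1)$ with $r=11$) or that $\kappa(T,r)=1$ can be read off from the cyclic Sylow $r$-subgroup of the relevant maximal torus (as for ${\rm PSL}_{3}^{\e}(q)$ with $r=7$). This dichotomy is exactly what makes the clean trichotomy (i)--(iii) in the statement possible.
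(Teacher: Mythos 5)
Your strategy is the same as the paper's: the corollary is read off from Theorem \ref{t:main} using the two facts you isolate (that \eqref{e:star} forces $\kappa(T,r)=1$, and that $\kappa(T,r)=1$ together with $r$ dividing $|H\cap T|$ forces $r$-elusivity), plus a row-by-row determination of $\kappa(T,r)$ for the entries of Tables \ref{t:lowdim} and \ref{t:bex}; the paper's own proof is exactly this, compressed into a few lines, and your treatment of the reverse implication and of the final assertion for $n>10$ is fine.

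The one step that does not survive checking is your assertion that the instances of Theorem \ref{t:main}(ii)(c) with $r=3$ ``must be --- and are --- recorded'' in Table \ref{tab:11}: they are not, since the $A_d$-rows of that table carry $r=2$ only. This is not a vacuous worry. For $H\in\mathcal{A}$ and $r=3\neq p$ the condition $c=r-1$ holds automatically (here $i=\Phi(3,p)\in\{1,2\}$ and $T$ is orthogonal or symplectic, so $c=2$ in either case), the hypotheses that $3$ divides $|H\cap T|$ and $|\Omega|$ are genuinely satisfiable (for example $T=\Omega_{d-1}(p)$ with $H_0\in\{A_d,S_d\}$, $3\nmid p$ and $v_3(|T|)>v_3(d!)$, which happens for many $d$ and $p$), and Lemma \ref{l:floor}(iii) gives $\kappa(T,3)\geqs \lfloor n/2\rfloor-1\geqs 2$. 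So these configurations are declared $3$-elusive by Theorem \ref{t:main}(ii)(c) yet fall under none of (i)--(iii) as printed; a complete deduction must either prove that they cannot arise (they can) or acknowledge that Table \ref{tab:11} (equivalently, the restriction $r\geqs 5$ in part (ii) of the corollary) needs adjusting. In fairness, the paper's own sketch has the same blind spot --- it transfers only the $r=2$, $H\in\mathcal{A}$ cases of Theorem \ref{t:main}(ii) into the table --- so this is best viewed as a defect of the statement/table rather than of your overall strategy; but your write-up cannot simply assert that the table already covers the $r=3$, $H\in\mathcal{A}$ cases, and this is precisely the point at which the ``routine'' verification in your second paragraph fails.
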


\renewcommand{\arraystretch}{1.1}
\begin{table}
$$\begin{array}{llcl}\hline\hline
T & S & r & \mbox{Conditions} \\ \hline
\Omega_{d-\delta}(p) & A_{d} & 2 & \mbox{$p \ne 2$, $d-\delta$ odd, $\left(\frac{(d-\delta+1)/2}{p}\right)=1$}  \\
{\rm P\Omega}_{d-\delta}^{\e}(p) & A_{d} & 2 & \mbox{$p \ne 2$, $(d-\delta) \equiv 2 \imod{4}$, $p \equiv 5\e \imod{8}$} \\
{\rm PSp}_{10}(p) & {\rm PSU}_{5}(2) & 2 & p \equiv \pm 1 \imod{8} \\
{\rm P\Omega}_{8}^{+}(p) & \Omega_{8}^{+}(2) &  2 & p \geqs 7 \\
& & 3 & p^2 \equiv 1 \imod{9} \\
\Omega_{7}(p) & {\rm Sp}_{6}(2) & 2 & p \geqs 7 \\
& & 3 & p^2 \equiv 1 \imod{9} \\
{\rm PSL}_{6}^{\e}(q) & {\rm PSL}_{3}(4) & 2 & p \equiv -5\e \imod{24} \\
&  {\rm PSU}_{4}(3) &  2 & p\equiv \e\imod{12} \\
& {\rm M}_{12} & 2 & (\e,q) = (+,3) \\
{\rm PSU}_{6}(2) & {\rm PSU}_{4}(3) & 2 & \\
{\rm PSp}_{6}(q) & {\rm J}_{2} & 2 & q^2 \equiv 1 \imod{8} \\
& {\rm PSU}_{3}(3) & 2 & q=p \equiv \pm 1 \imod{12} \\
{\rm PSL}_{5}^{\e}(q) & {\rm PSU}_{4}(2) & 2 & \\
{\rm PSL}_{4}^{\e}(q) & {\rm PSU}_{4}(2) & 2 & q \equiv -\e \imod{4} \\
& & 3 & q \equiv \e \imod{9} \\
{\rm PSL}_{4}^{\e}(q) & A_7 & 3 & q \equiv -\e \imod{9} \\
{\rm PSp}_{4}(q)' & A_6 & 2 & q \equiv \pm 1 \imod{12} \\
& & 3 & q^2 \equiv 1 \imod{9} \\ \hline\hline
\multicolumn{4}{l}{\mbox{{\small $\delta=2$ if $p$ divides $d$, otherwise $\delta=1$}}}
\end{array}$$
\caption{The $r$-elusive $\mathcal{S}$-actions with $r \in \{2,3\}$ and $\kappa(T,r) \geqs 2$}
\label{tab:11}
\end{table}
\renewcommand{\arraystretch}{1}

This corollary is easily deduced from Theorem \ref{t:main}. To see this, first recall that 
$\kappa(T,r)=1$ if the conditions in \eqref{e:star} hold, and one checks that the same is true for all the cases in Tables \ref{t:lowdim} and \ref{t:bex} with $r \geqs 5$. To determine the examples appearing in Table \ref{tab:11} we include the two cases with $H \in \mathcal{A}$ and $r=2$ in Theorem \ref{t:main}(ii), and we exclude those in Tables \ref{t:lowdim} and \ref{t:bex} with $r \in \{2,3\}$ and $\kappa(T,r)=1$.

Notice that there are genuine examples arising in parts (i) and (ii) of Corollary \ref{c:main11}. For (i), see Remark \ref{r:main2}. For an example in (ii), take $T=\Omega_{15}(p)$, $S=A_{16}$ and $r=7$, where $p$ is a prime such that $p(p-1) \equiv -1 \imod{49}$ (here $T$ is $r$-elusive and $\kappa(T,r) = 2$).

\begin{remk}\label{r:main3}
We can immediately determine the $r$-elusive $\mathcal{S}$-actions with $r=2$ or $3$ from Theorem \ref{t:main} (there are no examples if $H \not\in \mathcal{A}$ and $n>10$). It is also worth noting that the only $p$-elusive $\mathcal{S}$-action, where $p$ is the defining characteristic, is the case labelled $(\mathcal{B}15)$ in Table \ref{btab} with $T = {\rm PSU}_{6}(2)$ and $S = {\rm PSU}_{4}(3)$.
\end{remk}

Let us briefly describe the proof of Theorem \ref{t:main}. First, recall that we may assume $n \geqs 6$, where $n$ is the dimension of the natural module for $T$. We start by considering the collections $\mathcal{A}$ and $\mathcal{B}$, which are handled directly in Sections \ref{s:a} and \ref{s:b}, respectively. In order to complete the proof, we may assume that $H \not\in \mathcal{A} \cup \mathcal{B}$ and it remains to show that $T$ is $r$-elusive if and only if all the conditions in \eqref{e:star} hold.

To do this, we first apply a key theorem of Guralnick and Saxl (see Theorem \ref{gursax}), which provides an important reduction to the situation where
$$\mbox{$r \neq p$, $r>2$, $r$ divides $|H\cap T|$ and $c > \max\{2,\sqrt{n}/2\}$}$$
with $c$ the integer defined in \eqref{e:eqcc}. This is the content of Proposition \ref{p:thm1} and it essentially reduces the problem to showing that $T$ contains a derangement of order $r$ whenever $r \ne p$ is an odd prime such that 
$$\max\{2,\sqrt{n}/2\} < c \leqs n/2$$
(see Proposition \ref{p:c} for the precise statement).

Not surprisingly, most of the work in this final part of the argument arises when $S$ is a simple group of Lie type. Here the analysis naturally splits into two cases, according to whether or not $S \in {\rm Lie}(p)$, where ${\rm Lie}(p)$ is the set of simple groups of Lie type in the defining characteristic $p$ (the case $S \not\in {\rm Lie}(p)$ is studied in Section \ref{ss:lie_cross} and the proof is completed in Section \ref{ss:lie_def}). A similar approach applies in both cases; either we identify a specific derangement of order $r$ (this is often an element $x \in T$ of order $r$ with the largest possible $1$-eigenspace on the natural module), or we argue by estimating, and then comparing, the number of conjugacy classes of elements (or subgroups) of order $r$ in $T$ and $H \cap T$, respectively. For $S \not\in {\rm Lie}(p)$, the analysis relies heavily on the well known bounds of Landazuri and Seitz \cite{Land-S} on the dimensions of irreducible representations. In the defining characteristic, we use the highest weight theory of irreducible representations of quasisimple groups and the ambient simple algebraic groups. Work of Hiss and Malle \cite{HM} and L\"{u}beck \cite{Lu} also plays an important role.

\vs

We conclude by presenting several corollaries that are obtained by combining Theorem \ref{t:main} with the main results of \cite{BG} on geometric actions of classical groups. We follow \cite{KL} in labelling the geometric subgroup collections $\C_1, \ldots, \C_8$ (see \cite[Table 1.4.2]{BG}).

\begin{coroll}\label{c:main2}
Let $G \leqs {\rm Sym}(\Omega)$ be a primitive almost simple classical group over $\mathbb{F}_{q}$, where $q=p^a$ with $p$ prime. Let $T$ and $H$ be the socle and point stabiliser of $G$, and let $r$ be a prime divisor of $|\Omega|$. 
\begin{itemize}\addtolength{\itemsep}{0.2\baselineskip}
\item[{\rm (i)}] If $H \in \C_1 \cup \C_2$, $r=p>2$ and $T$ is $r$-elusive, then $(G,H)$ belongs to a known list of cases.
\item[{\rm (ii)}] If $H \in \mathcal{S}$ then $T$ is $r$-elusive if and only if $(G,H,r)$ satisfies the conditions in Theorem \ref{t:main}.
\item[{\rm (iii)}] In all other cases, $T$ is $r$-elusive if and only if $(G,H,r)$ belongs to a known list of cases.
\end{itemize}
\end{coroll}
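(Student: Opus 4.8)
The statement to prove is Corollary \ref{c:main2}, which combines Theorem \ref{t:main} (the $\mathcal{S}$-action result of this paper) with the results of \cite{BG} on geometric actions.

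\textit{Proof proposal.}

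The plan is to reduce the corollary directly to the trichotomy provided by Aschbacher's theorem, invoking Theorem \ref{t:main} for the $\mathcal{S}$-case and quoting \cite{BG} for the geometric cases. First I would recall the setup: since $G$ is a primitive almost simple classical group with socle $T$ and point stabiliser $H$, the maximality of $H$ in $G$ together with Aschbacher's subgroup structure theorem \cite{asch} (in the refined form of \cite[Theorem 2.2.19]{BG} or \cite{KL}) shows that $H$ lies in exactly one of the nine classes: one of the eight geometric collections $\C_1, \ldots, \C_8$, or the class $\mathcal{S}$ of almost simple irreducible subgroups (strictly, one must allow the usual handful of overlaps and the class-$\C'_6$/novelty subtleties, but these are already accounted for in the statement of the cited results). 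So the proof is simply a case-split on which class contains $H$.

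Part (ii) is immediate: if $H \in \mathcal{S}$, then $T$ is $r$-elusive if and only if $(G,H,r)$ satisfies the conditions in Theorem \ref{t:main}, which is precisely the main theorem of the present paper (with the $n<6$ cases drawn from \cite[Proposition 6.3.1]{BG} as recorded in Table \ref{t:lowdim}). For part (i), suppose $H \in \C_1 \cup \C_2$, $r = p > 2$, and $T$ is $r$-elusive. Here I would quote the relevant results of \cite{BG} treating subspace actions ($\C_1$) and imprimitive/direct-sum-decomposition actions ($\C_2$) in characteristic $r = p$; the point is that in these actions the classification of $p$-elusive groups is already completed there, yielding the ``known list of cases'' — one assembles this list from \cite[Section 1.5]{BG} and the corresponding $\C_1$ and $\C_2$ tables. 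For part (iii), if $H$ lies in one of the remaining possibilities — that is, $H \in \C_1 \cup \C_2$ with $(r,p)$ not as in (i), or $H \in \C_i$ for some $i \in \{3,4,5,6,7,8\}$ — then again the $r$-elusivity of $T$ is decided by the main results of \cite{BG}, which handle every geometric collection; this gives the asserted ``known list of cases''. Since the nine classes exhaust the possibilities for $H$ and the three parts of the corollary cover them, the result follows.

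The only real subtlety — and the step I would be most careful about — is bookkeeping at the boundary: making sure the partition of the geometric cases between parts (i) and (iii) matches exactly how \cite{BG} organises its output, and handling the small number of subgroups that formally belong to two Aschbacher classes (e.g. certain $\C_2$/$\C_8$ or $\C_1$/$\C_i$ overlaps in low dimension) or that arise as novelties, so that no case is double-counted or omitted. In practice this is resolved by adopting throughout the same conventions as \cite{KL} and \cite[Chapter 2]{BG} for assigning a canonical Aschbacher class to each maximal $H$, after which the three parts of the corollary are mutually exclusive and jointly exhaustive, and there is no genuine mathematical content beyond citing Theorem \ref{t:main} and the tabulated results of \cite{BG}.
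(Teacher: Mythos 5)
Your proposal is correct and follows essentially the same route as the paper, which simply combines Aschbacher's trichotomy with Theorem \ref{t:main} for $H \in \mathcal{S}$ and cites \cite[Theorems 4.1.4, 5.1.2 and 5.$i$.1]{BG} for the geometric collections (see Remark \ref{r:main4}). One small correction: for $H \in \C_1 \cup \C_2$ with $r=p>2$ the classification in \cite{BG} is \emph{not} complete (necessary and sufficient conditions for $p$-elusivity are not determined there), which is precisely why part (i) is stated only as a one-way implication; your citation still suffices for (i) as stated, but your parenthetical claim that the $p$-elusive classification is ``already completed there'' should be dropped.
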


\begin{remk}\label{r:main4}
In part (i) of Corollary \ref{c:main2}, we refer the reader to \cite[Theorems 4.1.4 and 5.1.2]{BG} for further details. Similarly in (iii), if $H \in \C_i$ then the relevant cases are recorded in \cite[Theorem 5.i.1]{BG}.  It is important to note that we are not claiming to have a complete classification of all the $r$-elusive classical groups. Indeed, in \cite{BG} we are unable to determine necessary and sufficient conditions for $p$-elusivity in all cases when $p$ is odd and $H \in \C_1 \cup \C_2$.
\end{remk}

Next we extend \cite[Theorem 1.5.3]{BG} to give a complete description of the $2$-elusive  almost simple primitive classical groups (note that $\kappa(T,2) \geqs 2$ if $n \geqs 6$, where $n$ is the dimension of the natural module for $T$).

\begin{coroll}\label{c:main3}
Let $G \leqs {\rm Sym}(\Omega)$ be a primitive almost simple classical group over $\mathbb{F}_{q}$ with socle $T$ and point stabiliser $H$. Then $T$ is $2$-elusive if and only if $|\Omega|$ is even and one of the following holds:
\begin{itemize}\addtolength{\itemsep}{0.2\baselineskip}
\item[{\rm (i)}] $H \in \C_1$ and $(G,H)$ is one of the cases in \cite[Table 4.1.3]{BG};
\item[{\rm (ii)}] $H \in \mathcal{S}$ and $(G,H)$ is one of the cases in Tables \ref{t:lowdim} or \ref{tab:11} (with $r=2$); 
\item[{\rm (iii)}] $H \not\in \C_1 \cup \mathcal{S}$ and $(G,H)$ is one of the cases in \cite[Table 5.1.2]{BG}.
\end{itemize}
\end{coroll}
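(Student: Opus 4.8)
## Proof Proposal for Corollary \ref{c:main3}

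The plan is to assemble the $2$-elusive classification by partitioning the maximal subgroups $H$ according to Aschbacher's theorem and invoking the appropriate prior results in each part, while being careful about the interface between the geometric cases and the $\mathcal{S}$-case. First I would note the standing hypothesis that $|\Omega|$ is even is necessary for $2$-elusivity (if $|\Omega|$ is odd then $2$ is not even a prime divisor of $|\Omega|$, so the statement is vacuous), and I would record the observation in the parenthetical remark: when $n \geqs 6$ and $H \in \mathcal{S}$ we always have $\kappa(T,2) \geqs 2$, so the ``default'' conclusion $\kappa(T,2)=1$ of \eqref{e:star} never produces a $2$-elusive example here — this explains why part (iv) of Theorem \ref{t:main} contributes nothing to the list and why only Tables \ref{t:lowdim} and \ref{tab:11} are relevant in part (ii). One should also double-check that $\kappa(T,2)\geqs 2$ rules out spurious overlap with part (i) of Corollary \ref{c:main11}.

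Next I would split into the three cases of the statement. For $H \in \C_1$, I would simply cite \cite[Theorem 1.5.3]{BG} (or whichever result of \cite{BG} tabulates the $2$-elusive $\C_1$-actions), which asserts that $T$ is $2$-elusive precisely for the cases in \cite[Table 4.1.3]{BG}; this gives part (i). For $H \in \mathcal{S}$, I would apply Theorem \ref{t:main} with $r=2$: when $n<6$ the examples are exactly those rows of Table \ref{t:lowdim} with $r=2$ (part (i) of Theorem \ref{t:main}), and when $n \geqs 6$ one runs through parts (ii)(a), (ii)(b), (iii), (iv) of Theorem \ref{t:main} — here (ii)(c) cannot occur since $c=r-1=1$ forces $i=1$ or the relevant small value, contradicting the conditions on $c$, and (iv) cannot occur as just noted — leaving precisely the $H\in\mathcal{A}$ rows of (ii)(a),(b) and the $H\in\mathcal{B}$ rows of Table \ref{t:bex} with $r=2$. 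One then checks that the union of these lists is exactly the collection of rows with $r=2$ in Table \ref{tab:11} together with the $n<6$ rows of Table \ref{t:lowdim}, which is the content of part (ii). This bookkeeping is essentially the same as the deduction of Corollary \ref{c:main11} already sketched after its statement, restricted to $r=2$. Finally, for $H \not\in \C_1 \cup \mathcal{S}$, that is $H$ in one of the collections $\C_2,\ldots,\C_8$, I would cite the corresponding results of \cite{BG} (the relevant union being recorded in \cite[Table 5.1.2]{BG}), giving part (iii).

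The only genuine subtlety — and the step I expect to need the most care — is the consistency of the case division at the boundary: a given action may, a priori, be describable with more than one Aschbacher type, and one must make sure the partition into $\C_1$, $\mathcal{S}$, and ``the rest'' used in \cite{BG} matches the one used here, so that no $2$-elusive example is counted twice or dropped. Concretely, I would verify that the convention of \cite{KL,BG} assigns each maximal $H$ a well-defined Aschbacher class (with the usual tie-breaking for the handful of ambiguous low-dimensional subgroups), and that $\mathcal{S}$ as defined in Definition \ref{sdef} is disjoint from $\C_1 \cup \cdots \cup \C_8$ in the cases at hand; with that in place the three parts (i)--(iii) are exhaustive and mutually exclusive, and the corollary follows by concatenating the three cited lists. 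Everything else is a routine cross-reference to \cite{BG} and to Tables \ref{t:lowdim}, \ref{t:bex}, \ref{tab:11} of the present paper.
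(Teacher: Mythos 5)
Your proposal is correct and follows essentially the same route as the paper, which obtains the corollary simply by combining Theorem \ref{t:main} (applied with $r=2$) with the results of \cite{BG} for the geometric collections, the only real work being the bookkeeping that matches Theorem \ref{t:main}(i)--(iii) against Tables \ref{t:lowdim} and \ref{tab:11}. Note only that parts (ii)(c) and (iv) of Theorem \ref{t:main} are excluded at $r=2$ simply because condition (ii)(c) and the conditions in \eqref{e:star} explicitly require $r>2$, so no auxiliary argument about $c$ or $\kappa(T,2)$ is needed for that exclusion.
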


Finally, by combining Corollary \ref{c:main11} with the main results in \cite[Sections 5.3-5.9]{BG} we obtain Corollary \ref{c:main4} below. Notice that we omit the $\C_i$-actions with $i \in \{1,2\}$ since it is not possible to state a definitive result in these cases when $r=p$ (and the required conditions when $r \ne p$ are rather complicated); we refer the reader to the relevant discussion in \cite[Chapter 4 and Section 5.1]{BG}). In order to state the result, let us say that a partition $\l = (\l_1, \ldots, \l_t)$ of a positive integer is \emph{$p$-bounded} (with $p$ prime) if $\l_i \leqs p$ for all $i$. In addition, set $d(\l) = \gcd\{\l_1, \ldots, \l_t\}$.  

\begin{coroll}\label{c:main4}
Let $G \leqs {\rm Sym}(\Omega)$ be a primitive almost simple classical group over $\mathbb{F}_{q}$, where $q=p^a$ with $p$ prime. Let $T$ and $H$ be the socle and point stabiliser of $G$, respectively, and let $n$ denote the dimension of the natural $T$-module. Let $r>5$ be a prime divisor of $|\Omega|$ and assume $n>5$ and $H \not\in \C_1 \cup \C_2$. Define $\kappa(T,r)$ as in \eqref{e:kop}. Then $T$ is $r$-elusive if and only if one of the following holds:
\begin{itemize}\addtolength{\itemsep}{0.2\baselineskip}
\item[{\rm (i)}] $\kappa(T,r)=1$;
\item[{\rm (ii)}] $r \ne p$, $H \in \mathcal{A}$ and $c=r-1$;
\item[{\rm (iii)}] $H \in \C_5$ is a subfield subgroup over $\mathbb{F}_{q_0}$ with $q=q_0^k$ for an odd prime $k$ and either $r=k$, or $r=p$ and the following conditions hold if $T= {\rm PSL}_{n}^{\e}(q)$:
\begin{itemize}\addtolength{\itemsep}{0.2\baselineskip}
\item[{\rm (a)}] $(d(\l),q-\e) = (d(\l),q_0-\e)$ for every $p$-bounded partition $\l$ of $n$; 
\item[{\rm (b)}] If there is a partition $\l$ in (a) with $(d(\l),q-\e)>1$, then either $k \geqs p$ or $(k,(n,q_0-\e))=1$.
\end{itemize}
\item[{\rm (iv)}] $r=p$, $T={\rm PSL}_{n}(q)$, $H \in \C_8$ is of type ${\rm GU}_{n}(q_0)$ with $q=q_0^2$, $n$ is odd and $(d(\l),q-1) = (d(\l),q_0+1)$ for every $p$-bounded partition $\l$ of $n$. 
\end{itemize}
\end{coroll}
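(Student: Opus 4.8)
The plan is to feed the hypotheses $n>5$, $r>5$ and $H\not\in\C_1\cup\C_2$ into Aschbacher's theorem and then quote, class by class, the relevant classification of $r$-elusive primitive classical groups, simplifying the resulting conditions at each stage. Since $H\not\in\C_1\cup\C_2$ and $n>5$, either $H\in\mathcal{S}$, and then we apply Corollary~\ref{c:main11}, or $H\in\C_i$ for some $i\in\{3,\ldots,8\}$, and then we apply \cite[Theorem~5.i.1]{BG}. We may assume throughout that $r$ divides $|H\cap T|$, for otherwise $H\cap T$ has no element of order $r$ and $T$ is trivially not $r$-elusive (this divisibility is a standing hypothesis in Corollary~\ref{c:main11} and in the relevant theorems of \cite{BG}). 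Granting it, recall from Lemma~\ref{l:floor} and Corollary~\ref{p:star} that, given $r\mid|\Omega|$, the condition $\kappa(T,r)=1$ is equivalent to the conditions in \eqref{e:star} and forces $T$ to be $r$-elusive, since the unique $T$-class of subgroups of order $r$ then meets $H\cap T$. So it remains to show that, for each Aschbacher class, the $r$-elusive configurations with $n>5$ and $r>5$ are precisely those with $\kappa(T,r)=1$, together with the families recorded in (ii)--(iv).

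If $H\in\mathcal{S}$ this is immediate from Corollary~\ref{c:main11}: as $r>5$ we have $r\notin\{2,3\}$, so part~(iii) of that corollary is vacuous, and it yields that $T$ is $r$-elusive exactly when $\kappa(T,r)=1$ (case~(i) here), or $r\ne p$, $H\in\mathcal{A}$ and $c=r-1$ (case~(ii) here).

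Next I would handle the classes $\C_i$ with $i\in\{3,4,6,7\}$, together with the ``same-field'' members of $\C_5$ (of type ${\rm Sp}_n(q)$ or ${\rm O}_n^{\e}(q)$ in a unitary $T$, or ${\rm O}_n^{\e}(q)$ in a symplectic $T$ with $q$ even) and the members of $\C_8$ of type ${\rm Sp}_n(q)$ or ${\rm O}_n^{\e}(q)$ in $T={\rm PSL}_n(q)$. For each of these the relevant result of \cite[Sections~5.3--5.8]{BG} asserts that $T$ is $r$-elusive if and only if $\kappa(T,r)=1$ or $(G,H,r)$ lies in an explicit list, and I would check that no entry of that list survives the restrictions $n>5$, $r>5$; hence only case~(i) arises here. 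The classes $i\in\{4,6,7\}$ are quick, since the point stabilisers are small; for the same-field members of $\C_5$ and $\C_8$ the defining characteristic $r=p$ is dealt with directly, by exhibiting a $p$-bounded partition of $n$ that is not symplectic, respectively not orthogonal (for instance $(3,1^{n-3})$, respectively $(2,1^{n-2})$): the corresponding unipotent class of $T$ then misses $H\cap T$, so $T$ is not $p$-elusive, while the primes $r\ne p$ are controlled by the $\kappa(T,r)$-counts already carried out in \cite{BG}. (The subfield members of $\C_5$ with field degree $2$, and a handful of other borderline configurations, are disposed of in the same references.)

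The substantive part --- and the main obstacle --- is the treatment of the two genuinely new families: $H\in\C_5$ a subfield subgroup over $\mathbb{F}_{q_0}$ with $q=q_0^k$ for an odd prime $k$, and $H\in\C_8$ of type ${\rm GU}_n(q_0)$ with $q=q_0^2$ and $T={\rm PSL}_n(q)$. Here, by \cite[Sections~5.5 and~5.8]{BG}, the $r$-elusive cases go strictly beyond $\kappa(T,r)=1$ and force $r\in\{k,p\}$. When $r=k$ (so $k=r>5$), one shows, following \cite[Section~5.5]{BG}, that every element of order $k$ of $T$ is $T$-conjugate into the subfield subgroup $H\cap T$ --- such elements are semisimple with eigenvalues defined over $\mathbb{F}_{q_0^{e}}$, where $e$ is the order of $q_0$ modulo $k$, and the arithmetic governing the splitting of their classes is unchanged on passing from $\mathbb{F}_q$ to $\mathbb{F}_{q_0}$ --- so that $T$ is $k$-elusive; this is the ``$r=k$'' alternative in~(iii). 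When $r=p$, the elements of order $p$ are unipotent and are parametrised by the $p$-bounded partitions $\l$ of $n$, the number of relevant $T$-classes attached to $\l$ being governed by $(d(\l),q-\e)$ and the corresponding number in $H\cap T$ by $(d(\l),q_0-\e)$ in the subfield case and by $(d(\l),q_0+1)$ in the ${\rm GU}$ case. Requiring the $p$-classes of $T$ and of $H\cap T$ to match up then unwinds to the equality $(d(\l),q-\e)=(d(\l),q_0-\e)$ for every $p$-bounded $\l$ --- condition~(iii)(a) --- together with the supplementary condition~(iii)(b), and, in the $\C_8$ situation, to condition~(iv), the restriction to odd $n$ there being exactly the one under which the ${\rm GU}$-type subgroup in question is maximal of the required kind (see \cite{KL} and \cite[Section~5.8]{BG}); for the orthogonal and symplectic types of $\C_5$ no extra condition is needed, since the pertinent component groups are $2$-groups on which the Frobenius acts trivially, so the relevant $p$-class structures over $\mathbb{F}_q$ and $\mathbb{F}_{q_0}$ agree automatically. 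The hard part is exactly this partition-theoretic bookkeeping: one must verify that the conditions so derived coincide verbatim with those in \cite[Theorems~5.5.1 and~5.8.1]{BG}, and that nothing is lost to a hidden containment between Aschbacher classes or to an exceptional isomorphism in small dimension; the remainder of the argument is routine cross-referencing.
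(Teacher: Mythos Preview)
Your approach is essentially the same as the paper's: the paper states that Corollary~\ref{c:main4} is obtained ``by combining Corollary~\ref{c:main11} with the main results in \cite[Sections 5.3--5.9]{BG}'', and that is exactly the strategy you outline---split according to Aschbacher's theorem, invoke Corollary~\ref{c:main11} for $H\in\mathcal{S}$, and invoke \cite[Theorem~5.i.1]{BG} for each geometric class $\C_i$ with $i\in\{3,\ldots,8\}$, then filter by the constraints $n>5$ and $r>5$. Your sketch is in fact considerably more detailed than what the paper provides, and the extra explanation (the partition-theoretic bookkeeping for the $\C_5$ subfield and $\C_8$ unitary cases, and the observation that the non-subfield members of $\C_5$ contribute nothing new) is accurate and in line with the content of \cite[Sections~5.5 and~5.8]{BG} that the paper defers to.
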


See \cite[Sections 5.5 and 5.8]{BG} for an explanation of the number-theoretic conditions appearing in parts (iii) and (iv) of Corollary \ref{c:main4}. Notice that there are genuine $p$-elusive examples in these cases. For instance, in (iii) we see that $T = {\rm PSL}_{6}(q)$ is $p$-elusive when $p \geqs 7$ and $k \geqs 5$. Similarly in (iv), if we take $p \geqs 7$ then $T = {\rm PSL}_{7}(q)$ is $p$-elusive.

\vs

\noindent \textbf{Notation.} We adopt the notation of \cite{BG, KL} for classical groups, so for example we write ${\rm PSL}_{n}^{+}(q) = {\rm PSL}_{n}(q)$ and ${\rm PSL}_{n}^{-}(q) = {\rm PSU}_{n}(q)$. We also use the standard notation for labelling involution class representatives presented in \cite{GLS} and \cite{AS}, in the odd and even characteristic settings, respectively. We use the notation in \cite{BG} for representatives of conjugacy classes of elements of odd prime order, which is recalled in Section \ref{ss:cc}. Finally, if $n$ is a positive integer then $Z_n$ (or just $n$) denotes a cyclic group of order $n$.

\vs

\noindent \textbf{Acknowledgments.} We thank an anonymous referee for helpful comments on an earlier version of the paper. The second author is supported by the Australian Research Council Grant DP160102323.

\section{Preliminaries}\label{s:prel}

In this section we record some preliminary results which will be needed in the proof of Theorem \ref{t:main}. 

\subsection{Derangements}

We begin with a useful lemma on derangements in the socle of a primitive almost simple group.

\begin{lem}\label{l:count0}
Let $G \leqs {\rm Sym}(\Omega)$ be an almost simple primitive group with socle $T$ and point stabiliser $H$. Set $H_0 = H \cap T$ and let $\Omega_0$ be the set of right cosets of $H_0$ in $T$. Then $\Delta(T) = \Delta_0(T)$, where $\Delta(T)$ and $\Delta_0(T)$ denote the set of derangements in $T$ on $\Omega$ and $\Omega_0$, respectively. In particular, if $r$ is a prime divisor of $|\Omega|$ then $T$ is $r$-elusive on $\Omega$ if and only if $T$ is $r$-elusive on $\Omega_0$. 
\end{lem}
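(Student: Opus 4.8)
The plan is to prove this by an elementary counting argument relating the action of $T$ on $\Omega$ to its action on $\Omega_0$, exploiting the fact that both are cosets spaces of closely related subgroups. First I would recall the standard setup: since $G$ is almost simple with socle $T$ and point stabiliser $H$, and $G = HT$ (primitivity plus $T$ normal in $G$), the action of $T$ on $\Omega = [G:H]$ is transitive, and a point stabiliser for this action is $H \cap T = H_0$. Hence, as $T$-sets, $\Omega \cong [T:H_0] = \Omega_0$. This identification is the crux, and once it is in place the conclusion is essentially immediate: an element $t \in T$ is a derangement on $\Omega$ if and only if it fixes no point of $\Omega$, which under the $T$-equivariant bijection $\Omega \to \Omega_0$ holds if and only if $t$ fixes no point of $\Omega_0$, i.e. if and only if $t$ is a derangement on $\Omega_0$. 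Therefore $\Delta(T) = \Delta_0(T)$.

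To make the identification $\Omega \cong \Omega_0$ rigorous, I would argue as follows. By transitivity of $T$ on $\Omega$ (which follows from $G = HT$, since for any $\omega \in \Omega$ one can write $\omega = (H g)$ with $g \in G$, and then using $g = h t$ with $h \in H$, $t \in T$ one gets $H g = H t$, so $\Omega = \{Ht : t \in T\}$), the orbit-stabiliser correspondence gives a $T$-equivariant bijection between $\Omega$ and the coset space of the stabiliser in $T$ of the base point $H \in \Omega$. That stabiliser is $\{t \in T : Ht = H\} = H \cap T = H_0$. Hence $\Omega \cong [T:H_0] = \Omega_0$ as $T$-sets, via $Ht \mapsto H_0 t$, and one checks directly that this map is well-defined (if $Ht = Ht'$ then $t't^{-1} \in H \cap T = H_0$, so $H_0 t = H_0 t'$) and injective by the same computation run backwards, and surjective since every coset $H_0 t$ is hit.

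Having established $\Delta(T) = \Delta_0(T)$, the final sentence follows at once: $T$ is $r$-elusive on $\Omega$ means $T$ contains no element of order $r$ lying in $\Delta(T)$, and similarly for $\Omega_0$; since the two derangement sets coincide as subsets of $T$, the existence of a derangement of order $r$ on $\Omega$ is equivalent to the existence of one on $\Omega_0$. I would also note in passing that $r \mid |\Omega|$ iff $r \mid |\Omega_0|$, since $|\Omega| = |G:H| = |T:H_0| = |\Omega_0|$ using $G = HT$ once more, so the hypothesis "$r$ is a prime divisor of $|\Omega|$" transfers cleanly.

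I do not anticipate a serious obstacle here; the only point requiring a little care is verifying that $G = HT$ (which is where primitivity and the almost simple hypothesis enter — $H$ maximal in $G$ and $T \not\leqslant H$ force $HT = G$) and that the base-point stabiliser in $T$ is exactly $H_0$ rather than something larger. Both are routine. The lemma is really just the observation that passing from $G$ to its socle $T$ does not change the underlying $T$-set up to equivalence, so derangement data for $T$ is intrinsic to the pair $(T, H_0)$ and independent of the ambient group $G$.
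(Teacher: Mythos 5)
Your proof is correct. It differs mildly in packaging from the paper's argument: the paper proves the two inclusions $\Delta(T) \subseteq \Delta_0(T)$ and $\Delta_0(T) \subseteq \Delta(T)$ directly, translating ``$x$ fixes a point'' into membership in a conjugate point stabiliser and using $G = HT$ to rewrite $H^g \cap T$ as $H^t \cap T = H_0^t$ with $t \in T$; you instead use $G = HT$ to show that $T$ is transitive on $\Omega$ with point stabiliser $H_0$, so that $\Omega$ and $\Omega_0$ are isomorphic as $T$-sets, from which the equality of derangement sets is immediate. Both routes hinge on exactly the same two facts (maximality of $H$ forcing $G = HT$, and the base-point stabiliser in $T$ being $H \cap T$), so the arguments are essentially equivalent in content; your version buys slightly more, namely that the two $T$-actions are permutation equivalent, so all fixed-point data (not just derangements of prime order) transfer, while the paper's version is marginally more direct for the stated conclusion. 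Your verification of well-definedness, injectivity, surjectivity and $T$-equivariance of $Ht \mapsto H_0t$, and the remark that $|\Omega| = |\Omega_0|$ so the divisibility hypothesis transfers, are all accurate.
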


\begin{proof}
First observe that $|\Omega|=|\Omega_0|$. Suppose $x \in \Delta(T)$. If $x$ has a fixed point on $\Omega_0$ then $x \in H_0^t$ for some $t \in T$, so $x \in H^t$ and thus $x$ fixes a point of $\Omega$, which is a contradiction. Therefore, $\Delta(T) \subseteq \Delta_0(T)$. Now assume $y \in \Delta_0(T)$ and suppose $y$ fixes a point of $\Delta$, so $y \in H^g \cap T$ for some $g \in G$. Since $G=HT$, we can write $g=ht$ for some $h \in H$, $t \in T$, so $y \in H^t \cap T = H_0^t$, but this contradicts the fact that $y$ is a derangement on $\Omega_0$. The result follows.
\end{proof}

\begin{cor}\label{c:count}
Let $G \leqs {\rm Sym}(\Omega)$ be an almost simple primitive group with socle $T$ and point stabiliser $H$. Let $r$ be a prime divisor of $|\Omega|$ and set $H_0 = H \cap T$. Suppose there are more $T$-classes of elements (or subgroups) of order $r$ in $T$ than there are $H_0$-classes of such elements (or subgroups) in $H_0$. Then $T$ is not $r$-elusive on $\Omega$. 
\end{cor}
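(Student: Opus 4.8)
The plan is to prove the contrapositive: assuming $T$ is $r$-elusive on $\Omega$, I will show that the number of $T$-classes of elements (resp.\ subgroups) of order $r$ in $T$ is bounded above by the number of $H_0$-classes of such elements (resp.\ subgroups) in $H_0$. The first step is to apply Lemma \ref{l:count0} in order to replace the action on $\Omega$ by the action of $T$ on the set $\Omega_0$ of right cosets of $H_0$ in $T$: the two actions have the same derangements in $T$, so $T$ is $r$-elusive on $\Omega$ precisely when it is $r$-elusive on $\Omega_0$, and for the latter action the point stabilisers are exactly the $T$-conjugates of $H_0$. Hence an element $x \in T$ of order $r$ is a derangement on $\Omega_0$ if and only if $x^T \cap H_0 = \emptyset$, and similarly a subgroup $R \leqs T$ of order $r$ has no fixed point if and only if no $T$-conjugate of $R$ lies in $H_0$.

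Next I would consider the map $x^{H_0} \mapsto x^T$ from the set of $H_0$-classes of elements of order $r$ in $H_0$ to the set of $T$-classes of elements of order $r$ in $T$; this is well defined since $H_0 \leqs T$. If $T$ is $r$-elusive then, because $r$ divides $|\Omega_0| = |\Omega|$, every element $y \in T$ of order $r$ fixes a point of $\Omega_0$, so $y \in H_0^t$ for some $t \in T$ and therefore $y^T = (y^t)^T$ lies in the image of this map. Thus the map is surjective, which forces the number of $T$-classes to be at most the number of $H_0$-classes --- contradicting the hypothesis, so $T$ cannot be $r$-elusive. The version with subgroups is proved in exactly the same way: a subgroup of order $r$ is cyclic, so $r$-elusivity forces every order-$r$ subgroup of $T$ to be $T$-conjugate into $H_0$, and hence the analogous map $R^{H_0} \mapsto R^T$ on conjugacy classes of subgroups of order $r$ is again well defined and surjective, giving the same contradiction.

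This is essentially a surjectivity (pigeonhole) argument, so I do not anticipate any real difficulty; the only points that need care are that the class-fusion map is genuinely well defined and that it is surjective under the $r$-elusivity hypothesis, both of which are immediate once Lemma \ref{l:count0} has been used to identify the point stabilisers of the $\Omega_0$-action with the $T$-conjugates of $H_0$.
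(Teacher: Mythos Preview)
Your proposal is correct and matches the paper's intent: the corollary is stated without proof, as an immediate consequence of Lemma~\ref{l:count0}, and your surjectivity argument on the class-fusion map $x^{H_0}\mapsto x^T$ (and its subgroup analogue) is exactly the implicit reasoning. There is nothing to add.
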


\subsection{Conjugacy classes}\label{ss:cc}

The conjugacy classes of elements of prime order in the almost simple classical groups are studied in \cite[Chapter 3]{BG}, which brings together earlier work of Wall \cite{Wall}, Aschbacher and Seitz \cite{AS}, Liebeck and Seitz \cite{LS}, Gorenstein, Lyons and Solomon \cite{GLS} and others. In order to highlight some of the results and the relevant notation, let us focus on conjugacy in the general linear group $G = {\rm GL}_{n}(q)$, where $q=p^f$ with $p$ a prime. Let $V$ be the natural module.

Let $x \in G$ be an element of prime order $r$. If $r \ne p$ then $x$ is diagonalisable over 
$\mathbb{F}_{q^i}$, but not over any proper subfield, where $i=\Phi(r,q)$ is the integer
\begin{equation}\label{e:phirq}
\Phi(r,q) = \min\{i \in \mathbb{N}\,:\, \mbox{$r$ divides $q^i-1$}\}.
\end{equation}
In other words, $r$ is a primitive prime divisor of $q^i-1$. By Maschke's Theorem, $x$ fixes a direct sum decomposition
$$V = U_1 \oplus \cdots \oplus U_m \oplus C_V(x),$$
where each $U_j$ is an $i$-dimensional subspace on which $x$ acts irreducibly, and $C_V(x)$ denotes the $1$-eigenspace of $x$. The eigenvalues of $x$ on $U_j \otimes \mathbb{F}_{q^i}$ are of the form $\L = \{\l, \l^q, \ldots, \l^{q^{i-1}}\}$ for some nontrivial $r$-th root of unity $\l \in \mathbb{F}_{q^i}$. In total, there are $t=(r-1)/i$ possibilities for $\L$, say $\L_1, \ldots, \L_t$ (these are simply the orbits on the set of nontrivial $r$-th roots of unity in $\mathbb{F}_{q^i}$ under the permutation $\omega \mapsto \omega^q$). Following \cite{BG}, if $a_j$ denotes the multiplicity of $\L_j$ in the multiset of eigenvalues of $x$ on $V \otimes \mathbb{F}_{q^i}$, then we will write
$$x = [\L_1^{a_1}, \ldots, \L_t^{a_t},I_{e}],$$
where $e=\dim C_V(x)$. This convenient notation is justified by \cite[Lemma 3.1.7]{BG}, which states that two elements of order $r$ in $G$ are conjugate if and only if they have the same multiset of eigenvalues (in $\mathbb{F}_{q^i}$). 

There is a similar description of the semisimple conjugacy classes of elements of prime order in the other classical groups, with some suitable modifications. For instance, if $x \in {\rm Sp}_{n}(q)$ and $ir$ is odd, then $t=(r-1)/i=2s$ is even and the $\L_j$ can be labelled so that $\L_{j}^{-1} = \L_{s+j}$ for $1 \leqs j \leqs s$ (where $\L_j^{-1} = \{\l^{-1} \,:\, \l \in \L_j\}$). Then the fact that $x$ preserves a symplectic form on $V$ implies that $a_{j}=a_{s+j}$ for each $j$, so we can write
$$x = [(\L_1,\L_1^{-1})^{a_1}, \ldots, (\L_s,\L_s^{-1})^{a_s},I_{e}].$$
Once again, two elements of order $r$ are conjugate if and only if they have the same eigenvalues. We refer the reader to \cite[Chapter 3]{BG} for further details.

\begin{rem}\label{r:cc}
Let $T$ be a simple classical group over $\mathbb{F}_{q}$ with natural module $V$ and let 
$x \in T$ be an element of odd prime order $r \ne p$. Set $n = \dim V$, $i = \Phi(r,q)$ and assume $c \geqs 2$, where $c$ is the integer in \eqref{e:eqcc}. By \cite[Lemma 3.1.3]{BG} we may write $x = \hat{x}Z$, where $\hat{x} \in {\rm GL}(V)$, $Z = Z({\rm GL}(V))$ and $\hat{x}$ has order $r$. Here $\hat{x}$ is conjugate to a block-diagonal matrix of the form 
$[X_1^{a_1}, \ldots, X_{s}^{a_s},I_{e}]$,
where $s=(r-1)/c$ and the $X_j$ are distinct $c \times c$ matrices with distinct eigenvalues in $\mathbb{F}_{q^i}$ (here $a_j$ denotes the multiplicity of $X_j$ as a diagonal block of $\hat{x}$). For example, if $T = {\rm PSL}_{n}(q)$ then $c=i$ and $X_j$ is irreducible with eigenvalues $\L_j$ as above. In particular, there exists an element $x \in T$ of order $r$ such that $\dim C_V(\hat{x}) = n-c$ (and the nontrivial eigenvalues of such an element (in 
$\mathbb{F}_{q^i}$) are distinct).
\end{rem}

Now suppose $x \in G$ has order $r=p$. Here we can write
\begin{equation}\label{e:unip}
x = [J_p^{a_p}, J_{p-1}^{a_{p-1}}, \ldots, J_1^{a_1}],
\end{equation}
where $J_i$ is a standard unipotent Jordan block of size $i$, and $a_i$ denotes the multiplicity of $J_i$ in the Jordan form of $x$ on $V$. In ${\rm GL}_{n}(q)$, two elements of order $p$ are conjugate if and only if they have the same Jordan form. There is a similar description of the conjugacy classes of elements of order $p$ in the other classical groups (again, we refer the reader to \cite[Chapter 3]{BG}).  

In the proof of Theorem \ref{t:main}, we will often establish the existence of a derangement of order $r$ by comparing the number of $T$-classes of subgroups (or elements) in $T$ with the number of such $H_0$-classes in $H_0$ (recall that if the former is greater than the latter, then $T$ contains a derangement of order $r$ by Corollary \ref{c:count}). Therefore, it will be helpful to have some general bounds on the number of such classes. With this aim in mind, the following notation will be useful.

\vs

\noindent \textbf{Notation.} Let $G$ be a finite group and let $m$ be a positive integer. We write $\kappa(G,m)$ for the number of conjugacy classes of subgroups of order $m$ in $G$. 

\begin{lem}\label{l:floor}
Let $T$ be a simple classical group over $\mathbb{F}_{q}$, let $n$ be the dimension of the natural module and let $r \ne p$ be an odd prime divisor of $|T|$. Set $m= \lfloor n/c \rfloor$, where  $c$ is the integer in \eqref{e:eqcc}. Assume $c \geqs 2$.
\begin{itemize}\addtolength{\itemsep}{0.2\baselineskip}
\item[{\rm (i)}] $\kappa(T,r)=1$ if and only if $m=1$, or $T={\rm P\Omega}_{n}^{-}(q)$ and $c=n/2$.
\item[{\rm (ii)}] If $m =2$, with $c \ne n/2$ if $T = {\rm P\Omega}_{n}^{+}(q)$, then $\kappa(T,r) \leqs (r-1)/c+1$.
\item[{\rm (iii)}] $\kappa(T,r) \geqs m-\delta$, where $\delta=1$ if $T$ is an orthogonal group and $n=mc$, otherwise $\delta=0$. In particular, $\kappa(T,r) \geqs \lfloor n/(r-1)\rfloor-1$.
\end{itemize}
\end{lem}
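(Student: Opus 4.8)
The plan is to work with a fixed primitive prime divisor $r$ of $q^i-1$, where $i = \Phi(r,q)$, and to parametrise the subgroups of order $r$ in $T$ via the conjugacy classes of elements of order $r$, using the description recalled in Section \ref{ss:cc} and Remark \ref{r:cc}. Recall from there that an element $x \in T$ of order $r$ lifts to $\hat x \in {\rm GL}(V)$ of order $r$ which is conjugate to a block-diagonal matrix $[X_1^{a_1}, \ldots, X_s^{a_s}, I_e]$ with $s = (r-1)/c$, the $X_j$ distinct $c \times c$ blocks, $e = \dim C_V(\hat x)$ and $n = c(a_1 + \cdots + a_s) + e$; and that conjugacy is governed by the multiset of eigenvalues. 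A subgroup $\langle x\rangle$ of order $r$ contains $r-1$ generators, and the number of distinct $T$-classes of elements fused inside $\langle x\rangle$ is controlled by the action of ${\rm N}_T(\langle x\rangle)/{\rm C}_T(\langle x\rangle)$, which permutes the $X_j$; so counting $\kappa(T,r)$ amounts to counting multisets $(a_1, \ldots, a_s)$ of non-negative integers with $\sum a_j \leqslant \lfloor n/c\rfloor = m$ (plus, in the symplectic/unitary/orthogonal cases, the form-imposed symmetry constraints and the eigenvalue-parity conditions on $e$ that are recorded in \cite[Chapter 3]{BG}), up to the symmetry identifying $(a_1,\ldots,a_s)$ with permutations and with the "inverse" involution when a form is present.

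For part (i): if $m = 1$ then the only possibility is $a_j = 1$ for a single $j$ and $a_k = 0$ otherwise, so all elements of order $r$ lie in a single subgroup up to conjugacy once we account for the normaliser permuting the $X_j$ — hence $\kappa(T,r) = 1$; conversely if $m \geqslant 2$ one can build at least two non-conjugate subgroups (e.g. one with a single block of multiplicity $1$ and one with multiplicity $2$, or in the linear case two blocks of multiplicity $1$), except precisely in the degenerate minus-type case $T = {\rm P\Omega}_n^-(q)$, $c = n/2$, where $m = 2$ but the form forces a unique admissible configuration — this exceptional case must be tracked carefully against the data in \cite[Chapter 3]{BG}. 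For part (ii), when $m = 2$ the admissible multisets have $\sum a_j \in \{1, 2\}$, giving the $s$ singletons $(a_j = 1)$ collapsed appropriately, the "$a_j = 2$" configurations and the "$a_j = a_k = 1$, $j \ne k$" configurations; bounding the total by $s + 1 = (r-1)/c + 1$ is a direct count, using that the mixed configurations and the doubled configurations together contribute no more than what the crude bound allows after one accounts for the normaliser action (and the stated exclusion of $c = n/2$ for $T = {\rm P\Omega}_n^+(q)$ removes the one place this over-counts or the parametrisation degenerates). For part (iii), the subgroups with a single block of multiplicity $a$, for $a = 1, \ldots, m$, are pairwise non-conjugate since their generators have distinct $\dim C_V$; this gives $m$ classes, reduced by $1$ only in the orthogonal case with $n = mc$ where the top configuration $a = m$, $e = 0$ may fail to exist or coincide with another because of the discriminant/type constraint — hence $\kappa(T,r) \geqslant m - \delta$. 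The final clause follows since $c \geqslant (r-1)/\,(\text{number of }\Lambda\text{-orbits}) $ gives $c \leqslant r-1$ and hence $m = \lfloor n/c\rfloor \geqslant \lfloor n/(r-1)\rfloor$, so $m - \delta \geqslant \lfloor n/(r-1)\rfloor - 1$.

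The main obstacle I expect is not the combinatorial core — that is essentially counting bounded-weight multisets modulo a symmetry group — but rather the bookkeeping for the non-linear groups: one must correctly invoke, from \cite[Chapter 3]{BG}, exactly which multisets $(a_1,\ldots,a_s)$ together with which residual eigenvalue $\pm 1$ on $C_V(\hat x)$ actually occur inside $T$ (as opposed to inside the conformal or full isometry group), how the relevant diagonal and graph-type outer automorphisms fuse these classes, and precisely when a subgroup of order $r$ can be realised with $e = 0$ in an orthogonal group of the given type. These are the points where the stated small exceptions ($c = n/2$ in parts (i) and (ii), the $\delta$ in part (iii)) originate, and getting them right requires careful cross-referencing rather than a slicker argument; the inequalities themselves are then immediate.
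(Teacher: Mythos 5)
Your overall strategy is the same as the paper's: parametrise subgroups of order $r$ by the block data $[X_1^{a_1},\ldots,X_s^{a_s},I_e]$ of Remark \ref{r:cc}, obtain the lower bound in (iii) from the pairwise non-conjugate representatives $\la [X_1^{a},I_{n-ac}]Z\ra$ (distinguished by $\dim C_V$ of their generators), and get the upper bounds in (i) and (ii) by using powering to normalise one block to $X_1$; your treatment of (ii), of (iii) including the definition of $\delta$, and of the final numerical clause agrees in outline with the paper's proof.

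There is, however, a genuine gap in part (i), in the $e=0$ situation $c=n$ with $T$ orthogonal (either $T={\rm P\Omega}_{n}^{+}(q)$ with $n\equiv 2\imod{4}$ and $r$ a primitive prime divisor of $q^{n/2}-1$, or $T={\rm P\Omega}_{n}^{-}(q)$ with $r$ a primitive prime divisor of $q^{n}-1$). Here $m=1$, but your argument ``single admissible block structure, hence a single class of subgroups after accounting for the normaliser permuting the $X_j$'' does not suffice: for these elements $\hat{x}=[X_j]$ has no $1$-eigenspace and each eigenvalue multiset yields \emph{two} $T$-classes of elements, fused only in ${\rm PO}_{n}^{\epsilon}(q)$ (see \cite[Proposition 3.5.8]{BG}), so conjugacy of the subgroups they generate is not determined by eigenvalue data and the multiset count alone cannot give $\kappa(T,r)=1$. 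The paper closes this case with an ingredient absent from your plan: a Sylow $r$-subgroup of $\Omega_{n}^{\epsilon}(q)$ lies in a cyclic maximal torus of ${\rm O}_{n}^{\epsilon}(q)$ of order $q^{n/2}-\epsilon$, so the Sylow $r$-subgroups of $T$ are cyclic and all subgroups of order $r$ are conjugate. Relatedly, your assertion that in the case $T={\rm P\Omega}_{n}^{-}(q)$, $c=n/2$ ``the form forces a unique admissible configuration'' is the right statement, but the fact that does the work --- every element of order $r$ there has nontrivial $1$-eigenspace, i.e.\ $\hat{x}=[X_j,I_{n/2}]$, as recorded in \cite[Remark 3.5.5(iii)]{BG} --- is deferred rather than established; once it is in hand, the exceptional clauses in (i) and (ii) do follow as you outline.
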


\begin{proof}
First consider (i). If $m \geqs 2$ (and $c \ne n/2$ if $T={\rm P\Omega}_{n}^{-}(q)$) then $\la [X_1,I_{n-c}]Z\ra$ and $\la [X_1^2,I_{n-2c}]Z\ra$ represent two distinct $T$-classes of subgroups of order $r$, so $\kappa(T,r) \geqs 2$. For the converse, let us assume $m=1$, or $T={\rm P\Omega}_{n}^{-}(q)$ and $c=n/2$. We claim that $\kappa(T,r)=1$.

Let $x \in T$ be an element of order $r$. By replacing $x$ with a suitable conjugate, if necessary, we may assume that $x=\hat{x}Z$ with $\hat{x} = [X_1^{a_1}, \ldots, X_{s}^{a_s},I_{e}]$ as in Remark \ref{r:cc} (so $s=(r-1)/c$). Suppose $T \ne {\rm P\Omega}_{n}^{\pm}(q)$. Since each $X_j$ has size $c$ we have $\hat{x}=[X_j,I_{n-c}]$ for some $j$, hence $T$ has $s$ conjugacy classes of elements of order $r$. Now the eigenvalues of $X_j$ coincide with the eigenvalues of a suitable power of $X_1$, so $\la x \ra$ is $T$-conjugate to $\la y \ra$, where $y = \hat{y}Z \in T$ and $\hat{y} = [X_1,I_{n-c}]$, so $\kappa(T,r)=1$ as claimed. 

Now assume $T = {\rm P\Omega}_{n}^{\e}(q)$ with $\e=\pm$. If $n/2<c<n$ then the above argument goes through unchanged. If $c=n/2$ then $\e=-$ and $C_V(\hat{x})$ has to be nontrivial (see \cite[Remark 3.5.5(iii)]{BG}), so $\hat{x}=[X_j,I_{n/2}]$ and the same argument applies. Finally, suppose $c=n$. There are two cases to consider:
\begin{itemize}\addtolength{\itemsep}{0.2\baselineskip}
\item[{\rm (a)}] $T = {\rm P\Omega}_{n}^{+}(q)$, $n \equiv 2 \imod{4}$ and $r$ is a primitive prime divisor of $q^{n/2}-1$. 
\item[{\rm (b)}] $T = {\rm P\Omega}_{n}^{-}(q)$ and $r$ is a primitive prime divisor of $q^{n}-1$.
\end{itemize}
Here $\hat{x}=[X_j]$ and for each choice of $j$ there are two $T$-classes of elements of this form, which are fused in ${\rm PO}_{n}^{\e}(q)$ (see \cite[Proposition 3.5.8]{BG}). 
In both cases, we observe that a Sylow $r$-subgroup of $\Omega_{n}^{\e}(q)$ is contained in a cyclic maximal torus of ${\rm O}_{n}^{\e}(q)$ of order $q^{n/2}-\e$. In particular, the Sylow $r$-subgroups of $T$ are cyclic and we conclude that $\kappa(T,r)=1$.  

Now let us turn to (ii). As in (i), if $T={\rm P\Omega}_{n}^{-}(q)$ and $c=n/2$ then $T$ has a unique class of subgroups of order $r$, so for the remainder we may assume that $c \ne n/2$ if $T$ is an orthogonal group. Let $\la x \ra$ be a subgroup of $T$ of order $r$. Since 
$m=2$, it is easy to see that $\la x \ra$ is $T$-conjugate to one of 
$\la [X_1,I_{n-c}]Z \ra$ or $\la [X_1,X_{j},I_{n-2c}]Z \ra$ for some $j \in \{1, \ldots, (r-1)/c\}$.
The result follows.

Finally, consider (iii). Clearly, none of the subgroups $\la [X_1^{a},I_{n-ac}]Z \ra$ are $T$-conjugate, where $1 \leqs a < m$. In addition, if either $n>mc$, or $n=mc$ and $T$ is not an orthogonal group, then $\la [X_1^{m},I_{n-mc}]Z \ra$ represents an additional class of subgroups of order $r$.
\end{proof}

\begin{rem}
The definition of $\delta$ in part (iii) of Lemma \ref{l:floor} can be explained as follows. Let 
$T = {\rm P\Omega}_{n}^{\e}(q)$ and set $i = \Phi(r,q)$ as in \eqref{e:phirq}, so $c=2i$ if $i$ is odd, otherwise $c=i$. Suppose $i=r-1$ and $n=mi=mc$. If $\e=(-)^{m-1}$ then $C_V(\hat{x})$ is nontrivial for all $x \in T$ of order $r$ (see \cite[Remark 3.5.5]{BG}), so the subgroups $\la [X_1^{a},I_{n-ac}]Z \ra$ with $1 \leqs a < m$ form a complete set of representatives of the $T$-classes of subgroups of order $r$. 
\end{rem}

\begin{rem}
Observe that the inequality in Lemma \ref{l:floor}(ii) need not be equality since 
$\la [X_1,X_j,I_{n-2c}]Z\ra$ and $\la [X_1,X_k,I_{n-2c}]Z\ra$ may be conjugate for 
$j \neq k$. For example, suppose $T={\rm PSL}_4(16)$ and $r=17$, so $c=m=2$ and $X_i$ has eigenvalues $\{\omega^i,\omega^{r-i}\}$ for some $r$-th root of unity
$\omega$. Then $[X_1,X_2]^8$ and $[X_1,X_8]$ are $T$-conjugate (they have the same set of eigenvalues), so $\la [X_1,X_2]Z \ra$ and $\la [X_1,X_8]Z \ra$ are conjugate subgroups. 
\end{rem}

The next result follows immediately from part (i) of Lemma \ref{l:floor}; in the statement, we refer to the conditions recorded in \eqref{e:star} (see p.2). 

\begin{cor}\label{p:star}
Let $G \leqs {\rm Sym}(\Omega)$ be an almost simple primitive classical group over $\mathbb{F}_{q}$ with socle $T$ and point stabiliser $H$. Let $n$ be the dimension of the natural $T$-module and let $r$ be a prime divisor of $|\Omega|$. If all of the conditions in \eqref{e:star} hold, then $T$ is $r$-elusive. 
\end{cor}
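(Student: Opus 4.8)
The plan is to deduce Corollary \ref{p:star} directly from part (i) of Lemma \ref{l:floor} together with Lemma \ref{l:count0}. First I would observe that the conditions in \eqref{e:star} are precisely engineered so that $c$ is large relative to $n$: we are told $r \neq p$, $r > 2$, $r$ divides $|H \cap T|$, and either $c > n/2$, or $c = n/2$ with $T = {\rm P\Omega}_n^-(q)$. Setting $m = \lfloor n/c \rfloor$ as in Lemma \ref{l:floor}, the hypothesis $c > n/2$ forces $m = 1$, while the alternative $c = n/2$ with $T = {\rm P\Omega}_n^-(q)$ is exactly the second clause in Lemma \ref{l:floor}(i). (One should also record that $c \geqs 2$ here, which is automatic since $c \geqs i = \Phi(r,q) \geqs 1$ and $c = 1$ would force $i=1$ and $c=i$, contradicting $c > n/2 \geqs 3$ as $n \geqs 6$ in the relevant range — or more simply, $c\geqs 2$ follows from $c>n/2\geqs 1$.) In either case Lemma \ref{l:floor}(i) gives $\kappa(T,r) = 1$.

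Next I would translate this into $r$-elusivity. Since $r$ divides $|\Omega|$, any Sylow $r$-subgroup of $G$ has order divisible by $r$; as $r$ divides $|H \cap T| = |H_0|$, the single conjugacy class of subgroups of order $r$ in $T$ must meet $H_0$, i.e. $T$ has an element of order $r$ lying in some conjugate of $H_0$. The cleanest way to phrase the conclusion is via Lemma \ref{l:count0}: it suffices to show $T$ is $r$-elusive on $\Omega_0$, the coset space of $H_0$ in $T$. Let $x \in T$ have order $r$. Since $\kappa(T,r) = 1$ and $r$ divides $|H_0|$, the subgroup $\la x \ra$ is $T$-conjugate to a subgroup of $H_0$, so some generator of $\la x \ra$ — hence some conjugate of $x$, hence $x$ itself up to $T$-conjugacy — lies in a conjugate of $H_0$ and thus fixes a point of $\Omega_0$. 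Therefore no element of order $r$ in $T$ is a derangement on $\Omega_0$, and by Lemma \ref{l:count0} the same holds on $\Omega$, i.e. $T$ is $r$-elusive.

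The only genuinely non-routine point is the bookkeeping in the previous paragraph: from "$\la x \ra$ is conjugate to a subgroup of $H_0$" one must pass to "$x$ fixes a point of $\Omega_0$". This is immediate because if $\la x \ra^t \leqs H_0$ for some $t \in T$ then $x^t \in H_0$, and $x^t$ has the same order and, being a power of $x^t$ composed with the inverse conjugation, one checks $x$ is $T$-conjugate to an element of $H_0$; but actually for the elusivity statement it is enough that \emph{some} element of order $r$ in $T$ fails to be a derangement is \emph{not} what we want — we need \emph{every} element of order $r$. This is where $\kappa(T,r)=1$ does all the work: every element of order $r$ generates a subgroup in the unique class, every such subgroup is conjugate into $H_0$, and every generator of a subgroup of $H_0$ of order $r$ obviously fixes the trivial coset. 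So for an arbitrary $x$ of order $r$ in $T$, writing $\la x\ra^t = \la y\ra$ with $y \in H_0$, we get $x^t = y^j$ for some $j$, whence $x^t \in H_0$ and $x \in H_0^{t^{-1}}$, so $x$ fixes the coset $H_0 t^{-1}$.

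I do not expect any serious obstacle: the corollary is essentially a one-line consequence of Lemma \ref{l:floor}(i), and the statement of the excerpt even flags that ``if all of the conditions in \eqref{e:star} hold, then $T$ is $r$-elusive'' is meant to be immediate. The proof I would write is: verify $c \geqs 2$ and that the hypotheses of \eqref{e:star} put us in one of the two cases of Lemma \ref{l:floor}(i), conclude $\kappa(T,r) = 1$, and then argue as above (via Lemma \ref{l:count0}) that a unique class of order-$r$ subgroups meeting $H_0$ forces $r$-elusivity.
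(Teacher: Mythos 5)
Your proposal is correct and follows the paper's route: the paper derives Corollary \ref{p:star} directly from Lemma \ref{l:floor}(i), with the hypotheses of \eqref{e:star} giving $\kappa(T,r)=1$, and then the divisibility $r\mid |H\cap T|$ forcing every subgroup of order $r$ (hence every element of order $r$) into a conjugate of $H_0$, exactly as you spell out via Lemma \ref{l:count0}. Your argument just makes explicit the bookkeeping the paper leaves implicit, so there is nothing further to add.
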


\begin{lem}\label{l:csub}
Let $T_1$ and $T_2$ be finite simple classical groups over $\mathbb{F}_{q}$, where $q=p^{f}$ and $p$ is a prime. Let $n_1$ and $n_2$ be the dimensions of the respective natural modules and let $r \ne p$ be an odd prime divisor of $|T_1|$ and $|T_2|$. Set $i = \Phi(r,q)$ and
$$c_j = \left\{\begin{array}{ll}
2i & \mbox{if $i$ is odd and $T_j \neq {\rm PSL}_{n_j}(q)$} \\
i/2 &  \mbox{if $i \equiv 2\imod{4}$ and $T_j = {\rm PSU}_{n_j}(q)$} \\
i & \mbox{otherwise}
\end{array}\right.$$
and assume that $c_1 \geqs c_2 \geqs 2$ and $n_2>2n_1$.
Then $\kappa(T_2,r)>\kappa(T_1,r)$.
\end{lem}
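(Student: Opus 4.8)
The plan is to reduce to a direct comparison of the counting bounds supplied by Lemma \ref{l:floor}, applied to $T_2$ from below and to $T_1$ from above. Set $m_j = \lfloor n_j/c_j \rfloor$ for $j=1,2$; the hypotheses $c_1 \geqs c_2$ and $n_2 > 2n_1$ give
\begin{equation*}
m_2 = \left\lfloor \frac{n_2}{c_2} \right\rfloor > \left\lfloor \frac{2n_1}{c_2} \right\rfloor \geqs \left\lfloor \frac{2n_1}{c_1} \right\rfloor \geqs 2\left\lfloor \frac{n_1}{c_1} \right\rfloor - 1 = 2m_1 - 1,
\end{equation*}
so in particular $m_2 \geqs 2m_1$ whenever $m_1 \geqs 1$, and certainly $m_2 \geqs 2$. (Here I use that $\lfloor 2x \rfloor \geqs 2\lfloor x \rfloor$ and that $c_2 \leqs c_1$ forces $\lfloor 2n_1/c_2 \rfloor \geqs \lfloor 2n_1/c_1 \rfloor$.) Since $r \mid |T_1|$ we have $m_1 \geqs 1$, and since $r \mid |T_2|$ the integer $c_2$ is genuinely the relevant parameter for $T_2$.

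Next I would bound the two sides. By Lemma \ref{l:floor}(iii), $\kappa(T_2,r) \geqs m_2 - \delta_2$ where $\delta_2 \in \{0,1\}$, so $\kappa(T_2,r) \geqs m_2 - 1 \geqs 2m_1 - 1$ when $m_1\geqs 1$ (using $m_2 \geqs 2m_1$). For the upper bound on $\kappa(T_1,r)$ I would split into cases according to $m_1$. If $m_1 = 1$, then by Lemma \ref{l:floor}(i) either $\kappa(T_1,r) = 1$, in which case $\kappa(T_2,r) \geqs m_2 - 1 \geqs 1 = \kappa(T_1,r)$ is not quite enough and one must note $m_2 > 2m_1 - 1 = 1$ forces $m_2 \geqs 2$, hence $\kappa(T_2,r) \geqs m_2 - \delta_2 \geqs 2 - 1 = 1$ — so here I would instead argue $m_2 \geqs 3$ (from $m_2 \geqs 2m_1$ and the strict inequality above one actually gets $m_2 \geqs 2$; to be safe, observe $n_2 > 2n_1 \geqs 2c_1 \geqs 2c_2$ gives $m_2 \geqs 2$, and if $m_2 = 2$ with $\delta_2 = 1$ one needs the sharper input), and if $\kappa(T_1,r) \geqs 2$ then $m_1 \geqs 2$ by Lemma \ref{l:floor}(i), a contradiction. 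For $m_1 \geqs 2$: the crude bound is $\kappa(T_1,r) \leqs$ (number of $T_1$-classes of subgroups of order $r$), which by the analysis in Remark \ref{r:cc} is at most the number of multisets of size $\leqs m_1$ drawn from the $s_1 = (r-1)/c_1$ symbols $X_j$, up to the power equivalence; a cleaner route is to bound $\kappa(T_1,r)$ by the number of conjugacy classes of elements of order $r$ in $T_1$, which is at most $\binom{m_1 + s_1}{s_1}$-ish, but I expect the genuinely efficient bound to come from the observation that a subgroup $\la x\ra$ is determined up to conjugacy by the multiplicity vector $(a_1,\dots,a_{s_1})$ with $\sum a_j c_1 \leqs n_1$, modulo the action of $(\z/r)^\times$ by power maps, giving $\kappa(T_1,r) \leqs \frac{1}{r-1}\cdot(\text{number of such vectors})$ plus a small correction.

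The main obstacle, and where the argument needs care rather than just invoking Lemma \ref{l:floor}, is obtaining an upper bound on $\kappa(T_1,r)$ that is comfortably below $m_2 - 1$ in all regimes — the counting-from-below in Lemma \ref{l:floor}(iii) is linear in $m_1$, but the total number of subgroups of order $r$ in $T_1$ can a priori grow faster than linearly in $m_1$ when $s_1 = (r-1)/c_1$ is large (e.g. $c_1 = 2$). I would handle this by noting that $s_1$ large forces $r$ large, hence $c_1 \geqs 2$ small relative to $r$, hence $m_1 = \lfloor n_1/c_1\rfloor$ is small relative to $n_1$, and then play off $n_2 > 2n_1$ together with $c_2 \leqs c_1$ to get $m_2 \geqs 2m_1$ and, crucially, to absorb the multiplicative overcount: since distinct $\la x\ra$ with the same multiplicity data are identified, and since for $T_2$ the bound $\kappa(T_2,r)\geqs m_2-\delta_2$ already uses only the "all-one-block" subgroups $\la [X_1^a, I_{n_2 - ac_2}]Z\ra$ for $1\leqs a<m_2$, the comparison reduces to $m_2 - 1 > \kappa(T_1,r)$, and I would finish by checking the handful of small cases ($m_1 \in\{1,2,3\}$, or $r \leqs$ small bound) directly and the general case via $m_2 - 1 \geqs 2m_1 - 1 > \kappa(T_1,r)$ using that $\kappa(T_1,r) \leqs 2m_1 - 2$ — this last inequality being the technical heart, provable by the power-map quotient argument above or, failing a clean bound, by citing the detailed class count for $T_1$ in the spirit of Lemma \ref{l:floor}(ii)–(iii) and its proof.
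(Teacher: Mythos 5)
There is a genuine gap, and it is exactly at the point you flag as ``the technical heart''. Your plan is to beat an upper bound for $\kappa(T_1,r)$ by the lower bound $\kappa(T_2,r) \geqs m_2-\delta_2$ from Lemma \ref{l:floor}(iii), and for this you need something like $\kappa(T_1,r) \leqs 2m_1-2$. No such bound holds: the hypotheses place no upper bound on $r$ in terms of $n_1,n_2$ (only $r \mid |T_1|$ and $r\mid |T_2|$), and $\kappa(T_1,r)$ grows with $r$ once $m_1 \geqs 2$. Indeed, already for $m_1=2$ the correct upper bound is $(r-1)/c_1+1$ (Lemma \ref{l:floor}(ii)), and Lemma \ref{l:prel1}(iii) records bounds of order $r^2$ for $(n_1,c_1)=(6,2)$, with the true counts of comparable size. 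Concretely, take $T_1={\rm PSL}_6(q)$, $T_2={\rm PSL}_{13}(q)$ and $r$ a large odd prime divisor of $q+1$ (so $i=c_1=c_2=2$, e.g.\ $q=2^{14}$, $r=113$): counting the pairwise non-conjugate subgroups $\la [X_1,X_j,I_2]\ra$ already gives $\kappa(T_1,r)$ of size roughly $r/4$, whereas your lower bound for $\kappa(T_2,r)$ is $m_2-\delta_2 \leqs 6$. So the comparison $m_2-1>\kappa(T_1,r)$ fails even though the lemma is true, and no argument routed through the linear-in-$m_2$ bound of Lemma \ref{l:floor}(iii) can succeed. (There are also smaller slips at the start: $n_2>2n_1$ does not force the strict inequality $\lfloor n_2/c_2\rfloor > \lfloor 2n_1/c_2\rfloor$, and $m_2\geqs 2m_1-1$ does not give $m_2\geqs 2m_1$.)

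The idea your proposal is missing is that one should not bound $\kappa(T_1,r)$ at all, but transfer its classes to $T_2$ injectively. The paper takes representatives $\la x_j\ra$ of the $T_1$-classes, written $\hat{x}_j=[X_1^{a_{1,j}},\ldots,X_s^{a_{s,j}},I_{e_j}]$, and maps each to a subgroup of $T_2$ of order $r$: when $e_j>0$ it pads with an identity block of size $n_2-n_1$, and when $e_j=0$ it instead doubles all the multiplicities, the hypothesis $n_2>2n_1$ guaranteeing a nontrivial $1$-eigenspace in both cases. These images are pairwise non-conjugate in $T_2$, which gives $\kappa(T_2,r)\geqs\kappa(T_1,r)$ with no upper estimate needed; strictness then follows by exhibiting one further class, e.g.\ $\la[X_1^{2\lfloor n_1/c\rfloor-1},I_{*}]Z\ra$, whose eigenvalue data differs from that of every transported class. (When $c_1>c_2$ the same argument is run after splitting each block $X_j$ into the two smaller blocks $Y_j,Y_{s+j}$.) Your proposal does contain the germ of this in the observation that classes are governed by multiplicity vectors up to power maps, but it then tries to convert that into a global count rather than an injection, and the count cannot be made small enough.
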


\begin{proof}
First assume $c_1=c_2=c$ and set $s=(r-1)/c$. Let $\{\la x_j \ra \,:\, 1 \leqs j \leqs \kappa(T_1,r)\}$ be a set of representatives of the $T_1$-classes of subgroups of order $r$. Write $x_j = \hat{x}_jZ$ with 
\begin{equation}\label{e:xj}
\hat{x}_j = [X_1^{a_{1,j}}, \ldots, X_s^{a_{s,j}},I_{e_j}]
\end{equation}
(up to conjugacy). By relabelling, if necessary, we may assume that there is an integer $\ell \geqs 0$ such that $e_j>0$ if and only if $j>\ell$.

Define elements $y_j, z_k \in T_2$ of order $r$ by setting
$$\begin{array}{ll}
\hat{y}_j = [X_1^{a_{1,j}}, \ldots, X_s^{a_{s,j}},I_{e_j+n_2-n_1}] & 1 \leqs j \leqs \kappa(T_1,r) \\
\hat{z}_k = [X_1^{2a_{1,k}}, \ldots, X_s^{2a_{s,k}},I_{n_2-2n_1}] & 1 \leqs k \leqs \ell.
\end{array}$$
Note that the $1$-eigenspaces of $\hat{y}_j$ and $\hat{z}_k$ are nontrivial, so $y_j$ and $z_k$ are indeed elements of $T_2$.
Then none of the following subgroups 
\begin{equation}\label{e:subg}
\{\la y_j \ra, \; \la z_k \ra \,:\, \ell<j \leqs \kappa(T_1,r),\, 1 \leqs k \leqs \ell\}
\end{equation}
are $T_2$-conjugate, so $\kappa(T_2,r) \geqs \kappa(T_1,r)$. The desired result now follows because it is easy to see that $T_2$ has some additional classes of subgroups of order $r$. For example, if we take $x = \hat{x}Z \in T_2$ with  
$$\hat{x} = \left\{\begin{array}{ll}
\mbox{$[X_1^{2\lfloor n_1/c \rfloor-1},I_{n_2 - c(\lfloor n_2/c \rfloor-1)}]$} & \lfloor n_1/c \rfloor > 1 \\
\mbox{$[X_1^{2},I_{n_2-2c}]$} & \lfloor n_1/c \rfloor=1, \, n_1 > c  \\
\mbox{$[X_1,I_{n_2-c}]$} & n_1= c
\end{array}\right.$$
then $\la x \ra$ is not $T_2$-conjugate to any of the subgroups in \eqref{e:subg}.

Now assume $c_1 >c_2$, in which case one of the following holds:
\begin{itemize}\addtolength{\itemsep}{0.2\baselineskip}
\item[{\rm (a)}] $T_1 \ne {\rm PSL}_{n_1}(q)$, $T_2 = {\rm PSL}_{n_2}(q)$, $i \geqs 3$ is odd, $c_1 = 2i$, $c_2 = i$;
\item[{\rm (b)}] $T_1 \ne {\rm PSU}_{n_1}(q)$, $T_2 = {\rm PSU}_{n_2}(q)$, $i \equiv 2 \imod{4}$, $i \geqs 6$, $c_1 = i$, $c_2 = i/2$.
\end{itemize}
Set $s = (r-1)/c_1$ and $t = (r-1)/c_2$, so $t = 2s$. As before, let $\{\la x_j \ra \,:\, 1 \leqs j \leqs \kappa(T_1,r)\}$ be a set of representatives of the $T_1$-classes of subgroups of order $r$, where $x_j = \hat{x}_jZ$ and $\hat{x}_j$ is given in \eqref{e:xj}.
Now every element $y \in T_2$ of order $r$ is of the form $y = \hat{y}Z$ with 
$$\hat{y} = [Y_1^{a_1}, \ldots, Y_{s}^{a_{s}},Y_{s+1}^{a_{s+1}}, \ldots, Y_{t}^{a_{t}},I_{e'}].$$
Without loss of generality, we may assume that for each $j \in \{1, \ldots, s\}$, the set of eigenvalues of $X_j$ (in $\mathbb{F}_{q^i}$) is the union of the eigenvalues of $Y_j$ and $Y_{s+j}$. We can now repeat the argument for the case $c_1=c_2$, replacing each $X_m$ by $Y_m$. The result follows.
\end{proof}

\begin{lem}\label{l:prel1}
Let $T = {\rm PSL}_{n}^{\e}(q)$ and let $r \geqs 5$ be a prime divisor of $q^2-1$. Define $c$ as in \eqref{e:eqcc}. 
\begin{itemize}\addtolength{\itemsep}{0.2\baselineskip}
\item[{\rm (i)}] If $(n,c)=(3,1)$ then $\kappa(T,r) \leqs r-1$.
\item[{\rm (ii)}] If $(n,c)=(4,1)$ then $\kappa(T,r) \leqs (r^2-3r+6)/2$.
\item[{\rm (iii)}] If $(n,c)=(6,2)$ then $\kappa(T,r) \leqs (r^2+15)/8$.
\end{itemize}
\end{lem}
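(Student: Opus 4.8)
To prove Lemma~\ref{l:prel1} I would treat all three estimates together. Put $L = {\rm PGL}_{n}^{\e}(q)$, so $T = {\rm PSL}_{n}^{\e}(q)\trianglelefteq L$ with $L/T$ cyclic of order $\gcd(n,q-\e)$, which is coprime to $r$ since $r\geqs 5$ and $n\leqs 6$. The first and, I expect, decisive step is the reduction $\kappa(T,r)=\kappa(L,r)$, whose content is that the $L$-class of a subgroup $\la y\ra\leqs T$ of order $r$ does not split in $T$. One first notes that $\la y\ra$ has a generator lifting to an element $\hat{x}\in{\rm GL}_{n}^{\e}(q)$ of order $r$: a lift is semisimple of order prime to $p$, and $r$ is either coprime to $q-\e=|Z({\rm GL}_{n}^{\e}(q))|$ (so a lift may be rescaled to order $r$), or satisfies $r>n$ (and then the eigenvalue multiset of a lift, being Frobenius-stable of size $n<r$, is a central-scalar multiple of a multiset of $r$-th roots of unity). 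Now $\hat{x}$ lies in a maximal torus $\widetilde{S}$ of the centraliser $C_{{\rm GL}_{n}^{\e}(q)}(\hat{x})$, which is a direct product of smaller general linear and unitary groups; choosing $\widetilde{S}$ to contain a maximal torus of one such factor with surjective determinant — e.g.\ a split torus of the ${\rm GL}_{e}^{\pm}$-factor acting on the fixed space of $\hat{x}$, or of one of its $c$-dimensional constituents — the image $S\leqs L$ is a maximal torus containing $y$ with $ST=L$. Since $S$ centralises $\la y\ra$, we get $N_{L}(\la y\ra)\,T=L$, so $\la y\ra^{L}$ meets $T$ in a single $T$-class.

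It remains to bound $\kappa(L,r)$, and for this I would use a combinatorial model. By Remark~\ref{r:cc}, a subgroup of $L$ of order $r$ is generated by the image of an element $\hat{x}=[X_{1}^{a_{1}},\ldots,X_{s}^{a_{s}},I_{e}]\in{\rm GL}_{n}^{\e}(q)$ of order $r$, with $s=(r-1)/c$, $c\sum_{j}a_{j}+e=n$ and $X_{j}$ having eigenvalue set $\L_{j}$. Encode $\hat{x}$ by its multiplicity function $m\colon\mathbb{Z}/r\to\mathbb{Z}_{\geqs 0}$ with $\sum_{k}m_{k}=n$ (symmetric, $m_{k}=m_{-k}$, when $c=2$, and with support of size $\geqs 2$ since $\hat{x}$ is non-central). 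Tracking the action on $m$ of conjugacy, of multiplication by a scalar $r$-th root of unity (available exactly when $c=1$), and of the power maps $y\mapsto y^{t}$ (which act by $k\mapsto tk$, $t\in(\mathbb{Z}/r)^{*}$), one finds that the $L$-class of the subgroup is exactly the orbit of $m$ under $\mathbb{Z}/r\rtimes(\mathbb{Z}/r)^{*}$ if $c=1$ and under $(\mathbb{Z}/r)^{*}$ if $c=2$. So $\kappa(L,r)$ counts these orbits.

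Finally I would enumerate the orbits by the partition of $\sum_{j}a_{j}\leqs\lfloor n/c\rfloor$ given by the nonzero values of $m$, bounding each shape by a crude normalisation: use a translation (when $c=1$) to put one distinguished block at $0$, use scaling to put a second at $1$, and count the remaining positions freely, discarding the further identifications from permutations of equal blocks (this is why the bounds are not sharp). For $(n,c)=(3,1)$, the shapes $(2,1)$ and $(1,1,1)$ contribute at most $1$ and $r-2$, summing to $r-1$. For $(n,c)=(4,1)$, the shapes $(3,1),(2,2),(2,1,1),(1,1,1,1)$ contribute at most $1,1,r-2,\binom{r-2}{2}$, and $2+(r-2)+\binom{r-2}{2}=\tfrac12(r^{2}-3r+6)$. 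For $(n,c)=(6,2)$, writing $s=(r-1)/2$, the shapes with $\sum_{j}a_{j}=1,2,3$ contribute at most $1$; $1$ and $s-1$; and $1$, $s-1$, $\binom{s-1}{2}$ respectively, and $3+2(s-1)+\binom{s-1}{2}=\tfrac18(r^{2}+15)$. The arithmetic is routine; the real work is the non-splitting argument of the first step, together with the bookkeeping in the unitary cases, where the $c$-dimensional constituents are hyperbolic pairs rather than single eigenspaces.
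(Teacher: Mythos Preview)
Your proof is correct and follows essentially the same approach as the paper's. In both cases the count is obtained by listing, up to conjugacy and powers, the possible eigenvalue multisets of a lift $\hat{x}\in{\rm GL}_{n}^{\e}(q)$ of order $r$: you normalise via the action of $\mathbb{Z}/r\rtimes(\mathbb{Z}/r)^{*}$ (or $(\mathbb{Z}/r)^{*}$ when $c=2$) on multiplicity functions, while the paper writes down the resulting representatives explicitly ($[1,1,\omega]$, $[1,\omega,\omega^{j}]$, etc.) and counts them directly; the two enumerations match term by term.

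The one point on which you are more careful than the paper is the reduction $\kappa(T,r)=\kappa(L,r)$. The paper simply asserts that a given subgroup is ``$T$-conjugate'' to one of the listed representatives, tacitly using that the $L$-class of a semisimple element does not split in $T$; this is standard background from \cite[Chapter~3]{BG} and is covered by the cited \cite[Lemma~3.11]{Bur2} together with the fact that centralisers of semisimple elements in ${\rm GL}_{n}^{\e}(q)$ have surjective determinant. Your explicit argument via a maximal torus $S$ of the centraliser with $ST=L$, giving $N_{L}(\langle y\rangle)T=L$, is exactly this fact made visible, and it is good that you isolated it.
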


\begin{proof}
Write ${\rm PGL}_{n}^{\e}(q) = {\rm GL}_{n}^{\e}(q)/Z$ and let $\omega \in \mathbb{F}_{q^2}$ and $x \in T$ be elements of order $r$. Since $r \geqs 5$ and $n \in \{3,4,6\}$ we have $(r,n)=1$ so we may write $x = \hat{x}Z$ with $\hat{x} \in {\rm GL}_{n}^{\e}(q)$ of order $r$ (see \cite[Lemma 3.11]{Bur2}).

First assume $(n,c)=(3,1)$. By replacing $x$ by a suitable conjugate, we may assume $\hat{x} = [1,\l_1,\l_2] \in {\rm GL}_{3}^{\e}(q)$, where $\l_1 \ne \l_2$ and $\l_2 \ne 1$. Clearly, if $\l_1=1$ then $\la x \ra$ is $T$-conjugate to $\la [1,1,\omega]Z \ra$. On the other hand, if $\l_1 \neq 1$ then $\la x \ra$ is $T$-conjugate to $\la [1,\omega,\omega^{j}]Z \ra$ for some $1<j<r$. The result follows. Similarly, if $(n,c)=(4,1)$ then any subgroup of $T$ of order $r$ is conjugate to one of the following:
$$\la [1,1,1,\omega]Z \ra,\; \la [1,1,\omega,\omega^{j}]Z \ra,\; \la [1,\omega,\omega^{k},\omega^{k'}]Z \ra,$$
where $1 \leqs j<r$ and $1<k<k'<r$. Therefore, there are at most 
$$1+(r-1)+\binom{r-2}{2} = (r^2-3r+6)/2$$
such classes. Finally, suppose $(n,c)=(6,2)$. Set $s=(r-1)/2$ and write 
$$\hat{x} = [X_1^{a_1}, \ldots, X_s^{a_s},I_e]$$ 
as in Remark \ref{r:cc}. Then the $T$-classes of subgroups of order $r$ are represented by 
$$\la [X_1,I_4]Z \ra, \; \la [X_1,X_j,I_2]Z \ra, \; \la [X_1^2,X_j]Z \ra,\; \la [X_1,X_{k},X_{k'}]Z\ra,$$
where $1 \leqs j \leqs (r-1)/2$ and $1 <k <k' \leqs (r-1)/2$. Therefore, there are at most
$$r+\binom{(r-3)/2}{2} = (r^2+15)/8$$
such classes, as claimed.
\end{proof}

\subsection{Subgroup structure}

Let $G$ be an almost simple classical group over $\mathbb{F}_{q}$ with socle $T$ and natural module $V$. Set $n = \dim V$ and let $H$ be a maximal subgroup of $G$ with $G = HT$. Recall that Aschbacher's subgroup structure theorem states that either $H$ belongs to one of eight geometric subgroup collections, or $H$ is almost simple and acts irreducibly on $V$. The latter collection of non-geometric subgroups is denoted by $\mathcal{S}$, and the formal definition of this collection is as follows (see \cite[p.3]{KL}). Note that the various conditions are designed to ensure that a subgroup in $\mathcal{S}$ is not contained in one of the geometric subgroup collections.

\begin{defn}\label{sdef}
A subgroup $H$ of $G$ belongs to the collection $\mathcal{S}$ if and only if it satisfies the following conditions:
\begin{itemize}\addtolength{\itemsep}{0.2\baselineskip}
\item[(i)] The socle $S$ of $H$ is a nonabelian simple group and $S \not\cong T$.
\item[(ii)] If $\hat{S}$ is the full covering group of $S$, and if $\rho : \hat{S} \to {\rm GL}(V)$ is a representation of $\hat{S}$ such that, modulo scalars, $\rho(\hat{S})=S$, then $\rho$ is absolutely irreducible.
\item[(iii)] $\rho(\hat{S})$ cannot be realised over a proper subfield of $\mathbb{F}$, where 
$\mathbb{F} = \mathbb{F}_{q^2}$ if $T={\rm PSU}_{n}(q)$, otherwise $\mathbb{F} = \mathbb{F}_{q}$.
\item[(iv)] If $\rho(\hat{S})$ fixes a nondegenerate quadratic form on $V$ then $T={\rm P\Omega}_{n}^{\e}(q)$.
\item[(v)] If $\rho(\hat{S})$ fixes a nondegenerate alternating form on $V$, but no nondegenerate quadratic form, then $T={\rm PSp}_{n}(q)$.
\item[(vi)] If $\rho(\hat{S})$ fixes a nondegenerate hermitian form on $V$ then $T={\rm PSU}_{n}(q)$.
\item[(vii)] If $\rho(\hat{S})$ does not fix a form as in (iv), (v) or (vi) then $T={\rm PSL}_{n}(q)$.
\end{itemize}
\end{defn}

Let $x \in G \cap {\rm PGL}(V)$ be a nontrivial element and write $x = \hat{x}Z$, where $V$ is the natural module for $T$, $\hat{x} \in {\rm GL}(V)$ and $Z = Z({\rm GL}(V))$. Set $\bar{V} = V \otimes \bar{\mathbb{F}}_q$, where $\bar{\mathbb{F}}_q$ is the algebraic closure of $\mathbb{F}_q$, and define 
\begin{equation}\label{e:nu}
\nu(x) = \min\{\dim [\bar{V},\l\hat{x}] \,:\, \l \in \bar{\mathbb{F}}_q^{\times}\}
\end{equation}
where $[\bar{V},\l\hat{x}]$ is the subspace $\la v-v^{\l\hat{x}} \mid v \in \bar{V} \ra$. 
Note that $\nu(x)$ is the codimension of the largest eigenspace of $\hat{x}$ on $\bar{V}$.  

The following theorem is a special case of \cite[Theorem 7.1]{GS} 
(recall that the subgroups in the collections $\mathcal{A}$ and $\mathcal{B}$ are recorded in Tables \ref{atab} and \ref{btab}, respectively).

\begin{thm}\label{gursax}
Let $G$ be a finite almost simple classical group with socle $T$ and let $H \in \mathcal{S}$ be a subgroup of $G$. Let $n$ be the dimension of the natural module for $T$, and assume that $n \geqs 6$ and $H \not\in \mathcal{A} \cup \mathcal{B}$. Then 
$$\nu(x) > \max\{2,\sqrt{n}/2\}$$
for all nontrivial $x \in H \cap {\rm PGL}(V)$.
\end{thm}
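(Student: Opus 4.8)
The plan is to obtain Theorem~\ref{gursax} as a special case of \cite[Theorem 7.1]{GS} and then to carry out the bookkeeping needed to see that the exceptional configurations in the Guralnick--Saxl classification are exactly the collections $\mathcal{A}$ and $\mathcal{B}$. Recall that \cite[Theorem 7.1]{GS} gives, for $H\in\mathcal{S}$ with natural module of dimension $n$, a lower bound of the shape $\nu(x)>\max\{2,\sqrt{n}/2\}$ for every nontrivial $x\in H\cap{\rm PGL}(V)$, together with an explicit list of exceptional pairs $(G,H)$ for which some nontrivial element has smaller support. So the first step is to run through that list: discard the cases with $n\leqs 5$ (irrelevant here, and in any case covered by \cite[Proposition 6.3.1]{BG} and Table~\ref{t:lowdim}), and check that, once the constraints built into Definition~\ref{sdef} and the existence/maximality conditions of \cite[Section 8.2]{BHR} are imposed, what survives with $n\geqs 6$ is precisely $\mathcal{A}\cup\mathcal{B}$, i.e.\ the cases in Tables~\ref{atab} and \ref{btab}. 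The one infinite family is the alternating groups acting on a fully deleted permutation module --- these are the cases in $\mathcal{A}$, where a double transposition or a $3$-cycle has $\nu(x)\leqs 2$ while $n=d-\delta$ is unbounded --- and the finitely many remaining cases assemble into $\mathcal{B}$.

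For completeness, here is the shape of the argument underlying \cite[Theorem 7.1]{GS}, which one could also reconstruct directly. Fix $H\in\mathcal{S}$ with socle $S$, pass to the quasisimple cover $\hat{S}$ acting absolutely irreducibly on $V$ (condition (ii) of Definition~\ref{sdef}), and suppose for contradiction that $H$ contains a nontrivial $x$ with $\nu(x)\leqs\max\{2,\sqrt{n}/2\}$; the goal is to force $H\in\mathcal{A}\cup\mathcal{B}$. The analysis splits according to the type of $S$:
\begin{itemize}\addtolength{\itemsep}{0.2\baselineskip}
\item[{\rm (a)}] $S=A_d$ alternating: one uses the classification of small-dimensional irreducible $\mathbb{F}\hat{S}$-modules, together with the fact that outside the fully deleted permutation module the minimal support $\nu(x)$ over nontrivial $x$ is large enough relative to $n$ for the bound to hold; the permutation-module case is exactly $\mathcal{A}$.
\item[{\rm (b)}] $S$ sporadic: a finite computation with the known (Brauer) character tables, yielding the relevant cases of $\mathcal{B}$ and of Table~\ref{t:lowdim}.
\item[{\rm (c)}] $S\in{\rm Lie}(p)$ in the defining characteristic: one combines lower bounds on $\dim V$ in terms of the rank of $S$ (see, e.g., \cite{Lu}) with an upper bound on $\dim C_{\bar{V}}(\hat{x})$ for nontrivial $x$, the latter extracted from the highest-weight description of $V$ and the structure of unipotent and semisimple classes in the ambient simple algebraic group (this is where estimates on dimensions of Weyl modules are needed); only small-rank groups carrying small modules survive, giving the remaining cases of $\mathcal{B}$.
\item[{\rm (d)}] $S\in{\rm Lie}(p_0)$ with $p_0\neq p$ (cross characteristic): the Landazuri--Seitz bounds \cite{Land-S} on $\dim V$ grow far faster than any quantity of the form $4\nu(x)^2$, so after a brief analysis of elements of small support no infinite family and only finitely many small cases remain.
\end{itemize}

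The main obstacle is case (c). Unlike (d), where the representation-dimension bounds are so large that crude estimates suffice, in defining characteristic one needs genuinely sharp control of $\nu(x)$ --- equivalently of the largest eigenspace of $\hat{x}$ on $\bar{V}$ --- uniformly over all nontrivial $x$ (unipotent and semisimple alike) and all dominant weights; this is the delicate core of \cite{GS}, and it is also why Burness--Giudici later rely on the work of L\"ubeck \cite{Lu} and Hiss--Malle \cite{HM} elsewhere in the paper. A secondary, purely organisational difficulty is verifying that the Guralnick--Saxl exception list, intersected with the conditions of Definition~\ref{sdef} and the maximality tables of \cite{BHR}, reproduces Tables~\ref{atab} and \ref{btab} exactly, including the value of the orthogonal-type sign $\e$ and the weight labels $\l_i$ recorded there.
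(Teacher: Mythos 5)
Your proposal matches the paper's treatment: Theorem \ref{gursax} is stated there as a special case of \cite[Theorem 7.1]{GS}, with the collections $\mathcal{A}$ and $\mathcal{B}$ in Tables \ref{atab} and \ref{btab} set up precisely to absorb the exceptions in the Guralnick--Saxl list, which is exactly the reduction and bookkeeping you describe. The additional sketch of how \cite{GS} is proved is not needed for (and not given in) the paper, but it does not affect the correctness of your argument.
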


This result plays a central role in our proof of Theorem \ref{t:main}. First we handle the excluded cases; the relevant $r$-elusive groups with $n<6$ were determined in \cite{BG} (see Table \ref{t:lowdim}), and the groups with a point stabiliser in $\mathcal{A}$ or $\mathcal{B}$ will be handled in the next two sections. At this point we are in a position to apply Theorem \ref{gursax}, which immediately implies that any element $x \in T$ of order $r$ with $\nu(x) \leqs \max\{2,\sqrt{n}/2\}$ is a derangement. In this way, we quickly reduce to the case $r \ne p$, $r \geqs 5$ and $c > \max\{2,\sqrt{n}/2\}$, where $c$ is the integer in \eqref{e:eqcc}. Moreover, we may assume that $r$ divides $|H \cap T|$. If $c > n/2$ then $T$ is $r$-elusive by Corollary \ref{p:star}, so we can assume that 
$$\max\{2,\sqrt{n}/2\} < c \leqs n/2$$
and our goal will be to show that $T$ contains a derangement of order $r$. This final step will be   carried out in Section \ref{s:c}.

\section{The collection $\mathcal{A}$}\label{s:a}

Let $G \leqs {\rm Sym}(\Omega)$ be an almost simple primitive classical group over $\mathbb{F}_{q}$ with socle $T$ and point stabiliser $H \in \mathcal{S}$. Let $S$ denote the socle of $H$ and let $V$ be the natural $T$-module. Recall that $V$ is absolutely irreducible as an $\hat{S}$-module, where $\hat{S}$ is an appropriate covering group of $S$. In this section we investigate the special case where $H$ belongs to the collection $\mathcal{A}$. Here $S=A_d$ is the alternating group of degree $d$ and $V$ is the \emph{fully deleted permutation module} for $S$ over $\mathbb{F}_{p}$. The relevant cases that arise are recorded in Table \ref{atab}.

We begin by recalling the construction of $V$. Let $p$ be a prime, let $d \geqs 5$ be an integer and consider the permutation module $\mathbb{F}_{p}^d$ for $S_d$. Define subspaces  
$$U=\{(a_1, \ldots, a_d) \,:\, \sum_{i=1}^{d}a_i=0\},\;\; W=\{(a,\ldots, a) \,:\, a \in \mathbb{F}_{p}\}$$
of $\mathbb{F}_{p}^d$, and observe that $U$ and $W$ are the only nonzero proper $A_d$-invariant submodules of 
$\mathbb{F}_{p}^d$. Then
$V=U/(U \cap W)$ is the fully deleted permutation module for $A_d$, which is an absolutely irreducible $A_d$-module over $\mathbb{F}_{p}$. Set $n = \dim V$ and note that 
$n = d-2$ if $p$ divides $d$, otherwise $n = d-1$. Note that $A_d$ preserves the symmetric bilinear form $B':U \times U \to \mathbb{F}_{p}$ defined by
$$B'((a_1, \ldots, a_d),(b_1, \ldots, b_d)) = \sum_{i=1}^{d}a_ib_i$$
and thus $B'$ induces a symmetric bilinear form $B$ on $V$. By \cite[Proposition 5.3.5]{KL}, if $d \geqs 10$ then $V$ has the smallest dimension of all nontrivial irreducible $A_d$-modules over $\mathbb{F}_{p}$.

Suppose $p$ is odd. In this situation, the $A_d$-module $V$ affords an embedding of $A_d$ into an orthogonal group $\Omega_{n}^{\e}(p)$. By choosing a suitable basis for $V$ it is straightforward to compute the determinant of the Gram matrix of $B$, and subsequently the discriminant $D(Q) \in \{\square, \boxtimes\}$ of the 
corresponding quadratic form $Q$ on $V$ (which is defined by $Q(v)=\frac{1}{2}B(v,v)$ for $v \in V$). 

For example, suppose $d$ is even and $p$ divides $d$, so $n=d-2$ and $U \cap W = W$. Let $\{v_1, \ldots, v_d\}$ be the standard basis for $\mathbb{F}_{p}^d$ and set $e_{i}=(v_{i}-v_{i+1})+W$, $1 \leqs i \leqs n$. Then 
\begin{equation}\label{e:beta}
\b=\{e_1, \ldots, e_{n}\}
\end{equation}
is a basis for $V$ and 
$$J_{\b}=\left(\begin{array}{ccccc}
2 & -1 & & &  \\
-1 & 2 & -1 & & \\
 & & \ddots & &  \\
& &  -1 & 2 & -1 \\
& & & -1 & 2 
\end{array}\right)$$
is the corresponding Gram matrix of $B$. Therefore $\det(J_{\b})=n+1$, so $D(Q)=\square$ if $n+1$ is a square in $\mathbb{F}_{p}$, otherwise $D(Q)=\boxtimes$. 

In general, if $p$ is odd and $n$ is even then using \cite[Proposition 2.5.10]{KL} we calculate that $\e=+$ if and only if
$$\left(\frac{n+1}{p}\right) = (-1)^{\frac{1}{4}n(p-1)}$$
where the term on the left is the \emph{Legendre symbol} (which takes the value $1$ if $n+1$ is a quadratic residue modulo $p$, $0$ if $p$ divides $n+1$, and $-1$ in the remaining cases; here $n+1$ is indivisible by $p$, so it is always nonzero). Note that if $d$ is even and $p$ divides $d$ then 
\begin{equation}\label{e:note}
\left(\frac{n+1}{p}\right) =\left(\frac{-1}{p}\right) = (-1)^{\frac{1}{2}(p-1)}
\end{equation}
and thus $\e=-$ if and only if $d \equiv 2 \imod{4}$ and $p \equiv 3 \imod{4}$.

Now assume $p=2$ so $n$ is even. Let $u=(a_1, \ldots, a_d) \in U$. We define a map $Q':U \to \mathbb{F}_{2}$ by setting $Q'(u)=1$ if the number of nonzero $a_i$ is congruent to $2$ modulo $4$, otherwise $Q'(u)=0$. Then $Q'$ is an $A_d$-invariant quadratic form on $U$ with associated bilinear form $B'$. If $d \not\equiv 2 \imod{4}$ then $Q'$ induces a nondegenerate quadratic form $Q$ on $V$, so in this case we obtain an embedding $A_d \leqs \Omega_{n}^{\e}(2)$ where $\e$ is given in Table \ref{atab} (see \cite[p.187]{KL}). On the other hand, if $d \equiv 2 \imod{4}$ then $A_d$ does not fix a nondegenerate quadratic form on $V$, so we have an embedding $A_{d} \leqs {\rm Sp}_{d-2}(2)$.

The specific irreducible embeddings that arise in this way are listed in Table \ref{atab}. Note that the conditions on $d$ in the final column ensure that $S=A_d$ is simple and not isomorphic to $T$. For the remainder of this section we set $H_0 = H \cap T$.

\begin{lem}\label{l:acol}
We have $H_0 = S_d$ if and only if $T = {\rm Sp}_{n}(2)$, or $np$ is odd and $\left(\frac{(n+1)/2}{p}\right) = 1$.
\end{lem}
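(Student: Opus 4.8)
The plan is to determine exactly when the embedding $S = A_d \leqslant T$ extends to $S_d \leqslant G$, and then to see when that extension lands inside $T$ rather than only in a larger almost simple group. The ambient principle is that $S_d$ always acts on the fully deleted permutation module $V$ over $\mathbb{F}_p$ (the construction in the section is manifestly $S_d$-equivariant), so the image of $S_d$ in $\mathrm{PGL}(V)$ is a subgroup $\bar{S}_d$ with $\bar{S}_d \cap \mathrm{PGL}(V)^{(1)} = \bar{A}_d$, where the relevant point is whether the extra coset (the image of a transposition, or equivalently of the full $S_d$) preserves the form data defining $T$ inside the conformal group. So $H_0 = H \cap T = S_d$ precisely when a transposition $\tau \in S_d$, acting on $V$, lies in $T$ itself; otherwise $H_0 = A_d$ (and $H = S_d . (\text{something outer})$ at the level of $G$, but that extra bit is not in $T$).

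First I would treat the characteristic $2$ case, i.e. $T = \mathrm{Sp}_n(2)$ with $n = d - 2$ and $d \equiv 2 \pmod 4$. Here $T = \mathrm{Sp}_n(2) = \mathrm{GSp}_n(2)$ has trivial outer diagonal/similarity part over $\mathbb{F}_2$, and $A_d$ preserves the alternating form $B$ but no nondegenerate quadratic form (this is exactly why we are in the symplectic case, as recalled just before the lemma). Since $S_d$ also preserves $B$ — the form $B'$ on $U$ is clearly $S_d$-invariant — the transposition $\tau$ lies in $\mathrm{Sp}_n(2) = T$, so $H_0 = S_d$ unconditionally. This recovers the first clause $T = \mathrm{Sp}_n(2) \Rightarrow H_0 = S_d$; I should also note in passing that when $T = \Omega_n^\e(2)$ (the case $d \not\equiv 2 \pmod 4$, $p = 2$), the transposition does \emph{not} preserve the quadratic form $Q'$ — it swaps two coordinates and so moves vectors between the two $Q'$-values in a way that is not an isometry of $Q$ — hence $H_0 = A_d$ there, consistent with the lemma's ``only if''.

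Next, the odd characteristic case: $p$ odd, $T = \mathrm{P\Omega}_n^\e(p)$ (or $\Omega_n(p)$ when $n$ is odd). A transposition $\tau$ acts on the permutation module $\mathbb{F}_p^d$ as the reflection swapping $v_i \leftrightarrow v_{i+1}$; restricted to $U$ (and then to $V$) it is again an orthogonal reflection, hence an element of $\mathrm{O}(V)$ but with determinant $-1$ and spinor norm determined by the norm of the corresponding root vector. The question ``is $\tau \in T$?'' thus becomes: does $\tau$ lie in $\Omega(V)$ (equivalently in $\mathrm{P\Omega}$ after quotienting scalars)? For $n$ odd, $\mathrm{SO}_n(p) = \Omega_n(p) \times \{\pm I\}$ only loosely — more precisely $\mathrm{O}_n(p) = \Omega_n(p) \times \langle -I \rangle$ when $-I \notin \Omega$, and the reflection $\tau$ has $\det = -1$, so $\tau \in \Omega_n(p)$ iff $-\tau \in \mathrm{SO} \setminus \Omega$ fails... the clean way is: $\tau$ is a single reflection in a vector of norm $2$ (from the Gram matrix with diagonal $2$), so its spinor norm is the class of $2$ in $\mathbb{F}_p^\times / (\mathbb{F}_p^\times)^2$, and $\tau \in \Omega(V)$ iff that spinor norm is trivial, i.e. iff $2$ is a square mod $p$. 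One then has to translate this, using the discriminant computation and \cite[Proposition 2.5.10]{KL} already carried out in the section, into the stated Legendre-symbol condition $\left(\frac{(n+1)/2}{p}\right) = 1$; the factor $(n+1)/2$ arises because the relevant reflecting vector in $V = U/(U\cap W)$ is not the naive one and its norm picks up a factor related to $\det J_\b = n+1$ and the parity/reduction involved.

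The main obstacle I expect is precisely this last bookkeeping step: correctly identifying, inside $V$ (as opposed to inside $U \subseteq \mathbb{F}_p^d$), the norm of the vector in which $\tau$ is a reflection, so that the spinor-norm criterion comes out as $\left(\frac{(n+1)/2}{p}\right)=1$ rather than as the cruder $\left(\frac{2}{p}\right)=1$. This requires care about whether $p \mid d$ (so $n = d-2$ and $U \cap W = W$) or $p \nmid d$ (so $n = d-1$, $U \cap W = 0$, and $V = U$), and about how $-I$ sits relative to $\Omega$ in each parity of $n$; I would handle the two subcases separately, compute the Gram matrix of $B$ on the explicit basis $\b$ as in \eqref{e:beta}, read off the reflecting vector for $\tau$ and its norm, and invoke the spinor-norm formula for reflections. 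Everything else — the symplectic case, and the ``$\tau$ moves out of $\Omega_n^\e(2)$'' observation in even characteristic — is essentially immediate from the constructions already recorded. Finally, I would double-check that in the even-characteristic orthogonal case and in the odd-characteristic case with $2$ a non-square the extension $S_d$ genuinely sits in $G \setminus T$ (so that $H_0 = A_d$ and not something in between), which follows since $|H_0|$ divides $|A_d| \cdot 2$ and $S_d/A_d$ is simple of order $2$.
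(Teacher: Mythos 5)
Your overall strategy---reduce everything to whether the image of a single transposition lies in $T$---is exactly the paper's, but two of your key steps are wrong. In characteristic $2$, your claim that a transposition fails to preserve the quadratic form $Q'$ is false: $Q'(u)$ depends only on the number of nonzero coordinates of $u$, which is invariant under all of $S_d$, so $S_d$ does act by isometries of $Q$ whenever $Q'$ descends to $V$. The correct reason why $H_0=A_d$ in the orthogonal characteristic-$2$ cases is that the transposition has Jordan form $[J_2,J_1^{n-2}]$ on $V$, i.e.\ it is a $b_1$-type element (an orthogonal ``reflection''), and such elements have nontrivial Dickson invariant, so they lie in ${\rm O}_{n}^{\epsilon}(2)\setminus \Omega_{n}^{\epsilon}(2)$; this is the observation the paper uses, and it simultaneously gives the symplectic case, since ${\rm Sp}_{n}(2)$ contains all such elements.

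In odd characteristic your criterion ``$\tau\in\Omega(V)$ iff the spinor norm of the reflection, namely the class of $2$, is trivial'' cannot be right: a reflection has determinant $-1$, so it never lies in $\Omega\leqslant {\rm SO}(V)$, whatever its spinor norm. The analysis has to be projective: the image of $\tau$ lies in $T={\rm P\Omega}_{n}^{\epsilon}(p)$ if and only if one of the lifts $\pm\hat{\tau}$ lies in $\Omega_{n}^{\epsilon}(p)$. Since $\det(\hat{\tau})=-1$, when $n$ is even neither lift is even in ${\rm SO}$, and this is exactly where the condition ``$n$ odd'' in the statement comes from---your sketch never produces it. When $n$ is odd one must decide whether $-\hat{\tau}$, whose $(-1)$-eigenspace is the $(n-1)$-dimensional fixed space $E$ of $\hat{\tau}$, lies in $\Omega_{n}(p)$; this is governed by the discriminant/type of $E$ (equivalently the spinor norm of $-\hat{\tau}$, a product of $n-1$ reflections), and the Gram determinant of $E$ is $(n+1)/2$. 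That, and not a ``corrected norm of the reflecting vector'', is the source of the factor $(n+1)/2$; the paper computes the type $\epsilon'$ of $E$ via \cite[Proposition 2.5.10]{KL} and then reads off membership in $\Omega_{n}(p)$ from \cite[Table 4.5.1]{GLS}. As written, your argument would yield the wrong criterion $\left(\frac{2}{p}\right)=1$ and would miss the parity condition on $n$, so the odd-characteristic half needs to be redone along these lines.
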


\begin{proof}
Let $x$ be the transposition $(1,2)$ in $S_d$. If $p=2$ then $x$ has Jordan form $[J_2,J_1^{n-2}]$ on $V$, so $x \in T$ if and only if $T$ is a symplectic group. Now assume $p$ is odd, so $T$ is an orthogonal group. Up to conjugacy, $x$ acts on $V$ as a diagonal matrix $[-I_{1},I_{n-1}]$ (modulo scalars), so $x \in T$ only if $n$ is odd. In terms of the above basis $\b$ for $V$ (see \eqref{e:beta}), $x$ maps $e_1$ to $-e_1$, $e_2$ to $e_1+e_2$, and it fixes all the other basis vectors. Then
$E = \la e_1+2e_2, e_3, \ldots, e_n\ra$ is the $1$-eigenspace of $x$, which is a nondegenerate $(n-1)$-space of type $\e'$. To determine whether or not $x \in T$ we need to calculate $\e'$. 

It is straightforward to check that the Gram matrix of the induced bilinear form on $E$ has determinant $(n+1)/2$, so \cite[Proposition 2.5.10]{KL} implies that $\e'=+$ if and only if
$$\left(\frac{(n+1)/2}{p}\right) = (-1)^{\frac{1}{4}(n-1)(p-1)}$$
If $\e'=+$ (respectively, $\e'=-$) then $x \in {\rm SO}_{n}(p)$ is an involution of type $t_{(n-1)/2}$ (respectively, $t_{(n-1)/2}'$) in the notation of \cite{BG, GLS}, and the desired result follows by inspecting \cite[Table 4.5.1]{GLS}. For example, if $\e'=+$ then we find that an involution in ${\rm SO}_{n}(p)$ of type $t_{(n-1)/2}$ is in $T$ if and only if 
$$p^{\frac{1}{2}(n-1)} \equiv 1 \imod{4},$$ 
whence $H_0 = S_d$ if and only if $\left(\frac{(n+1)/2}{p}\right) = 1$.
\end{proof}

In the statement of the next lemma, we use the notation in \eqref{e:unip} for expressing the Jordan form of an element of order $p$.

\begin{lem}\label{lem:jordan}
Let $x \in S_d$ be an element of order $p$ with cycle-shape $(p^h,1^s)$. Then the Jordan form of $x$ on $V$ is as follows:
\begin{itemize}\addtolength{\itemsep}{0.2\baselineskip}
\item[{\rm (i)}] $[J_p^h,J_1^{s-1}]$ if $s\geqs 1$ and $(p,d)=1$;
\item[{\rm (ii)}] $[J_p^h,J_1^{s-2}]$ if $s\geqs 1$ and $p$ divides $d$;
\item[{\rm (iii)}] $[J_p^{h-1},J_{p-2}]$ if $s=0$ and  $(p,h)=1$;
\item[{\rm (iv)}] $[J_p^{h-2},J_{p-1}^2]$ if $s=0$, $p$ divides $h$, and $h\neq 2$;
\item[{\rm (v)}] $[J_2]$ if $s=0$ and $p=h=2$.
\end{itemize}
\end{lem}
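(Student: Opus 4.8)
The plan is to compute the Jordan form of $x$ on the permutation module $\mathbb{F}_p^d$ first, and then pass to the fully deleted module $V = U/(U\cap W)$ by tracking how the submodules $W$ and (when relevant) $U$ sit inside $\mathbb{F}_p^d$ with respect to $x$. Concretely, if $x$ has cycle-shape $(p^h,1^s)$ then on $\mathbb{F}_p^d$ it is a permutation matrix that decomposes as a direct sum of $h$ regular cyclic blocks (each the regular representation of $Z_p$, which as an $\mathbb{F}_p Z_p$-module is $\mathbb{F}_p[Z_p]$, a single Jordan block $J_p$ since $\mathbb{F}_p Z_p$ is uniserial) together with $s$ trivial blocks $J_1$. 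So on $\mathbb{F}_p^d$ the Jordan form of $x$ is $[J_p^h, J_1^s]$; this is the starting point for every case.

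Next I would handle the passage to $V$ according to the two cases $(p,d)=1$ and $p\mid d$, and within each, according to whether $s\geqslant 1$ or $s=0$. When $(p,d)=1$ we have $U\cap W = 0$, $W$ is a trivial submodule ($J_1$), and $\mathbb{F}_p^d = U\oplus W$ as $\mathbb{F}_pS_d$-modules, so $V\cong U$ and the Jordan form of $x$ on $V$ is obtained from $[J_p^h,J_1^s]$ by deleting one $J_1$ (the copy spanned by the all-ones vector, which lies in $W$ but not in $U$ when $(p,d)=1$ — more precisely $W\not\subseteq U$, so removing the $W$-summand removes one trivial block): this gives $[J_p^h, J_1^{s-1}]$ when $s\geqslant 1$, which is (i). When $p\mid d$ we have $W\subseteq U$, so $V = U/W$ and we lose \emph{two} trivial blocks from $[J_p^h,J_1^s]$ — one passing from $\mathbb{F}_p^d$ to $U$ (codimension one) and one passing from $U$ to $U/W$ — giving $[J_p^h,J_1^{s-2}]$ when $s\geqslant 1$, which is (ii). The only subtlety here is to make sure the two quotients/subspaces being removed are genuinely trivial Jordan blocks and not part of a larger block, which is clear because $W$ is a trivial submodule and $U$ has codimension one containing the image of each $J_p$-block's "augmentation-zero" part; a clean way to see it is that $x$ acts trivially on $\mathbb{F}_p^d/U$ and on $W$, so these are accounted for by $J_1$'s.

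The cases $s=0$, i.e. $x$ is fixed-point-free with cycle-shape $(p^h)$ and $d=ph$, require more care because now $U$ (and $W$) meet each cyclic block nontrivially. For a single regular block $\mathbb{F}_p[Z_p] = J_p$, the augmentation-zero subspace is the unique Jordan subblock of codimension one, namely a copy of $J_{p-1}$, and the all-ones vector spans the socle (the unique $J_1$ at the bottom). Here $U = \ker(\text{augmentation})$ on $\mathbb{F}_p^d$ has codimension one, and $W$ = span of the global all-ones vector lies in $U$ iff $p\mid d$, which holds since $d=ph$. So $V = U/W$. To get its Jordan type I would decompose $U$ first: $U$ is the kernel of the sum-of-coordinates map $\mathbb{F}_p^d\to\mathbb{F}_p$, and I want the Jordan form of $x$ on $U$, then quotient by the one-dimensional socle $W$. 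The arithmetic of how the $h$ blocks' socles (each the block's all-ones vector) combine with the global constraint is exactly what produces the dichotomy $(p,h)=1$ versus $p\mid h$: the sum of the $h$ block-socle-vectors is the global all-ones vector, so whether $W$ is "diagonal" or "off-diagonal" relative to the $h$ socle lines depends on $h \bmod p$. I expect this bookkeeping — pinning down $J_p^h$ on $\mathbb{F}_p^d$ refined to $U$, namely that $U$ as an $\mathbb{F}_p\langle x\rangle$-module is $J_p^{h-1}\oplus J_{p-1}$ (the augmentation map restricted to the socle of $J_p^h$, which is $h$-dimensional trivial, has image $\mathbb{F}_p$ and kernel of dimension $h-1$ as long as... actually the augmentation is nonzero on each block socle, so $U\cap \text{socle}(J_p^h)$ has dimension $h-1$), and then carefully removing $W$ — to be the main obstacle. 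Once $U \cong J_p^{h-1}\oplus J_{p-1}$ is established as an $\langle x\rangle$-module, quotienting by $W$: if $p\mid h$ then $W$ lies in the socle of the $J_p^{h-1}$ part (since... the combinatorial condition), forcing one $J_p$ down to $J_{p-1}$ and leaving $[J_p^{h-2},J_{p-1}^2]$ (case (iv), using $h\neq 2$ so that $h-2\geqslant 0$ and in fact $h\geqslant p\cdot(\text{something})$; when $h=2$ and $p=2$ we get the degenerate $n=2$ case $[J_2]$, which is (v)); if $(p,h)=1$ then $W$ is "transverse" and quotienting merges things to give $[J_p^{h-1},J_{p-2}]$ (case (iii)). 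I would verify dimensions throughout as a sanity check: in (iii), $n = p h - 1$ (since $(p,d)=(p,ph)=p\neq1$, wait $d=ph$ so $p\mid d$, hence $n=d-2=ph-2$) and $(h-1)p + (p-2) = ph - 2 = n$, consistent; in (iv), $(h-2)p + 2(p-1) = ph-2 = n$, consistent; in (v), $d=4$, $n=2$, $[J_2]$ has size $2$, consistent. These dimension counts, together with the module-theoretic identification of the socle of $J_p^h$ inside $U$ and the position of $W$, complete the argument.
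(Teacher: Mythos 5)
Your overall route --- compute the Jordan form $[J_p^h,J_1^s]$ on the permutation module $\mathbb{F}_p^d$ and then pass module-theoretically to $U$ and to $U/(U\cap W)$ --- is a legitimate alternative to the paper's argument, which instead exhibits explicit linearly independent vectors in $V$ cyclically permuted by $x$ and invokes \cite[Lemma 5.2.6]{BG} together with the fixed-space dimension from \cite[Lemma 4.3]{PSY}. Case (i) is fine as you argue it ($\mathbb{F}_p^d=U\oplus W$ plus uniqueness of the Jordan form). But there are genuine gaps exactly where the work lies. For (ii), your ``clean way to see it'' --- that $x$ acts trivially on $W$ and on $\mathbb{F}_p^d/U$, so two $J_1$'s are deleted --- is not a valid inference: a codimension-one submodule with trivial quotient can lose a box from a $J_p$ rather than a $J_1$, and your own $s=0$ discussion (where passing to $U$ turns a $J_p$ into a $J_{p-1}$ although $\mathbb{F}_p^d/U$ is trivial) is a counterexample to that heuristic. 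What actually makes (ii) work is that $s\geqslant 1$ provides a fixed basis vector $v_d\notin U$, so $\mathbb{F}_p^d=U\oplus\langle v_d\rangle$ as $\langle x\rangle$-modules and $U\cong [J_p^h,J_1^{s-1}]$; and then the all-ones vector $w$ does not lie in $(x-1)U$ (the image of $x-1$ is supported on the moved points, while $w$ is nonzero on a fixed point), so the quotient by $W$ deletes a $J_1$ and not a box from a $J_p$.

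The case $s=0$, which is the heart of the lemma, is not proved: the decisive step is left as ``since\dots the combinatorial condition'', and the one concrete claim you make there is false. The augmentation does \emph{not} restrict nontrivially to the block socles: each block socle is spanned by the sum of the $p$ basis vectors in that cycle, whose coordinate sum is $p=0$ in $\mathbb{F}_p$, so the entire socle of $\mathbb{F}_p^d$ lies in $U$ and $U\cap{\rm soc}$ has dimension $h$, not $h-1$ (note $h-1$ blocks of size at most $p$ could never account for $\dim U=hp-1$). The identification $U\cong J_p^{h-1}\oplus J_{p-1}$ is nevertheless correct (a codimension-one submodule removes exactly one box), but your proposed bookkeeping for locating $W$ collapses with the false socle claim. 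The invariant that decides the dichotomy is whether $w\in(x-1)^{p-1}U$: since $(x-1)^{p-1}$ acts on each block as the sum-over-the-cycle map, one gets $(x-1)^{p-1}U=\{\sum_i c_iu_i:\sum_i c_i=0\}$ with $u_i$ the block socle generators, so $w=\sum_i u_i$ lies in it if and only if $p$ divides $h$; in that case the quotient by $W$ removes a box from a $J_p$, giving (iv), while if $(p,h)=1$ then $w\in(x-1)^{p-2}U\setminus(x-1)^{p-1}U$ (using $U\supseteq(x-1)\mathbb{F}_p^d$) and the box comes off the $J_{p-1}$, giving (iii). Incidentally, carrying this out for $p=h=2$ yields $[J_1^2]$, not $[J_2]$: for $d=4$ the element $(1,2)(3,4)$ acts trivially on the $2$-dimensional module $V$ (the kernel of $S_4\to{\rm GL}(V)$ contains the Klein four-group), so the degenerate case (v) cannot be established as stated --- harmless for the paper, where $d\geqslant 8$ in all applications, but your bare assertion of (v) is a further sign that the method was sketched rather than executed.
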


\begin{proof}
Up to conjugacy, we may assume that 
$$x=(1,\ldots,p)\cdots ((h-1)p+1,\ldots, hp).$$
Suppose first that $s \geqs 1$. Then for each $i \in \{0,\ldots,h-1\}$,  
$$\mathcal{E}_i=\{e_{ip+1}-e_d+(U\cap W), \ldots, e_{(i+1)p}-e_d+(U\cap W)\}$$ 
is a set of $p$ linearly independent vectors in $V$, which are cyclically permuted by $x$, and $\mathcal{E}_0\cup\ldots\cup\mathcal{E}_{h-1}$ is a linearly independent set of $hp$ vectors. Therefore, \cite[Lemma 5.2.6]{BG} implies that $x$ has Jordan form $[J_p^h,J_1^{s-1}]$ if $(p,d)=1$ and $[J_p^h,J_1^{s-2}]$ if $p$ divides $d$.

For the remainder, let us assume that $s=0$, so $n=d-2$, $U \cap W = W$ and $x$ cyclically permutes the $p$ vectors 
$$\{e_1-e_2+W,\ldots, e_{p-1}-e_p+W, e_p-e_1+W\}.$$ 
If $h=1$ then $V$ is spanned by this set of vectors and the first $p-2$ form a basis for $V$. Thus $x$ has Jordan form $[J_{p-2}]$ on $V$.  Suppose now that $h \geqs 2$.  Then for each $i\in\{1, \ldots, h-1\}$ the set 
$$\mathcal{E}_i=\{ e_1-e_{ip+1}+W,e_2-e_{ip+2}+W, \ldots, e_p-e_{(i+1)p}+W\}$$
is a set of $p$ linearly independent vectors cyclically permuted by $x$. If $(p,h)=1$ then $\mathcal{E}_1\cup\ldots\cup\mathcal{E}_{h-1}$ is an $x$-invariant set of linearly independent vectors and by Lemma \cite[Lemma 5.2.6]{BG}, the Jordan form of $x$ on the span of these vectors is $[J_p^{h-1}]$. By \cite[Lemma 4.3]{PSY}, the $1$-eigenspace of $x$ on $V$ has dimension $h$, so it follows that $x$ has Jordan form $[J_p^{h-1}, J_{p-2}]$ on $V$.

Now assume $p$ divides $h$. If $p=h=2$ then $\dim V=2$ and $x$ acts nontrivially on $V$, so $x$ has  Jordan form $[J_2]$. Now assume $h \geqs 3$. Note that 
$\mathcal{E}_1 \cup\ldots\cup\mathcal{E}_{h-1}$ is linearly dependent, whereas $\mathcal{E}=\mathcal{E}_1 \cup \ldots \cup \mathcal{E}_{h-2}$ is linearly independent.  Let $Y$ be the span of $\mathcal{E}$. Now $x$ cyclically permutes the $p$ vectors 
$$\{e_{(h-1)p+1} -e_{(h-1)p+2}+W, \ldots, e_{(h-1)p+p+1}-e_{hp}+W,e_{hp}-e_{(h-1)p+1}+W\}$$
which span a $(p-1)$-dimensional subspace $Z$ of $V$ such that $Y \cap Z = 0$. Moreover, the Jordan form of $x$ on $Z$ is $[J_{p-1}]$.  By \cite[Lemma 4.3]{PSY}, the $1$-eigenspace of $x$ on $V$ is $h$-dimensional, and so $x$ has Jordan form $[J_{p-1}^{h-2},J_{p-1}^2]$ on $V$.
\end{proof}

\begin{lem}\label{lem:semi}
Let $x \in S_d$ be an element of prime order $r \ne p$ with cycle-shape $(r^h,1^s)$ and consider the action of $x$ on $\bar{V} = V \otimes \mathbb{F}$, where $\mathbb{F} = \bar{\mathbb{F}}_{p}$. Then every nontrivial $r$-th root of unity occurs as an eigenvalue of $x$ on $\bar{V}$ with multiplicity $h$.
\end{lem}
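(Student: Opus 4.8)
The plan is to first determine the eigenvalues of $x$ on the ambient permutation module, and then to track what happens as we pass down to the section $\bar{V}$, observing that each reduction step only affects the trivial eigenvalue.

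Since $r \ne p$, the order of $x$ divides the separable polynomial exponent, so $x$ acts diagonalisably on $P := \mathbb{F}_{p}^d \otimes \mathbb{F}$ with all eigenvalues being $r$-th roots of unity. Now $x$ permutes the natural basis of $P$ with cycle-shape $(r^h,1^s)$: each $r$-cycle contributes a cyclic permutation block whose characteristic polynomial is $t^r-1$, hence each $r$-th root of unity occurs as an eigenvalue of that block with multiplicity one, while each fixed point contributes the eigenvalue $1$. Therefore, on $P$ the eigenvalue $1$ has multiplicity $h+s$ and every nontrivial $r$-th root of unity has multiplicity exactly $h$. I would then use the fact that, for a diagonalisable operator and an invariant subspace, the multiplicity of a given eigenvalue on the whole space is the sum of its multiplicities on the subspace and on the quotient. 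The coordinate-sum functional $P \to \mathbb{F}$ is $x$-equivariant with kernel $U \otimes \mathbb{F}$ and trivial $x$-action on the one-dimensional image, so the eigenvalue multiplicities on $U \otimes \mathbb{F}$ agree with those on $P$ except that the multiplicity of $1$ drops by one.

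If $p \nmid d$ then $(1,\ldots,1) \notin U$, so $U \cap W = 0$ and $\bar{V} = U \otimes \mathbb{F}$; hence every nontrivial $r$-th root of unity has multiplicity $h$ on $\bar{V}$ and we are done. If $p \mid d$ then $(1,\ldots,1) \in U$, so $U \cap W = W$, and $W \otimes \mathbb{F}$ is a one-dimensional $x$-submodule of $U \otimes \mathbb{F}$ with trivial action; passing to the quotient $\bar{V} = (U\otimes\mathbb{F})/(W\otimes\mathbb{F})$ again only decreases the multiplicity of $1$, this time by a further one, and leaves the nontrivial eigenvalue multiplicities equal to $h$. This completes the argument. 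There is no real obstacle: the computation is elementary linear algebra, and the only point to keep an eye on is the consistency of the bookkeeping for the trivial eigenvalue in small cases (for instance $h=1$, $s=0$ forces $d=r$ and hence $p \nmid d$, so the second reduction is never performed in a configuration where its count would be problematic).
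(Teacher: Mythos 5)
Your proof is correct and follows essentially the same route as the paper's: compute the eigenvalue multiplicities of $x$ on the full permutation module $\mathbb{F}^d$ (each nontrivial $r$-th root of unity with multiplicity $h$, the eigenvalue $1$ with multiplicity $h+s$) and then observe that passing to $U\otimes\mathbb{F}$, and when $p\mid d$ factoring out $W\otimes\mathbb{F}$, only lowers the multiplicity of the trivial eigenvalue. The one genuine (if minor) difference is that you replace the paper's explicit complements --- the splitting $\mathbb{F}^d=\bar{U}\oplus\bar{W}$ when $p\nmid d$, and $\mathbb{F}^d=\bar{U}\oplus\langle v_d\rangle$ after choosing a fixed point of $x$ when $p\mid d$ --- by the additivity of eigenvalue multiplicities of the semisimple element $x$ along an invariant subspace and its quotient; this has the small advantage of treating uniformly the corner case $p\mid d$ with $s=0$ (where $x$ has no fixed point, so the paper's reduction does not literally apply, although there $p\mid h$ forces $h\geqslant 2$ and the count still works out).
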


\begin{proof}
Let $\mathbb{F}^d$ be the permutation module for $S_d$ over $\mathbb{F}$ and set $\bar{U} = U \otimes \mathbb{F}$ and $\bar{W} = W \otimes \mathbb{F}$. Let $\omega \in \mathbb{F}$ be a nontrivial $r$-th root of unity. By \cite[Lemma 5.2.6]{BG}, $\omega$ occurs as an eigenvalue of $x$ on $\mathbb{F}^d$ with multiplicity $h$. If $p$ does not divide $d$ then $\mathbb{F}^d=\bar{U} \oplus \bar{W}$, $\bar{V}=\bar{U}$ and $\bar{W}$ is contained in the $1$-eigenspace of $x$ on $\mathbb{F}^d$. Therefore, $\omega$ has multiplicity $h$ as an eigenvalue of $x$ on $\bar{V}$.  

Now assume $p$ divides $d$, so $\bar{W} \leqs \bar{U}$ and $\bar{V}=\bar{U}/\bar{W}$. Now $x$ has a fixed point and without loss of generality we may assume that $x$ fixes the standard basis element $v_d \in \mathbb{F}^d$. Since $\mathbb{F}^d=\bar{U} \oplus \langle v_d \rangle$ and $v_d$ is a $1$-eigenvector for $x$, it follows that $\omega$ has multiplicity $h$ as an eigenvalue of $x$ on $\bar{U}$. Since $\bar{W}$ is also contained in the $1$-eigenspace of $x$ we conclude that $\omega$ has multiplicity $h$ as an eigenvalue of $x$ on $\bar{V}$.
\end{proof}

We are now ready to state the main result of this section. In the proof of the proposition, we freely use the notation for prime order elements introduced in Section \ref{ss:cc}, which is consistent with the notation adopted in \cite{BG}. In part (ii) of the statement, we define the integer $c$ as in \eqref{e:eqcc}.

\begin{prop}\label{p:a}
Let $G \leqs {\rm Sym}(\Omega)$ be a primitive almost simple classical group over $\mathbb{F}_{q}$ with socle $T$ and point stabiliser $H \in \mathcal{A}$. Let $r$ be a prime divisor of $|\Omega|$ and assume that $n \geqs 6$. Set $H_0 = H \cap T$ and note that $q=p$ is a prime. Then $T$ is $r$-elusive if and only if one of the following holds:
\begin{itemize}\addtolength{\itemsep}{0.2\baselineskip}
\item[{\rm (i)}] $r=2$, $p \neq 2$ and either
\begin{itemize}\addtolength{\itemsep}{0.2\baselineskip}
\item[{\rm (a)}] $T = \Omega_n(p)$ and $\left(\frac{(n+1)/2}{p}\right) = 1$; or
\item[{\rm (b)}] $T = {\rm P\Omega}_{n}^{\e}(p)$, $n \equiv 2\imod{4}$ and $p \equiv 5\e \imod{8}$. 
\end{itemize}
\item[{\rm (ii)}] $r \ne p$, $r>2$, $r$ divides $|H_0|$ and $c=r-1$.
\end{itemize}  
\end{prop}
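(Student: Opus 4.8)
The plan is to use Lemma~\ref{l:count0} to replace the action of $G$ on $\Omega$ by that of $T$ on the coset space $T/H_0$, so that $T$ is $r$-elusive if and only if every subgroup of order $r$ in $T$ lies in a conjugate of $H_0$. Recall that $H_0 \in \{A_d, S_d\}$ is determined by Lemma~\ref{l:acol}, that $T$ is an orthogonal group when $p$ is odd, and that $T = {\rm Sp}_n(2)$ when $p = 2$. Throughout, the idea is to confront the classification of prime-order conjugacy classes of $T$ recalled in Section~\ref{ss:cc} (see also \cite[Chapter 3]{BG}) with the explicit action of prime-order elements of $S_d$ on $V$ given by Lemmas~\ref{lem:jordan} and~\ref{lem:semi}: in each case we either exhibit a concrete derangement of order $r$, or verify that every relevant class of $T$ meets $H_0$, appealing to Corollary~\ref{c:count} (comparison of class numbers) when a direct argument is not immediate.

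First I would dispose of $r = p$, where the outcome should be that no $r$-elusive example occurs. By Lemma~\ref{lem:jordan}, an element of order $p$ in $S_d$ acts on $V$ with Jordan form built only from blocks of sizes $p$, $p-1$, $p-2$ and $1$, and only in one of five rigid shapes, whereas $T$ has unipotent classes of order $p$ outside this list. For $p \geqs 5$ one may take a class with a single block $J_2$ (or a pair $J_2^2$ of suitable type, in the orthogonal and symplectic cases), giving a derangement; for $p = 3$ one argues likewise, using that $J_2$-blocks can arise in $S_d$ only through case~(iv) of Lemma~\ref{lem:jordan}, which simultaneously forces a block $J_3$; and for $p = 2$, where $T = {\rm Sp}_n(2)$, one invokes the finer class invariants of \cite{AS}, since products of transpositions realise only some of the involution classes $a_\ell, b_\ell, c_\ell$ of ${\rm Sp}_n(2)$. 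Hence $T$ is never $p$-elusive.

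Next comes $r \ne p$ with $r$ odd. By Lemma~\ref{lem:semi}, an element of $S_d$ of order $r$ with cycle-shape $(r^h, 1^s)$ acts on $\bar{V}$ with each nontrivial $r$-th root of unity of multiplicity exactly $h$; in the notation of Section~\ref{ss:cc} this is the element $[X_1^h, \dots, X_s^h, I_e]$, where $s = (r-1)/c$, and since an $r$-cycle is an even permutation the distinction $A_d$ versus $S_d$ is irrelevant here. Now $c \mid r-1$. If $c < r-1$ then $s \geqs 2$, and by Remark~\ref{r:cc} the group $T$ contains an order-$r$ element with unequal block multiplicities, such as $[X_1^2, X_2, I_e]$ (which exists as the dimensions permit); this cannot be conjugate into $H_0$, so $T$ is not $r$-elusive. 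If $c = r-1$ then $s = 1$, and every order-$r$ element of $T$ has the shape $[X_1^{a_1}, I_e]$ with $e = n - (r-1)a_1$; it then remains to check that for each such element the cycle-shape $(r^{a_1}, 1^{d - r a_1})$ is available in $A_d$, that is, $r a_1 \leqs d$. This is where the orthogonal (or symplectic) structure is used: the block $X_1$, of dimension $r-1$, spans a nondegenerate subspace of a prescribed type, the fixed space $I_e$ must be nondegenerate of compatible type (and nonzero when $n$ is odd, see \cite[Remark 3.5.5]{BG}), and these constraints restrict the admissible $a_1$; with $n \in \{d-1, d-2\}$, one then checks that the admissible values of $a_1$ all satisfy $r a_1 \leqs d$, so $T$ is $r$-elusive. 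This yields part~(ii).

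Finally, $r = 2$ with $p$ odd, so $T$ is orthogonal. Every involution of $T$ is automatically ``balanced'' (its sole nontrivial eigenvalue is $-1$), so the question reduces to which involution classes of $T$ meet the set of products of transpositions. A product of $k$ transpositions acts on $V$ as $[-I_k, I_{n-k}]$, and the type of its $(-1)$-eigenspace is read off from a Legendre-symbol computation of exactly the kind performed in the proof of Lemma~\ref{l:acol}. Matching this against the involution-class data of \cite[Table 4.5.1]{GLS} and \cite[Section 3.5]{BG}, and keeping track of whether $H_0 = A_d$ (only even $k$) or $H_0 = S_d$ (all $k$), one finds that every involution class of $T$ is realised in $H_0$ precisely in the two situations in (i)(a) and (i)(b); in every other case a suitable $[-I_k, I_{n-k}]$ is a derangement. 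I expect this last step to be the main obstacle: it needs careful bookkeeping of how the involution classes of ${\rm O}_n(p)$ fuse and split on passing to $\Omega_n^\e(p)$ and then to ${\rm P\Omega}_n^\e(p)$, and of how the resulting spinor-norm and discriminant conditions combine — with the arithmetic in the $c = r-1$ case as a secondary difficulty.
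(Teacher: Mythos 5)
Your handling of $r=p$ odd and of odd $r \ne p$ does track the paper's argument (Lemmas \ref{lem:jordan} and \ref{lem:semi}, explicit derangements, class-matching when $c=r-1$), but the case $r=2$, $p$ odd rests on a false premise. You assert that every involution of $T$ is ``balanced'', with sole nontrivial eigenvalue $-1$, and you reduce the whole $r=2$ analysis to matching the classes $[-I_k,I_{n-k}]$ against products of transpositions. When $n$ is even this is wrong: ${\rm P\Omega}_{n}^{\e}(p)$ contains involution classes whose lifts to $\Omega_{n}^{\e}(p)$ have order $4$, with eigenvalues $\pm\l$ where $\l^2=-1$, so no eigenvalue $-1$ at all (the classes of type $t_{n/2}$, $t_{n/2}'$ in the notation of \cite[Sections 3.5.2.10 and 3.5.2.11]{BG}). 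These classes are precisely what decides the answer: in the paper's proof they are the derangements showing $T$ is never $2$-elusive when $n\equiv 0 \imod{4}$, and for $n\equiv 2 \imod{4}$ their presence when $p\equiv \e \imod{8}$ versus absence when $p\equiv 5\e \imod{8}$ is exactly what produces condition (i)(b). Products of transpositions only see the balanced classes, and those behave identically for $p\equiv \e$ and $p\equiv 5\e \imod{8}$; so an argument built on your premise cannot distinguish these two congruences and cannot yield the ``only if'' direction of (i)(b). This is a genuine gap, not bookkeeping: you need the full list of involution classes of the projective orthogonal groups, including those with order-$4$ lifts, before the Legendre-symbol matching you describe can start.

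Two smaller points. First, your setup claim that $T={\rm Sp}_n(2)$ whenever $p=2$ is incorrect: cases $(\mathcal{A}3)$ and $(\mathcal{A}4)$ have $T=\Omega_{n}^{\pm}(2)$, so the $r=p=2$ step must cover orthogonal groups as well; the paper's actual argument is the short computation that $(1,2)(3,4)$ is a $c_2$-type involution, whence the $a_2$-class consists of derangements — your appeal to the Aschbacher--Seitz invariants is the right idea but is asserted rather than carried out. Second, in the case $c<r-1$ your proposed witness $[X_1^2,X_2,I_e]$ need not exist, since it requires $3c\leqs n$ (this fails, for instance, with $d=13$, $r=13$, $c=6$, $n=12$); the element $[X_1,I_{n-c}]$ supplied by Remark \ref{r:cc} already has unequal block multiplicities (the orbits $X_2,\dots,X_s$ simply do not occur) and is, in the form $[\L,I_{n-i}]$ or $[\L,\L^{-1},I_{n-2i}]$, the derangement the paper uses.
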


\begin{proof}
Here $H_0 \in \{A_{d},S_{d}\}$ and $d \geqs 5$. If $r=p>2$ then $[J_{2}^{2},J_1^{n-4}] \in T$ is a derangement by Lemma \ref{lem:jordan}. Now assume $r=p=2$ and note that by Lemma \ref{lem:jordan}, $x=(1,2)(3,4) \in H_0$ has Jordan form $[J_2^2,J_1^{n-4}]$ on $V$. Moreover, in terms of the Aschbacher-Seitz \cite{AS} notation, we identify $x$ as a $c_2$-type involution since
$$B(e_3+e_4,(e_3+e_4)x) = B(v_3+v_5,v_4+v_5) = 1.$$
We conclude that the $a_2$-type involutions in $T$ are derangements.

Next suppose $r \neq p$ and $r>2$. Let $i = \Phi(r,p)$ (see \eqref{e:phirq}), so $i$ is the smallest positive integer such that $r$ divides $p^i-1$. Clearly, if $r$ fails to divide $|H_0|$ then every element in $T$ of order $r$ is a derangement, so let us assume $r$ divides $|H_0|$. Let $x \in H_0$ be an element of order $r$ and write $x=\hat{x}Z$, where $\hat{x} \in {\rm GL}_{n}(p)$ has order $r$. By Lemma \ref{lem:semi}, the multiset of eigenvalues of $\hat{x}$ on $\bar{V} = V \otimes \bar{\mathbb{F}}_{q}$ contains every nontrivial $r$-th root of unity with equal multiplicity. Therefore, if $i$ is even and $i<r-1$ then $[\L,I_{n-i}]$ is a derangement. Similarly, if $i$ is odd and $i<(r-1)/2$ then $[\L,\L^{-1},I_{n-2i}]$ has the desired property. Now assume $i$ is even and $i=r-1$, so $\hat{x}$ is conjugate to an element of the form $[\L^{h},I_{n-h(r-1)}]$ for some $h \geqs 1$ with $hr \leqs n$. There is a unique $T$-class of such elements for each value of $h$, and $x^T \cap H$ consists of the permutations in $H_0$ with cycle-shape $(r^{h},1^{d-hr})$. In particular, $T$ is $r$-elusive. An entirely similar argument applies if $i=(r-1)/2$ is odd.

To complete the proof of the proposition, we may assume that $r=2$ and $p \neq 2$, so $T$ is an orthogonal group (see Table \ref{atab}). By Lemma \ref{lem:semi}, if $x \in S_d$ has cycle-shape $(2^h,1^s)$ then the $(-1)$-eigenspace of $x$ on $V$ has dimension $h\leqs d/2$. 

Suppose first that $T={\rm P\Omega}^+_n(q)$. If $n\equiv 0\pmod 4$ then $T$ contains involutions of type $t_{n/2}$ or $t_{n/2}'$, and these elements are derangements because they do not have $-1$ as an eigenvalue (see \cite[Sections 3.5.2.10 and 3.5.2.11]{BG}). Now assume $n\equiv 2\pmod 4$. If $p \equiv 1\pmod 8$ then the same argument implies that involutions of type $t_{n/2}$ in $T$ are derangements. If $p \equiv 3\pmod 4$ then $T$ contains two classes of involutions (namely, $t_1$ and $t_1'$) with a $2$-dimensional $(-1)$-eigenspace and so one of these classes must consist of derangements. This leaves $p \equiv 5\pmod{8}$, in which case $H_0=A_d$ by Lemma \ref{l:acol}. Here every involution in $T$ has a $2\ell$-dimensional $(-1)$-eigenspace for some $1 \leqs \ell < n/4$ (see \cite[Table B.10]{BG}), and there is a unique class of such involutions for each $\ell$. We conclude that $T$ is $2$-elusive. 
A very similar argument applies if $T = {\rm P\Omega}^-_n(q)$ and we omit the details.

Finally, suppose $T=\Omega_n(p)$ with $n$ odd. Here every involution in $T$ is of the form $[-I_{2\ell},I_{n-2\ell}]$, and there is a unique such class for each $1 \leqs \ell \leqs (n-1)/2$ (see \cite[Table B.8]{BG}). Now, if $H_0=A_d$ then $H_0$ does not contain a transposition, so any involution in $T$ of the form $[-I_{n-1},I_{1}]$ is a derangement. On the other hand, if $H_0=S_d$ then it is easy to see that every involution in $T$ has fixed points, so $T$ is $2$-elusive.  Note that $H_0 = S_d$ if and only if $\left(\frac{(n+1)/2}{p}\right) = 1$ (see Lemma \ref{l:acol}).
\end{proof}

\section{The collection $\mathcal{B}$}\label{s:b}

In this section we turn our attention to the case where $H \in \mathcal{S}$ is a subgroup in  
the collection $\mathcal{B}$ (see Table \ref{btab}). Recall that these cases arise naturally as exceptions in the statement of Theorem \ref{gursax}, so $n \geqs 6$ and  
$$\nu(x) \leqs \max\{2,\sqrt{n}/2\}$$
for some nontrivial element $x \in H \cap {\rm PGL}(V)$. Our main result is the following (note that Table \ref{t:bex} is located in the introduction).

\begin{prop}\label{p:b}
Let $G \leqs {\rm Sym}(\Omega)$ be a primitive almost simple classical group with socle $T$ and point stabiliser $H \in \mathcal{B}$. Let $r$ be a prime divisor of $|\Omega|$ and let $S$ denote the socle of $H$. Then $T$ is $r$-elusive if and only if $(T,S,r)$ is one of the cases listed in Table \ref{t:bex}.
\end{prop}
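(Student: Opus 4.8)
The plan is to work through the eighteen cases $(\mathcal{B}1)$–$(\mathcal{B}18)$ of Table \ref{btab} one at a time, since the collection $\mathcal{B}$ is finite and each socle $S$ is an explicitly named group of small rank (a classical group of dimension at most $5$, an exceptional group of small rank such as $G_2(q)$ or ${}^3D_4(q_0)$, an alternating group $A_6$ or $A_7$, or one of the sporadic groups ${\rm M}_{12}$, ${\rm M}_{22}$, ${\rm J}_2$). For a fixed case the embedding $\hat S \hookrightarrow {\rm GL}(V)$ is known concretely (via \cite{BHR} and the tables in \cite[Section 8.2]{BHR}), so one can describe the Brauer character of $V$ restricted to $\hat S$ and hence, for each prime $r$ dividing $|\Omega|$ and $|H_0|$ (with $H_0 = H\cap T$), determine the eigenvalues on $\bar V$, and therefore the value $\nu(x)$, for every element $x\in H_0$ of order $r$. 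Comparing these with the known conjugacy/subgroup data for $T$ (from \cite[Chapter 3]{BG} and Section \ref{ss:cc}) then decides $r$-elusivity in that case.

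First I would dispose of the defining-characteristic prime $r=p$ in each row: using Lemma \ref{lem:jordan}-type Jordan-form computations, or the known unipotent class data, one exhibits a unipotent derangement of order $p$ in $T$ (typically the class with the largest $1$-eigenspace not meeting $H_0$), so that $T$ is never $p$-elusive here — consistent with Remark \ref{r:main3}, the sole exception being $(\mathcal{B}15)$. Next, for $r\ne p$ I would split into the cases where $S\in{\rm Lie}(p)$ (rows $(\mathcal{B}2),(\mathcal{B}3),(\mathcal{B}6),(\mathcal{B}7),(\mathcal{B}9)$) and $S\notin{\rm Lie}(p)$. When $S\notin{\rm Lie}(p)$, the parameter $q$ is bounded ($q\le p^2$ or $q=p$), $S$ is a fixed finite group, and the possible primes $r$ dividing $|S|$ are explicitly listed; for each such $r$ one computes the cycle-type/eigenvalue data of the order-$r$ elements of $H_0$ from the character table of $\hat S$ acting on $V$, reads off $i=\Phi(r,q)$ and $c$ from \eqref{e:eqcc}, and then either finds a derangement (often forced when $c$ is large enough that $\nu(x)$ exceeds every value attained on $H_0$) or verifies elusivity by class-matching; the resulting number-theoretic conditions (e.g. $p^3\equiv\pm1\imod{49}$, $q^2\equiv-1\imod{25}$) are precisely those recorded in the final column of Table \ref{t:bex}, as explained in the forthcoming Remark \ref{r:42}. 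When $S\in{\rm Lie}(p)$, the module $V$ is a Weyl-module-like module given by a highest weight (as in rows with $V=L(\l_1)$ or $L(\l_2)$ or $L(2\l_1)$), and one uses the semisimple conjugacy class structure of $S$ together with Steinberg's tensor product / restriction theory to compute eigenvalue multiplicities on $\bar V$; again one compares with the $T$-class data.

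The main obstacle, I expect, is the careful bookkeeping needed to decide \emph{exact} $r$-elusivity (as opposed to merely exhibiting a derangement) in the borderline small cases — notably $(\mathcal{B}2),(\mathcal{B}3),(\mathcal{B}6),(\mathcal{B}7),(\mathcal{B}8),(\mathcal{B}17)$ where $T$ has dimension $7$ or $8$ and the outer automorphisms (graph and field automorphisms of $D_4$, the triality, etc.) can fuse $T$-classes that are not $H_0$-fused. Here Corollary \ref{c:count} (class-counting) must be applied with genuine care about fusion in $G$ versus in $T$, and about which order-$r$ subgroups of $T$ actually meet $H_0$; getting the precise congruence conditions on $p$ (or $q$) right — for instance distinguishing $p\ge7$ from $p\equiv\pm1\imod 8$ for involutions, or pinning down $q^2\equiv-1\imod{125}$ rather than $\imod{25}$ in $(\mathcal{B}17)$ — is where the real work lies. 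The remaining rows, where $T$ has larger dimension or $H_0$ is a well-understood group like ${\rm PSU}_3(3)$ or ${\rm M}_{12}$, should follow routinely by the same eigenvalue-versus-class comparison.
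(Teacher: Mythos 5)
Your plan matches the paper's actual proof in all essentials: the paper likewise proceeds case by case through Table \ref{btab}, handling $r=p$ by class-counting or explicit unipotent derangements, and $r\neq p$ by comparing Brauer-character eigenvalue data of $H_0$ on $\bar V$ with the $T$-class data (via Corollary \ref{c:count}), with special fusion/triality arguments in the $7$- and $8$-dimensional cases and restriction/embedding arguments when $S\in{\rm Lie}(p)$. The only cosmetic difference is that for rows such as $(\mathcal{B}2)$, $(\mathcal{B}3)$, $(\mathcal{B}6)$, $(\mathcal{B}7)$ the paper cites earlier fixed-point-ratio computations of Burness and the description of $H_0$ as a triality centralizer rather than recomputing eigenvalues from highest-weight theory, but the underlying strategy is the same.
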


\begin{rem}\label{r:42}
The conditions recorded in the final column of Table \ref{t:bex} are needed to ensure that every element in $T$ of order $r$ has fixed points, and they also imply that $r$ divides the degree of $G$. Note that these conditions are additional to the ones given in Table \ref{btab}, which are needed for the existence and maximality of $H$ in $G$. We refer the reader to the tables in \cite[Section 8.2]{BHR} for the precise conditions required for maximality, and for a detailed description of the structure of $H_0 = H \cap T$. Further information on these cases can be found in \cite[Section 2.3]{Bur6}. Also note that the relevant covering group $\hat{S}$ is given in the statement of \cite[Theorem 7.1]{GS}.
\end{rem}

\begin{lem}\label{l:b1}
Proposition \ref{p:b} holds in Case $(\mathcal{B}1)$ of Table \ref{btab}.
\end{lem}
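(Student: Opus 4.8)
The plan is to settle Case $(\mathcal{B}1)$, where $T = {\rm PSp}_{10}(p)$ with $p$ odd, $S = {\rm PSU}_5(2)$, and $V = L(\l)$ is the $10$-dimensional irreducible $\hat{S}$-module with $\hat{S} = {\rm SU}_5(2)$ (the relevant covering group from \cite[Theorem 7.1]{GS}). The degree $|\Omega| = |G|/|H|$, and since $|{\rm PSU}_5(2)| = 2^{10}\cdot 3^5 \cdot 5 \cdot 11$, the only primes $r$ dividing $|H \cap T|$ are $r \in \{2,3,5,11\}$; for any other prime $r$ dividing $|\Omega|$ every element of order $r$ in $T$ is automatically a derangement, so $T$ is not $r$-elusive. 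Thus it suffices to work through $r \in \{2,3,5,11\}$ one at a time, in each case either exhibiting an element of order $r$ in $T$ with no fixed point, or verifying that every such element has a fixed point (and that $r \mid |\Omega|$), so as to reproduce precisely the two lines of Table \ref{t:bex}.

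The key computational tool is the Brauer character of $V = L(\l)$ for ${\rm SU}_5(2)$ (available in \cite{BHR} or the modular ATLAS), which tells us, for each element $g \in H_0$ of order $r$, the eigenvalue multiset of $\hat{g}$ on $\bar V$, and hence $\nu(g) = \dim[\bar V, \hat g]$ and the conjugacy class of $g$ in ${\rm PSp}_{10}(p)$ (using the class descriptions of \cite[Chapter 3]{BG} and the standard symplectic-class notation). Conversely, the classes of elements of order $r$ in $T = {\rm PSp}_{10}(p)$ are parametrised as in Section \ref{ss:cc}: for $r = 2$ these are the involutions $[-I_{2\ell}, I_{10-2\ell}]$ (of types $t_\ell, t_\ell'$ depending on the sign of the $(-1)$-eigenspace, cf.\ \cite[Table B.?]{BG}), and for $r$ odd with $i = \Phi(r,p)$ and $c$ as in \eqref{e:eqcc}, by their eigenvalue multiplicities $[\dots, I_e]$. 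Comparing the list of $T$-classes against those that actually meet $H_0$ then decides $r$-elusivity, and for $r = 11$ (where $c \in \{5,10\}$ is large, so $\kappa(T,11) = 1$ by Lemma \ref{l:floor}(i) when $p^5 \equiv \pm 1 \imod{121}$ and $p^2 \not\equiv 1 \imod{11}$), the arithmetic conditions $p^2 \not\equiv 1 \imod{11}$, $p^5 \equiv \pm 1 \imod{121}$ recorded in Table \ref{t:bex} are exactly the conditions forcing $i = 5$ or $10$, $11 \mid |\Omega|$, and $\kappa(T,11)=1$; for $r=2$ one reads off from the Brauer character that $S$ contains involutions whose $(-1)$-eigenspace on $V$ has dimension $2$, $4$ — so $t_1$-type elements are realised — and the condition $p \equiv \pm 1 \imod 8$ governs which of $t_1, t_1'$ (i.e.\ the $+$ or $-$ type $2$-space) lies in $T$ versus in a coset, forcing $2$-elusivity exactly when $p \equiv \pm 1 \imod 8$; and for $r \in \{3,5\}$ one checks directly from the eigenvalue data that $T$ always contains a derangement (a class of elements of order $r$ with larger $1$-eigenspace defect than anything in $S$), so there are no entries.

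The main obstacle is bookkeeping rather than conceptual: one must (a) correctly identify the Brauer character values of $L(\l)$ on the order-$r$ elements of ${\rm SU}_5(2)$ for $r = 2,3,5,11$, being careful about the difference between $\hat S$-classes and $S$-classes and about scalars (the element $x = \hat x Z$ normalisation of Remark \ref{r:cc}); (b) translate these into symplectic conjugacy-class labels in ${\rm PSp}_{10}(p)$, which for $r = 2$ means getting the plus/minus type of the $(-1)$-eigenspace right — this is where the $p \equiv \pm 1 \imod 8$ congruence enters, via the discriminant/spinor-norm calculation analogous to the one in Lemma \ref{l:acol}; and (c) pin down, for each $r$, the precise congruence condition on $p$ ensuring both that $r \mid |\Omega|$ and that no $T$-class of order $r$ is missed by $H_0$. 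Once (a)–(c) are done, the conclusion is a direct comparison: $T$ is $r$-elusive iff every $T$-class of elements of order $r$ meets $H_0$, and running this for $r = 2,3,5,11$ yields exactly the two rows of Table \ref{t:bex} for Case $(\mathcal{B}1)$, completing the proof of the lemma.
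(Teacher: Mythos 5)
Your reduction to $r \in \{2,3,5,11\}$ and your handling of the odd primes essentially track the paper's argument (Brauer character comparison for $r=3,5$; for $r=11$ the conditions $p^2 \not\equiv 1 \imod{11}$ and $p^5 \equiv \pm 1 \imod{121}$ force $i \in \{5,10\}$, hence $c=10$ --- note that $c=2i=10$ when $i=5$, not $c=5$ --- so that $\kappa(T,11)=1$ and $11$ divides $|\Omega|$). You do, however, omit the possibility $r=p \in \{3,5,11\}$, which the paper settles by observing that $H_0$ has at most six classes of elements of order $p$ while $T$ has at least seven.

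The genuine gap is in your $r=2$ analysis. The congruence $p \equiv \pm 1 \imod{8}$ does not arise from a discriminant computation on a $2$-dimensional $(-1)$-eigenspace: the form on $V$ is alternating, so nondegenerate $2$-spaces have no plus/minus type and there is no symplectic analogue of the $t_1$ versus $t_1'$ dichotomy you are borrowing from the orthogonal setting of Lemma \ref{l:acol}. The classes that actually decide $2$-elusivity here are the involutions $x \in T$ with $\nu(x)=5$, whose preimages in ${\rm Sp}_{10}(p)$ square to $-I_{10}$ and have eigenvalues $\pm\sqrt{-1}$, each of multiplicity $5$. Since the relevant covering group is $S={\rm PSU}_{5}(2)$ itself, every involution of $S$ acts on $V$ with eigenvalues $\pm 1$ and so has $\nu \leqs 4$; hence the $\nu(x)=5$ classes of $T$ can meet $H_0$ only through the involutory graph automorphism $\gamma$ of $S$, which has $\nu(\gamma)=5$. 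Whether $\gamma$ lies in $H_0$, i.e.\ whether $H_0=S.2$ rather than $S$, is precisely what \cite[Table 8.65]{BHR} records: $H_0=S.2$ if and only if $p \equiv \pm 1 \imod{8}$. Your outline never invokes this structural input and never accounts for the $\nu(x)=5$ classes, so as written it cannot produce (or justify) the $r=2$ condition in Table \ref{t:bex}; the classes with $\nu(x)<5$, which your sketch does cover, always have fixed points and are not where the condition comes from.
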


\begin{proof}
Here $T={\rm PSp}_{10}(p)$, $S={\rm PSU}_{5}(2)$ and $p \ne 2$. According to \cite[Table 8.65]{BHR}, we have $H_0 = S.2$ if and only if $p \equiv \pm 1\imod{8}$. Let $r$ be a prime divisor of $|\Omega|$. If $r$ does not divide $|H_0|$ then any element in $T$ of order $r$ is a derangement, so we may as well assume that $r$ also divides $|H_0|$, hence $r \in \{2,3,5,11\}$. 

If $r=p$ then $H_0$ has at most six classes of elements of order $r$, but $T$ has at least seven by \cite[Proposition 3.4.10]{BG} and thus $T$ is not $r$-elusive by Corollary \ref{c:count}. Now assume $r \neq p$. Set $i=\Phi(r,p)$ as in \eqref{e:phirq} and define $\nu(x)$ for $x \in T$ as in \eqref{e:nu}. Let $\chi$ be the corresponding Brauer character of $H_0$ (this is available in \textsf{GAP} \cite{GAP4}, for example). One observes that $\{\chi(x) \,:\, x \in H_0,\, |x|=3\} = \{-5,-2,1,4\}$, which implies that every $x \in T$ of order $3$ with $\nu(x)=2$ is a derangement (indeed, over $\bar{\mathbb{F}}_{p}$ such an element has eigenvalues $\omega$, $\omega^2$ and $1$ (the latter with multiplicity $8$), so $\chi(x)=7$). In the same way, we deduce that the elements $x \in T$ of order $5$ with $\nu(x)=4$ are derangements. If $r=11$ then $i \in \{1,2,5,10\}$ and by considering $\chi$ we see that $T$ is $11$-elusive if and only if $i>2$ (in fact, we need the condition $p^5 \equiv \pm 1 \imod{121}$ to ensure that $|\Omega|$ is divisible by $11$).

Finally, let us assume $r=2$. By inspecting the values of $\chi$ we deduce that the involutions $x \in T$ with $\nu(x)<5$ have fixed points, whereas those with $\nu(x)=5$ have fixed points if and only if $H_0=S.2$ (in this situation, $H_0$ contains an involutory graph automorphism $\gamma$ of $S$ such that $\nu(\gamma)=5$). We conclude that $T$ is $2$-elusive if and only if $p \equiv \pm 1\imod{8}$.
\end{proof}

\begin{lem}\label{l:b2}
Proposition \ref{p:b} holds in Case $(\mathcal{B}2)$ of Table \ref{btab}.
\end{lem}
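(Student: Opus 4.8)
The plan is as follows. In Case $(\mathcal{B}2)$ we have $T = {\rm P\Omega}_{8}^{+}(q)$ and $S = \Omega_{7}(q)$ (with $S \cong {\rm Sp}_{6}(q)$ when $p = 2$), and $V$ is the $8$-dimensional spin module for $\hat{S} = {\rm Spin}_{7}(q)$ (respectively ${\rm Sp}_{6}(q)$). I would begin by reading off the structure of $H_0 = H \cap T$ from the relevant table in \cite[Section 8.2]{BHR} and recording that $|T : H_0| = q^{3}(q^{4}-1)$ up to a bounded power of $2$; in particular every prime divisor $r$ of $|\Omega|$ satisfies $r = p$ or $r \mid q^{4}-1$, and in the latter case $i := \Phi(r,q) \in \{1,2,4\}$. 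Since Case $(\mathcal{B}2)$ is absent from Table \ref{t:bex}, the aim is to exhibit a derangement of order $r$ for every such $r$. If $r \nmid |H_0|$ then every element of order $r$ in $T$ is a derangement, so we may assume $r \mid |H_0|$.

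The main technical ingredient is a description of the action on $V$ of elements of prime order in $\hat{S}$, via the highest weight theory for the spin representation of $B_3$ (equivalently $C_3$ when $p = 2$). If $g \in \hat{S}$ is semisimple with eigenvalues $\{a^{\pm 2}, b^{\pm 2}, c^{\pm 2}, 1\}$ on the natural $7$-dimensional orthogonal module, then $g$ acts on $\bar{V}$ with the eight eigenvalues $a^{\pm 1}b^{\pm 1}c^{\pm 1}$, while a long-root element of $\hat{S}$ acts on $V$ with $V|_{\langle g \rangle} \cong 4\cdot(\text{trivial}) \oplus 2\cdot(\text{natural } 2\text{-dimensional module})$. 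Partitioning the eight eigenvalues into four inverse pairs and using that a nontrivial $r$-th root of unity with $r$ odd is never equal to its inverse, one deduces that a nontrivial element $g \in \hat{S}$ of odd order coprime to $p$ satisfies $\nu(g) \geq 4$, and moreover that if the eigenvalue $1$ has multiplicity at least $4$ in the spectrum of such a $g$, then $g$ has at most one nontrivial eigenvalue-inverse-pair on $\bar{V}$. For odd $r$ the Sylow $r$-subgroups of $S$, $\hat{S}$ and $H_0$ coincide, so these statements apply to every odd-order $r$-element of $H_0$; likewise the second fact shows that $\hat{S}$ has a unique class of unipotent elements $u$ with $\nu(u) = 2$, of Jordan type $[J_2^2, J_1^4]$ on $V$.

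Now I would treat the primes $r$ in turn. Suppose first that $r \neq p$, so $r$ is odd when $p$ is odd; set $c = i$ if $i \in \{2,4\}$ and $c = 2$ if $i = 1$, so $c$ is the integer in \eqref{e:eqcc} for $T$. Let $x = [\Lambda, I_{8-c}]$ be an element of order $r$ acting fixed-point-freely on a nondegenerate $c$-space (of minus type when $i \in \{2,4\}$, of plus type when $i = 1$) and trivially on a nondegenerate complement chosen of the type that makes the ambient $8$-space of plus type; then $x \in \Omega_{8}^{+}(q)$ and $\nu(x) = c \leq 4$. When $c = 2$ we have $\nu(x) = 2 < 4$, and when $c = 4$ the element $x$ has the eigenvalue $1$ with multiplicity $4$ and two distinct nontrivial eigenvalue-inverse-pairs on $\bar{V}$; in either case the previous paragraph shows that $x$ is not $T$-conjugate into $\hat{S}$, hence not into $H_0$, so $x$ is a derangement. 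For $r = 2$ — which forces $i = 1$ when $p$ is odd — the same argument applies with an involution $x \in \Omega_{8}^{+}(q)$ of type $[-I_2, I_6]$, using that every $H_0$-involution acts on $V$ with $\nu \geq 4$ when $p$ is odd; when $p = 2$ one instead uses the following observation. For $r = p$, one checks that $\Omega_{8}^{+}(q)$ has at least two unipotent classes of Jordan type $[J_2^2, J_1^4]$ on $V$, whereas $\hat{S}$ — and hence $H_0$ — realises only one of them (the class containing the long-root elements, or the transvections when $p = 2$); so the other such class consists of derangements.

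I expect the main obstacle to be the case $r = p$: one must determine which $[J_2^2, J_1^4]$-class of $\Omega_{8}^{+}(q)$ meets the long-root elements of $\hat{S}$ (via a short computation of the induced quadratic form on $[V,u]$, in the spirit of the argument in Lemma \ref{l:b1}) and confirm that $\Omega_{8}^{+}(q)$ genuinely carries a second such class; the characteristic-$2$ case, in which $S = {\rm Sp}_{6}(q)$ and both the spin module and the classification of unipotent involutions of $\Omega_{8}^{+}(q)$ differ, needs a parallel but separate treatment. A secondary point requiring care — though it amounts only to routine orthogonal-geometry bookkeeping — is checking that the elements $x = [\Lambda, I_{8-c}]$ with $i \in \{2,4\}$ really lie in $\Omega_{8}^{+}(q)$ rather than in $\Omega_{8}^{-}(q)$, which is precisely where the minus type of the block $\Lambda$ is used. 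By contrast, the semisimple cases with $i \in \{1,2\}$ and the initial reduction to $r \mid |H_0|$ are essentially immediate.
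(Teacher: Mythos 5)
Your handling of the semisimple primes is essentially correct and self-contained: the eigenvalue description $a^{\pm 1}b^{\pm 1}c^{\pm 1}$ of the spin module, the deduction that every nontrivial semisimple element of odd order in $\hat{S}$ has $\nu \geqs 4$, and the finer observation that such an element whose $1$-eigenspace is at least $4$-dimensional has only one nontrivial inverse pair of eigenvalues (each of multiplicity $2$) are all true, and they yield exactly the derangements the paper obtains by citing the proof of \cite[Lemma 2.7]{Bur6}: elements with $\nu(x)=2$ when $i \in \{1,2\}$, elements $[\Lambda,I_4]$ with four distinct nontrivial eigenvalues when $i=4$, and $[-I_2,I_6]$ when $r=2<p$. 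The genuine gap is the case $r=p$. For $p$ odd your proposed derangement does not exist: ${\rm P\Omega}_{8}^{+}(q)$ has exactly \emph{one} class of unipotent elements with Jordan form $[J_2^2,J_1^4]$. Indeed, in odd characteristic no nondegenerate $2$-space admits a $J_2$, and $\Omega_4^{-}(q) \cong {\rm PSL}_2(q^2)$ has no element of type $[J_2^2]$ (its unipotent elements act as $[J_3,J_1]$), so in the orthogonal decomposition of such an element the $[J_2^2]$-part is forced to be a plus-type $4$-space, hence the $J_1^4$-part is also of plus type; this gives a single ${\rm O}_8^{+}(q)$-class, which does not split in $\Omega_8^{+}(q)$ or ${\rm P\Omega}_8^{+}(q)$ because the centraliser contains elements of every determinant and spinor norm (already inside ${\rm O}_4^{+}(q)$ acting on the fixed part). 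Since, as you note, the long root elements of ${\rm Spin}_7(q)$ act on the spin module precisely with type $[J_2^2,J_1^4]$, this unique class meets $H_0$ and contains no derangements. You would have to use a different class, for instance $[J_3,J_1^5]$ as the paper does, and then show that no unipotent element of $\hat{S}$ acts on $V$ with that Jordan form; this needs the spin Jordan types of the unipotent classes of $B_3$ (only the long-root class has $\nu=2$ on $V$), which your eigenvalue lemma, being purely semisimple, does not provide.

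A second, smaller error occurs when $p=2$: the transvections (long root elements) of ${\rm Sp}_6(q)$ do not act on the $8$-dimensional module $L(\lambda_3)$ with type $[J_2^2,J_1^4]$ but with type $[J_2^4]$, since every weight $\pm\epsilon_1\pm\epsilon_2\pm\epsilon_3$ pairs nontrivially with the coroot of the long root $2\epsilon_1$. In this case your two-class claim is true (the classes $a_2$ and $c_2$ of $\Omega_8^{+}(q)$), but one must still determine which involution classes of ${\rm Sp}_6(q)$ hit which $T$-class; the paper's conclusion, again via \cite[Lemma 2.7]{Bur6}, is that the $c_2$-involutions of $T$ are the derangements. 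So the $r=p$ portion of your plan needs to be replaced rather than merely completed, while the $r \ne p$ portion stands.
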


\begin{proof}
Here $T = {\rm P\Omega}_{8}^{+}(q)$ and $H_0=\Omega_7(q)$ if $q$ is odd, otherwise $H_0 = {\rm Sp}_{6}(q)$. This embedding arises by restricting an irreducible spin representation of $\Omega_{8}^{+}(q)$ to the stabiliser of a $1$-dimensional nonsingular subspace of the natural $\Omega_{8}^{+}(q)$-module. Let $r$ be a prime divisor of $|H_0|$ and $|\Omega|$.

First assume $q$ is even, so $H_{0}={\rm Sp}_{6}(q)$. By inspecting the proof of \cite[Lemma 2.7]{Bur6}, we deduce that every $c_2$-type involution in $T$ is a derangement (here we are using the standard Aschbacher-Seitz \cite{AS} notation for involutions). 
Now assume $r$ is odd. Let $i=\Phi(r,q)$, so $i \in \{1,2,4\}$. 
If $i \in \{1,2\}$ then the proof of \cite[Lemma 2.7]{Bur6} indicates that every element $x \in T$ of order $r$ with $\nu(x)=2$ is a derangement. Similarly, if $i=4$ then the elements with $\nu(x)=4$ are derangements.

A very similar argument applies when $q$ is odd. For example, the proof of \cite[Lemma 2.7]{Bur6} shows that $[J_{3},J_1^5]$ and $[-I_{2},I_{6}]$ are derangements in $T$ of order $p$ and $2$, respectively. Finally, if $r \neq p$ and $r>2$ then we can proceed as above in the $q$ even case.
\end{proof}

\begin{lem}\label{l:b3}
Proposition \ref{p:b} holds in Case $(\mathcal{B}3)$ of Table \ref{btab}.
\end{lem}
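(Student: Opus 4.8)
The plan is as follows. Here $T = {\rm P\Omega}_{8}^{+}(q)$, $S = {}^{3}D_{4}(q_{0})$ with $q = q_{0}^{3}$, and the embedding $S \leqs T$ is afforded by the $8$-dimensional orthogonal $\mathbb{F}_{q}\hat{S}$-module $V$, which is not realisable over $\mathbb{F}_{q_{0}}$; by the relevant table in \cite[Section 8.2]{BHR} (see also \cite[Section 2.3]{Bur6}) we have $H_{0} = H \cap T \in \{S, S{:}3\}$, where the factor of $3$ is induced by a field automorphism of $S$. Since Case $(\mathcal{B}3)$ contributes no rows to Table \ref{t:bex}, what must be shown is that $T$ is \emph{not} $r$-elusive for any prime $r$ dividing $|\Omega|$; that is, $T$ contains a derangement of order $r$. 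If $r \nmid |H_{0}|$ this is immediate, so I may assume $r \mid |H_{0}|$, whence either $r = p$, or $r$ divides one of the cyclotomic factors $\Phi_{k}(q_{0})$ with $k \in \{1,2,3,6,12\}$ occurring in $|{}^{3}D_{4}(q_{0})| = q_{0}^{12}\Phi_{1}^{2}\Phi_{2}^{2}\Phi_{3}^{2}\Phi_{6}^{2}\Phi_{12}$.

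For $r = p$ I would argue by a class count: $|T|_{p} = q^{12} = q_{0}^{36}$ whereas $|H_{0}|_{p}$ divides $3q_{0}^{12}$, and the number of $H_{0}$-classes of elements of order $p$ is bounded above by the ($q$-independent) number of unipotent classes of ${}^{3}D_{4}(q_{0})$, which is strictly smaller than the corresponding number for ${\rm P\Omega}_{8}^{+}(q)$; hence $T$ is not $p$-elusive by Corollary \ref{c:count}. Equivalently, using the explicit list of Jordan forms on $V$ of the unipotent elements of $S$ (see \cite[Chapter 3]{BG}, \cite{Bur6}) one can just exhibit a unipotent element of $T$ whose Jordan form is not attained in $S$ --- for instance an element of type $a_{2}$ when $p = 2$, or one of type $[J_{3},J_{1}^{5}]$ or $[J_{2}^{2},J_{1}^{4}]$ when $p$ is odd.

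Now suppose $r \ne p$ and $r \geqs 5$, and put $i_{0} = \Phi(r,q_{0}) \in \{1,2,3,6,12\}$, so that $i = \Phi(r,q) = i_{0}/\gcd(i_{0},3)$ and the integer $c$ of \eqref{e:eqcc} for $T$ equals $2$ if $i_{0} \in \{1,2,3,6\}$ and $4$ if $i_{0} = 12$; in particular $c \leqs n/2$, so Corollary \ref{p:star} never applies and a derangement has to be produced directly. The case $i_{0} = 12$ is the cleanest: every element of $S$ of order $r$ lies in a cyclic Coxeter torus of order $\Phi_{12}(q_{0}) = q_{0}^{4}-q_{0}^{2}+1$ and acts on $\bar{V}$ without nonzero fixed vectors, so $\nu(x) = 8$ for all such $x$; on the other hand $\Phi_{12}(q_{0})$ divides $q_{0}^{6}+1 = q^{2}+1$, so $T$ contains an element $y$ of order $r$ supported on an $\Omega_{4}^{-}(q)$-factor of an orthogonal decomposition $V = U \perp U^{\perp}$ with $\dim U = 4$, whence $\nu(y) = 4 < 8$ and $y$ is a derangement. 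For $i_{0} \in \{1,2,3,6\}$ one has $c = 2$, and I would read off the possible values of $\nu(x)$ for $x \in S$ of order $r$ from the maximal torus structure of ${}^{3}D_{4}(q_{0})$ and the restriction of $V$ to its tori (equivalently, from the known conjugacy class data for $S$): then either the minimum of $\nu(x)$ over such $x$ exceeds $2$, in which case any element of $T$ of order $r$ with $\nu = 2$ is a derangement, or, in the cases where $S$ does contain order-$r$ elements with $\nu = 2$, a direct count of $\kappa(T,r)$ and $\kappa(H_{0},r)$ (using Lemma \ref{l:floor} and the fact that $S$ has Lie rank $2$) shows that $T$ has strictly more classes of subgroups of order $r$, so Corollary \ref{c:count} applies.

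Finally, for $r \in \{2,3\}$ with $r \ne p$ the same comparison strategy works. For $r = 2$ (so $p$ is odd) the involution classes of $S$ are few, and their $(-1)$-eigenspace dimensions on $V$ are known; since ${\rm P\Omega}_{8}^{+}(q)$ has involutions of every admissible support $2,4,6$ (of both types where relevant --- see \cite[Chapter 3]{BG}) whereas $S$ realises only a proper subset, $T$ has an involution that is a derangement. For $r = 3$ one argues likewise with the order-$3$ classes of $H_{0}$ --- including, when $H_{0} = S{:}3$, the outer order-$3$ elements and their centralisers in $S$ --- against those of $T$. In every case $T$ contains a derangement of order $r$, so $T$ is never $r$-elusive, as required. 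The principal obstacle throughout is the bookkeeping: one needs the precise values of $\nu$ on all prime-order classes of ${}^{3}D_{4}(q_{0})$ acting on the natural $8$-dimensional module, and once this data is assembled each individual case is routine.
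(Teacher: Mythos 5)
You have the right skeleton (reduce to $r$ dividing $|H_0|$, treat $r=p$, then semisimple $r$ with $c\in\{2,4\}$), but the proposal misses the idea the paper's argument actually turns on, and the decisive case is left unproved. The paper exploits the characterisation $H_0=C_{T}(\psi)$, where $\psi$ is a triality graph-field automorphism of $T$ (in particular $H_0={}^{3}D_{4}(q_0)$, not $S{:}3$): an element of $T$ whose class is moved by triality cannot fuse into $H_0$, and this at once shows that $[J_3,J_1^5]$ and $[-I_2,I_6]$ ($p$ odd), $a_4$-involutions ($p=2$), and every semisimple $x$ of order $r$ with $\nu(x)=2$ (when $i\in\{1,2\}$) or $\nu(x)=4$ (when $i=4$) are derangements, since for instance $\nu(x^{\psi})=4\neq 2$ (see \cite[Proposition 3.55(iv)]{Bur2} and the proof of \cite[Lemma 2.12]{Bur6}). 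In your treatment the central case $c=2$ (that is, $i_0\in\{1,2,3,6\}$) is an unresolved dichotomy: you neither determine the minimal value of $\nu$ on the order-$r$ elements of $S$ nor carry out the class count, and the count is not routine as stated, since $\kappa(H_0,r)$ grows linearly in $r$ (the relevant Sylow $r$-subgroup of $S$ has rank $2$) while Lemma \ref{l:floor}, as you invoke it, only guarantees $\kappa(T,r)\geqs 3$. So the heart of the lemma is missing.

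There are also concrete errors among your proposed witnesses. For $p$ odd, $[J_2^2,J_1^4]$ is the Jordan form of a long root element of $\Omega_8^{+}(q)$, and ${}^{3}D_{4}(q_0)$ does contain long root elements of $T$ (its long root subgroups are the $\mathbb{F}_{q_0}$-points of the triality-stable long root subgroups of $D_4$), so this Jordan form is attained in $H_0$ and such elements are not derangements; similarly, for $p=2$ the $a_2$-involutions are precisely the long root involutions and again lie in $H_0$ --- the paper uses $a_4$ instead. Finally, $\nu(x)=8$ is impossible on an $8$-dimensional module ($\nu\leqs 7$); for $i_0=12$ what you actually need is that every order-$r$ element of $S$ has $\nu>4$, which is exactly what the triality argument (or your torus analysis, properly completed) delivers.
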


\begin{proof}
Here $T={\rm P\Omega}_{8}^{+}(q)$ and $H_{0}=C_{T}(\psi) = {}^{3}D_4(q_0)$, where $q=q_0^3$ and $\psi$ is a triality graph-field automorphism of $T$. In view of the proof of \cite[Lemma 2.12]{Bur6}, this characterisation of $H_0$ implies that if $p \neq 2$ then unipotent elements with Jordan form 
$[J_{3},J_1^{5}]$ are derangements of order $p$, and so are involutions of type $a_{4}$ when $p=2$. Similarly, if $p \neq 2$ then the involutions of type $[-I_{2},I_{6}]$ are also derangements.

Let $r \ne p$ be an odd prime divisor of $|\Omega|$ and $|H_0|$. Set $i=\Phi(r,q)$ and note that $i \in \{1,2,4\}$. Let $x \in T$ be an element of order $r$ with $\nu(x)=\a$, where $\a=2$ if $i \in \{1,2\}$, otherwise $\a=4$. Then $x$ is not centralised by $\psi$ (see \cite[Proposition 3.55(iv)]{Bur2}), so $x$ is a derangement. For example, if $i \in \{1,2\}$ and $\nu(x)=2$ then $\nu(x^{\psi}) = 4$.
\end{proof}

\begin{lem}\label{l:b4}
Proposition \ref{p:b} holds in Case $(\mathcal{B}4)$ of Table \ref{btab}.
\end{lem}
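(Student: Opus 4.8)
\textbf{Proof plan for Lemma \ref{l:b4} (Case $(\mathcal{B}4)$).}

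In this case $T = {\rm P\Omega}_{8}^{+}(p)$, $S = \Omega_{8}^{+}(2)$ and $p \ne 2$; the module $V$ is an $8$-dimensional $\mathbb{F}_{p}\hat{S}$-module and one reads off $H_0 = H \cap T$ from \cite[Table 8.50]{BHR}, together with the conditions under which $H_0$ involves extra outer automorphisms (triality and/or graph-field automorphisms of $\Omega_{8}^{+}(2)$). The plan is to work throughout with the Brauer character $\chi$ of $\hat{S}$ afforded by $V$, which is available in \textsf{GAP} \cite{GAP4}: for an element $x \in T$ of order $r$ with prescribed eigenvalue multiset on $\bar{V}$ (equivalently, prescribed $\nu(x)$), the value $\chi$ would take on a preimage in $\hat S$ is determined, so by comparing the set $\{\chi(y) \,:\, y \in H_0, |y|=r\}$ with the value forced by the eigenvalues of $x$ we can decide whether $x$ is a derangement. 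Since $|S|$ is divisible only by $2,3,5,7$ and $p$, and $r$ must divide both $|\Omega|$ and $|H_0|$, it suffices to treat $r \in \{2,3,5,7,p\}$.

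First I would dispose of $r = p$: here $H_0$ has only boundedly many classes of elements of order $p$ (at most the number of unipotent classes of $\Omega_{8}^{+}(2).\,(\text{outer})$ that lift, which is small), whereas ${\rm P\Omega}_{8}^{+}(p)$ has strictly more unipotent classes by \cite[Proposition 3.4.10]{BG} (or by directly exhibiting $[J_3,J_1^5]$, $[-I_2,I_6]$-type issues), so Corollary \ref{c:count} gives a derangement of order $p$; this matches the absence of $r=p$ from Table \ref{t:bex}. Next, for each $r \in \{2,3,5,7\}$ with $r \ne p$, set $i = \Phi(r,p)$ and run through the admissible values of $i$ (note $i \mid 8$ is forced by $r \mid |\Omega|$ being a proper condition, so $i \in \{1,2,4\}$ for $r \ne 7$ and $i \in \{1,2,3,6\}$ for $r = 7$). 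For each $i$ the possible eigenvalue multisets of an order-$r$ element of $T$ on $\bar V$ are the ones listed in the relevant table of \cite[Appendix B]{BG}; I would compute the corresponding forced value of $\chi$ and check it against the finite list $\{\chi(y) : y \in H_0, |y| = r\}$. The elements whose forced $\chi$-value is absent from that list are derangements; the remaining ones have fixed points, and one then verifies that $r$ actually divides $|\Omega|$ precisely under the congruence on $p$ stated in the final column of Table \ref{t:bex} (for $r=2$ this is $p \geqs 7$, i.e. excluding the small primes $3,5$ where the relevant involution class happens to meet $H_0$; for $r=3$ it is $p^2 \equiv 1 \imod 9$; for $r=5$, $p^2 \equiv -1 \imod{25}$; for $r=7$, $p^2 \not\equiv 1 \imod 7$ and $p^3 \equiv \pm 1 \imod{49}$). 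A subtle point in the $r=2$ analysis is that, depending on $p \bmod$ small integers, $H_0$ may or may not contain an involutory graph (triality-type) automorphism of $S$ realising a particular $\nu$-value, so one must track which outer involutions lie in $H_0$; this is exactly the mechanism that produces the bound $p \geqs 7$.

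The main obstacle I anticipate is the bookkeeping for $r = 2$: one has to match up the $\Omega_{8}^{+}(p)$-involution classes (the $t_\ell, t_\ell', a_\ell, b_\ell, c_\ell$ families of \cite{GLS}, distinguished by $\nu$ and by $\pm$-type of eigenspaces) against the involution classes of $H_0$, using both the Brauer character values and the finer information about which involutions are inner to $S$ versus outer (graph/field/triality), and to confirm that the borderline cases behave as the table claims — in particular that for $p \in \{3,5\}$ there genuinely is an involution in $T$ with no fixed point while for $p \geqs 7$ there is not. The other cases ($r = 3,5,7$) are comparatively mechanical once the eigenvalue-versus-$\chi$ dictionary is set up, since for those primes the $i$ is large enough that the "large" classes (those with $\nu(x)$ maximal, i.e. $\nu(x) = i$ or $i/$something) are visibly absent from $H_0$ unless $i$ attains the value $r-1$, mirroring the $c = r-1$ phenomenon in Proposition \ref{p:a}(ii). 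I would present the argument case by case on $r$, quoting the relevant \textsf{GAP} character-value computations and the tables of \cite[Appendix B]{BG} and \cite[Table 4.5.1]{GLS}, exactly in the style of Lemmas \ref{l:b1}--\ref{l:b3}.
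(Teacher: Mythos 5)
Your overall strategy (use the Brauer character of $2.\Omega_{8}^{+}(2)$ on $V$ to decide which prime-order classes of $T$ meet $H_0$, plus a class-count for $r=p$) is essentially the paper's, but two specific points are wrong or missing. First, your explanation of the condition $p \geqs 7$ for $r=2$ is not the right mechanism. In Case $(\mathcal{B}4)$ one has $H_0 = \Omega_{8}^{+}(2)$ for every odd $p$ (there is no dependence of $H_0$ on congruences in $p$, unlike Cases $(\mathcal{B}1)$ or $(\mathcal{B}13)$), so ``which outer involutions lie in $H_0$'' is not what produces the bound. The actual point, used in the paper, is that for $p \in \{3,5\}$ the only prime dividing both $|\Omega|$ and $|H_0|$ is $p$ itself; in particular the $2$-parts of $|T|$ and $|\Omega_{8}^{+}(2)|$ coincide, so $|\Omega|$ is odd and the case $r=2$ simply does not arise (similarly $r \in \{3,5,7\}$ with $r \ne p$). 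Your claim that for $p \in \{3,5\}$ ``there genuinely is an involution in $T$ with no fixed point'' is false: with $|\Omega|$ odd every involution fixes a point. So the borderline verification you describe would not go through as stated; the correct check is simply that $2$ divides the degree only when $p \geqs 7$.

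Second, for $p \geqs 7$ and $r=2$ the character-value dictionary alone does not suffice, and this is the one genuinely nontrivial step your plan omits. The involution classes of $T$ are of types $t_1',t_2,t_3',t_4'$ (for $p=7$, say), and the Brauer character only shows that the $t_1'$ class and at least one further class meet a conjugate of $H_0$: trace values cannot separate the classes $t_3'$ and $t_4'$, which have identical eigenvalue data on $\bar{V}$. The paper closes this gap by noting that $H_0$ is normalised by a triality graph automorphism $\tau$ of $T$ which permutes the $T$-classes represented by $t_1',t_3',t_4'$; since one of these meets $H_0$ and $H_0^{\tau}=H_0$, all of them do, whence $T$ is $2$-elusive. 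Some such fusion argument must be added to your $r=2$ bookkeeping. (Minor points: for $r=p$ the relevant count for orthogonal groups is \cite[Proposition 3.5.12]{BG} rather than 3.4.10, giving at least six classes in $T$ against at most five in $H_0$; your treatment of $r \in \{3,5,7\}$, $r \ne p$, via eigenvalue multisets versus character values is in line with the paper.)
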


\begin{proof}
Here $T={\rm P\Omega}_{8}^{+}(p)$, $H_{0}=\Omega_{8}^{+}(2)$ and $p \ne 2$ (see \cite[Table 8.50]{BHR}). Let $r$ be a prime divisor of $|\Omega|$ and $|H_0|$, so $r \in \{2,3,5,7\}$. If $r=p$ then $H_0$ has at most five classes of subgroups of order $r$, whereas $T$ has at least six (see \cite[Proposition 3.5.12]{BG}). Now assume $r \neq p$ and note that $p \geqs 7$ (indeed, if $p \in \{3,5\}$ then $p$ is the only prime dividing $|\Omega|$ and $|H_0|$). Set $i = \Phi(r,p)$.

Suppose $p=7$, so $r \in \{2,5\}$. If $r=5$ then $i=4$ and we deduce that $T$ is $5$-elusive by considering the values of the corresponding Brauer character $\chi$ of $2.\Omega_{8}^{+}(2)$. Now assume $r=2$. The involutions in $T$ are of type $t_1',t_2,t_3'$ and $t_4'$, in terms of the notation in \cite{GLS, BG}. By inspecting $\chi$ we see that the $t_1'$ elements have fixed points, and so do the involutions in at least one of the other classes. Since $H_0$ is normalised by a triality graph automorphism $\tau$ of $T$, and $\tau$ permutes the $T$-classes represented by the elements $t_1',t_3',t_4'$, we conclude that every involution in $T$ has fixed points, so $T$ is $2$-elusive.

Now assume $p>7$. As above, $T$ is $2$-elusive. By considering $\chi$ we see that every element of order $3$ has fixed points, and we note that $|\Omega|$ is divisible by $3$ if and only if $p^2 \equiv 1 \imod{9}$. Similarly, if $r \in \{5,7\}$ then $T$ is $r$-elusive if and only if $i>2$.
\end{proof}

\begin{lem}\label{l:b67}
Proposition \ref{p:b} holds in Cases $(\mathcal{B}6)$ and $(\mathcal{B}7)$ of Table \ref{btab}. 
\end{lem}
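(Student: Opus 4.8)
The plan is to treat Cases $(\mathcal{B}6)$ and $(\mathcal{B}7)$ of Table \ref{btab} together, exploiting that in both cases $S = G_2(q)$ embedded in an orthogonal or symplectic group of dimension $n \in \{6,7\}$ via a small-dimensional irreducible module (the natural module $L(\l_1)$ when $p \neq 3$, or the $6$-dimensional module $L(\l_2)$ when $p=3$). First I would fix the ambient data: for $(\mathcal{B}6)$ we have $T = \Omega_7(q)$ if $p>2$ and $T = {\rm Sp}_6(q)$ if $p=2$, with $H_0 = G_2(q)$, and for $(\mathcal{B}7)$ we have $p=3$, $T = \Omega_7(q)$ (acting on $L(\l_2)$, which is $6$-dimensional, so really inside ${\rm Sp}_6(q)$ — I would double-check this against \cite[Table 8.39]{BHR} and \cite[Section 2.3]{Bur6}). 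I would then read off from \cite[Section 8.2]{BHR} the precise structure of $H_0$ and the conditions on $q$ for maximality, and note that $|\Omega|$-divisibility by $r$ must be tracked separately, exactly as flagged in Remark \ref{r:42}.

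The core of the argument is the standard dichotomy used throughout Section \ref{s:b}: either exhibit an explicit derangement of order $r$, or count classes and apply Corollary \ref{c:count}. For $r = p$ (so $p \in \{2,3\}$ here, since $q \leqs p^2$ does not apply but $p>2$ gives $r=p$ odd as well — more precisely $r=p$ is possible for any $p$): I would compare the number of $H_0 = G_2(q)$-classes of unipotent elements of order $p$ with the number in $T$. The unipotent classes of $G_2(q)$ are well documented (there are boundedly many — at most around $5$ or $6$ nontrivial ones depending on $p$), and $\Omega_7(q)$ or ${\rm Sp}_6(q)$ has strictly more unipotent classes of order $p$ once one lists Jordan forms $[J_p^a, J_{p-1}^b, \ldots]$ respecting the relevant form; for small $p$ this is a finite check using \cite[Chapter 3]{BG}. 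Alternatively, and more cleanly, I would produce a single unipotent derangement: an element of $T$ of order $p$ whose Jordan form on $V$ does not occur for any element of $G_2(q)$ in the given representation — for instance a regular unipotent element of $\Omega_7(q)$ (Jordan form $[J_7]$) cannot arise from $G_2(q)$ acting on $L(\l_1)$ since the image of a unipotent element of $G_2$ has bounded Jordan block sizes controlled by the highest weight; I would verify the precise claim via L\"{u}beck's tables \cite{Lu} or \cite[Section 2.3]{Bur6}. For $r \neq p$: I would set $i = \Phi(r,q)$ and use that $r \mid |G_2(q)|$ forces $i \in \{1,2,3,6\}$ (the relevant cyclotomic factors of $|G_2(q)| = q^6(q^6-1)(q^2-1)$); for each such $i$ I would pin down $\nu(x)$ for $x \in T$ of order $r$ with the largest $1$-eigenspace and show it is too small to be realised inside $G_2(q)$ — using the eigenvalue description in Section \ref{ss:cc} and the Brauer character of $G_2(q)$ on $L(\l_1)$ (resp. $L(\l_2)$), which is available in \textsf{GAP} \cite{GAP4}. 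This is precisely the technique of Lemmas \ref{l:b1}--\ref{l:b4}.

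The conclusion is that in Cases $(\mathcal{B}6)$ and $(\mathcal{B}7)$, $T$ is \emph{never} $r$-elusive — consistent with the fact that neither case appears in Table \ref{t:bex}. So the proof amounts to: for every prime $r$ dividing both $|\Omega|$ and $|H_0|$, exhibit a derangement of order $r$ (via an explicit element of $T$ whose $V$-action — Jordan form if $r=p$, eigenvalue multiset if $r \neq p$ — is incompatible with membership in the image of $G_2(q)$), handling separately the finitely many small primes $r \in \{2,3,7\}$ where a direct Brauer character or class-count computation is cleanest, and the generic case where $\nu(x)$ is bounded below on $T$-elements of order $r$ by a quantity exceeding the maximum of $\nu$ over $G_2(q)$.

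The main obstacle will be the $r = p$ analysis, particularly for $p = 3$ in Case $(\mathcal{B}7)$ where the module $L(\l_2)$ is not the natural module and the unipotent Jordan block structure of $G_2(q)$-elements on $L(\l_2)$ is less familiar; I expect to lean on \cite[Section 2.3]{Bur6} and \cite{Lu}, and possibly on the fact (traceable in \cite[Lemma 2.12]{Bur6} or nearby) that regular unipotent elements of $\Omega_7(q)$ resp. ${\rm Sp}_6(q)$ act with a single Jordan block, which $G_2(q)$ on a $6$- or $7$-dimensional module cannot match. A secondary nuisance is bookkeeping the $|\Omega|$-divisibility conditions, but since the claim is that there are no $r$-elusive examples, an over-generous derangement argument suffices and I need not determine these conditions precisely.
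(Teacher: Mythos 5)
Your overall shape (show $T$ is never $r$-elusive here, by exhibiting derangements or counting classes) is right, but two steps as proposed would fail. First, in the semisimple case you plan to run through $i\in\{1,2,3,6\}$ and in each case show that the elements of $T$ of order $r$ with largest $1$-eigenspace cannot be realised in $G_2(q)$, explicitly declining to track divisibility of $|\Omega|$ on the grounds that ``an over-generous derangement argument suffices''. For $i\in\{3,6\}$ this is exactly backwards: there $c=6>n/2$, so $\kappa(T,r)=1$ by Lemma \ref{l:floor}(i), and since $r$ divides $|G_2(q)|$ every element of order $r$ in $T$ lies in a conjugate of $H_0$ --- there are \emph{no} derangements of order $r$ to find. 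These primes are harmless only because $|\Omega|$ is, up to a bounded factor, $q^3(q^4-1)$, which such $r$ do not divide; so the divisibility bookkeeping you wave away is the indispensable step, and it is why the paper can assert $i\in\{1,2\}$. Moreover, for $i\in\{1,2\}$ your proposed tool (the Brauer character of $G_2(q)$ ``available in GAP'') does not exist for generic $q$: that device works in Lemmas \ref{l:b1}--\ref{l:b4} because there $S$ is a fixed finite group in cross characteristic. Here one needs a structural argument; the paper restricts $\bar{V}$ to the maximal-rank $A_2\leqs G_2$ (which contains all semisimple elements of $G_2$ up to conjugacy), notes $\bar{V}\downarrow A_2=V_3\oplus V_3^*(\oplus 0)$, and compares eigenvalue multisets with an element $x\in T$ of order $r$ with $\nu(x)=2$.

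Second, your ``cleaner'' unipotent derangement is false: for $p\geqs 7$ a regular unipotent element of $G_2(q)$ acts on $L(\l_1)$ as a single block $J_7$ (restrict to a principal $A_1$), so $[J_7]$ is not a derangement, and the highest weight does not bound block sizes below $7$. The fallback count is also delicate: for $p\geqs 7$, $G_2(q)$ has six classes of elements of order $p$ while $\Omega_7(q)$ has six admissible Jordan forms, so a strict inequality needs class-splitting/fusion data. The paper avoids all of this by exhibiting $[J_3,J_1^4]$, which no element of $G_2(q)$ realises on the $7$-dimensional module for $p>2$ (long root elements give $[J_2^2,J_1^3]$, short root elements $[J_3,J_2^2]$, etc.; see the proof of \cite[Lemma 2.13]{Bur6}), together with involutions $x\in T$ with $\nu(x)=1$ for $r=2$. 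Finally, a smaller point: $L(\l_2)$ is $7$-dimensional when $p=3$ (not $6$-dimensional inside ${\rm Sp}_6$), and the paper disposes of $(\mathcal{B}7)$ at the outset via the exceptional graph automorphism of $G_2(q)$ in characteristic $3$, which interchanges $L(\l_1)$ and $L(\l_2)$, so only the standard embedding $(\mathcal{B}6)$ needs analysis; your plan to treat $(\mathcal{B}7)$ directly rests on that incorrect dimension guess.
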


\begin{proof}
Here $H_0 = G_2(q)$ and $T = \Omega_7(q)$ or ${\rm Sp}_{6}(q)$, according to the parity of $p$.
If $p=3$ then $G_{2}(q)$ admits an involutory graph automorphism that interchanges the two irreducible $7$-dimensional modules $L(\l_{1})$ and $L(\l_{2})$, so we only need to consider the standard embedding, labelled $(\BB6)$. Let $r$ be a prime divisor of $|\Omega|$ and $|H_0|$.

If $r=p>2$ then the proof of \cite[Lemma 2.13]{Bur6} implies that $[J_{3},J_1^{4}]$ is a derangement of order $r$. Similarly, every involution $x \in T$ with $\nu(x)=1$ is a derangement.

Finally, suppose $r \neq p$ and $r>2$. Set $i=\Phi(r,q)$ and note that $i \in \{1,2\}$. Let $x \in T$ be an element of order $r$ with $\nu(x)=2$. Let $\bar{H}=G_2$ and $\bar{G}=B_3$ (or $C_3$ if $p=2$) be the ambient simple algebraic groups over the algebraic closure $\bar{\mathbb{F}}_{q}$, and note that $x$ is contained in a maximal rank subgroup $A_2$ of $\bar{H}$. If $\bar{V}$ denotes the natural module for $\bar{G}$, then
$$\bar{V} \downarrow A_2 = \left\{\begin{array}{ll}
V_3 \oplus V_3^* & p=2 \\
V_3 \oplus V_3^* \oplus 0 & p \neq 2
\end{array}\right.$$
where $V_3$ and $0$ denote the natural and trivial $A_2$-modules, respectively. It follows that each $y \in A_2$ of order $r$ has a repeated nontrivial eigenvalue on $\bar{V}$. Since the two nontrivial eigenvalues of $x$ are distinct, we conclude that $x$ is a derangement. 
\end{proof}

\begin{lem}\label{l:brem}
Proposition \ref{p:b} holds in each of the remaining cases in Table \ref{btab}.
\end{lem}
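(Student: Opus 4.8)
The remaining cases comprise Cases $(\mathcal{B}5)$, $(\mathcal{B}8)$--$(\mathcal{B}18)$ of Table \ref{btab}; these are precisely the cases where $T$ has small dimension ($n \in \{6,7\}$) and $S$ is either a simple group of Lie type of bounded size, an alternating group $A_6$ or $A_7$, or a sporadic group. The plan is to handle each of these finitely many embeddings by the same strategy already used in Lemmas \ref{l:b1} and \ref{l:b4}: fix the relevant covering group $\hat{S}$ (recorded in \cite[Theorem 7.1]{GS}) and the corresponding $n$-dimensional representation, and work through the primes $r$ dividing both $|\Omega|$ and $|H_0|$ in turn. For each such $r$, I would read off the list of conjugacy classes of elements of order $r$ in $H_0$ together with the relevant Brauer character values $\chi(x)$ (available in \textsf{GAP} \cite{GAP4}, or from the Atlas-type data underlying \cite{BHR}), and compare this with the classification of classes of elements of order $r$ in $T$ from \cite[Chapter 3]{BG}. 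An element $x \in H_0$ of order $r$ fuses into the $T$-class determined by its eigenvalues on $\bar V$, which are encoded in $\chi(x)$ via $\nu(x)$; so the fixed-point-free classes in $T$ are exactly those whose invariant $\nu$-value (equivalently, multiset of eigenvalues) is not realised by any element of order $r$ in $H_0$. If such a class exists, $T$ is not $r$-elusive; otherwise one checks directly that every $T$-class of order $r$ meets $H_0$, giving $r$-elusivity, and one records the arithmetic condition on $q$ needed to make $r$ divide $|\Omega|$ (and, where relevant, the condition on $q$ controlling whether $|H_0|$ contains the field/graph automorphism extension).

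More concretely, the case division is as follows. For $r = p$ (the defining characteristic) I would count classes of subgroups (or elements) of order $p$: by the same argument as in Lemmas \ref{l:b1} and \ref{l:b4}, $H_0$ has only a small bounded number of unipotent classes, whereas \cite[Propositions 3.4.10, 3.5.12]{BG} (and the analogous results for ${\rm PSL}_6^{\e}$ and ${\rm PSp}_6$) give strictly more in $T$ as soon as $p$ is not too small, so Corollary \ref{c:count} forces non-$r$-elusivity except possibly for the very smallest $p$, which are then checked by hand — this is how the single $p$-elusive example $(\mathcal{B}15)$ with $T = {\rm PSU}_6(2)$, $S = {\rm PSU}_4(3)$ survives (cf. Remark \ref{r:main3}). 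For $r \ne p$ with $r > 2$, one uses $i = \Phi(r,q)$ and $c$ from \eqref{e:eqcc}: when $c$ is large relative to $n$ (typically $c = n$ or $c = r-1 \in \{6,7\}$ for $n \in \{6,7\}$) the conditions in \eqref{e:star} hold and $T$ is automatically $r$-elusive by Corollary \ref{p:star}, accounting for the $r \in \{5,7,11\}$ rows of Table \ref{t:bex}; otherwise $c$ is small, an element of order $r$ with the largest possible $1$-eigenspace on $\bar V$ is a derangement, and $T$ is not $r$-elusive. For $r = 2$ (possible only when $p \ne 2$, forcing $T$ orthogonal or $T = {\rm PSL}_6^{\e}(q)$, ${\rm PSp}_6(q)$), one inspects the involution classes of $T$ from \cite{GLS, BG} against those of $H_0$: involutions of type $t_{n/2}$, $t_{n/2}'$ (when $n$ is even) have no $1$-eigenspace on $\bar V$ and hence, when present, are derangements; among the remaining types, the Brauer character decides fusion, and the answer typically depends on $q \bmod 8$ or $q \bmod{12}$ (and, in the triality cases $(\mathcal{B}4)$-type, on a triality graph automorphism permuting classes — but those are already done, so this does not recur here).

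I would organise the write-up as a short sequence of sub-lemmas, one per case or per small group of similar cases — e.g. one treating $(\mathcal{B}8)$ together with the ${\rm PSL}_6^{\e}$ cases $(\mathcal{B}10)$--$(\mathcal{B}14)$ where $S$ is $A_7$, $A_6$, ${\rm PSL}_3(4)$, ${\rm PSU}_4(3)$ or ${\rm M}_{12}$ (all sharing the $6$-dimensional behaviour), and separately $(\mathcal{B}5)$ (with $T = {\rm PSL}_7^{\e}(p)$, $S = {\rm PSU}_3(3)$, where $n=7$ and only $r=7$ survives), $(\mathcal{B}15)$--$(\mathcal{B}16)$ (with $T = {\rm PSU}_6(2)$), and $(\mathcal{B}17)$--$(\mathcal{B}18)$ (with $T = {\rm PSp}_6(q)$, $S = {\rm J}_2$ or ${\rm PSU}_3(3)$). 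Each sub-lemma follows the template above and ends by matching its findings against the corresponding row of Table \ref{t:bex}; I would also confirm Remark \ref{r:42}, i.e. that the stated conditions are exactly what is needed for $r \mid |\Omega|$ and for every order-$r$ element to have a fixed point.

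The main obstacle is bookkeeping rather than mathematical depth: there is no single hard step, but one must correctly extract, for each of the dozen-or-so embeddings and each relevant prime, the right Brauer character / eigenvalue data and the right list of $T$-classes, and keep straight the numerous congruence conditions on $q$ (including the subtle $p \equiv \pm 5\e$ type conditions and the conditions governing whether $H_0$ realises the full stabiliser, e.g. the extension-by-a-graph-automorphism phenomenon seen in Lemma \ref{l:b1}). The one place requiring genuine care is ensuring that a computed $\nu$-value really does pin down the $T$-class: for involutions in orthogonal groups of even dimension there can be two classes (say $t_\ell$ and $t_\ell'$) with the same $\nu$, distinguished by the type $\pm$ of the $(-1)$-eigenspace, so when $H_0$ contains an involution with that $\nu$ one must verify it lies in the class actually occurring in $\Omega$, not merely in ${\rm SO}$; this is handled exactly as in the proof of Lemma \ref{l:acol}, via \cite[Proposition 2.5.10]{KL} and \cite[Table 4.5.1]{GLS}.
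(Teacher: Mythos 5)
Your strategy is essentially the paper's own: the paper treats these cases exactly as you describe, by fixing the relevant cover $\hat{S}$, reading off Brauer character values of the order-$r$ elements of $H_0$, comparing class counts or eigenvalue data with the $T$-classes from \cite[Chapter 3]{BG} via Corollary \ref{c:count}, keeping track of whether $H_0=S$ or $S.2$, and recording the congruences on $q$ that make $r$ divide $|\Omega|$; indeed the paper only writes out the representative case $(\mathcal{B}13)$ and declares the remaining cases similar.

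One concrete omission: your description of the remaining cases (``$S$ of bounded size, $A_6$, $A_7$ or sporadic'') and your sub-lemma organisation silently drop $(\mathcal{B}9)$, where $T={\rm PSL}_{6}^{\e}(q)$ and $S={\rm PSL}_{3}^{\e}(q)$ with $V=L(2\l_1)$ and $q$ unbounded. Here there is no fixed Brauer character table to consult, so the table-lookup template does not literally apply; instead one argues directly from the module structure (the eigenvalues of an element of $H_0$ on $V$ are the pairwise products of its eigenvalues on the natural $3$-dimensional module), in the spirit of the algebraic-group computation in Lemma \ref{l:b67}, to produce a derangement of order $r$ for every prime $r$ dividing $|\Omega|$ (consistent with the absence of $(\mathcal{B}9)$ from Table \ref{t:bex}). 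With that case restored, your plan coincides with the paper's proof.
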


\begin{proof}
The remaining cases are similar so we only give details in case $(\mathcal{B}13)$. Here
$T={\rm PSL}_{6}^{\e}(p)$ and $S={\rm PSU}_{4}(3)$, where $p \equiv \e\imod{6}$ and $p\geqs 5$. More precisely, $H_0=S$ or $S.2$, with $H_0 = S.2$ if and only if $p \equiv \e\imod{12}$ (see \cite[Tables 8.25 and 8.27]{BHR}). Let $r$ be a prime divisor of $|\Omega|$ and $|H_0|$, so $r \in \{2,3,5,7\}$. Let $\chi$ be the corresponding Brauer character of $6.{\rm PSU}_{4}(3)$ or $6.{\rm PSU}_{4}(3).2$ (according to the value of $p$).

If $r=p$ then $r \in \{5,7\}$ and $T$ is not $r$-elusive since $H_0$ has at most two classes of elements of order $r$. Next assume $r \ne p$ and $r>2$. Set $i=\Phi(r,p)$. If $r=3$ then $H_0$ has at most four classes of elements of order $3$, but there are at least five in $T$ (see \cite[Propositions 3.2.2 and 3.3.3]{BG}, for example). Now assume $r=5$. By inspecting $\chi$ we see that $\nu(y)=4$ for all $y \in H_0$ of order $5$, whence $T$ is $5$-elusive if and only if $i=4$ (in fact, we need $p^2 \equiv -1 \imod{25}$ so that $|\Omega|$ is divisible by $5$). Similarly, $T$ is $7$-elusive if and only if $i = 3(3+\e)/2$ (here we need the condition $p^3 \equiv -\e \imod{49}$).

Finally, let us assume $r=2$. If $H_0 = S$ then $H_0$ has a unique class of involutions, but $T$ has two such classes and thus $T$ is not $2$-elusive. Now assume that 
$H_0=S.2$, so $p \equiv \e\imod{12}$ and $T$ has three classes of involutions, with representatives labelled $t_1$, $t_2$ and $t_3$ (see \cite[Table 4.5.1]{GLS}). Note that $H_0$ is an extension of $S$ by an involutory graph automorphism of type $\gamma_1$ (see \cite[Sections 3.2.5 and 3.3.5]{BG}). By considering the Brauer character $\chi$ we deduce that the two classes of graph automorphisms in $H_0$ fuse to the $T$-classes represented by $t_1$ and $t_3$, while the involutions in $S$ are $T$-conjugate to $t_2$. We conclude that $T$ is $2$-elusive.
\end{proof}

This completes the proof of Proposition \ref{p:b}.

\section{The proof of Theorem \ref{t:main}}\label{s:c}

As in the statement of Theorem \ref{t:main}, let $G \leqs {\rm Sym}(\Omega)$ be a primitive almost simple classical group over $\mathbb{F}_{q}$ with socle $T$ and point stabiliser $H \in \mathcal{S}$. Set $H_0 = H \cap T$ and write $q=p^f$ with $p$ a prime. Let $S$ denote the socle of $H$ and let $n$ be the dimension of the natural $T$-module $V$. Let $r$ be a prime divisor of $|\Omega|$. 

If $n<6$ then \cite[Proposition 6.3.1]{BG} states that $T$ is $r$-elusive if and only if $(T,S,r)$ is one of the cases in Table \ref{t:lowdim}, so we may assume that $n \geqs 6$. Similarly, if $H \in \mathcal{A} \cup \mathcal{B}$ then the conclusion to Theorem \ref{t:main} follows from our work in Sections \ref{s:a} and \ref{s:b} (see Propositions \ref{p:a} and \ref{p:b}). In addition, Corollary \ref{p:star} implies that $T$ is $r$-elusive if all of the conditions in \eqref{e:star} hold. 

Therefore, in order to complete the proof of Theorem \ref{t:main} we may assume that $n \geqs 6$ and $H \not\in \mathcal{A} \cup \mathcal{B}$; our aim is to show that $T$ is $r$-elusive only if all the conditions in \eqref{e:star} hold. Proposition \ref{p:thm1} below is a first step towards achieving this goal. In order to state this result, recall the definition of $i$ and $c$ in \eqref{e:eqcc}, and let \eqref{e:Diamond} denote the following conditions:
\begin{equation*}
\mbox{\emph{ 
$r \neq p$, $r>2$, $r$ divides $|H_0|$ and $c > \max\{2,\sqrt{n}/2\}$.} \label{e:Diamond} \tag{$\Diamond$}}
\end{equation*}

\begin{prop}\label{p:thm1}
Let $G \leqs {\rm Sym}(\Omega)$ be a primitive almost simple classical group with socle $T$ and point stabiliser $H \in \mathcal{S}$. Let $r$ be a prime divisor of $|\Omega|$ and let $S$ denote the socle of $H$. Assume that $n \geqs 6$ and $H \not\in \mathcal{A} \cup \mathcal{B}$. Then $T$ is $r$-elusive only if the conditions in \eqref{e:Diamond} hold.
\end{prop}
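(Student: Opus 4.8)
The plan is to prove the contrapositive: assuming at least one of the conditions in \eqref{e:Diamond} fails, we exhibit an element of order $r$ in $T$ that is a derangement. The starting point is Theorem \ref{gursax}, which (since $n \geqs 6$ and $H \not\in \mathcal{A} \cup \mathcal{B}$) guarantees $\nu(x) > \max\{2,\sqrt{n}/2\}$ for every nontrivial $x \in H \cap {\rm PGL}(V)$; equivalently, every nontrivial element of $H_0$ moves more than $\max\{2,\sqrt{n}/2\}$ dimensions on $\bar V$. So any element $x \in T$ of order $r$ with $\nu(x) \leqs \max\{2,\sqrt n/2\}$ lies in no conjugate of $H$ and is automatically a derangement. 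Thus the whole argument reduces to producing such a low-$\nu$ element of order $r$ in $T$ whenever some condition in \eqref{e:Diamond} fails, using the conjugacy-class descriptions of Section \ref{ss:cc} together with Remark \ref{r:cc}.

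I would organise the proof by which condition fails. First, if $r = p$: here one uses the unipotent classes of $T$. For $p$ odd, a regular-enough small unipotent element — essentially $[J_2^2, J_1^{n-4}]$ or $[J_3,J_1^{n-3}]$ according to the type of $T$, which has $\nu = 2 < \max\{2,\sqrt n/2\}$ once $n \geqs 6$ (strictly once $n > 16$, with the small cases $6 \leqs n \leqs 16$ needing the bound $\nu > 2$ from Theorem \ref{gursax}) — gives a derangement; for $p = 2$ the analogous statement uses an $a_2$-type (or $c_2$-type) involution, exactly as in the arguments of Sections \ref{s:a}--\ref{s:b}. One must be slightly careful that such a unipotent element genuinely exists in the relevant simple group $T$ (e.g. in $\Omega_n^\pm(q)$ versus ${\rm Sp}_n(q)$), but in every classical type a unipotent element with $\nu = 2$ exists once $n \geqs 6$. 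Next, if $r$ divides $|\Omega|$ but $r \nmid |H_0|$: then trivially \emph{every} element of $T$ of order $r$ is a derangement, and such elements exist because $r \mid |T|$ (as $r$ divides $|\Omega|$ and $|G| = |H_0||\Omega|$ forces $r \mid |H_0|$ or $r \mid |\Omega|$... indeed here $r \mid |\Omega| \mid |T|$), so we are done. Next, if $r = 2$ (and $r \ne p$): $T$ is a classical group with $n \geqs 6$, so it contains an involution $x$ with $\nu(x) = 2$ — a reflection-type or $[-I_2,I_{n-2}]$-type element (again being careful about the precise simple group, using the involution tables of \cite{GLS}, \cite{AS}) — and Theorem \ref{gursax} makes this a derangement, unless $n$ is so small that $\max\{2,\sqrt n/2\} < 2$ is false, but $\sqrt n/2 \leqs 2 \iff n \leqs 16$, so for $n \leqs 16$ we need $\nu(x) \leqs 2$, which a $[-I_2,I_{n-2}]$ involution satisfies, hence the conclusion still holds. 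Finally, the substantive case: $r \ne p$, $r > 2$, $r \mid |H_0|$, but $c \leqs \max\{2,\sqrt n/2\}$. Write $i = \Phi(r,q)$ and recall $c \in \{i, 2i, i/2\}$ from \eqref{e:eqcc}. By Remark \ref{r:cc} there is an element $x \in T$ of order $r$ of shape $[X_1, I_{n-c}]$ (a single block of size $c$, with the correct compatibility with the form), so $\nu(x) = c \leqs \max\{2,\sqrt n/2\}$, and Theorem \ref{gursax} again forces $x$ to be a derangement. One subtlety: Remark \ref{r:cc} assumes $c \geqs 2$; if $c = 1$ the element $[X_1,I_{n-1}]$ still has $\nu = 1$, and if $c = 2$ we use $\nu = 2$; in all cases $\nu(x) \leqs \max\{2,\sqrt n/2\}$ because that is precisely what the negation of the last condition in \eqref{e:Diamond} says. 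This completes the contrapositive.

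The main obstacle, as usual in this subject, is bookkeeping at the boundary between the simply-connected / isometry group and the actual simple group $T$: one needs to check in each classical type, and for each relevant small value of $\nu$, that the candidate derangement (the small unipotent element for $r = p$, the $[-I_2,I_{n-2}]$-type involution for $r = 2$, the single-block semisimple element $[X_1,I_{n-c}]$ for $r > 2$) really does lie in $T$ rather than merely in ${\rm PGL}(V)$ or in the conformal/similarity group — and, when $T$ is orthogonal, that the relevant eigenspace has the correct discriminant/type so that the element is in $\Omega$ and not just in ${\rm SO}$. This is exactly the kind of analysis carried out via \cite[Chapter 3]{BG} and the involution tables of \cite{GLS, AS}, and since $n \geqs 6$ there is always enough room for the complementary $1$-eigenspace to be adjusted to the needed type. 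A second, minor, obstacle is the genuinely small range $6 \leqs n \leqs 16$ where $\sqrt n/2 \leqs 2$, so that Theorem \ref{gursax} only gives $\nu(x) > 2$: here one must ensure the chosen derangement has $\nu \leqs 2$ exactly, which the elements listed above do. With these checks in place, every way of violating \eqref{e:Diamond} produces a derangement of order $r$, so $r$-elusivity of $T$ forces all the conditions in \eqref{e:Diamond}, as required. \qed
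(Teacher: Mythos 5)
Your proposal follows essentially the same route as the paper's proof: invoke Theorem \ref{gursax} and, for each way of violating \eqref{e:Diamond}, exhibit an element of order $r$ in $T$ with $\nu \leqs \max\{2,\sqrt{n}/2\}$ (a unipotent element $[J_2^2,J_1^{n-4}]$ when $r=p$, an involution $[-I_2,I_{n-2}]$ when $r=2 \ne p$, any element of order $r$ when $r \nmid |H_0|$, and an element with $\dim C_V(x)=n-c$ via Remark \ref{r:cc} when $1<c\leqs \max\{2,\sqrt{n}/2\}$). The one place where your named witness is wrong as stated is the subcase $c=1$ (which forces $T={\rm PSL}_{n}^{\e}(q)$ and $r \mid q-\e$): an element of shape $[\lambda,I_{n-1}]$, with $\nu=1$, need not lie in $T$ at all, since its image lies in ${\rm PSL}_{n}(q)$ only when $\lambda$ is an $n$-th power in $\mathbb{F}_{q}^{\times}$; this fails, for instance, when $r$ divides $n$ and $r^2 \nmid q-1$ (e.g. $T={\rm PSL}_{10}(11)$, $r=5$), and your "reflection-type" suggestion for $r=2$ has the same membership problem. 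The repair is exactly the device you already use for $c=2$, and is what the paper does: when $c=1$ take instead an element with $\dim C_V(x)=n-2$, such as $[\lambda,\lambda^{-1},I_{n-2}]$, which has determinant $1$, lies in $T$, and has $\nu(x)=2\leqs\max\{2,\sqrt{n}/2\}$, so Theorem \ref{gursax} still makes it a derangement. With that correction your argument coincides with the paper's proof of Proposition \ref{p:thm1}.
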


\begin{proof}
We apply Theorem \ref{gursax}. For example, any element in $T$ with Jordan form $[J_{2}^2,J_1^{n-4}]$ is a derangement of order $p$. Similarly, if $p$ is odd then involutions in $T$ of type $[-I_{2},I_{n-2}]$ are also derangements.  

Now assume $r \neq p$ and $r>2$. Clearly, $T$ contains derangements of order $r$ if $|H_0|$ is indivisible by $r$, so let us assume $r$ divides $|H_0|$. If $1<c \leqs \max\{2,\sqrt{n}/2\}$ then let $x \in T$ be an element of order $r$ with $\dim C_V(x)=n-c$ (see Remark \ref{r:cc}). Here 
$\nu(x)=c$, so Theorem \ref{gursax} implies that $x$ is a derangement. Similarly, if $c=1$ then any element $x \in T$ of order $r$ with $\dim C_V(x)=n-2$ is a derangement. We conclude that $T$ is $r$-elusive only if $c > \max\{2,\sqrt{n}/2\}$.
\end{proof}

To complete the proof of Theorem \ref{t:main}, it remains to show that $T$ contains a derangement of order $r$ when the following conditions are satisfied:
\begin{equation*}
\begin{array}{c}
\multicolumn{1}{l}{\mbox{\emph{ 
$n \geqs 6$, $H \not\in \mathcal{A} \cup \mathcal{B}$, $r \neq p$, $r>2$, $r$ divides $|H_0|$ and}}} \\
\\
\max\{2,\sqrt{n}/2\} < c \leqs n/2, \\ 
\\
\multicolumn{1}{l}{\mbox{\emph{with $c<n/2$ if $T = {\rm P\Omega}_{n}^{-}(q)$.} \label{e:box} \tag{$\boxtimes$}}}
\end{array}
\end{equation*}

\begin{prop}\label{p:c}
Let $G \leqs {\rm Sym}(\Omega)$ be a primitive almost simple classical group with socle $T$ and point stabiliser $H \in \mathcal{S}$. Let $r$ be a prime divisor of $|\Omega|$ and assume that the conditions in \eqref{e:box} are satisfied. Then $T$ contains a derangement of order $r$.
\end{prop}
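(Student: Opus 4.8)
We need to produce, under the hypotheses in \eqref{e:box}, an element of $T$ of order $r$ that is a derangement, i.e.\ one lying in no $G$-conjugate of $H$. By Corollary \ref{c:count} it suffices to find a situation where $T$ has strictly more classes of elements (or subgroups) of order $r$ than $H_0$ does; alternatively, we may exhibit a specific element of order $r$ in $T$ with a large $1$-eigenspace on $V$ and argue directly that it cannot be conjugate into $H_0$. The analysis splits naturally according to the isomorphism type of the socle $S$ of $H$, since this controls the subgroup and class structure of $H_0$. The principal dichotomy is whether $S$ is a group of Lie type in the defining characteristic $p$ (i.e.\ $S \in \mathrm{Lie}(p)$) or not; the sporadic and alternating cases, together with cross-characteristic Lie type groups, will be treated by the order-of-magnitude argument below, and the defining characteristic case requires weight-theoretic input.

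First I would dispose of the cases where $S$ is alternating, sporadic, or of Lie type in characteristic $\neq p$. Here the key point is that $\sqrt{n}/2 < c \leqs n/2$ forces $n$ to be bounded in terms of $c$ but, crucially, forces $|H_0|$ to be small relative to $|T|$: by the Landazuri--Seitz bounds \cite{Land-S}, a quasisimple group $\hat S$ with $S \notin \mathrm{Lie}(p)$ has no nontrivial projective representation over $\bar{\mathbb{F}}_p$ of dimension less than roughly $\sqrt{|S|}$-ish lower bounds, so $|S|$ is polynomially bounded in $n$, whereas $|T|$ grows like $q^{\Theta(n^2)}$. I would make this quantitative by comparing $\kappa(T,r)$ with the number of $H_0$-classes of subgroups of order $r$. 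By Lemma \ref{l:floor}(iii), $\kappa(T,r) \geqs \lfloor n/(r-1) \rfloor - 1$, and since $c \leqs r-1$ always (as $c \mid r-1$), combined with $c > \sqrt{n}/2$ we get $r-1 \geqs c > \sqrt{n}/2$, so $\kappa(T,r) \geqs \lfloor n/(r-1)\rfloor - 1$ can still be forced large when $n$ is large relative to $c$; more usefully, $\kappa(T,r) \geqs \lfloor n/c \rfloor - \delta \geqs 2$, and one pushes for $\kappa(T,r) > $ (number of order-$r$ subgroup classes in $H_0$). The number of such classes in $H_0$ is at most the number of conjugacy classes of $H_0$, which for the small groups in question is a bounded constant, while $\lfloor n/c \rfloor$ can be made $\geqs 3$ except in a short list of $(n,c)$; those residual small configurations (like $(n,c)=(6,3),(6,2),(8,4),\dots$) get handled case-by-case, often via an explicit Brauer character computation exactly as in Section \ref{s:b}, or by Lemmas \ref{l:prel1} and \ref{l:csub}.

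Next, for $S \in \mathrm{Lie}(p)$, the strategy is different and this is where the real work lies. Here $V$ is (a twist of) an irreducible $\bar{\mathbb{F}}_p \hat S$-module of highest weight $\lambda$, and I would use the highest weight theory of the ambient simple algebraic group $\bar H$ (with $\hat S$ an appropriate Frobenius-fixed-point subgroup) to control, for each semisimple element $y \in \bar H$ of order $r$, the eigenvalue multiplicities of $y$ on $V$ — equivalently the possible values of $\nu(y)$. Work of L\"ubeck \cite{Lu} classifying small-dimensional irreducibles, and Hiss--Malle \cite{HM}, pins down $\lambda$ once $n$ is bounded in terms of $c$ (hence the dimension is small relative to the rank, forcing $\lambda$ to be "small"). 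One then shows that there is an element $x \in T$ of order $r$ with $\dim C_V(x) = n - c$ — the generic element of Remark \ref{r:cc} — whose $1$-eigenspace (of codimension exactly $c$, which is $\leqs n/2$ and $> \sqrt n / 2$) is too large to be matched by any order-$r$ element of $H_0$: every nontrivial element of $\hat S$ acting on $V$ has $\nu \geqs$ some bound growing with the rank of $\bar H$, which exceeds $c$ in the relevant range. The comparison is cleanest when $S$ itself is a classical group with natural module $W$: then $V$ is built from $W$ by Lie-functorial operations (symmetric/exterior powers, tensor products, Frobenius twists), and one transfers eigenvalue data from $W$ to $V$; if $\dim W$ is large, Lemma \ref{l:csub} (which says $\kappa(T_2,r) > \kappa(T_1,r)$ whenever $c_1 \geqs c_2$ and $n_2 > 2n_1$) applied with $T_1 = S$, $T_2 = T$ finishes the job immediately, and if $\dim W$ is small the module $V$ is one of finitely many explicitly listed ones and we compute directly.

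The main obstacle, as anticipated in the excerpt's proof sketch, is the defining-characteristic case $S \in \mathrm{Lie}(p)$: there is no clean uniform lower bound for $\nu(y)$ over $y \in H_0$ analogous to Theorem \ref{gursax}, so one must genuinely enter the weight combinatorics, and the boundary cases where $c$ is close to $n/2$ (so the generic element's $1$-eigenspace is barely more than half of $V$) are delicate — one may need to fall back on counting subgroup classes via Lemma \ref{l:floor}(ii),(iii) and L\"ubeck's tables rather than on a single explicit derangement. I expect the write-up to proceed: (1) reduce via Landazuri--Seitz to $S \in \mathrm{Lie}(p)$ or $S$ in a short list; (2) dispatch the short list by class-counting and Brauer characters; (3) for $S \in \mathrm{Lie}(p)$ classical, split on $\dim W$ large (use Lemma \ref{l:csub}) versus small (finite check using \cite{Lu,BHR}); (4) for $S \in \mathrm{Lie}(p)$ exceptional, the rank and $n$ are both bounded, so again a finite, if lengthy, check using the known module structures and \cite{HM,Lu}. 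In every branch the final assertion — existence of a derangement of order $r$ — follows either from $\kappa(T,r)$ strictly exceeding the corresponding count in $H_0$, invoking Corollary \ref{c:count}, or from exhibiting the element $x$ of Remark \ref{r:cc} and checking $\nu(x) = c$ cannot be achieved inside $H_0$.
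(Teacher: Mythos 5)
Your proposal follows essentially the same route as the paper's proof: split on whether $S$ is alternating, sporadic, of Lie type in cross characteristic, or in $\mathrm{Lie}(p)$, and in each branch either exhibit the element of Remark \ref{r:cc} with $\nu(x)=c$ and show no element of $H_0$ of order $r$ can match it, or compare $\kappa(T,r)$ with the number of order-$r$ classes in $H_0$ via Corollary \ref{c:count}, using the Landazuri--Seitz bounds together with Lemmas \ref{l:floor}, \ref{l:prel1} and \ref{l:csub} in cross characteristic, and highest-weight theory, Steinberg's tensor product theorem, L\"ubeck's and Hiss--Malle's tables (plus maximal-torus and Brauer-character arguments in the small residual cases) in defining characteristic. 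This is the structure of Sections \ref{ss:spor}--\ref{ss:lie_def} of the paper, so the approach matches.
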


As in Sections \ref{p:a} and \ref{p:b}, in order to prove Proposition \ref{p:c} we will either identify a specific derangement of order $r$, or we will establish the existence of such an element by comparing  the number of $T$-classes of subgroups (or elements) in $T$ of order $r$ with the number of such $H_0$-classes in $H_0$. As before, we will write $\kappa(T,r)$ to denote the number of $T$-classes of subgroups of order $r$ in $T$ (and similarly $\kappa(H_0,r)$). Note that the conditions in \eqref{e:box} imply that $\kappa(T,r) \geqs 2$ (this quickly follows from Lemma \ref{l:floor}(iii)), so the desired conclusion follows immediately if $\kappa(H_0,r)=1$.

Before we begin the proof of Proposition \ref{p:c}, let us record a couple of useful observations. Suppose the conditions in \eqref{e:box} hold. First observe that $r \geqs 5$ since $c \geqs 3$. Also note that $r$ divides $q^{r-1}-1$ by Fermat's Little  Theorem, so $i$ divides $r-1$. In particular, if $i$ is odd then $2i$ divides $r-1$. It follows that $c$ divides $r-1$ and thus
\begin{equation}\label{r:bd}
r \geqs c+1 \geqs \left\lceil \sqrt{n}/2 \right\rceil +1.
\end{equation}

\subsection{Sporadic groups}\label{ss:spor}

We begin the proof of Proposition \ref{p:c} by considering the special case where $S$ is a sporadic group.

\begin{prop}\label{p:spor}
Proposition \ref{p:c} holds if $S$ is a sporadic group.
\end{prop}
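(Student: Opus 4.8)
The plan is to dispose of the sporadic case by a direct, finite check, exploiting the fact that the conditions in $(\boxtimes)$ are extremely restrictive when $S$ is sporadic. First I would invoke the Guralnick--Saxl theorem (Theorem~\ref{gursax}), which, together with the inequality \eqref{r:bd}, forces $c > \max\{2,\sqrt{n}/2\}$ and hence $n < 4c^2 \leqslant 4(r-1)^2$; combined with $c \leqslant n/2$ this pins $n$ into a bounded range once $r$ (equivalently $c$) is fixed. Since $r$ divides $|H_0|$ and $H_0$ has socle a sporadic simple group, $r$ ranges over the (finite) set of primes dividing the order of some sporadic group, and for each such $r$ the dimension $n$ of a faithful irreducible module for the relevant covering group $\hat{S}$ lying in the window $2c < n \leqslant 4(r-1)^2$ (in the appropriate characteristic) is constrained by the Landazuri--Seitz-type lower bounds and by the explicit tables of low-dimensional representations in \cite{BHR}, \cite{HM}, \cite{Lu}. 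In practice this leaves only a short list of triples $(S, n, r)$, or possibly none at all.

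Next, for each surviving candidate $(T, S, r)$ I would compare $\kappa(H_0, r)$ with $\kappa(T, r)$ and invoke Corollary~\ref{c:count}. The lower bound $\kappa(T,r) \geqslant \lfloor n/(r-1)\rfloor - 1$ from Lemma~\ref{l:floor}(iii), together with the observation (noted after the statement of Proposition~\ref{p:c}) that $(\boxtimes)$ already guarantees $\kappa(T,r) \geqslant 2$, will usually do the job: the number of $H_0$-classes of subgroups of order $r$ in a sporadic group (or its covering group extended by outer automorphisms) is small and can be read off from the ATLAS or from the Brauer character data available in \textsf{GAP} \cite{GAP4}. Alternatively, and often more efficiently, for a specific element $x \in T$ of order $r$ with $\nu(x) = c$ (which exists by Remark~\ref{r:cc}, since $c \leqslant n/2$, with the orthogonal caveat handled by $c < n/2$ when $T = {\rm P\Omega}_n^-(q)$), I would use the Brauer character $\chi$ of $\hat{S}$ on $V$ to check that no element of order $r$ in $H_0$ has the same eigenvalue multiset (equivalently the same $\nu$-value and the same distribution of nontrivial eigenvalues), whence $x$ is a derangement.

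The main obstacle I anticipate is not any single hard argument but rather the bookkeeping: one must correctly match up the relevant covering group $\hat{S}$ and the outer automorphisms present in $H_0$ (as recorded in \cite[Section 8.2]{BHR} and \cite[Theorem 7.1]{GS}), and then carry out the Brauer-character comparison in both the defining and cross characteristic settings. A secondary subtlety is that for the very smallest values of $c$ (e.g.\ $c = 3$, forcing $r \in \{5,7\}$ and $n \leqslant 48$) the window is wide enough to admit genuinely many modules, so one has to be careful to enumerate them all; here the tables of \cite{HM} for $n \leqslant 250$ and L\"ubeck's data \cite{Lu} are essential. I expect the net outcome to be that in every sporadic case either $r$ fails to divide $|H_0|$, or $\kappa(H_0,r) < \kappa(T,r)$, or an explicit element of order $r$ and $\nu$-value $c$ in $T$ has no $H_0$-conjugate with matching eigenvalues — in all cases producing a derangement of order $r$, as required.
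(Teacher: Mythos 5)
Your proposal is correct and follows essentially the same route as the paper: the conditions in \eqref{e:box} bound $n$ in terms of $r$, leaving a finite check in which one either observes that $\kappa(H_0,r)=1$, or compares $\kappa(H_0,r)$ (read off from character-table/power-map data) with the lower bound $\kappa(T,r)\geqslant \lfloor n/(r-1)\rfloor-1$ of Lemma \ref{l:floor}(iii) via Corollary \ref{c:count}, using minimal-degree bounds and the tables of \cite{HM, HM2, BHR} (for sporadic groups the relevant minimal degrees come from \cite{Jansen} rather than \cite{Lu}) to pin down the surviving triples, exactly as the paper does for ${\rm M}_{11}$, ${\rm J}_2$ and the Monster.
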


\begin{proof}
This is a straightforward calculation, using the character table of $S$ and lower bounds on the dimensions of irreducible representations. To illustrate the general approach, we will consider the cases $S \in \{{\rm M}_{11}, {\rm J}_{2}, \mathbb{M}\}$. Set $i = \Phi(r,q)$ as in \eqref{e:phirq}.

If $S = {\rm M}_{11}$ then $r \in \{5,11\}$ and the result follows since $\kappa(H_0,r)=1$. Next suppose $S = \mathbb{M}$ is the Monster. Here $r \leqs 71$ and by inspecting the character table of $H_0=S$ (specifically, the associated power maps) we deduce that $\kappa(H_0,r) \leqs 2$. But $n \geqs 196882$ (see \cite{Jansen}) and thus 
$\kappa(T,r) \geqs \lfloor 196882/70 \rfloor-1 = 2811$
by Lemma \ref{l:floor}(iii). Now apply Corollary \ref{c:count}.

Finally, let us assume that $S = {\rm J}_{2}$, so $r \in \{5,7\}$. From the character table we see that $\kappa(S,5)=2$ and $\kappa(S,7)=1$. Therefore we may assume that $r=5$, so $i=4$ since $c \geqs 3$. If $n \geqs 13$ then $\kappa(T,5) \geqs 3$ by Lemma \ref{l:floor}(iii), so we can assume $n \leqs 12$. By inspecting \cite[Table 2]{HM2} (or \cite[Section 8.2]{BHR}), it follows that $T = {\rm PSp}_{6}(q)$ is the only possibility, and either $q=p \equiv \pm 1\imod{5}$ or $q=p^2>4$ and $p \equiv \pm 2 \imod{5}$. Clearly, neither of these conditions  on $q$ are compatible with the fact that $i=4$, so this case does not arise. (Alternatively, observe that this is the case labelled $(\mathcal{B}17)$ in Table \ref{btab}, so we can discard it since we are assuming that $H \not\in \mathcal{B}$.)

The remaining cases are very similar and we leave the reader to check the details.
\end{proof}

\subsection{Alternating groups}\label{ss:alt}

Next assume $S = A_d$ is an alternating group. Since we are assuming $H \not\in \mathcal{A}$, it follows that $V$ is not the fully deleted permutation module for $S$. Note that $r \leqs d$ since $r$ divides $|H_0|$. The following lemma gives a useful lower bound on $n$ in terms of $d$.

\begin{lem}\label{l:adbd}
If $d \geqs 15$ then $n \geqs d(d-5)/4$.
\end{lem}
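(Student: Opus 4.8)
The plan is to use the fact that $V$ is an absolutely irreducible $\hat S$-module of dimension $n$, where $\hat S$ is a covering group of $S = A_d$, and that since $H \notin \mathcal{A}$ the module $V$ is \emph{not} the fully deleted permutation module. The key input is the known classification of small-dimensional irreducible representations of alternating (and symmetric) groups, together with their covering groups, in both the ordinary and modular settings. The relevant references are the work of James on the modular representations of symmetric groups, the results of Kleshchev and Tiep on small-dimensional representations (and the spin case), and the tables in \cite[Section 8.2]{BHR} or \cite[Table 2]{HM2}; the point is that the smallest dimension of any faithful irreducible $\overline{\mathbb{F}}_p\hat S$-module \emph{other than} the trivial module and the (basic spin or) fully deleted permutation module grows quadratically in $d$.

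The steps I would carry out are: first, recall from the literature that for $d \geqslant 15$ (the bound is chosen to avoid the small sporadic exceptions, e.g.\ the exceptional behaviour of $A_5,\dots,A_{14}$ and of the basic spin modules in small rank), the irreducible $\overline{\mathbb{F}}_p\hat S$-modules split into: (a) the trivial module; (b) the fully deleted permutation module of dimension $d-1$ or $d-2$ (excluded here since $H \notin \mathcal{A}$); (c) for $\hat S = 2.A_d$ a nontrivial proper cover, the basic spin module(s) of dimension roughly $2^{\lfloor (d-1)/2\rfloor}$, which is far larger than $d(d-5)/4$ for $d \geqslant 15$; and (d) all remaining modules, whose dimension is bounded below by the dimension of the next-smallest composition factor of the Specht/permutation-type modules. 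For case (d) in characteristic zero the relevant bound comes from the hook-length formula applied to the partitions $(d-2,2)$ and $(d-2,1,1)$, giving dimension $\tfrac12 d(d-3)$ and $\tfrac12(d-1)(d-2)$ respectively, and in positive characteristic one uses James's results on the dimensions of the irreducible modules $D^\lambda$ (reduction mod $p$ of Specht modules) to see that the smallest such dimension is at least $d(d-5)/4$ once $d \geqslant 15$. Assembling these cases yields $n \geqslant d(d-5)/4$ in every remaining possibility.

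I would organise the write-up as: fix the covering group $\hat S$; dispose of the spin case (d), (c) by the exponential lower bound $2^{\lfloor (d-1)/2\rfloor} > d(d-5)/4$; then for the linear case quote the precise statement (James, or Kleshchev–Tiep) that the two smallest dimensions of nontrivial irreducibles of $\hat S = A_d$ over any field are $d - \delta$ (the fully deleted module, with $\delta \in \{1,2\}$) and then a value $\geqslant d(d-5)/4$, with the transitional range $d \leqslant 14$ explicitly excluded. A small amount of care is needed at the boundary $d = 15, 16, 17$ to check the numerics $d(d-5)/4 \leqslant$ (next dimension), but this is a finite check.

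The main obstacle I anticipate is pinning down the \emph{exact} next-smallest dimension in the modular case uniformly in $p$: the dimension of $D^{(d-2,2)}$ and $D^{(d-2,1,1)}$ depends on congruences of $d$ modulo $p$ (there are drops when $p \mid d$ or $p \mid d-1$, etc.), so the cleanest route is probably not to compute these dimensions exactly but to invoke a ready-made lower bound from the literature on minimal degrees of faithful representations of alternating groups (excluding the natural module), and simply verify that the stated quadratic bound $d(d-5)/4$ is below it for all $d \geqslant 15$. If such a clean citable bound is unavailable in exactly the form needed, the fallback is a case division on $p \bmod d$ using James's branching/dimension formulae, which is routine but slightly tedious.
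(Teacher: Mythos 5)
Your proposal follows essentially the same route as the paper: split according to whether the centre of the double cover $2.A_d$ acts nontrivially, kill the spin case by the exponential lower bound of Kleshchev--Tiep, and handle the linear case by quoting a minimal-degree result for $A_d$ excluding the fully deleted permutation module. The ``clean citable bound'' you hope for in the modular case does exist in exactly the form needed -- it is \cite[Theorem 7]{James}, which is what the paper invokes -- so your fallback computation with the modules $D^{(d-2,2)}$, $D^{(d-2,1,1)}$ is unnecessary.
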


\begin{proof}
First observe that $\hat{S} = 2.S$ is the full covering group of $S$. If $Z(\hat{S})$ acts nontrivially on $V$ then $p \ne 2$ and the main theorem of \cite{KT} implies that $n \geqs 2^{\lfloor (d-3)/2\rfloor}$ and the result follows. Therefore, we may assume that $S$ acts linearly on $V$, in which case the desired bound follows from \cite[Theorem 7]{James}.
\end{proof}

\begin{prop}\label{p:alter}
Proposition \ref{p:c} holds if $S$ is an alternating group.
\end{prop}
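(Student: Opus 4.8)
The plan is to show that, under the hypotheses of \eqref{e:box} with $S = A_d$, the socle $T$ always contains a derangement of order $r$. Since $H \not\in \mathcal{A}$, the module $V$ is not the fully deleted permutation module, so $n$ must be substantially larger than $d$; in fact, by combining \eqref{r:bd} with Lemma \ref{l:adbd}, I expect a clean numerical contradiction in all but a small number of cases. First I would reduce to $r \geqs 5$ (immediate from $c \geqs 3$) and note that $r \leqs d$ since $r$ divides $|H_0|$ and $A_d$ has no elements of order $r > d$. The key inequality to exploit is that $c \leqs n/2$ together with $c \geqs \lceil \sqrt{n}/2 \rceil$, which via \eqref{r:bd} forces $r \geqs \lceil \sqrt{n}/2 \rceil + 1$; combined with $r \leqs d$ this gives $d \geqs \lceil \sqrt{n}/2 \rceil + 1$, i.e. $n \leqs 4(d-1)^2$ or so. But Lemma \ref{l:adbd} says $n \geqs d(d-5)/4$ for $d \geqs 15$, and these two bounds are compatible only for bounded $d$ — so I would first dispose of all $d \geqs 15$ (or whatever the precise threshold turns out to be) by checking the window where both hold, and then treat the finitely many small values of $d$ individually.

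For the main range, the strategy is a class-counting argument via Corollary \ref{c:count}: I would compare $\kappa(T,r)$ with $\kappa(H_0,r)$, where $H_0 \in \{A_d, S_d\}$ (possibly times a diagonal outer piece, but the relevant subgroups of order $r$ lie in $A_d$ for $r$ odd). The number of conjugacy classes of subgroups of order $r$ in $A_d$ is small — roughly the number of possible numbers $h$ of $r$-cycles with $hr \leqs d$, i.e. at most $\lfloor d/r \rfloor$, up to the usual subtleties about fusion of cyclic subgroups (elements vs. subgroups differ by a factor at most $r-1$, but for \emph{subgroups} the count is essentially $\lfloor d/r \rfloor$ since the power maps merge the $\langle(r\text{-cycle})^k\rangle$ within a fixed support). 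On the other side, Lemma \ref{l:floor}(iii) gives $\kappa(T,r) \geqs \lfloor n/(r-1)\rfloor - 1 \geqs \lfloor n/(d-1)\rfloor - 1$, and using $n \geqs d(d-5)/4$ (when applicable) or a smaller-dimensional bound from \cite{KT, James} for the remaining $d$, this should dominate $\lfloor d/r\rfloor \leqs \lfloor d/5 \rfloor$. So the inequality $\kappa(T,r) > \kappa(H_0,r)$ should hold outright in all but perhaps a handful of borderline $(d,n,r)$ triples.

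For those borderline cases — small $d$ (say $d \leqs 14$), where the representation-theoretic lower bounds on $n$ are weak and one must consult the explicit list of irreducible modules of $2.A_d$ — I would pass to the low-dimensional data in \cite{BHR} (Section 8.2) and \cite{HM2} to enumerate the actual $(T, S, V)$ with $n \geqs 6$, then for each surviving triple either exhibit an explicit derangement of order $r$ (e.g. an element $x \in T$ of order $r$ with $\dim C_V(x) = n - c$ as in Remark \ref{r:cc}, checking via the Brauer character of $\hat{S}$ that no such eigenvalue configuration is realised inside $H_0$) or complete the class count by hand. The point is that $c = \Phi(r,q)$ or $2\Phi(r,q)$ is large relative to $n$ in the \eqref{e:box} regime, so the element of order $r$ in $T$ with the largest $1$-eigenspace has a very specific, "spread out" eigenvalue multiset on $V$, whereas an $r$-element of $A_d$ acting on a \emph{non}-permutation module tends to have a different fixed-space dimension — this mismatch is what produces the derangement.

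\textbf{Main obstacle.} The hard part will be the small-$d$ analysis: for $d$ up to roughly $14$ the generic dimension bound of Lemma \ref{l:adbd} does not apply, so one cannot argue purely numerically and must instead work through the concrete list of irreducible $\mathbb{F}_q \hat{A_d}$-modules of dimension $n \geqs 6$ that give rise to a subgroup in $\mathcal{S} \setminus \mathcal{A}$, and for each verify the $r$-elusivity statement using character-theoretic data. The bookkeeping — keeping track of which $H_0$ equals $A_d$ versus $S_d$, of the spin versus linear modules, and of exactly when $c = r-1$ (the one situation where, by analogy with Proposition \ref{p:a}(ii), $T$ could still be $r$-elusive) — is where the genuine care is needed; but since part (iv) of Theorem \ref{t:main} already predicts the answer (no exceptions beyond the $\mathcal{A}$, $\mathcal{B}$ and $n<6$ lists), I expect every small case to yield a derangement, and the verification to be routine once the module list is pinned down.
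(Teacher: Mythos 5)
Your strategy is essentially the paper's: for large $d$ compare $\kappa(H_0,r)\leqs\lfloor d/r\rfloor$ with the lower bound on $\kappa(T,r)$ coming from Lemma \ref{l:floor}(iii) and the dimension bound of Lemma \ref{l:adbd}, then invoke Corollary \ref{c:count}, and handle small $d$ separately via explicit low-dimensional data. Two quantitative steps do not survive as written, however. First, there is no ``clean numerical contradiction'' between $n\leqs 4(d-1)^2$ (from \eqref{r:bd} and $r\leqs d$) and $n\geqs d(d-5)/4$: the latter is smaller than the former for every $d$, so this step eliminates nothing and the whole weight falls on the class count. Second, in that count you throw away $r$ twice, replacing $r-1$ by $d-1$ on one side and $r$ by $5$ on the other; the resulting comparison $\lfloor d(d-5)/(4(d-1))\rfloor-1>\lfloor d/5\rfloor$ fails for a sizeable range of $d$ (e.g.\ $d=15$, $r=5$ gives $1>3$), so this is more than ``a handful of borderline triples''. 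The repair is simply to keep $r$ on both sides, exactly as the paper does: for $d\geqs 15$ one has $\kappa(T,r)\geqs\lfloor d(d-5)/(4(r-1))\rfloor-1>\lfloor d/r\rfloor=\kappa(S,r)$, and this holds for all admissible $r\leqs d$ (the cases with $r$ close to $d$ are tight but go through). Finally, your small-$d$ plan is much heavier than necessary: for $d\leqs 14$ and $r\geqs 5$ one has $\kappa(A_d,r)\leqs 2$ (and $=1$ if $d\leqs 9$, or if $r>5$ and $d\leqs 12$), while the Hiss--Malle tables give $n\geqs 16$ for $d\in\{10,11,12\}$ and $n\geqs 32$ for $d\in\{13,14\}$, so Lemma \ref{l:floor}(iii) already yields $\kappa(T,r)\geqs 3>\kappa(H_0,r)$ with no enumeration of modules or Brauer-character analysis; in particular the $c=r-1$ worry is moot, since the subgroup-class comparison settles every case once the fully deleted permutation module (the collection $\mathcal{A}$) has been excluded.
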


\begin{proof}
First assume $d \geqs 15$. Now $\kappa(S,r) = \lfloor d/r\rfloor$  and a combination of Lemmas \ref{l:floor}(iii) and \ref{l:adbd} implies that 
$$\kappa(T,r) \geqs \lfloor n/(r-1) \rfloor - 1 \geqs \lfloor d(d-5)/4(r-1) \rfloor - 1 > \lfloor d/r\rfloor.$$
We conclude that $T$ contains derangements of order $r$. 

Finally, let us assume that $5 \leqs d \leqs 14$. If $d \leqs 9$ then $r \in \{5,7\}$ and the result follows since $\kappa(S,r)=1$. If $d \in \{10,11,12\}$ then we may assume that $r=5$, in which case $\kappa(S,r)=2$ (if $r>5$, then $\kappa(S,r)=1$). By inspecting \cite[Table 3]{HM}, we see that $n \geqs 16$ and thus $\kappa(T,r) \geqs 3$ by Lemma \ref{l:floor}(iii). The result follows. A similar argument applies if $d \in \{13,14\}$, using the fact that $n \geqs 32$ (see \cite{HM}).
\end{proof}

\subsection{Groups of Lie type: Cross-characteristic}\label{ss:lie_cross}

Let $S$ be a simple group of Lie type over $\mathbb{F}_{t}$, where $t=\ell^e$ and $\ell \neq p$ is a prime. Set $H_0 = H \cap T$. By \eqref{r:bd} we have 
\begin{equation}\label{e:rrbb}
r \geqs \max\{5, \left\lceil \sqrt{n}/2 \right\rceil +1\}.
\end{equation}
We will make extensive use of the Landazuri-Seitz bounds in \cite{Land-S}. We consider each of the possibilities for $S$ in turn, starting with the classical groups. 

\subsubsection{Linear groups}

\begin{lem}\label{p:psl2}
Proposition \ref{p:c} holds if $S = {\rm PSL}_{2}(t)$ and $(t,p)=1$.
\end{lem}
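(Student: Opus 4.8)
The plan is to handle $S = {\rm PSL}_2(t)$ directly, using the Landazuri--Seitz bounds together with the fact that the cross-characteristic irreducible representations of ${\rm SL}_2(t)$ are very well understood. First I would recall that the minimal degree of a nontrivial irreducible projective $\bar{\mathbb F}_p$-representation of ${\rm PSL}_2(t)$ is at least $(t-1)/\gcd(2,t-1)$, so the conditions in \eqref{e:box}, in particular $c \leqs n/2$ and \eqref{e:rrbb}, force $n$ to grow with $t$; combined with $r \leqs |H_0|$, i.e. $r$ is one of the few primes dividing $|{\rm PSL}_2(t)| = t(t-1)(t+1)/\gcd(2,t-1)$, this pins down a small range of possibilities. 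Concretely, $r$ divides $t$, $t-1$ or $t+1$ (and $r\geqs 5$), and in each case a Sylow $r$-subgroup of $H_0$ is cyclic, so $\kappa(H_0,r) \leqs$ a small constant; in fact for $r \ne \ell$ one checks $\kappa(H_0,r) = 1$ unless $r$ divides $t \mp 1$ and $r^2$ divides the relevant factor, and even then $\kappa(H_0,r)$ is bounded by the $r$-part of $t\mp1$ being nontrivial — but the key point is that the number of classes of subgroups of order $r$ in $H_0$ is tiny compared with $\kappa(T,r)$.

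The main estimate is then to compare $\kappa(T,r)$ with $\kappa(H_0,r)$ via Corollary \ref{c:count}. By Lemma \ref{l:floor}(iii), $\kappa(T,r) \geqs \lfloor n/(r-1)\rfloor - 1$, and since $r - 1 \geqs c$ and $c \leqs n/2$ we get $\kappa(T,r) \geqs \lfloor n/c \rfloor - 1 \geqs 1$ — which is not quite enough on its own, so I would instead exploit that the representation degrees of ${\rm SL}_2(t)$ in cross characteristic are essentially $t-1$, $t$, $t+1$, or $(t\pm1)/2$ up to small multiples (Landazuri--Seitz gives $n \geqs (t-1)/\gcd(2,t-1)$, and the full list of small-degree representations is classical). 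Writing $t = \ell^e$ and using $r \geqs \lceil \sqrt n/2\rceil + 1$ from \eqref{e:rrbb}, together with $n$ of order roughly $t$, yields $r \geqs \lceil \sqrt{t}/2 \rceil$ roughly, so $r$ is a comparatively large prime divisor of $t(t^2-1)$; this severely restricts $e$ and $\ell$, leaving only finitely many $(t, n)$ to inspect. For all but a bounded list, $\lfloor n/(r-1)\rfloor - 1 \geqs 2 > \kappa(H_0,r)$ and we are done; the bounded exceptions are dispatched by hand, checking against \cite{BHR} and \cite{HM2} to see whether such an embedding with the required congruence on $q$ (forcing the correct value of $c$) even exists — and noting that several of these are precisely the cases $(\mathcal B1)$, $(\mathcal B17)$ etc.\ already removed by the hypothesis $H \not\in \mathcal B$.

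I expect the main obstacle to be the bookkeeping in the small-$t$ regime: once $n$ is small (say $n \leqs 20$ or so), the inequality $\kappa(T,r) > \kappa(H_0,r)$ can fail numerically, and one must verify directly, using the known list of cross-characteristic representations of ${\rm SL}_2(t)$ and the explicit eigenvalue/Jordan-form data, that $T$ nonetheless contains a derangement of order $r$ — typically by producing an element $x \in T$ of order $r$ whose $1$-eigenspace on $V$ is too large (i.e.\ $\nu(x) < c$) to be realised inside $H_0$, or one whose nontrivial eigenvalue multiplicities differ from anything attainable by an element of $H_0$ of order $r$ (whose eigenvalues on $V$ are determined by the explicit character formulas for ${\rm SL}_2(t)$). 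A secondary subtlety is correctly matching the value of $c$ to the congruence class of $q$ modulo $r$ and to the type of $T$ (linear, unitary, symplectic, orthogonal, as dictated by Definition \ref{sdef}); since $\Phi(r,q) = i$ and $c$ depends on $i \bmod 4$ and on whether $T$ is unitary, I would organise the small cases by the pair $(i, T)$ rather than by $t$ alone. Throughout I would lean on the reduction already in place: we may assume \eqref{e:box}, so in particular $r \geqs 5$, $c \geqs 3$, and $\kappa(T,r) \geqs 2$, which already kills the generic case and leaves only the explicit finite check.
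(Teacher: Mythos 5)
Your overall strategy matches the paper's: the Landazuri--Seitz bound $n \geqs (t-1)/(2,t-1)$, the inequality $r \geqs \lceil \sqrt{n}/2\rceil+1$ from \eqref{e:rrbb}, a comparison of $\kappa(T,r)$ with $\kappa(H_0,r)$ via Corollary \ref{c:count} and Lemma \ref{l:floor}(iii), and a finite check of small cases. But there is a concrete omission: you never treat elements of order $r$ in $H_0\setminus S$. You pass from ``$r$ divides $|H_0|$'' to ``$r$ divides $t(t^2-1)$'', yet $H_0$ may contain field automorphisms of order $r$ (so $r$ divides $e=\log_{\ell}t$), possibly with $r$ not dividing $|S|$ at all (e.g.\ $t=2^5$, $r=5$), and in any case such elements can contribute additional $H_0$-classes of subgroups of order $r$, so the generic comparison ``$\kappa(H_0,r)=1<2\leqs\kappa(T,r)$'' is not automatic. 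The paper disposes of this by noting that the bound you already have forces $r>e$ once $t>2^7$, leaving only $t\in\{2^5,2^7\}$, where one checks directly that ${\rm P\Gamma L}_{2}(t)$ has a unique class of subgroups of order $r$; this step is missing from your outline.

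A second inaccuracy sits exactly at the one delicate subcase, $r=\ell$: there the Sylow $r$-subgroup of $S$ is elementary abelian of order $\ell^{e}$, not cyclic as you claim. The conclusion you want survives, but by a different count: $S$ has just two classes of elements of order $\ell$, so $\kappa(S,\ell)\leqs 2$, and then $r=\ell\geqs\lceil\sqrt{n}/2\rceil+1$ combined with $\kappa(T,r)\geqs\lfloor(\ell^{e}-1)/2(\ell-1)\rfloor-1$ reduces everything to $t=5^2$, which is settled since $n\geqs 12$ gives $\kappa(T,5)\geqs 3$. Relatedly, your claim that $r\gtrsim\sqrt{t}/2$ ``severely restricts $e$ and $\ell$, leaving only finitely many $(t,n)$'' is misdirected when $r$ divides $t\pm1$: no restriction on $t$ arises, nor is one needed, since $\kappa(S,r)=1$ there (your caveat about $r^2$ dividing $t\mp1$ is also unnecessary); the finiteness argument is only needed, and only available, in the $r=\ell$ and $r\mid e$ subcases. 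Both points are repairable within your framework, but as written they are genuine gaps.
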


\begin{proof}
If $t \in \{4,9\}$ then $r=5$ and $\kappa(H_0,r)=1$, so for the remainder we may assume that $t \geqs 5$ and $t \neq 9$, hence $n \geqs (t-1)/(2,t-1)$ by the main theorem of Landazuri and Seitz \cite{Land-S}. In particular, \eqref{e:rrbb} implies that
\begin{equation}\label{e:rbd1}
r \geqs \lceil \sqrt{n}/2 \rceil +1 \geqs \left\lceil \frac{1}{2}\sqrt{(t-1)/(2,t-1)} \right\rceil + 1.
\end{equation}

Suppose $x \in H_0\setminus S$ has order $r$. Then $x$ is a field automorphism and thus $r$ divides $e = \log_{\ell}t$. If $t>2^7$ then \eqref{e:rbd1} implies that $r>e$, so $t \in \{2^5, 2^7\}$ and one checks that $H_0={\rm P\Gamma L}_{2}(t)$ has a unique class of subgroups of order $r$.

For the remainder, we may assume that every element in $H_0$ of order $r$ is contained in $S$. If $r \neq \ell$ then $\kappa(S,r)=1$, so we may assume that $r=\ell$. Note that $\kappa(S,r) \leqs 2$ since $S$ has two classes of elements of order $r$. In fact, if $e=1$ then 
$\kappa(S,r)=1$ by Sylow's Theorem, so we may assume $e \geqs 2$. By Lemma \ref{l:floor}(iii) we have 
$$\kappa(T,r) \geqs \lfloor n/(\ell-1) \rfloor -1 \geqs \lfloor (\ell^e-1)/2(\ell-1) \rfloor -1.$$
This reduces us to the case $t=5^2$. Here $n \geqs 12$ and $r=5$, so $\kappa(T,r) \geqs 3$ (note that $T$ is symplectic if $n=12$ -- see \cite[Table 2(b)]{HM}).
\end{proof}

\begin{lem}\label{p:psld}
Proposition \ref{p:c} holds if $S = {\rm PSL}_{d}(t)$ and $(t,p)=1$.
\end{lem}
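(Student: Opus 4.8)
The plan is to adapt the strategy used in the ${\rm PSL}_2(t)$ case, namely to compare $\kappa(T,r)$ with $\kappa(H_0,r)$, but now the combinatorics of $r$-elements in ${\rm PSL}_d(t)$ is richer and we must be more careful. Write $S = {\rm PSL}_d(t)$ with $d \geqs 3$ (the case $d=2$ being Lemma \ref{p:psl2}), $t = \ell^e$, $\ell \ne p$, and recall from \eqref{e:rrbb} that $r \geqs \max\{5, \lceil \sqrt{n}/2 \rceil + 1\}$. First I would dispose of small $d$ and small $t$ by hand: when $d$ and $t$ are bounded, the relevant modules $V$ appear in \cite{HM} (low-dimensional cross-characteristic representations), so one reads off $n$, checks it against the Landazuri--Seitz bound, and applies Lemma \ref{l:floor}(iii) together with a direct count of $\kappa(S,r)$; in each such case either $\kappa(T,r) > \kappa(H_0,r)$ or the arithmetic conditions on $q$ forced by $r$ divides $|\Omega|$ are incompatible with $c > \max\{2,\sqrt n/2\}$, or we land in $\mathcal{B}$ (which is excluded). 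This is the same bookkeeping already carried out in the ${\rm PSL}_2$ and $A_d$ proofs.

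For the generic range, the key input is the Landazuri--Seitz lower bound $n \geqs (t^{d-1}-1)/(t-1) - \epsilon$ for some small $\epsilon$ (essentially $t^{d-1}-t$ up to the scalar factor), which grows much faster than any reasonable count of $r$-classes in $H_0$. I would bound $\kappa(H_0,r)$ from above as follows. If $r$ divides the order of a field/graph automorphism, then $r \mid e \cdot 2$, but $r \geqs \lceil\sqrt n/2\rceil+1$ grows like $t^{(d-1)/2}$, vastly exceeding $2e = 2\log_\ell t$ once $t$ or $d$ is moderately large, so all $r$-elements lie in the socle $S$ (a handful of tiny exceptions handled above). For $r$-elements in ${\rm PSL}_d(t)$: if $r \ne \ell$ then the number of $S$-classes of subgroups of order $r$ is crudely bounded by the number of ways to distribute eigenvalue-multiplicities among the $\Phi(r,t)$-dimensional blocks, which is a polynomial in $d$ of degree at most $d/\Phi(r,t) \leqs d$; and if $r = \ell$ then $\kappa(S,r)$ is bounded by the number of partitions of $d$, i.e. $p(d)$. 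In all cases $\kappa(H_0,r) = \kappa(H_0,r) \leqs $ (something subexponential in $d$ and independent of $t$), whereas by Lemma \ref{l:floor}(iii), $\kappa(T,r) \geqs \lfloor n/(r-1)\rfloor - 1 \geqs \lfloor n/(c+\text{small})\rfloor$, and since $n$ is at least of order $t^{d-1}$ while $r-1 \leqs n$ only mildly (indeed $r \leqs d$ because $r$ divides $|H_0|$, forcing $r \leqs d$ when $r=\ell$ needs $r \leqs t$ too), the ratio $n/(r-1)$ dwarfs $\kappa(H_0,r)$. Then Corollary \ref{c:count} gives a derangement of order $r$ in $T$.

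More concretely, I would split into the two sub-cases $r \ne \ell$ and $r = \ell$. When $r \ne \ell$, set $j = \Phi(r,t)$; every $r$-element of $S$ is semisimple with at most $d/j$ nontrivial "block types'' and fixed multiplicities, so $\kappa(S,r) \leqs \binom{d + (r-1)/j}{(r-1)/j}$ or a similar crude bound, certainly at most $d^{d}$, while $n \geqs t^{d-1}/(t-1) - 2 \geqs 2^{d-1}-1$; combined with $r-1 \leqs n$ (needed for $r \mid |\Omega|$), Lemma \ref{l:floor}(iii) beats this for all but a short list of $(d,t)$ already covered. When $r = \ell$, one has $r \leqs t$, the $r$-elements of $S$ are unipotent with Jordan type a partition of $d$, so $\kappa(S,r) \leqs p(d)$; again $n$ of order $t^{d-1}$ and $r - 1 < t$ force $\kappa(T,r) \geqs \lfloor n/(t-1)\rfloor - 1 \geqs t^{d-2} - 1 > p(d)$ for $d \geqs 4$ (the case $d=3$ being small and explicit). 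The main obstacle I anticipate is the bookkeeping at the boundary: small $(d,t)$ where the Landazuri--Seitz bound is not yet large enough to overwhelm $\kappa(H_0,r)$, where one must consult \cite{HM} for the exact value of $n$, handle the possibility that $x \notin S$ carefully (field automorphisms of order $r$ when $e$ is divisible by $r$), and verify that every leftover genuinely either yields a derangement or falls under the excluded collections $\mathcal{A} \cup \mathcal{B}$ or the $n < 6$ table — exactly as in the analogous steps of Lemmas \ref{p:psl2} and \ref{l:b1}.
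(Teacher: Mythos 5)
Your overall strategy (compare $\kappa(T,r)$ with $\kappa(H_0,r)$ via Corollary \ref{c:count}, using the Landazuri--Seitz bound and Lemma \ref{l:floor}(iii)) is the same as the paper's, but the quantitative core of your argument does not hold up. The parenthetical claim ``$r \leqs d$ because $r$ divides $|H_0|$'' is false for $S = {\rm PSL}_d(t)$: here $r$ is typically a primitive prime divisor of $t^j-1$ with $j = \Phi(r,t) \leqs d/2$, and can be as large as roughly $(t^{d/2}-1)/(t-1)$; indeed the standing hypothesis $c > \sqrt{n}/2$ together with $n \geqs t^{d-1}-1$ forces $r > \frac{1}{2}\sqrt{t^{d-1}-1}$, so $r$ is necessarily huge, not small. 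For the same reason your asserted bound that $\kappa(H_0,r)$ is ``subexponential in $d$ and independent of $t$'' (and ``certainly at most $d^d$'') is false: the number of classes of semisimple elements of order $r$ grows like a polynomial of degree $\lfloor d/j\rfloor$ in $(r-1)/j$, hence depends on $t$. This is not a cosmetic issue, because the comparison you propose fails exactly in the main case: the lower bound on $r$ forces $j > (d-3)/2$, so $j$ is essentially $d/2$ and $r-1$ can be of order $t^{d/2-1}$; then Lemma \ref{l:floor}(iii) only gives $\kappa(T,r) \geqs \lfloor n/(r-1)\rfloor - 1$, of order $t^{d/2}$, whereas a crude count of elements with two nontrivial blocks is of order $((r-1)/j)^2 \approx t^{d-2}/d^2$, which is larger for big $d$. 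So a bound quadratic in $r$ cannot win, and your proposal has no mechanism to do better.

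What the paper does, and what is missing from your sketch, is precisely this: first use $r > t^{(d-3)/2}$ to pin down $j$ (either $j=(d-1)/2$ for $d$ odd, or $j\in\{d/2-1,d/2\}$ for $d$ even, with $d\in\{3,4,6\}$ treated separately), so that $\lfloor d/j\rfloor = 2$ in the generic case; then invoke the \emph{subgroup}-counting bound of Lemma \ref{l:floor}(ii), $\kappa(S,r) \leqs (r-1)/j + 1$, which is linear in $r$, together with the arithmetic bound $r-1 \leqs (t^{d/2}-1)/(t-1)-1$, to beat $\lfloor n/(r-1)\rfloor - 1$; the small-rank cases $d\in\{3,4,6\}$, where $j$ can be $1$ or $2$, are handled by the sharper estimates of Lemma \ref{l:prel1}. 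Your treatment of outer automorphisms is fine (graph automorphisms have order $2<r$, and $r>e$ rules out field automorphisms), and your $r=\ell$ case is workable, though the paper's route is shorter: $r=\ell\leqs t$ is incompatible with $r>\frac12\sqrt{t^{d-1}-1}$ unless $d=3$, which is then dispatched directly, with only $(d,t)\in\{(3,2),(3,4)\}$ needing the ad hoc observation $\kappa(H_0,r)=1$ (no recourse to \cite{HM} is required). As written, however, your generic-case estimate is the step that would fail.
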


\begin{proof}
We may assume $d \geqs 3$. If $(d,t) = (3,2)$ or $(3,4)$ then $r \in \{5,7\}$ and $\kappa(H_0,r)=1$. In each of the remaining cases we have 
$n \geqs t^{d-1}-1$ by \cite{Land-S} and thus
\begin{equation}\label{e:rbd2}
r \geqs \lceil \sqrt{n}/2 \rceil +1 \geqs \left\lceil \frac{1}{2}\sqrt{t^{d-1}-1} \right\rceil +1
\end{equation}
In particular, $r>e$ so we only need to consider elements in ${\rm PGL}_{d}(t)$. 

Suppose $r=\ell$, so $t \geqs 5$. If $d \geqs 4$ then \eqref{e:rbd2} implies that $r>t$, so we must have $d=3$. Then $S$ has at most four conjugacy classes of elements of order $r$ (see \cite[Section 3.2.3]{BG}, for example), but Lemma \ref{l:floor}(iii) implies that $T$ has at least 
$$\lfloor n/(r-1) \rfloor -1 \geqs \lfloor (t^2-1)/(t-1) \rfloor -1 = t$$
such classes.

For the remainder, we may assume that $r \ne \ell$. Set $j = \Phi(r,t)$ (so $j$ is the smallest positive integer such that $r$ divides $t^j-1$). If $j > d/2$ then $\kappa(S,r)=1$ (see Lemma \ref{l:floor}(i)), so we may assume that $j \leqs d/2$. Now the lower bound in \eqref{e:rbd2} implies that $r>t^{(d-3)/2}$, so $j>(d-3)/2$. Therefore, either $d$ is odd and $j=(d-1)/2$, or $d$ is even and $j \in \{d/2-1, d/2\}$.

First assume $d=3$, so $j=1$ and $r$ divides $t-1$. By Lemma \ref{l:prel1}(i) we have 
$\kappa(S,r)<r$, but Lemma \ref{l:floor}(iii) implies that 
$$\kappa(T,r) \geqs \lfloor n/(r-1) \rfloor - 1 \geqs \lfloor (t^2-1)/(t-2) \rfloor - 1 \geqs t >r,$$
so $T$ contains derangements of order $r$.

Next suppose $d \geqs 5$ is odd. Here $j=(d-1)/2$, so $\lfloor d/j \rfloor = 2$ and Lemma \ref{l:floor}(ii) implies that  
$$\kappa(S,r) \leqs (r-1)/j+1 = 2(r-1)/(d-1)+1<r.$$ 
Since $r$ divides $t^{(d-1)/2}-1$, by applying Lemma \ref{l:floor}(iii) we deduce that 
$$\kappa(T,r) \geqs \lfloor n/(r-1) \rfloor - 1 \geqs \lfloor (t^{d-1}-1)/(t^{(d-1)/2}-2) \rfloor - 1 \geqs t^{(d-1)/2} >r$$
and the result follows.

Next assume $d \geqs 8$ is even. Here $\lfloor d/j \rfloor=2$ and thus 
$$\kappa(S,r) \leqs (r-1)/j+1 \leqs  2(r-1)/(d-2)+1.$$ 
Now $r$ divides $(t^j-1)/(t-1)$, so $r-1 \leqs \a$ where $\a = (t^{d/2}-1)/(t-1)-1$. Therefore, if $t>2$ then 
$$\kappa(T,r) \geqs \lfloor n/(r-1) \rfloor - 1 \geqs \lfloor (t^{d-1}-1)/\a \rfloor - 1 \geqs \a \geqs r-1 > 2(r-1)/(d-2)+1.$$
Similarly, if $t=2$ then 
$$\kappa(T,r) \geqs \lfloor (2^{d-1}-1)/\a \rfloor - 1  = 2^{d/2-1} > \frac{1}{2}(2^{d/2}-1)-\frac{1}{2} \geqs \frac{1}{2}(r-1)$$
and once again the desired result follows.

Finally, let us assume that $d \in \{4,6\}$. First assume $d=4$ so $j \in \{1,2\}$. If $j=1$ then $r \leqs t-1$ and thus $t \geqs 7$. Moreover, $\kappa(S,r) \leqs (r^2-3r+6)/2$ (see Lemma \ref{l:prel1}(ii)) and 
$$\kappa(T,r) \geqs \lfloor n/(r-1) \rfloor - 1 \geqs \lfloor (t^3-1)/(t-2) \rfloor - 1 > (t^2-5t+10)/2 \geqs (r^2-3r+6)/2.$$
Similarly, if $j=2$ then $t \geqs 4$, $\kappa(S,r) \leqs (r-1)/2+1 = (r+1)/2$ and
$$\kappa(T,r) \geqs \lfloor n/(r-1) \rfloor - 1 \geqs \lfloor (t^3-1)/t \rfloor - 1 > (t+2)/2 \geqs (r+1)/2.$$

Now assume $d=6$, so $j \in \{2,3\}$. If $j=2$ then $t \geqs 4$ and $\kappa(S,r) \leqs (r^2+15)/8$ by Lemma \ref{l:prel1}(iii), whereas Lemma \ref{l:floor}(iii) implies that 
$$\kappa(T,r) \geqs \lfloor n/(r-1) \rfloor - 1 \geqs \lfloor (t^5-1)/t \rfloor - 1 > (r^2+15)/8.$$
Finally, if $j = 3$ then $r \leqs (t^3-1)/(t-1) = t^2+t+1$ and $\kappa(S,r) \leqs (r-1)/3+1 = (r+2)/3$. However,
$$\kappa(T,r) \geqs \lfloor n/(r-1) \rfloor - 1 \geqs \lfloor (t^5-1)/t(t+1) \rfloor - 1 > (r+2)/3$$
and the desired result follows.
\end{proof}

\subsubsection{Unitary groups} 

\begin{lem}\label{l:psud}
Proposition \ref{p:c} holds if $S = {\rm PSU}_{d}(t)$ and $(t,p)=1$.
\end{lem}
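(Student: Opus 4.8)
The plan is to mimic the structure of the linear group case (Lemma \ref{p:psld}) almost verbatim, since unitary groups behave analogously once one replaces $t$ by $t$ and tracks the sign conventions in $\Phi(r,t)$ versus $\Phi(r,-t)$. First I would dispose of the small cases: if $(d,t)$ is one of the finitely many pairs where the Landazuri--Seitz bound is weak (e.g. ${\rm PSU}_3(t)$ for small $t$, ${\rm PSU}_4(2)$, ${\rm PSU}_4(3)$, etc.), then $r$ lies in a short explicit list and either $\kappa(H_0,r)=1$ (so $\kappa(T,r)\geqs 2$ already gives a derangement) or one checks these cases are among the collection $\mathcal{B}$ (e.g. ${\rm PSU}_5(2)$ is $(\mathcal{B}1)$, ${\rm PSU}_3(3)$ is $(\mathcal{B}5),(\mathcal{B}18)$, ${\rm PSU}_4(3)$ is $(\mathcal{B}13),(\mathcal{B}15)$) and hence already excluded. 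For all remaining $(d,t)$ we have the Landazuri--Seitz bound $n \geqs (t^{d}-t)/(t+1)$ for $d$ odd and $n \geqs (t^{d}-1)/(t+1)$ (or the appropriate variant) for $d$ even, giving $r \geqs \lceil\sqrt{n}/2\rceil+1 > e$, so only elements of ${\rm PGU}_d(t)$ need be considered.

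Next I would split on whether $r = \ell$. If $r=\ell$ then $t\geqs 5$ and, exactly as in the linear case, for $d\geqs 4$ the bound forces $r > t$, a contradiction, so $d=3$; then $S$ has boundedly many (at most four) unipotent classes of order $r$ while Lemma \ref{l:floor}(iii) gives $\kappa(T,r) \geqs \lfloor n/(r-1)\rfloor - 1 \geqs t$, which wins. For $r\neq\ell$, set $j = \Phi(r,-t)$ — equivalently the parameter governing the factorisation of $r$ in ${\rm GU}_d(t)$ — and use Lemma \ref{l:floor}(i): if the analogue of ``$j$ large'' holds then $\kappa(S,r)=1$ and we are done, so we may assume $j$ is small relative to $d$. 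The Landazuri--Seitz lower bound on $n$ combined with $r \geqs \lceil\sqrt{n}/2\rceil + 1$ then pins $j$ down to one of two values (roughly $(d-1)/2$ for $d$ odd, or $d/2-1,d/2$ for $d$ even, just as before). In each resulting case I would bound $\kappa(S,r)$ from above using Lemma \ref{l:floor}(ii) (when $\lfloor d/j\rfloor = 2$) or Lemma \ref{l:prel1} (for the small residual cases $d\in\{3,4,6\}$ with $j$ minimal), and bound $\kappa(T,r)$ from below by $\lfloor n/(r-1)\rfloor - 1$, using $n \geqs (t^{d-1}-\cdots)/(t+1)$ and $r-1$ at most the relevant cyclotomic-type quantity; then a direct comparison of the two polynomial-in-$t$ expressions yields $\kappa(T,r) > \kappa(S,r) \geqs \kappa(H_0,r)$, so Corollary \ref{c:count} produces a derangement.

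The routine-but-delicate bookkeeping is the main obstacle: the Landazuri--Seitz bounds for unitary groups are smaller than for linear groups (division by $t+1$ rather than $t-1$), so the numerical inequalities $\kappa(T,r) > \kappa(S,r)$ are tighter and several borderline $(d,t,j)$ triples — particularly ${\rm PSU}_3(t)$ with $r\mid t+1$, and ${\rm PSU}_4(t)$, ${\rm PSU}_6(t)$ with $j$ small — will need to be checked individually rather than by a uniform estimate. One must also be careful that the parameter $c$ in \eqref{e:eqcc} for $T = {\rm PSU}_n(q)$ has the exceptional clause $c = i/2$ when $i \equiv 2 \pmod 4$, so the constraint $c \leqs n/2$ in \eqref{e:box} interacts with $j = \Phi(r,-t)$ through Lemma \ref{l:csub}; applying Lemma \ref{l:csub} directly (with $T_1$ a classical group containing $S$ and $T_2 = T$, whenever $n > 2\dim$) is likely the cleanest way to handle the cases where $S$ is itself a unitary group of large rank, reducing everything to the comparison $n > 2(\text{natural dimension of } S)$ guaranteed by Landazuri--Seitz for $d$ not too small.
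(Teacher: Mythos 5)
Your overall strategy is indeed the paper's: Landazuri--Seitz bounds force $r \geqs \lceil \sqrt{n}/2\rceil+1 > \log_{\ell}t$ (and $r>t$ for $d \geqs 4$), and one then compares $\kappa(S,r)$ with $\kappa(T,r)$ via Lemmas \ref{l:floor} and \ref{l:prel1}, with the unitary block parameter (your $\Phi(r,-t)$, the paper's $c'$) pinned near $d/2$. But there is a genuine gap at exactly the borderline you flag without resolving: $d=3$ with $r$ dividing $t+1$. When $r=t+1$ (so $t \geqs 4$ is even), Lemma \ref{l:prel1}(i) only gives $\kappa(S,r) \leqs r-1 = t$, while $n \geqs t(t^2-1)/(t+1)=t(t-1)$ and Lemma \ref{l:floor}(iii) only give $\kappa(T,r) \geqs t-2$, so the uniform comparison is inconclusive for every such $t$. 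The paper closes this by sharpening the upper bound on $\kappa(S,r)$: for $t \geqs 16$ the subgroups $\la [1,\omega,\omega^{k}]Z \ra$ with $k \in \{2,4,8\}$ are shown to be redundant as class representatives (for instance $[1,\omega,\omega^{2}]^{(t+2)/2}$ is ${\rm GU}_{3}(t)$-conjugate to $[1,\omega,\omega^{(t+2)/2}]$), giving $\kappa(S,r) \leqs t-3 < t-2 \leqs \kappa(T,r)$, and the case $t=4$ is settled by hand using $n \geqs 12$. Declaring that such triples ``need to be checked individually'' does not supply this idea, and without it your argument fails for infinitely many $t$.

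Two smaller points. The proposed shortcut via Lemma \ref{l:csub} is not available here: that lemma assumes $T_1$ and $T_2$ are classical groups over the \emph{same} field $\mathbb{F}_{q}$ with $r \ne p$, and the paper invokes it only in the defining-characteristic analysis. In the present cross-characteristic setting $S$ is defined over $\mathbb{F}_{t}$ with $\ell \ne p$, the block sizes $c'$ (for $S$) and $c$ (for $T$) are computed over different fields, and in any case one typically has $c' \leqs d/2 < \sqrt{n}/2 < c$, so even a numerical analogue would fail in the needed direction; the comparison must be done directly, as in your main plan. Finally, the fact that ${\rm PSU}_{5}(2)$, ${\rm PSU}_{3}(3)$ and ${\rm PSU}_{4}(3)$ occur as socles in Table \ref{btab} excludes only those specific embeddings $(T,S)$ listed there, not every irreducible representation of these groups, so they cannot be discarded wholesale; the paper instead handles the genuinely awkward small cases by noting $\kappa(H_0,r)=1$ for $(d,t)=(4,2),(4,3)$ and by direct checks when $t=2$ and $d \leqs 9$.
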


\begin{proof}
In view of Lemma \ref{p:psl2}, we may assume that $d \geqs 3$. If $d=4$ and $t \leqs 3$ then $r \in \{5,7\}$ and $\kappa(H_0,r)=1$, so we may assume  that $t>3$ if $d=4$. Therefore, \cite{Land-S} implies that 
\begin{equation}\label{e:unb}
n \geqs \left\{\begin{array}{ll}
t(t^{d-1}-1)/(t+1) & \mbox{$d$ odd} \\
(t^{d}-1)/(t+1) & \mbox{$d$ even}
\end{array}\right.
\end{equation}
and thus
$$r \geqs \lceil \sqrt{n}/2 \rceil +1 \geqs \left\lceil \frac{1}{2}\sqrt{t(t^{d-1}-1)/(t+1)} \right\rceil +1>\log_{\ell}t.$$
Therefore, every element of order $r$ in $H_0$ is contained in ${\rm PGU}_{d}(t)$. In fact, the same bound implies that $r>t$ if $d \geqs 4$, so $r=\ell$ only if $d=3$. 

Suppose $r=\ell$, so $S = {\rm PSU}_{3}(t)$ and $t \geqs 5$. Now $\kappa(S,r) \leqs 4$ and by combining the lower bound on $n$ in \eqref{e:unb} with Lemma \ref{l:floor}(iii), we see that $\kappa(T,r) \geqs t-1$. Therefore, we may assume that $t=5$ and thus $n \geqs 20$. If $n=20$ then $T$ is a symplectic group (see \cite[Table 2]{HM2}), so Lemma \ref{l:floor}(iii) implies that $\kappa(T,r) \geqs 5$ and the result follows.

For the remainder we may assume $r \ne \ell$. Set $j = \Phi(r,t)$ and 
$$c' = \left\{\begin{array}{ll}
j & j \equiv 0 \imod{4} \\
j/2 & j \equiv 2 \imod{4}  \\
2j & \mbox{$j$ odd.}
\end{array}\right.$$
Note that $\kappa(S,r)=1$ if $c'>d/2$ (see Lemma \ref{l:floor}(i)), so we may assume that $c' \leqs d/2$.

First consider the special case $t=2$, so $d \geqs 4$ (since ${\rm PSU}_{3}(2)$ is not simple). The cases with $d \leqs 9$ can be checked directly. For example, suppose $S = {\rm PSU}_{9}(2)$.
If $r>5$ then $c' \geqs 5$ and thus $\kappa(S,r)=1$. If $r=5$ then $\kappa(S,r)=2$ and \eqref{e:unb} implies that $n \geqs 170$, so $\kappa(T,r) \geqs \lfloor 170/4\rfloor-1=41$.
Now assume $d \geqs 10$, so
$$r \geqs \left\lceil \frac{1}{2}\sqrt{2(2^{d-1}-1)/3} \right\rceil +1> 2^{(d-4)/2}+1$$
and thus $j>(d-4)/2$. If $j \equiv 0 \imod{4}$ then $r$ divides $2^{j/2}+1$ and $d-3 \leqs j=c' \leqs d/2$, which is absurd since $d \geqs 10$. Similarly, if $j$ is odd then $(d-4)/2<j\leqs d/4$ and once again we reach a contradiction. Finally, suppose $j \equiv 2 \imod{4}$. Here $d-3\leqs j \leqs d$ and $c'=j/2$, so $\lfloor d/c' \rfloor=2$ since $d \geqs 10$. Therefore, $\kappa(S,r) \leqs (r-1)/c'+1$ by Lemma \ref{l:floor}(ii), whereas $T$ has at least
$$\lfloor n/(r-1) \rfloor - 1 \geqs \left\lfloor \frac{2(2^{d-1}-1)/3}{2^{d/2}+1} \right\rfloor - 1 >\frac{1}{d}2^{d/2+1}+1 \geqs \frac{1}{j}2^{j/2+1}+1 \geqs \frac{r-1}{c'}+1$$
classes of subgroups of order $r$. The result follows.

Finally, let us assume $t \geqs 3$. First observe that
$$r \geqs \left\lceil \frac{1}{2}\sqrt{t(t^{d-1}-1)/(t+1)} \right\rceil +1 > t^{(d-3)/2}+1.$$
If $j \equiv 0 \imod{4}$ then $4 \leqs j=c' \leqs d/2$, so $d \geqs 8$. Moreover, $r$ divides $t^{j/2}+1$ and thus $d-2 \leqs j = c'  \leqs d/2$, but this is incompatible with the bound $d \geqs 8$. Next assume $j$ is odd, so $2 \leqs 2j=c' \leqs d/2$ and $d \geqs 4$. Since 
$(d-3)/2<j \leqs d/4$, it follows that $d \in \{4,5\}$ and $j=1$, so $r \leqs t-1$. In particular, $\kappa(S,r) \leqs (r+1)/2$ (see Lemma \ref{l:floor}(ii)) and
$$\kappa(T,r) \geqs \lfloor n/(r-1) \rfloor-1 \geqs \left\lfloor \frac{(t^4-1)/(t+1)}{t-2}\right\rfloor-1>t/2 \geqs (r+1)/2,$$
so the result follows.

To complete the proof of the lemma, we may assume that $t \geqs 3$ and $j \equiv 2 \imod{4}$, in which case $t^{(d-3)/2}+1 < r \leqs t^{j/2}+1$ and thus $d-2 \leqs j \leqs d$. In particular, $d \not\equiv 1 \imod{4}$. For now, we will assume that $d \geqs 6$, so $\lfloor d/c'\rfloor=2$ and Lemma \ref{l:floor}(ii) implies that 
$$\kappa(S,r) \leqs (r-1)/c'+1 = 2(r-1)/j+1.$$

If $d \equiv 0 \imod{4}$ and $d \geqs 8$ then $j=d-2$ and 
$$\kappa(T,r) \geqs \lfloor n/(r-1) \rfloor-1 \geqs \left\lfloor \frac{(t^d-1)/(t+1)}{t^{d/2-1}}\right\rfloor-1>\frac{2t^{d/2-1}}{d-2}+1 \geqs \frac{2(r-1)}{j}+1.$$
Similarly, if $d \equiv 3 \imod{4}$ and $d \geqs 7$, then $j=d-1$ and we see that $\kappa(T,r) > 2(r-1)/j+1$. Now assume $d \equiv 2 \imod{4}$, so $d \geqs 6$ and $j=d$. Here $r$ divides $(t^{d/2}+1)/(t+1)$, so $r-1 \leqs (t^{d/2}+1)/(t+1)-1=\a$ and thus  
$$\kappa(T,r) \geqs \lfloor n/(r-1) \rfloor-1 \geqs \left\lfloor \frac{(t^d-1)/(t+1)}{\a}\right\rfloor-1>\frac{2}{d}\left(\frac{t^{d/2}+1}{t+1}-1\right)+1 \geqs \frac{2(r-1)}{d}+1.$$

Therefore, to complete the proof we may assume that $d \in \{3,4\}$, in which case $j=2$ and $r$ divides $t+1$, so $t \geqs 4$. If $d=4$ then $\kappa(S,r) \leqs (r^2-3r+6)/2$ (see Lemma \ref{l:prel1}(ii)) and
$$\kappa(T,r) \geqs \lfloor n/(r-1) \rfloor-1 \geqs \left\lfloor \frac{t^4-1}{(t+1)t} \right\rfloor-1 > \frac{1}{2}(t^2-t+4) \geqs \frac{1}{2}(r^2-3r+6).$$

Finally suppose $d=3$, so $\kappa(S,r) <r$ by Lemma \ref{l:prel1}(i). If $r<t+1$ then $r \leqs (t+1)/2$ and by applying Lemma \ref{l:floor}(iii) we deduce that $\kappa(T,r) \geqs 2t-1 \geqs r$. Therefore, we may assume that $r=t+1$, so $t \geqs 4$ is even. If $S = {\rm PSU}_{3}(4)$ then $r=5$ and $\kappa(S,r)=2$, whereas $\kappa(T,r) \geqs 3$ since $n \geqs 12$ (note that $T$ is a symplectic group if $n=12$; see \cite[Table 2]{HM2}). Now assume $t \geqs 16$ and let $\omega \in \mathbb{F}_{t^2}$ be a primitive $r$-th root of unity. As noted in the proof of Lemma \ref{l:prel1}(i), any subgroup of ${\rm PGU}_{3}(t)$ of order $r$ is conjugate to a subgroup of the form $\la [1,1, \omega]Z \ra$ or $\la [1, \omega, \omega^k]Z \ra$ for some $1< k < r$, where $Z$ denotes the centre of ${\rm GU}_{3}(t)$. In fact, we can exclude $k \in \{2,4,8\}$ since 
$$[1,\omega, \omega^2]^{(t+2)/2} \sim [1,\omega, \omega^{(t+2)/2}],\;\; [1,\omega, \omega^4]^{(3t+4)/4} \sim [1,\omega, \omega^{(3t+4)/4}]$$
and
$$[1,\omega, \omega^8]^{(7t+8)/8} \sim [1,\omega, \omega^{(7t+8)/8}],$$
where $\sim$ denotes ${\rm GU}_{3}(t)$-conjugacy. Therefore, $\kappa(S,r) \leqs t-3$ and Lemma \ref{l:floor}(iii) implies that $\kappa(T,r) \geqs t-2$. This completes the proof of the lemma. 
\end{proof}

\subsubsection{Symplectic groups}

\begin{lem}\label{p:psp4}
Proposition \ref{p:c} holds if $S = {\rm PSp}_{4}(t)'$ and $(t,p)=1$.
\end{lem}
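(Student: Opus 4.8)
The plan is to follow the same template used for the other low-rank Lie type cases, exploiting the fact that $S = {\rm PSp}_4(t)'$ has small rank and a well-understood set of irreducible representations, while $T$ has many classes of subgroups of order $r$ because $n$ is forced to be reasonably large. First I would dispose of the tiny cases: $S = {\rm PSp}_4(2)' \cong A_6$ is handled by Proposition \ref{p:alter} (or is excluded as not being of generic Lie type), and $S = {\rm PSp}_4(3) \cong {\rm PSU}_4(2)$ can be treated directly or deferred to Lemma \ref{l:psud}; in fact for $t \leqs 3$ the only primes $r$ dividing $|H_0|$ with $r \geqs 5$ are $r = 5$, and one checks $\kappa(H_0,5) = 1$, so the conclusion follows since $\eqref{e:box}$ forces $\kappa(T,r) \geqs 2$. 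So I may assume $t \geqs 4$.

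For $t \geqs 4$, the Landazuri--Seitz bound \cite{Land-S} gives $n \geqs \tfrac12 t(t-1)^2$ (the minimal faithful projective cross-characteristic degree of ${\rm PSp}_4(t)$, with the even better bound $\tfrac12(t^2-1)$ when $p \mid t+1$ needing a separate but easy check, or simply using $n \geqs (t-1)^2/2$ uniformly suffices here). Combined with \eqref{e:rrbb} this yields $r \geqs \lceil \sqrt{n}/2\rceil + 1 > \log_\ell t$, so every element of order $r$ in $H_0$ lies in the inner-diagonal part ${\rm PGSp}_4(t)$ (no field or graph automorphisms of order $r$ to worry about), and moreover $r > t$ unless $t$ is small, which pins down when $r = \ell$ can occur. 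If $r = \ell$, then $S = {\rm PSp}_4(t)$ has a bounded number of unipotent classes of order $\ell$ (at most a constant, say $\leqs 5$, read off from \cite[Chapter 3]{BG} or the known unipotent class list), so $\kappa(S,\ell)$ is bounded, while Lemma \ref{l:floor}(iii) gives $\kappa(T,\ell) \geqs \lfloor n/(\ell-1)\rfloor - 1 \geqs \lfloor n/(t-1)\rfloor - 1$, which is at least $\tfrac12 t(t-1) - 1$ and hence exceeds the bound, so $T$ has derangements of order $\ell$ by Corollary \ref{c:count}.

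For $r \ne \ell$, set $j = \Phi(r,t)$ and let $c' \in \{j, j/2, 2j\}$ be the associated invariant for $S$ as in Lemma \ref{l:psud}. Since $S$ has natural module of dimension $4$, the number of classes of subgroups of order $r$ in $S$ is controlled by $c'$: if $c' > 2$ then $\kappa(S,r) = 1$ (Lemma \ref{l:floor}(i)) and we are done immediately; if $c' = 2$ then by Lemma \ref{l:floor}(ii) we have $\kappa(S,r) \leqs (r-1)/c' + 1 = (r+1)/2$, while $r - 1 \leqs$ (the relevant cyclotomic value, at most $t+1$ or $t-1$ depending on whether $c' = 2$ comes from $j=2$ or $j=1,4$), and then Lemma \ref{l:floor}(iii) gives $\kappa(T,r) \geqs \lfloor n/(r-1)\rfloor - 1$, which with $n \geqs \tfrac12 t(t-1)^2$ and $r \leqs t+1$ beats $(r+1)/2$; if $c' = 1$ then $r \mid t - 1$, so $t \geqs r + 1 \geqs 6$, and again $\kappa(T,r) \geqs \lfloor n/(t-2)\rfloor - 1$ is large compared to the crude bound $\kappa(S,r) \leqs r - 1$ (or one uses the sharper count of classes of order-$r$ subgroups in a rank-2 group). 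In every subcase the inequality $\kappa(T,r) > \kappa(S,r) \geqs \kappa(H_0,r)$ holds once $t \geqs 4$, so Corollary \ref{c:count} finishes the argument.

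The main obstacle I anticipate is not any single estimate but the bookkeeping at the boundary: making sure the Landazuri--Seitz lower bound on $n$ is the correct one (the minimal degree for ${\rm PSp}_4(t)$ drops when $p \mid t+1$, and there is the genuinely special small value $t = 4$ where $n$ could be as small as $12$), and confirming that the handful of cases with $n \leqs 12$ or so — where Lemma \ref{l:floor}(iii) only gives $\kappa(T,r) \geqs 2$ or $3$ — are either covered by the collection $\mathcal{B}$ (e.g. ${\rm PSp}_6$ with $S$ related to ${\rm PSp}_4(t)'$ via an exceptional isomorphism) and hence already excluded, or can be settled by a direct inspection of \cite[Section 8.2]{BHR} and the Brauer character, exactly as in the sporadic and alternating cases. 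Once those low-$n$ exceptions are checked off, the generic inequality is comfortable.
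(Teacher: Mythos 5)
Your overall strategy (dispose of $t\leqs 3$ via $\kappa(H_0,r)=1$, then compare class numbers via Corollary \ref{c:count}, Lemma \ref{l:floor} and the Landazuri--Seitz bounds) is the same as the paper's, but the key numerical input is misstated in a way that breaks the argument exactly where it is delicate. For $t$ odd the Landazuri--Seitz bound for ${\rm PSp}_{4}(t)$ is only $n \geqs \frac{1}{2}(t^{2}-1)$ (the Weil representation degree); the bound $\frac{1}{2}t(t-1)^{2}$ you use holds only for $t$ even, which is precisely the case where $r=\ell$ cannot occur (there $\ell=2<r$). With the correct bound your $r=\ell$ comparison collapses: Lemma \ref{l:floor}(iii) gives only $\kappa(T,\ell)\geqs \lfloor n/(\ell-1)\rfloor-1\geqs (t-1)/2$, which does \emph{not} exceed the number of classes of order-$\ell$ elements/subgroups of $S$ for $t\leqs 13$; likewise your claim that $r>t$ except for small $t$ fails for odd $t$, and in the $c'=2$ subcase with $t$ odd you need the refinement $r\leqs (t+1)/2$ (both $t\pm1$ are even) before $\lfloor n/(r-1)\rfloor-1$ beats $(r+1)/2$. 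There is also a borderline failure of strictness at $t=4$, $n=18$, $r=5$, where $\lfloor n/(r-1)\rfloor-1=3=\kappa(S,5)$ and one must use the sharper $m-\delta$ form of Lemma \ref{l:floor}(iii).

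The residual hard cases are then left unproved: $t=5$ with $r=\ell=5$, where $n$ may be as small as $12$ or $13$, and $t=r=\ell=7$ with $c=6$ and $n\in\{24,25\}$. These are not covered by the collection $\mathcal{B}$ (so your suggestion to discard them via an exceptional isomorphism does not apply), and the paper disposes of them by a different kind of argument: for $t=5$ the Frobenius--Schur indicators in \cite[Table 2]{HM2} force $T={\rm PSp}_{12}(q)$ or $\Omega_{13}(q)$, the condition $c\geqs 3$ forces $i=4$, i.e.\ $q^{2}\equiv -1 \imod{5}$, while the irrationalities of the Brauer character force $q^{2}\equiv 1\imod{5}$, a contradiction; for $t=7$, the fact that $c=6$ makes $\chi(x)$ a rational integer for $x$ of order $7$, contradicting the $b7$ irrationality recorded in \cite[Table 2]{HM2}. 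Your closing remark that such low-$n$ exceptions "can be settled by a direct inspection" of \cite{BHR} and the Brauer character acknowledges the issue but supplies no argument, so as written the proposal has a genuine gap at exactly the cases that required the extra representation-theoretic input in the paper.
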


\begin{proof}
If $t \in \{2,3\}$ then $r=5$ and $\kappa(S,r)=1$, so for the remainder we may assume that $t \geqs 4$. By \cite{Land-S} we have
\begin{equation}\label{e:nbd}
n \geqs \left\{\begin{array}{ll}
\frac{1}{2}(t^{2}-1) & \mbox{$t$ odd} \\
\frac{1}{2}t(t-1)^2 & \mbox{$t$ even}
\end{array}\right.
\end{equation}

Suppose $t=4$, so $r \in \{5,17\}$ and $n \geqs 18$. Now $\kappa(S,5)=3$ and $\kappa(S,17)=1$, so the result follows from the lower bound on $\kappa(T,r)$ in Lemma \ref{l:floor}(iii). Next assume $t=5$, so $r \in \{5,13\}$ and $n \geqs 12$. Since $\kappa(S,5)=4$ and $\kappa(S,13)=1$, we may assume $r=5$. If $n > 13$ then by inspecting \cite[Table 2]{HM2} we deduce that $n \geqs 40$ and thus $\kappa(T,5) \geqs 5$. Therefore, we may assume that $n \in \{12,13\}$. By considering the corresponding Frobenius-Schur indicator in \cite[Table 2]{HM2} we see that $T = {\rm PSp}_{12}(q)$ or $\Omega_{13}(q)$. Set $i = \Phi(r,q)$ as before and note that $i \in \{1,2,4\}$. In fact, $i=4$ is the only possibility since $c \geqs 3$, so $q^2 \equiv -1 \imod{5}$. However, by inspecting the irrationalities of the corresponding Brauer character in \cite[Table 2]{HM2}, we see that $q^2 \equiv 1 \imod{5}$, which is a contradiction.

For the remainder we may assume that $t \geqs 7$, in which case \eqref{r:bd} implies that
$$r \geqs \left\lceil \frac{1}{2}\sqrt{(t^2-1)/2}\right\rceil+1 > \log_{\ell}t$$
and thus every element in $H_0$ of order $r$ is contained in $S$. 

First assume $r=\ell$, so $t$ is odd. According to \cite[Proposition 3.4.10]{BG}, $S$ has six classes of elements of order $r$, and Lemma \ref{l:floor}(iii) implies that 
$$\kappa(T,r) \geqs \lfloor n/(t-1) \rfloor - 1 \geqs \lfloor (t+1)/2 \rfloor -1 = (t-1)/2.$$
Therefore, we may assume that $t \in \{7,11,13\}$. In each of these cases one checks that $\kappa(S,r)=4$, so we can assume $r=t=7$ and $c \in \{3,6\}$. Note that $n \geqs 24$. If $c=3$ then $\kappa(T,r) \geqs \lfloor 24/3 \rfloor-1=7$, so we can assume $c=6$. If $n>25$ then $n \geqs 126$ (see \cite[Table 2]{HM2}) and the desired result follows, so let us assume that $n \in \{24,25\}$. 
Suppose $x \in S$ has order $7$ and let $\chi$ be the corresponding Brauer character. Since $c=6$, each nontrivial $7$-th root of unity occurs as an eigenvalue of $x$ with equal multiplicity (in terms of the action of $x$ on $\bar{V} = V \otimes K$, where $K$ is the algebraic closure of $\mathbb{F}_{q}$), so $\chi(x)$ is an integer. However, \cite[Table 2]{HM2} indicates that $\chi$ has a $b7$ irrationality in the standard Atlas notation, which means that $\chi(x)$ is not an integer for some $x \in S$ of order $7$. Therefore, $c \ne 6$ when $n \in \{24,25\}$ and the result follows. 
 
Now assume $r \ne \ell$. Set $j = \Phi(r,t)$ and note that $j \in \{1,2,4\}$. If $j=4$ then $\kappa(S,r)=1$, so we may assume that $j \in \{1,2\}$, in which case $r \leqs t+1$ and 
$\kappa(S,r) \leqs (r+1)/2$ (see Lemma \ref{l:floor}(ii)). If $t$ is even then a combination of \eqref{e:nbd} and Lemma \ref{l:floor}(iii) implies that $\kappa(T,r)> t/2+1$. Similarly, if $t$ is odd then $r \leqs (t+1)/2$ and the same conclusion holds.
\end{proof}

\begin{lem}\label{p:psp6}
Proposition \ref{p:c} holds if $S = {\rm PSp}_{6}(t)$ and $(t,p)=1$. 
\end{lem}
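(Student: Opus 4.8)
The plan is to follow the same template used for the smaller symplectic and unitary groups: bound $\kappa(S,r)$ from above, bound $\kappa(T,r)$ from below via Lemmas \ref{l:floor}(iii) and the Landazuri--Seitz estimate, and conclude via Corollary \ref{c:count}. First I would dispose of the small fields. If $t=2$ then $r=7$ and $S = {\rm Sp}_{6}(2)$ has $\kappa(S,7)=1$, so we are done immediately; this is the case $(\mathcal{B}6)$ when $t=q$, but here $\ell \ne p$. If $t=3$ then $r \in \{5,7,13\}$ and one checks from the character table of ${\rm Sp}_{6}(3)$ that $\kappa(S,r) \leqs 2$, while \cite{Land-S} gives $n \geqs (3^3-1)(3-1)/2 = 13$, wait — more precisely the Landazuri--Seitz bound for ${\rm PSp}_{6}(3)$ gives $n$ large enough that Lemma \ref{l:floor}(iii) forces $\kappa(T,r) \geqs 3$ whenever $r=5$; for $r=7,13$ we have $\kappa(S,r)=1$. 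So assume $t \geqs 4$, whence by \cite{Land-S}
\begin{equation*}
n \geqs \left\{\begin{array}{ll}
\frac{1}{2}(t^3-1)(t-1) & \mbox{$t$ odd} \\
\frac{1}{2}t^2(t-1)(t^2-1)/(t+1) & \mbox{$t$ even}
\end{array}\right.
\end{equation*}
(the precise constant is not important; what matters is $n \gg t^2$). Combined with \eqref{r:bd} this gives $r \geqs \lceil\sqrt{n}/2\rceil+1 > \log_\ell t$, so every element of $H_0$ of order $r$ lies in $S$ (no field automorphisms), and in fact $r > t+1$ unless $t$ is small, so $r=\ell$ forces $t$ bounded.

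Next I would split on whether $r = \ell$. If $r=\ell$ (so $t$ odd), then by \cite[Proposition 3.4.10]{BG} (or the analogue for ${\rm Sp}_6$) the group $S$ has a bounded number of classes of elements of order $r$ — something like at most a constant times $t^{?}$, but crucially $\kappa(S,r)$ is much smaller than the lower bound $\lfloor n/(r-1)\rfloor - 1 \geqs \lfloor n/(t-1)\rfloor - 1$ on $\kappa(T,r)$, which is roughly $\frac{1}{2}t^2$ by the displayed bound. So for all but a few small values of $t$ (which are checked directly against \cite[Table 2]{HM2}, exactly as in the $t=7$ subcase of Lemma \ref{p:psp4}, possibly using a Brauer-character irrationality argument to rule out the borderline dimensions $n\in\{$small set$\}$), we get $\kappa(T,r) > \kappa(S,r)$ and hence a derangement.

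If $r \ne \ell$, set $j = \Phi(r,t)$ and let $c'$ be the integer of Lemma \ref{l:csub}/Lemma \ref{l:floor} associated to $S = {\rm PSp}_{6}(t)$ and the prime $r$; so $j \in \{1,2,3,6\}$ and $c' \in \{1,2,3,6\}$ accordingly. If $c' > 3$ then $\kappa(S,r)=1$ by Lemma \ref{l:floor}(i) and we are done. If $c'=3$ then $\lfloor 6/c'\rfloor = 2$ so Lemma \ref{l:floor}(ii) gives $\kappa(S,r) \leqs (r-1)/c'+1 = (r-1)/3+1$; since $r$ divides $t^3-1$ (or $(t^3-1)/(t-1)$ depending on the form of $c'$), $r-1$ is $O(t^3)$ or $O(t^2)$, and the displayed lower bound on $n$ makes $\kappa(T,r) \geqs \lfloor n/(r-1)\rfloor - 1$ exceed it for $t \geqs 4$ — this is a short computation mirroring the $d=6$, $j=3$ case in the proof of Lemma \ref{p:psld}. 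If $c' \in \{1,2\}$ then $r \leqs t+1$, so $\kappa(S,r) \leqs$ a small polynomial in $r$ (at worst quadratic, via bounds like Lemma \ref{l:prel1} applied inside ${\rm PSp}_6$, or simply $\leqs (r+1)/2$-type bounds from Lemma \ref{l:floor}(ii) when $\lfloor 6/c'\rfloor = 3$; I would enumerate the conjugacy classes of order-$r$ subgroups of ${\rm PGSp}_6(t)$ directly by listing the possible multisets of eigenvalue-pairs $[(\Lambda,\Lambda^{-1})^{a_1},\ldots,I_e]$ with $\sum a_i \leqs 3$), whereas $\kappa(T,r) \geqs \lfloor n/(r-1)\rfloor - 1 > t^2/2$-ish, which dominates for $t \geqs 4$.

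The main obstacle I anticipate is the handful of genuinely small cases — essentially $t \in \{3,4,5,7\}$ combined with the smallest admissible dimensions $n$ for $T$, where the crude counting inequality $\kappa(T,r) > \kappa(S,r)$ fails by a slim margin. These will have to be settled by hand using the explicit data in \cite[Table 2]{HM2} (or \cite{BHR, HM}): identifying exactly which classical type $T$ is (from the Frobenius--Schur indicator), pinning down $i = \Phi(r,q)$ (forced to satisfy $c > \max\{2,\sqrt n/2\}$, which typically forces $i$ into a single value like $i=4$ or $i=6$), and then either exhibiting an explicit derangement (an element of order $r$ with the largest possible $1$-eigenspace, so $\nu = c$) or deriving a contradiction from a Brauer-character irrationality — the same $b7$-type argument used in Lemma \ref{p:psp4} for $T = {\rm PSp}_{12}(q)$. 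I expect at most a few lines per exceptional pair, and no new ideas beyond those already deployed for ${\rm PSp}_4(t)'$ and ${\rm PSL}_d^\e(t)$.
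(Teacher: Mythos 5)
Your overall strategy is the paper's: dispose of small $t$, use the Landazuri--Seitz bound together with \eqref{r:bd} to exclude field automorphisms and force $r$ large relative to $t$, reduce the $r=\ell$ case to a bounded situation, and for $r\ne\ell$ split on $j=\Phi(r,t)$, getting $\kappa(S,r)=1$ when the associated parameter exceeds $3$ and otherwise comparing an eigenvalue-multiset count of order-$r$ subgroups of $S$ (the $(r^2+15)/8$-type bound) with the lower bound on $\kappa(T,r)$ from Lemma \ref{l:floor}(iii). Indeed the paper pins the $r=\ell$ case down to the single value $t=5$ (since its degree bound gives $r>t$ for $t\geqs 7$, and $r\geqs 5$ rules out $t=3$), where ${\rm PSp}_6(5)$ has $13$ classes of elements of order $5$ against at least $14$ in $T$ because $n\geqs 62$ --- this is exactly the ``slim-margin, by-hand'' check you defer to.

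However, your $r\ne\ell$ case analysis contains a concrete error. For a symplectic group the parameter attached to $S$ by \eqref{e:eqcc}/Lemma \ref{l:csub} is always even: if $j=3$ then $c'=2j=6>3$, so $\kappa(S,r)=1$ at once by Lemma \ref{l:floor}(i) (this is the paper's ``$j>2$'' step), and your value $c'=3$ never occurs; also $j=4$ is possible and is missing from your list, though your generic ``$c'>3$'' clause would cover it. More seriously, the counting you sketch for the putative $c'=3$ branch does not go through: there $r$ can be as large as $t^2+t+1$, so $(r-1)/3+1$ is of order $t^2/3$, while with the correct Landazuri--Seitz bound $n\geqs\frac{1}{2}(t^3-1)$ for $t$ odd (your stated bound $\frac{1}{2}(t^3-1)(t-1)$ is too strong) one only gets $\kappa(T,r)\geqs\lfloor n/(r-1)\rfloor-1\approx t/2$, which is far smaller --- e.g.\ $t=7$, $r=57\!+\!1$-free case $r=t^2+t+1$ prime for $t=8$ already defeats the inequality. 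So that step would fail as written and only appears plausible because of the inflated degree bound; the repair is simply to use the correct (even) value of $c'$, after which the case is trivial and your argument collapses back onto the paper's.
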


\begin{proof}
If $t=2$ then $r \in \{5,7\}$ and $\kappa(S,r)=1$. For the remainder we may assume that $t \geqs 3$, in which case \cite{Land-S} gives
\begin{equation}\label{e:nbd2}
n \geqs \left\{\begin{array}{ll}
\frac{1}{2}(t^{3}-1) & \mbox{$t$ odd} \\
\frac{1}{2}t^2(t^2-1)(t-1) & \mbox{$t$ even}
\end{array}\right.
\end{equation}
In particular, $r \geqs \lceil \frac{1}{2}\sqrt{(t^3-1)/2} \rceil+1>\log_{\ell}t$. In fact, the same bound implies that $r>t$ if $t \geqs 7$, so if $r=\ell$ then $t=5$ is the only possibility. Now $S = {\rm PSp}_{6}(5)$ has $13$ classes of elements of order $5$, but $T$ has at least $\lfloor 62/(5-1) \rfloor = 15$ since $n \geqs 62$. 

Now assume $r \ne \ell$. Set $j = \Phi(r,t)$ and note that $j \in \{1,2,3,4,6\}$. If $j>2$ then $\kappa(S,r)=1$ and the result follows. Now assume that $j \in \{1,2\}$, so $r \leqs t+1$. By arguing as in the proof of Lemma \ref{l:prel1}(iii) we see that $\kappa(S,r) \leqs (r^2+15)/8$ and in the usual way, via \eqref{e:nbd2} and Lemma \ref{l:floor}, it is easy to check that $\kappa(T,r)>\kappa(S,r)$.
\end{proof}

\begin{lem}\label{p:pspd}
Proposition \ref{p:c} holds if $S = {\rm PSp}_{d}(t)'$ and $(t,p)=1$. 
\end{lem}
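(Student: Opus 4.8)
The plan is to follow the template established for the linear and unitary groups (Lemmas \ref{p:psld} and \ref{l:psud}). Since ${\rm PSp}_{2}(t) = {\rm PSL}_{2}(t)$, and the cases $d \in \{4,6\}$ were settled in Lemmas \ref{p:psp4} and \ref{p:psp6}, I would reduce at once to the situation $d \geqs 8$ (with $d$ even). The first step is to invoke the Landazuri--Seitz bound \cite{Land-S}, which gives $n \geqs \frac{1}{2}(t^{d/2}-1)$ when $t$ is odd and a substantially larger bound, of order $t^{d-1}$, when $t$ is even. Together with \eqref{e:rrbb} this forces $r > \log_{\ell}t = e$, so every element of $H_0$ of order $r$ lies in $S$ (there are no field automorphisms of order $r$, and $d \geqs 8$ excludes graph automorphisms); moreover $|H_0:S|$ divides $2e$ and $r \geqs 5$ is odd, so $\kappa(H_0,r) \leqs \kappa(S,r)$ and it suffices to produce more $T$-classes than $S$-classes of subgroups of order $r$ and apply Corollary \ref{c:count}. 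Finally, the inequality $r > \frac12\sqrt{n}$ combined with $n \geqs \frac12(t^{d/2}-1)$ and $t = \ell^{e}$ gives $8r^2+1 > t^{d/2} \geqs r^{4}$, which is impossible for $r \geqs 5$; hence we may assume $r \ne \ell$ throughout.

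With $r \ne \ell$ in hand, set $j = \Phi(r,t)$ and let $c'$ be the associated integer for the symplectic group $S$ over $\mathbb{F}_{t}$, so $c' = 2j$ if $j$ is odd and $c' = j$ otherwise (in particular $c'$ is even). If $c' > d/2$ then $\kappa(S,r) = 1$ by Lemma \ref{l:floor}(i) and we are done, so assume $c' \leqs d/2$; then $r$ divides $t^{c'/2}-1$ or $t^{c'/2}+1$, whence $r-1 \leqs t^{c'/2}$. Feeding this into $r \geqs \lceil \frac12\sqrt{n}\rceil + 1$ and using $n \geqs \frac12(t^{d/2}-1)$ forces $c' > d/3$ (in particular $c'=2$ is ruled out since $r>t+1$), so that $\lfloor d/c'\rfloor = 2$ and Lemma \ref{l:floor}(ii) yields $\kappa(S,r) \leqs (r-1)/c' + 1$. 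It then remains to verify
$$ \kappa(T,r) \;\geqs\; \left\lfloor \frac{n}{r-1}\right\rfloor - 1 \;\geqs\; \left\lfloor \frac{(t^{d/2}-1)/2}{t^{c'/2}}\right\rfloor - 1 \;>\; \frac{r-1}{c'}+1 \;\geqs\; \kappa(S,r), $$
where the first inequality is Lemma \ref{l:floor}(iii); since $c' \leqs d/2$ and $d \geqs 8$ the middle term has order $t^{d/4}$ whereas $(r-1)/c'+1$ has order at most $t^{d/4}/d$, so the estimate holds comfortably for $t \geqs 3$, and the case $t=2$ (where the lower bound on $n$ has order $2^{d-1}$) goes through by the same computation.

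I expect the main obstacle to be the boundary configurations: small $t$ (chiefly $t = 2$) together with small $d$, and the borderline value $c' = d/2$, where the displayed inequality becomes tight and some care is needed with the floor functions and with the documented low-dimensional exceptions to the Landazuri--Seitz bounds. In such cases I would argue exactly as in Lemmas \ref{p:psp4} and \ref{p:psp6}: either sharpen the upper bound on $\kappa(S,r)$ directly (in the spirit of Lemma \ref{l:prel1}), or use the low-dimensional classification in \cite{HM, HM2} to pin down the admissible groups $T$ together with $i = \Phi(r,q)$, and then discard the remaining cases by exhibiting an incompatibility between $i$ and the Brauer-character irrationalities recorded there, just as in the treatment of ${\rm PSp}_{6}(t)$.
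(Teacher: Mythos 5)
Your proposal is correct and follows essentially the same route as the paper: reduce to $d \geqs 8$, use the Landazuri--Seitz bounds together with \eqref{r:bd} to force $r \ne \ell$ and to confine elements of order $r$ to $S$, then assume $c' \leqs d/2$, observe $\lfloor d/c' \rfloor = 2$, and win by comparing $\kappa(S,r) \leqs (r-1)/c'+1$ (Lemma \ref{l:floor}(ii)) with $\kappa(T,r) \geqs \lfloor n/(r-1)\rfloor - 1$ (Lemma \ref{l:floor}(iii)) via Corollary \ref{c:count}. The only difference is organisational: the paper sharpens $r > t^{(d-4)/4}+1$ to pin down $j$ exactly via congruence conditions on $d$ (using $r \mid (t^{d/4}+1)/2$ when $t$ is odd in the tight case), whereas you run one uniform estimate with $c' > d/3$ and $r-1 \leqs t^{c'/2}$ and, as you correctly anticipate, the only genuinely tight non-vacuous configuration (essentially $(t,d,c',r)=(3,8,4,5)$, plus $t=2$ with the larger even-characteristic bound) is settled by a direct numerical check rather than by the tables in \cite{HM,HM2}.
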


\begin{proof}
We may assume $d \geqs 8$. By \cite{Land-S} we have 
\begin{equation}\label{e:nbd3}
n \geqs \left\{\begin{array}{ll}
\frac{1}{2}(t^{d/2}-1) & \mbox{$t$ odd} \\
\frac{1}{2}t^{d/2-1}(t^{d/2-1}-1)(t-1) & \mbox{$t$ even}
\end{array}\right.
\end{equation}
and \eqref{r:bd} implies that $r>t^{(d-4)/4}+1$. In particular, $r \neq \ell$ and every element in $H_0$ of order $r$ is contained in $S$. Let $j = \Phi(r,t)$ and set $c'=2j$ if $j$ is odd, and $c'=j$ if $j$ is even.

If $j$ is odd and $j>d/4$ then $c'>d/2$ and thus $\kappa(S,r)=1$. The same conclusion holds if $j$ is even and $j>d/2$. Note that if $j$ is even then $r$ divides $t^{j/2}+1$, so the bound  $r>t^{(d-4)/4}+1$ implies that $j>(d-4)/2$. Therefore, we may assume that one of the following holds:
\begin{itemize}\addtolength{\itemsep}{0.2\baselineskip}
\item[(a)] $j$ odd: Either $d \equiv 4 \imod{8}$ and $j=d/4$, or $d \equiv 6 \imod{8}$ and $j=(d-2)/4$.
\item[(b)] $j$ even: Either $d \equiv 0 \imod{4}$ and $j=d/2$, or $d \equiv 2 \imod{4}$ and $j=(d-2)/2$.
\end{itemize}

First assume that $j$ is odd and $d \equiv 4 \imod{8}$, so $d \geqs 12$,  $c'=d/2$ and $r$ divides $t^{d/4}-1$. Since $\lfloor d/c' \rfloor = 2$, Lemma \ref{l:floor}(ii) implies that $\kappa(S,r) \leqs 2(r-1)/d+1$, whereas $T$ has at least 
$$\lfloor n/(r-1) \rfloor -1 \geqs \left\lfloor \frac{(t^{d/2}-1)/2}{t^{d/4}-2}\right\rfloor - 1 \geqs \frac{1}{2}t^{d/4}-1 \geqs \frac{1}{2}(r-1) > 2(r-1)/d+1$$
such classes. A similar argument applies if $j$ is odd and $d \equiv 6 \imod{8}$.

Next assume $j$ is even and $d \equiv 2 \imod{4}$, so $d \geqs 10$, $c'=(d-2)/2$ and $r$ divides $t^{(d-2)/4}+1$. In addition, since $\lfloor d/c' \rfloor=2$ we have $\kappa(S,r) \leqs 2(r-1)/(d-2)+1$. By applying Lemma \ref{l:floor}(iii) and the lower bound on $n$ given in \eqref{e:nbd3}, it is easy to check that $\kappa(T,r) \geqs t^{(d-2)/4}$. The desired result follows since $t^{(d-2)/4} \geqs r-1 > 2(r-1)/(d-2)+1$.

Finally, suppose that $j$ is even and $d \equiv 0 \imod{4}$, in which case $d \geqs 8$, $c'=d/2$ and $r$ divides $t^{d/4}+1$. In particular, $\kappa(S,r) \leqs 2(r-1)/d+1$. We claim that $\kappa(T,r) \geqs t^{d/4}$, which is sufficient since $t^{d/4} \geqs r-1>2(r-1)/d+1$. If $t$ is even, this follows in the usual way via Lemma \ref{l:floor}(iii) and \eqref{e:nbd3}. If $t$ is odd then $t^{d/4}+1$ is even and thus $r$ divides $(t^{d/4}+1)/2$, whence Lemma \ref{l:floor}(iii) implies that
$$\kappa(T,r) \geqs \lfloor n/(r-1) \rfloor - 1 \geqs \left\lfloor \frac{(t^{d/2}-1)/2}{(t^{d/4}-1)/2} \right\rfloor - 1  = t^{d/4}$$
as claimed. 
\end{proof}

\subsubsection{Orthogonal groups}

\begin{lem}\label{p:orthog}
Proposition \ref{p:c} holds if $S = {\rm P\Omega}_{d}^{\e}(t)$ and $(t,p)=1$.
\end{lem}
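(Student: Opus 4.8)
First I would dispose of the small cases $d \leqs 6$: here $S$ is isomorphic to a linear, unitary or symplectic group -- for instance ${\rm P\Omega}_{3}(t) \cong {\rm PSL}_{2}(t)$, ${\rm P\Omega}_{4}^{-}(t) \cong {\rm PSL}_{2}(t^2)$, ${\rm P\Omega}_{5}(t) \cong {\rm PSp}_{4}(t)$, ${\rm P\Omega}_{6}^{+}(t) \cong {\rm PSL}_{4}(t)$ and ${\rm P\Omega}_{6}^{-}(t) \cong {\rm PSU}_{4}(t)$ (and ${\rm P\Omega}_{4}^{+}(t)$ is not simple), so the result follows from Lemmas \ref{p:psl2}, \ref{p:psld}, \ref{l:psud}, \ref{p:psp4} and \ref{p:psp6}. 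So assume $d \geqs 7$. The Landazuri--Seitz bounds \cite{Land-S} then give a lower bound for $n$ which is a polynomial in $t$ of degree growing with $d$, so $n$ is large relative to $d$; combining this with \eqref{r:bd} forces $r > \log_{\ell}t$, and since $r \geqs 5$ does not divide the order of the Schur multiplier of $S$ while the only odd-order outer automorphism of $S$ is the triality of $D_{4}$, every element of order $r$ in $H_0$ lies in $S$. The same bounds force $r > t$, hence $r \neq \ell$, outside a short list of small pairs $(d,t)$ which I would settle directly using \cite{HM, HM2} and \cite[Section 8.2]{BHR} (recalling for instance that $\Omega_{7}(2) \cong {\rm Sp}_{6}(2)$ is covered by Lemma \ref{p:psp6}, and that the triality-twisted embeddings of $\Omega_{7}(t)$ and ${\rm Sp}_{6}(t)$ in ${\rm P\Omega}_{8}^{+}(q)$ are cases $(\mathcal{B}2)$ and $(\mathcal{B}3)$, which are excluded by hypothesis).

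For the main argument I may therefore assume $r \neq \ell$. Set $j = \Phi(r,t)$ and let $c' = 2j$ if $j$ is odd and $c' = j$ if $j$ is even, so that $c'$ plays for $S$ and its $d$-dimensional natural module the role that $c$ plays for $T$. By Lemma \ref{l:floor}(i), applied to $S$, we have $\kappa(S,r) = 1$ whenever $c' > d/2$, or $c' = d/2$ and $S = {\rm P\Omega}_{d}^{-}(t)$; in that case $\kappa(S,r) = 1 < 2 \leqs \kappa(T,r)$, where the lower bound on $\kappa(T,r)$ follows from \eqref{e:box} and Lemma \ref{l:floor}(iii), so $T$ is not $r$-elusive by Corollary \ref{c:count}.

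It thus remains to treat the case $c' \leqs d/2$ (with $c' < d/2$ if $\e = -$). Here $r - 1 \leqs t^{c'/2}+1$, since $r$ divides $t^{j/2}+1$ when $j$ is even and $t^{j}-1$ when $j$ is odd; on the other hand the Landazuri--Seitz bound and \eqref{r:bd} force $r$ to be at least of order $t^{(d-O(1))/2}$. Comparing exponents pins $c'$ down to a handful of boundary values close to $d/2$, determined by $d$ modulo $4$ (or $8$), exactly as in the symplectic analysis of Lemma \ref{p:pspd} -- and for all but finitely many small $(d,t)$ no such $c'$ exists at all, so the case is vacuous. In each surviving boundary configuration $\lfloor d/c'\rfloor = 2$, so $\kappa(S,r) \leqs (r-1)/c'+1$ by Lemma \ref{l:floor}(ii), while Lemma \ref{l:floor}(iii) together with the Landazuri--Seitz bound gives
$$\kappa(T,r) \geqs \lfloor n/(r-1)\rfloor - 1 \geqs t^{c'/2} - 1 \geqs r - 1 > \frac{r-1}{c'}+1 \geqs \kappa(S,r)$$
(with the minor adjustments to the middle terms familiar from Lemma \ref{p:pspd} when $r$ divides only a proper divisor of $t^{c'/2}\pm 1$), so $T$ contains a derangement of order $r$ by Corollary \ref{c:count}. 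The remaining small-dimensional cases (with $t$ small) are handled explicitly via \cite{HM, HM2} and \cite[Section 8.2]{BHR}, using Brauer characters to test whether a prescribed element of order $r$ in $T$ -- typically one with the largest possible $1$-eigenspace on $V$ -- is a derangement.

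The main obstacle, just as in Lemma \ref{p:pspd}, is the boundary regime where $c'$ is close to $d/2$: there the inequality $\kappa(T,r) > \kappa(S,r)$ is delicate and one must carry the exact Landazuri--Seitz lower bound -- which for orthogonal groups is more intricate than for symplectic groups, with separate formulas according to the parity of $d$ and, for $d$ even, the sign $\e$ -- together with the precise value of $c'$ dictated by $\Phi(r,t)$ and $d$. A secondary difficulty is the low-dimensional cases, where the general bounds are too weak and one must appeal to the explicit representation-theoretic data in \cite{HM, HM2} and \cite[Section 8.2]{BHR} (and, for ${\rm P\Omega}_{8}^{+}(q)$, keep track of triality).
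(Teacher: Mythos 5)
Your outline is correct and runs on the same machinery as the paper (Landazuri--Seitz bounds plus \eqref{r:bd}, the integer $c'$ attached to $\Phi(r,t)$, Lemma \ref{l:floor} and Corollary \ref{c:count}, with the small configurations checked directly), but you miss the feature that makes the orthogonal case easy. For $d \geqs 7$ the cross-characteristic Landazuri--Seitz bound is of order $t^{d-3}$, namely $n \geqs t^{(d-3)/2}(t^{(d-3)/2}-1)$ for $d$ odd and $n \geqs (t^{(d-2)/2}+1)(t^{(d-4)/2}-1)$ for $d$ even, so \eqref{r:bd} gives $r > t^{(d-1)/4}+1$ (resp. $r > t^{d/4}+1$); since $r$ divides $t^{j/2}+1$ when $j$ is even and $c'=2j$ when $j$ is odd, this already forces $c' > d/2$, hence $\kappa(S,r)=1$ outright. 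Thus the ``boundary regime'' you brace for, with $\lfloor d/c'\rfloor = 2$ and a delicate comparison of $\kappa(T,r)$ against $(r-1)/c'+1$ as in Lemma \ref{p:pspd}, simply never occurs here (unlike the symplectic case, where the bound is only of order $t^{d/2}$); the only genuine work is the short list of Landazuri--Seitz exceptions, namely $\Omega_7(3)$, $\Omega_{8}^{+}(2)$ (where $n=8$ is excluded because it is case $(\mathcal{B}4)$, not $(\mathcal{B}2)$ or $(\mathcal{B}3)$ -- those are defining-characteristic embeddings and irrelevant when $(t,p)=1$) and $d=8$ with $t \in \{2,3\}$, which the paper settles by direct class counts. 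Two further cautions, neither fatal since the case is vacuous: in your fallback chain the step $t^{c'/2}-1 \geqs r-1$ can fail (for $j$ even one may have $r = t^{j/2}+1$), and the exclusion of elements of order $r$ outside $S$ rests on $r$ exceeding the order of any field automorphism (your $r > \log_{\ell}t$), not on the Schur multiplier, which is beside the point.
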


\begin{proof}
We may assume that $d \geqs 7$, with $t$ odd if $d$ is odd. If $S=\Omega_7(3)$ then $r \in \{5,7,13\}$ and $\kappa(S,r)=1$. Next assume $S = \Omega_{8}^{+}(2)$. Here $r \in \{5,7\}$ with $\kappa(S,5)=3$ and $\kappa(S,7)=1$. By \cite{Land-S} we have $n \geqs 8$. If $n=8$ then $T = {\rm P\Omega}_{8}^{+}(p)$ and this is the case labelled $(\mathcal{B}4)$ in Table \ref{btab}. Therefore, we may assume that $n>8$, in which case $n \geqs 28$ (see \cite[Table 2]{HM2}) and thus Lemma \ref{l:floor}(iii) implies that $\kappa(T,5) \geqs 6$.

In each of the remaining cases, the Landazuri-Seitz \cite{Land-S} bounds imply that
$$n \geqs \left\{\begin{array}{ll}
(t^{(d-2)/2}+1)(t^{(d-4)/2}-1) & \mbox{$d$ even} \\
t^{(d-3)/2}(t^{(d-3)/2}-1) & \mbox{$d$ odd.}
\end{array}\right.$$
Suppose $d$ is odd. Then
$$r \geqs \lceil \sqrt{n}/2 \rceil +1 \geqs \left\lceil \frac{1}{2}\sqrt{t^{(d-3)/2}(t^{(d-3)/2}-1)}\right\rceil +1>t^{(d-1)/4}+1$$
and thus $r \ne \ell$ and every element in $H_0$ of order $r$ is contained in $S$. Set $j = \Phi(r,t)$ and $c'=2j$ if $j$ is odd, otherwise $c'=j$. Note that the above lower bound on $r$ implies that $j>(d-1)/4$. If $j$ is odd then $c'>(d-1)/2$ and thus $c' \geqs (d+1)/2$ and $\kappa(S,r)=1$. Similarly, if $j$ is even then $r$ divides $t^{j/2}+1$, so $c'>(d-1)/2$ and the same conclusion holds.

Finally, let us assume $d$ is even. The cases with $d=8$ and $t \in \{2,3\}$ can be checked directly. For example, if $S = {\rm P\Omega}_{8}^{+}(3)$ then $r \in \{5,7,13\}$. If $r \in \{7,13\}$ then $c'=6$ and thus $\kappa(S,r)=1$. If $r=5$ then $c'=4$ and $\kappa(S,r)=2$, but $n \geqs 224$ and thus $\kappa(T,r)>2$. In all of the remaining cases we have 
$$r \geqs \lceil \sqrt{n}/2 \rceil +1 \geqs \left\lceil \frac{1}{2}\sqrt{(t^{(d-2)/2}+1)(t^{(d-4)/2}-1)}\right\rceil +1>t^{d/4}+1$$
and by arguing as in the $d$ odd case we deduce that $\kappa(S,r)=1$.
\end{proof}

\subsubsection{Exceptional groups}

\begin{lem}\label{p:e78cross}
Proposition \ref{p:c} holds if $S \in \{E_7(t), E_8(t)\}$ and $(t,p)=1$.
\end{lem}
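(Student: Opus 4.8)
The plan is to follow the same template used throughout Section~\ref{ss:lie_cross}: combine the Landazuri--Seitz lower bound on $n$ with the bound $r \geqs \lceil \sqrt{n}/2\rceil + 1$ coming from \eqref{e:rrbb}, deduce that $r$ is very large relative to the rank of $S$, and then show $\kappa(T,r) > \kappa(H_0,r)$ via Lemmas~\ref{l:floor} and~\ref{l:prel1} (or directly exhibit a derangement), finishing with Corollary~\ref{c:count}. First I would recall the Landazuri--Seitz bounds for the smallest nontrivial projective representation in cross characteristic: for $S = E_7(t)$ one has $n \geqs t^{15}(t^2-1)$ roughly (the precise bound is $n \geqs t^{16}-t^{14}$ or so from \cite{Land-S}), and for $S = E_8(t)$ one has $n \geqs t^{27}(t^2-1)$ roughly; in particular $n$ grows like a large power of $t$. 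Feeding this into \eqref{e:rrbb} gives $r > t^{7}$ in the $E_7$ case and $r > t^{13}$ in the $E_8$ case (any crude polynomial lower bound of the right shape suffices), which in particular forces $r > \log_\ell t$, so every element of order $r$ in $H_0$ lies in the group of inner-diagonal automorphisms, and moreover $r$ cannot be the defining characteristic $\ell$ of $S$ since $r > t$.

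Next I would bound $\kappa(S,r)$ from above. Since $r \ne \ell$, an element of order $r$ in $S$ lies in a maximal torus, and the number of $S$-classes of such elements — a fortiori the number of classes of subgroups of order $r$ — is bounded by a function of the Weyl group: crudely, $\kappa(S,r) \leqs |W(E_7)|$ or $|W(E_8)|$, but much better, writing $j = \Phi(r,t)$, the rank constraint forces $j$ to be large (a primitive prime divisor contribution of $\Phi_j(t)$ to $|S|$ requires $j \leqs 30$ for $E_8$ and $j \leqs 18$ for $E_7$), and since $r > t^7$ (resp.\ $t^{13}$) we get $j$ at least, say, $7$ (resp.\ $13$). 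A torus element of order $r$ with $\Phi(r,t) = j$ occupies at least $j$ of the torus dimensions, so $\kappa(S,r)$ is at most a small constant (in fact one checks $\kappa(S,r) \leqs $ a few, often $1$). On the other side, Lemma~\ref{l:floor}(iii) gives $\kappa(T,r) \geqs \lfloor n/(r-1)\rfloor - 1$, and since $n$ is a large power of $t$ while $r-1 < $ a much smaller power of $t$ (as $r \leqs t^j + 1$ with $j$ at most the rank, whereas $n$ exceeds $t^{2\cdot\mathrm{rank}}$ in both cases), this quotient is enormous, certainly exceeding $\kappa(S,r)$. Hence $\kappa(T,r) > \kappa(H_0,r)$ and Corollary~\ref{c:count} produces a derangement of order $r$ in $T$.

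Finally I would dispose of the smallest base cases $t=2$ (and possibly $t=3$ for $E_7$, $E_8$) separately, where the generic inequalities might be tight: here $r$ ranges over an explicit finite set of primes dividing $|S|$, and for each such $r$ one checks $\kappa(S,r)$ directly from the known class data of $E_7(2)$, $E_8(2)$ (e.g.\ one typically finds $\kappa(S,r) = 1$ for the relevant large $r$, as these $r$ are primitive prime divisors of $2^j-1$ with $j$ near the rank, so Sylow $r$-subgroups are cyclic), while the Landazuri--Seitz bound gives $n$ at least several hundred, so Lemma~\ref{l:floor}(iii) again wins. The main obstacle I anticipate is not conceptual but bookkeeping: getting a clean, uniformly valid lower bound on $r$ in terms of $t$ that simultaneously (i) exceeds $\log_\ell t$, (ii) forces $r \ne \ell$, and (iii) keeps $r-1$ small enough relative to the Landazuri--Seitz bound on $n$ that $\lfloor n/(r-1)\rfloor - 1$ dominates $\kappa(S,r)$ — together with making sure the handful of small-$t$ exceptions are genuinely covered by the character-theoretic/Sylow check rather than slipping through. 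Since both $E_7$ and $E_8$ have no members in $\mathcal{A} \cup \mathcal{B}$ and the representation dimensions are so large, I expect this to go through cleanly with room to spare.
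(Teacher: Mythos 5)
Your strategy (Landazuri--Seitz bound plus \eqref{r:bd}, then a class-counting comparison via Lemma \ref{l:floor}(iii) and Corollary \ref{c:count}) can be made to work, but it is a genuinely different and much heavier route than the paper's, which is a short contradiction argument showing the case is vacuous: from $n \geqs t^{27}(t^2-1)$ (resp. $t^{15}(t^2-1)$) one gets $r>t^{13}$ (resp. $r>t^{7}$), so $r \ne \ell$ and $r$ is a primitive prime divisor of $t^{j}-1$ with $j$ dividing one of the degrees of $E_8$ (resp. $E_7$), forcing $j \in \{14,18,20,24,30\}$ in the $E_8$ case; but then $r$ divides $t^{j/2}+1$, and for $j=30$ even $(t^{15}+1)/(t^5+1)=t^{10}-t^5+1$, all of which are smaller than $t^{13}$ -- a contradiction, so no prime $r$ satisfies \eqref{e:box} and there is nothing to count. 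Your own estimates essentially contain this contradiction (you record both $r>t^{13}$, resp. $t^{7}$, and that $r-1$ is bounded by a power of $t$ no larger than roughly the rank); pushed one step further they close the proof with no discussion of $\kappa(S,r)$, no appeal to Lemma \ref{l:floor}(iii), and no separate treatment of $t=2,3$ or the class data of $E_7(2)$, $E_8(2)$. If you do keep the counting route, two points need repair: the claim that $j=\Phi(r,t)$ is at most the rank is false (for $E_8$ it can be as large as $30$); the correct statement is that $r$ divides $\Phi_j(t)$, whose degree $\phi(j)$ is at most the rank, and it is this that bounds $r-1$ (similarly, an element of order $r$ ``occupies'' at least $\phi(j)$, not $j$, torus dimensions). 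Moreover the crucial upper bound $\kappa(S,r)\leqs$ ``a few'' is asserted rather than proved; it is in fact $\kappa(S,r)=1$ for every relevant $j$, because exactly one degree of the Weyl group is divisible by $j$ and so the Sylow $r$-subgroups are cyclic (the same unique-torus/cyclic-Sylow argument the paper uses for $E_6$, $F_4$ and the other exceptional cases), but as written this key step is a gap that would need to be filled before Corollary \ref{c:count} can be invoked.
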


\begin{proof}
First assume $S = E_8(t)$. Here $n \geqs t^{27}(t^2-1)$ by \cite{Land-S}, so \eqref{r:bd} implies that
$$r \geqs \lceil \sqrt{n}/2\rceil +1 \geqs \left\lceil \frac{1}{2}\sqrt{t^{27}(t^2-1)} \right\rceil+1>t^{13}.$$
Therefore $r$ divides $|S|$, $r \ne \ell$ and by considering the order of $S$ we deduce that  
$j=\Phi(r,t) \in \{14,18,20,24,30\}$. Hence $r$ divides $t^{j/2}+1$, so $j=30$ is the only possibility. However, if $j=30$ then $r$ divides $(t^{15}+1)/(t^5+1) = t^{10}-t^5+1$, which is incompatible with the bound $r>t^{13}$. The case $S = E_7(t)$ is entirely similar, using the bound $n \geqs t^{15}(t^2-1)$ from \cite{Land-S}. 
\end{proof}

\begin{lem}\label{p:e6cross}
Proposition \ref{p:c} holds if $S \in \{ E_6(t), {}^2E_6(t) \}$ and $(t,p)=1$.
\end{lem}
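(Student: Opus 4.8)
The plan is to mimic the argument just used for $E_7(t)$ and $E_8(t)$ in Lemma \ref{p:e78cross}, adjusting the numerology for $E_6$ and ${}^2E_6$. First I would invoke the Landazuri--Seitz bound \cite{Land-S}, which gives $n \geqs t^9(t^2-1)$ for $S = E_6(t)$ and $n \geqs t^9(t^3-1)$ for $S = {}^2E_6(t)$ (with small adjustments for tiny $t$, such as $t=2$, which can be checked directly). Combining this with \eqref{r:bd} yields a lower bound of the shape $r \geqs \lceil \sqrt{n}/2 \rceil + 1 > t^4$ (roughly), so in particular $r \neq \ell$ and $r$ divides $|S|$, and every element of order $r$ in $H_0$ lies in $S$ (since $r$ exceeds $\log_\ell t$).

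Next, since $r$ divides $|S|$ and $r \neq \ell$, I would set $j = \Phi(r,t)$ and note that $j$ must be one of the degrees appearing in the factorisation of $|E_6(t)|$ or $|{}^2E_6(t)|$ into cyclotomic parts: for $E_6(t)$ the relevant $j$ are $1,2,3,4,5,6,8,9,12$, and the large ones are $j \in \{8,9,12\}$ (similarly for ${}^2E_6(t)$, where the twisted structure allows $j=18$). The lower bound $r > t^4$ rules out all small $j$, so we are left with a short list of large values of $j$. For each surviving $j$ I would examine which cyclotomic factor of $|S|$ the prime $r$ actually divides: for instance, for $j=12$ one has $r \mid \Phi_{12}(t) = t^4 - t^2 + 1$, which has degree $4$ and so is incompatible with $r > t^4$; for $j=9$ one has $r \mid \Phi_9(t) = t^6 + t^3 + 1$, again of too-small degree; similarly $j=8$ gives $r \mid \Phi_8(t) = t^4+1$, which is borderline and may need the sharper form of the $r \geqs \lceil\sqrt n/2\rceil+1$ bound (or the observation that $c' \leqs d/2$ is violated), and $j=18$ in the twisted case gives $r \mid \Phi_{18}(t) = t^6 - t^3 + 1$. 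In every case the degree of the relevant cyclotomic polynomial in $t$ falls short of the exponent forced by the Landazuri--Seitz bound, so no such $r$ exists, i.e.\ there is no prime $r$ with $c > \max\{2,\sqrt n/2\}$ dividing $|H_0|$, and the hypotheses of \eqref{e:box} are vacuous for these $S$.

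The only genuine obstacle I anticipate is the borderline case $j=8$ for $E_6$ (and possibly $j=12$), where $r \mid t^4+1$ and the crude estimate $r > t^4$ is not quite strong enough on its own; here I would either sharpen the bound using the exact Landazuri--Seitz value (which carries the extra factor $t^2-1$, giving $r > t^4$ strictly and hence $r \geqs t^4+1$, already a contradiction with $r \mid t^4+1$ unless $r = t^4+1$, a case ruled out because then $r$ divides $q^{c}-1$ with $c = 8$ but $r^2 \nmid q^8-1$ for the relevant primes, or more simply because $c' = j = 8 > d/2$ forces $\kappa(S,r)=1$ via Lemma \ref{l:floor}(i)), or fall back on the direct observation that for $\kappa(S,r)>1$ we would need $c' \leqs d/2$ where $d$ is the Lie rank datum, which fails. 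I would also handle the handful of small $t$ cases ($t=2$, and $t=3$ if needed) separately by a direct check against \cite[Table 2]{HM2} and the character-theoretic data, exactly as in the proof of Lemma \ref{p:orthog}.

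\begin{proof}
First assume $S = E_6(t)$. By \cite{Land-S} we have $n \geqs t^9(t^2-1)$, so \eqref{r:bd} implies that
$$r \geqs \lceil \sqrt{n}/2\rceil +1 \geqs \left\lceil \frac{1}{2}\sqrt{t^9(t^2-1)} \right\rceil +1 > t^4.$$
In particular $r$ divides $|S|$, $r \ne \ell$, and every element of order $r$ in $H_0$ lies in $S$. Set $j = \Phi(r,t)$. By considering the order of $S$, we have $j \in \{1,2,3,4,5,6,8,9,12\}$, and the bound $r > t^4$ forces $j \geqs 8$. If $j = 8$ then $r$ divides $\Phi_8(t) = t^4+1$; if $j=9$ then $r$ divides $\Phi_9(t) = t^6+t^3+1$; and if $j=12$ then $r$ divides $\Phi_{12}(t) = t^4-t^2+1$. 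In each case let $c'$ be the integer with $c'=2j$ if $j$ is odd, otherwise $c'=j$ (so $c' \in \{8,18,12\}$ respectively). Since the Lie rank of $S$ is $6$ and $c' > 6$, Lemma \ref{l:floor}(i) applied to $S$ (viewed inside its natural module of dimension at most $27$ over $\mathbb{F}_{t}$) would give $\kappa(S,r)=1$; but in fact each of $t^4+1$, $t^6+t^3+1$ and $t^4-t^2+1$ is strictly smaller than $t^4$ for $j \in \{9,12\}$, and for $j=8$ we have $r \leqs t^4+1$, which combined with $r > t^4$ forces $r = t^4+1$, whereupon $r^2$ cannot divide $t^8-1$, contradicting the fact (noted after the statement of Proposition \ref{p:c}) that $r^2 \mid q^c-1$ whenever the conditions in \eqref{e:box} hold. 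In all cases we obtain a contradiction, so the conditions in \eqref{e:box} cannot be satisfied, and there is nothing to prove. The small cases $t \in \{2,3\}$ are checked directly: here $r$ is small and one verifies using \cite[Table 2]{HM2} and the relevant Brauer characters that $\kappa(T,r) > \kappa(H_0,r)$ whenever $c \geqs 3$, so $T$ contains a derangement of order $r$.

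The case $S = {}^2E_6(t)$ is entirely similar. Here \cite{Land-S} gives $n \geqs t^9(t^3-1)$, so \eqref{r:bd} yields $r > t^5$. Again $r$ divides $|S|$, $r \ne \ell$, and every element of order $r$ in $H_0$ lies in $S$. Setting $j = \Phi(r,t)$, the structure of $|{}^2E_6(t)|$ forces $j \in \{1,2,3,4,6,8,10,12,18\}$, and the bound $r > t^5$ leaves only $j \in \{10,12,18\}$. In these cases $r$ divides $\Phi_{10}(t) = t^4-t^3+t^2-t+1$, $\Phi_{12}(t) = t^4-t^2+1$, or $\Phi_{18}(t) = t^6-t^3+1$ respectively, each of which is strictly less than $t^5$, a contradiction. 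Hence the conditions in \eqref{e:box} cannot hold, and the result follows. (As before, the case $t=2$ is handled by a direct calculation.)
\end{proof}
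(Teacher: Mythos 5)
There is a genuine gap, and it is at the heart of the lemma. Your argument rests on the claim that every surviving value of $j=\Phi(r,t)$ leads to a numerical contradiction, so that the hypotheses of \eqref{e:box} are vacuous. That is false for exactly the two cases that carry the content of this lemma: $j=9$ for $E_6(t)$ and $j=18$ for ${}^2E_6(t)$. Here $r$ divides $\Phi_9(t)=t^6+t^3+1$, respectively $\Phi_{18}(t)=t^6-t^3+1$, and these quantities are \emph{not} ``of too-small degree'': they exceed $t^4+1$, they exceed $t^5$ (note $t^6-t^3+1>t^5$ for all $t\geqs 2$, contrary to your assertion), and they even exceed the true lower bound $r>\tfrac{1}{2}\sqrt{t^9(t^2-1)}\approx\tfrac{1}{2}t^{11/2}$ extracted from the Landazuri--Seitz bound (e.g.\ $t=2$, $r=73$, $j=9$). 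So these cases genuinely occur and cannot be dismissed; your sentence ``each of $t^4+1$, $t^6+t^3+1$ and $t^4-t^2+1$ is strictly smaller than $t^4$'' is simply wrong for the middle term. The paper's proof deals with them by a different mechanism that your proposal lacks entirely: every subgroup of $S$ of order $r$ lies in a cyclic maximal torus of order $t^6+t^3+1$ (resp.\ $t^6-t^3+1$), and $S$ has a unique class of such tori, whence $\kappa(S,r)=1$; since the conditions in \eqref{e:box} force $\kappa(T,r)\geqs 2$, Corollary \ref{c:count} then produces a derangement. Without this (or an equivalent) torus argument your proof does not go through, and its final conclusion ``there is nothing to prove'' is incorrect.

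Two secondary points. First, your bound $n\geqs t^9(t^3-1)$ for ${}^2E_6(t)$, and the consequent $r>t^5$, are unsupported; the paper uses $n\geqs t^9(t^2-1)$ for both groups (from \cite[Table 5.3.A]{KL}), which already gives $r>t^4+1$ and hence kills $j=8$ outright, since then $r$ would divide $\Phi_8(t)=t^4+1<r$. Your detour for $j=8$ is therefore unnecessary, and moreover its ingredients are shaky: the claim that $r^2\mid q^c-1$ under \eqref{e:box} is a misreading of Remark \ref{r:main2} (which concerns the elusive examples in part (iv) of Theorem \ref{t:main}, where $c>n/2$), and Lemma \ref{l:floor}(i) with ``$c'\leqs d/2$'' applies to classical groups acting on their natural module, not to an exceptional group $S$, so neither fallback is valid as stated. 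Second, $j=5$ is not obviously excluded by the crude bound $r>t^4$ either (prime divisors of $\Phi_5(t)$ can exceed $t^4+1$ for small $t$), so if you keep the contradiction-by-size framework you must be careful about which $j$ it actually eliminates; the cases it cannot eliminate are precisely the ones that require the structural argument about cyclic maximal tori.
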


\begin{proof}
Here $n \geqs t^{9}(t^2-1)$ (see \cite[Table 5.3.A]{KL}) and we deduce that $r>t^4+1$. Set $j=\Phi(r,t)$ and first assume $S = E_6(t)$. Since $r>t^4+1$ and $r$ divides $|S|$, it follows that $j \in \{9,12\}$. If $j=12$ then $r$ divides $(t^6+1)/(t^2+1) = t^4-t^2+1$, which contradicts the bound $r>t^4+1$. Now assume $j=9$, in which case $r$ divides $(t^9-1)/(t^3-1) = t^6+t^3+1$. By inspecting the structure of the maximal tori in $S$ (see \cite[Section 2.7]{KS}, for example), it follows that every subgroup of $S$ of order $r$ is contained in a cyclic maximal torus of order $t^6+t^3+1$. Since $S$ has a unique class of such maximal tori, we deduce that $\kappa(S,r)=1$ and the result follows.

A very similar argument applies if $S = {}^2E_6(t)$. Here $j \in \{10,12,18\}$ and we can rule out $j=12$ as above. Similarly, if $j=10$ then $r$ divides $(t^5+1)/(t+1)$, but this is not possible since $r>t^4+1$. Finally, if $j=18$ then $r$ divides $t^6-t^3+1$, which implies that every subgroup of $S$ of order $r$ is contained in a cyclic maximal torus of order $t^6-t^3+1$. Once again we deduce that $\kappa(S,r)=1$ since $S$ has a unique class of such tori.
\end{proof}

\begin{lem}\label{p:f4cross}
Proposition \ref{p:c} holds if $S \in \{ F_4(t), {}^2F_4(t)' \}$ and $(t,p)=1$. 
\end{lem}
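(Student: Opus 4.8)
The plan is to follow the template already established for the other exceptional groups in Lemmas \ref{p:e78cross} and \ref{p:e6cross}: combine a Landazuri--Seitz lower bound on $n$ with the list of cyclotomic factors of $|S|$ to force $\Phi(r,t)$ into a small set of ``large'' values, and then rule each one out. First I would extract from \cite{Land-S} (or \cite[Table 5.3.A]{KL}) the bounds $n \geqs t^6(t^2-1)$ for $S = F_4(t)$ and $n \geqs t^{4}(t^2-1)/\sqrt{2}$-type bound (more precisely the known bound for ${}^2F_4(t)'$, roughly $n \geqs t^4(t^3-1)(t-1)/\sqrt{2}$, or just $n \geqs t^{11/2}$ order of magnitude) for the Ree groups. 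In either case, \eqref{r:bd} gives $r > t^{3}+1$ for $F_4(t)$ (and a comparable bound for ${}^2F_4(t)'$), which in particular shows $r \ne \ell$ and that every element of $H_0$ of order $r$ lies in $S$ (since $r$ exceeds $\log_\ell t$).

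Next I would set $j = \Phi(r,t)$ and use the factorisation of $|F_4(t)| = t^{24}\Phi_1^4\Phi_2^4\Phi_3^2\Phi_4^2\Phi_6^2\Phi_8\Phi_{12}$ to conclude that, since $r > t^3$, the only possibilities are $j \in \{8,12\}$. If $j = 8$ then $r \mid t^4+1$, and the structure of the maximal tori of $F_4(t)$ (see \cite[Section 2.7]{KS} or the standard references) shows every subgroup of $S$ of order $r$ lies in a cyclic maximal torus of order $t^4+1$, of which there is a unique class, so $\kappa(S,r)=1$; since $\kappa(T,r)\geqs 2$ by \eqref{e:box} and Lemma \ref{l:floor}(iii), $T$ has a derangement of order $r$. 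If $j=12$ then $r \mid \Phi_{12}(t) = t^4-t^2+1 < t^4+1$, again contradicting $r > t^3+1$ once one checks the inequality $t^4-t^2+1 \leqs t^3+1$ fails only for small $t$ — here I would note $t^4 - t^2 + 1 > t^3 + 1$ for all $t \geqs 2$, so in fact $j=12$ is \emph{not} immediately excluded by the crude bound and needs the torus argument instead: $r \mid \Phi_{12}(t)$ forces $\la x\ra$ into the unique class of cyclic tori of order $\Phi_{12}(t)$, giving $\kappa(S,r)=1$. So both surviving values of $j$ are dispatched by the maximal torus argument.

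For ${}^2F_4(t)'$ (with $t = 2^{2m+1} \geqs 2$, and recalling ${}^2F_4(2)'$ is simple so must be treated — though there $r$ can only be small and one checks $\kappa(S,r)=1$ directly against the Atlas), the order is $t^{12}\Phi_1^2\Phi_2^2\Phi_4\Phi_6\Phi_{12}\Phi_{12}'$ where $\Phi_{12}'(t) = t^4 - t^2\sqrt{2t} + t^2 - \sqrt{2t}\,t + 1$ type ``twisted'' cyclotomic factors; the large primitive prime divisors come from $t^4+1$, $t^6+1$ and the two factors of $\Phi_{12}'$ of order roughly $t^2 \pm \sqrt{2t^3}$. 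The bound $r > t^{3/2}$ or so (depending on the precise Landazuri--Seitz estimate used) forces $j$ into this short list, and in each case the relevant Sylow $r$-subgroup sits inside a unique class of cyclic maximal tori, so $\kappa(S,r)=1$ and we are done by Corollary \ref{c:count}. The main obstacle I anticipate is bookkeeping: getting the Landazuri--Seitz bound for ${}^2F_4(t)'$ exactly right and correctly enumerating the cyclotomic (and twisted-cyclotomic) factors of the group orders so that no ``medium-sized'' value of $j$ slips through; once the list of possible $j$ is pinned down, each case is a routine appeal to the maximal torus structure and Lemma \ref{l:floor}. I would also flag the small cases $S = {}^2F_4(2)'$ and $F_4(2)$ as needing a direct check against the character table, as was done for $\Omega_7(3)$ and $\Omega_8^+(2)$ in Lemma \ref{p:orthog}.
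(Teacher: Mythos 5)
Your strategy for $F_4(t)$ is the paper's: the bound $n \geq t^6(t^2-1)$ forces $r > t^3+1$, hence $j = \Phi(r,t) \in \{8,12\}$, and both values are handled by the unique class of cyclic maximal tori of order $t^4+1$, respectively $t^4-t^2+1$ (you correctly caught that $j=12$ cannot be excluded by the size bound and must go through the torus argument). Two points need tightening, though. First, in the twisted case your bookkeeping is off in a way that matters: $t^4+1 = \Phi_8(t)$ does \emph{not} divide $|{}^2F_4(t)| = t^{12}(t^6+1)(t^4-1)(t^3+1)(t-1)$, and $\Phi_4(t)=t^2+1$ occurs to the \emph{second} power in that order. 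Consequently, if you only have a bound of the shape $r > t^{3/2}$, the case $j=4$ (i.e. $r \mid t^2+1$) survives, and there your blanket claim that "the relevant Sylow $r$-subgroup sits inside a unique class of cyclic maximal tori" is false, since the Sylow $r$-subgroup has rank $2$. The fix is exactly the sharper Landazuri--Seitz bound the paper uses: for $t \geq 8$ one has $n \geq t^4(t-1)\sqrt{t/2}$, whence $r > t^2+1$; this kills $j \in \{1,2,4\}$ outright and also $j=6$ (as $r$ would divide $t^2-t+1 < t^2+1$), leaving only $j=12$, where $r > t^2+1$ forces $r$ to divide the larger factor $t^2+\sqrt{2t^3}+t+\sqrt{2t}+1$ of $t^4-t^2+1$ and the unique-cyclic-torus argument gives $\kappa(S,r)=1$.

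Second, your treatment of the small cases should be made explicit: for $S = {}^2F_4(2)'$ one indeed checks $\kappa(S,r)=1$ for $r \in \{5,13\}$, but for $S = F_4(2)$ the prime $r=7$ has $\kappa(S,7)=2$, so a character-table check alone does not give $\kappa(S,r)=1$; as in the $\Omega_8^+(2)$ case you cite, one must compare counts, using $n \geq 44$ and Lemma \ref{l:floor}(iii) to get $\kappa(T,7) \geq \lfloor 44/6 \rfloor - 1 = 6 > 2$ and then apply Corollary \ref{c:count}. With these repairs your argument coincides with the paper's proof.
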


\begin{proof}
First assume that $S=F_4(2)$, so $r \in \{5,7,13,17\}$ and $n \geqs 44$ (see \cite{Land-S}). If $r \in \{5,13,17\}$ then the character table of $S$ indicates that $\kappa(S,r)=1$, so we may assume $r=7$. Now $S$ has two classes of subgroups of order $7$, but Lemma \ref{l:floor}(iii) implies that $T$ has at least $\lfloor 44/6 \rfloor-1=6$ such classes. The result follows.

Next suppose that $S=F_4(t)$ and $t \geqs 3$. Here $n \geqs t^6(t^2-1)$ (see \cite[Table 5.3.A]{KL}) and thus
$$r \geqs \left\lceil \frac{1}{2}\sqrt{t^6(t^2-1)} \right\rceil+1>t^3+1.$$
Set $j=\Phi(r,t)$. Since $r$ divides $|S|$ and $r>t^3+1$, it follows that $j \in \{8,12\}$. If $j=12$ then $r$ divides $(t^6+1)/(t^2+1) = t^4-t^2+1$ and we deduce that every subgroup of $S$ of order $r$ is contained in a cyclic maximal torus of order $t^4-t^2+1$. But there is a unique conjugacy class of such tori, whence $\kappa(S,r)=1$. An entirely similar argument applies if $j=8$, using the fact that $S$ has a unique class of cyclic maximal tori of order $t^4+1$.

Now assume $S = {}^2F_4(t)'$, so $t=2^{2m+1}$ with $m \geqs 0$. If $t=2$ then $r \in \{5,13\}$ and by inspecting the character table of $S$ we deduce that $\kappa(S,r)=1$. Now assume $t \geqs 8$. Here \cite{Land-S} gives $n \geqs t^4(t-1)\sqrt{t/2}$, which implies that $r>t^2+1$. Set $j = \Phi(r,t)$ and observe that $j \in \{6,12\}$. If $j=6$ then $r$ divides $(t^3+1)/(t+1) = t^2-t+1$, which contradicts the bound $r>t^2+1$. Now assume $j=12$, in which case $r$ divides 
$$\frac{t^6+1}{t^2+1} = (t^2 - \sqrt{2t^3} + t - \sqrt{2t} +1)(t^2 + \sqrt{2t^3} + t + \sqrt{2t} +1).$$
Since $r>t^2+1$ it follows that $r$ divides $t^2 + \sqrt{2t^3} + t + \sqrt{2t} +1$ and we deduce that every subgroup of $S$ of order $r$ is contained in a cyclic maximal torus in $S$ of order  
$t^2 + \sqrt{2t^3} + t + \sqrt{2t} +1$. The result now follows since $S$ has a unique conjugacy class of such tori and thus $\kappa(S,r)=1$.
\end{proof}

\begin{lem}\label{p:g2cross}
Proposition \ref{p:c} holds if $S \in \{ {}^2B_2(t), G_2(t)', {}^2G_2(t)' \}$ and $(t,p)=1$.
\end{lem}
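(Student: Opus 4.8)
The plan is to handle the three remaining small-rank exceptional (and twisted) families $S \in \{{}^2B_2(t), G_2(t)', {}^2G_2(t)'\}$ one at a time, in each case using the Landazuri--Seitz lower bound on $n$ together with the bound $r \geqs \lceil \sqrt{n}/2\rceil + 1$ from \eqref{r:bd} (equivalently \eqref{e:rrbb}) to force the primitive prime divisor $j = \Phi(r,t)$ into a short list, and then reading off $\kappa(S,r)$ from the maximal torus structure (or, for the genuinely tiny groups, directly from the character table). Throughout, $r \ne p$ and $r \geqs 5$, and the strategy is exactly as in Lemmas \ref{p:e6cross}--\ref{p:f4cross}: show that any subgroup of $S$ of order $r$ lies in a cyclic maximal torus lying in a single $S$-class, hence $\kappa(S,r)=1$, which combined with $\kappa(T,r) \geqs 2$ (a consequence of \eqref{e:box} via Lemma \ref{l:floor}(iii)) gives a derangement of order $r$ by Corollary \ref{c:count}.

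Concretely: for $S = {}^2B_2(t)$ with $t = 2^{2m+1} \geqs 8$, we have $n \geqs \sqrt{t/2}(t-1)$ by \cite{Land-S}, so $r > \sqrt[4]{t/2}\cdot\sqrt{(t-1)}/2$ roughly; since $|{}^2B_2(t)| = t^2(t^2+1)(t-1)$ and $t^2+1 = (t - \sqrt{2t} + 1)(t + \sqrt{2t} + 1)$, the odd primes $r \geqs 5$ dividing $|S|$ with $r$ large (bigger than $t-1$) must divide one of the two factors of $t^2+1$, and each such factor corresponds to a unique class of cyclic maximal tori, giving $\kappa(S,r)=1$; the case $t=2$ does not arise since ${}^2B_2(2)$ is solvable. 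For $S = G_2(t)'$ we use $n \geqs t^3(t^2-1)/\gcd(2,t-1)$ — more precisely the Landazuri--Seitz value, which for $t \geqs 3$ forces $r > t$ — so $j = \Phi(r,t) \in \{3,6\}$ from $|G_2(t)| = t^6(t^6-1)(t^2-1)$; if $j=6$ then $r \mid t^2-t+1$ and $S$ has a unique class of cyclic maximal tori of order $t^2-t+1$, while if $j=3$ then $r \mid t^2+t+1$ and likewise there is a unique class of tori of that order, so $\kappa(S,r)=1$ in both cases; the small cases $G_2(2)' \cong {\rm PSU}_3(3)$ (which is case $(\mathcal{B}18)$, or covered by Lemma \ref{l:psud}) and $G_2(3), G_2(4)$ are either excluded by hypothesis or checked directly from the character table against the $n \geqs 14$ lower bound via Lemma \ref{l:floor}(iii). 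For $S = {}^2G_2(t)'$ with $t = 3^{2m+1}$, the case $t=3$ gives ${}^2G_2(3)' \cong {\rm PSL}_2(8)$, already treated in Lemma \ref{p:psl2}, and for $t \geqs 27$ we have $n \geqs t(t^2-1)/\sqrt{3}$-type bounds forcing $r > t$; since $|{}^2G_2(t)| = t^3(t^3+1)(t-1)$ and $t^3+1 = (t+1)(t - \sqrt{3t} + 1)(t + \sqrt{3t} + 1)$, a large $r$ must divide one of the two ``twisted'' factors, each giving a unique class of cyclic maximal tori, so $\kappa(S,r)=1$.

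The main obstacle is bookkeeping rather than conceptual: I need to verify in each family that the Landazuri--Seitz bound on $n$ is strong enough that the surviving values of $j = \Phi(r,t)$ really are only those for which $r$ divides a factor of the ``large'' cyclotomic-type divisor of $|S|$ that sits inside a \emph{unique} conjugacy class of (necessarily cyclic) maximal tori — this requires care with the factorisations $t^2 + 1 = (t \pm \sqrt{2t} + 1)$ for Suzuki groups and $t^3 + 1 = (t+1)(t \pm \sqrt{3t} + 1)$ for small Ree groups, and with checking that $r \geqs 5$ does not divide both factors. The finitely many genuinely small cases ($t = 2$ for ${}^2B_2$, which is excluded; $t \in \{2,3,4\}$ for $G_2$; $t=3$ for ${}^2G_2$) are dispatched either by an explicit isomorphism to a group handled earlier (${}^2G_2(3)' \cong {\rm PSL}_2(8)$, $G_2(2)' \cong {\rm PSU}_3(3)$) or by a direct character-table computation of $\kappa(S,r)$ compared with the bound $\kappa(T,r) \geqs \lfloor n/(r-1)\rfloor - 1$ from Lemma \ref{l:floor}(iii), exactly as in the opening paragraphs of Lemmas \ref{p:psp4} and \ref{p:orthog}.

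\begin{proof}
If $S = {}^2B_2(t)$ then $t = 2^{2m+1}$ with $m \geqs 1$ (note that ${}^2B_2(2)$ is solvable). By \cite{Land-S} we have $n \geqs \sqrt{t/2}(t-1)$, so \eqref{r:bd} gives $r > t$. Since $|S| = t^2(t^2+1)(t-1)$ and $t^2+1 = (t-\sqrt{2t}+1)(t+\sqrt{2t}+1)$, any odd prime $r>t$ dividing $|S|$ must divide exactly one of $t-\sqrt{2t}+1$ or $t+\sqrt{2t}+1$. In either case, every subgroup of $S$ of order $r$ is contained in a cyclic maximal torus of $S$ of the corresponding order, and $S$ has a unique conjugacy class of such tori (see \cite[Section 2.7]{KS}), so $\kappa(S,r)=1$ and the result follows from Corollary \ref{c:count} since $\kappa(T,r) \geqs 2$ by Lemma \ref{l:floor}(iii).

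Next assume $S = G_2(t)'$. If $t=2$ then $S \cong {\rm PSU}_3(3)$ and the result follows from Lemma \ref{l:psud} (indeed, this is case $(\mathcal{B}18)$, which we are excluding). Now assume $t \geqs 3$, so $S = G_2(t)$ and $n \geqs t^6-t^3$ if $p=3$, otherwise $n \geqs t^6-t^4$, by \cite{Land-S} and \cite[Table 5.3.A]{KL}; in all cases $n \geqs t^6-t^4 \geqs 14$, so \eqref{r:bd} yields $r>t$. Since $|S| = t^6(t^6-1)(t^2-1)$, it follows that $j = \Phi(r,t) \in \{3,6\}$. If $j=6$ then $r$ divides $t^2-t+1$, and if $j=3$ then $r$ divides $t^2+t+1$. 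In both cases, every subgroup of $S$ of order $r$ lies in a cyclic maximal torus of the corresponding order, and $S$ has a unique class of such tori (see \cite[Section 2.7]{KS}), so $\kappa(S,r)=1$ as required.

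Finally, suppose $S = {}^2G_2(t)'$, so $t = 3^{2m+1}$ with $m \geqs 0$. If $t=3$ then $S \cong {\rm PSL}_2(8)$ and the result follows from Lemma \ref{p:psl2}. Now assume $t \geqs 27$. By \cite{Land-S} we have $n \geqs t(t-1)$, so \eqref{r:bd} gives $r>t$. Since $|S| = t^3(t^3+1)(t-1)$ and $t^3+1 = (t+1)(t-\sqrt{3t}+1)(t+\sqrt{3t}+1)$, any odd prime $r>t$ dividing $|S|$ divides exactly one of $t-\sqrt{3t}+1$ or $t+\sqrt{3t}+1$. In either case, every subgroup of $S$ of order $r$ is contained in a cyclic maximal torus of $S$ of the corresponding order, and $S$ has a unique conjugacy class of such tori, so $\kappa(S,r)=1$. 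The result now follows from Corollary \ref{c:count}.
\end{proof}
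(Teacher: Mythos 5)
Your overall strategy is the paper's own (Landazuri--Seitz bounds plus \eqref{r:bd} to constrain $j=\Phi(r,t)$, then uniqueness of classes of cyclic maximal tori to get $\kappa(S,r)=1$), but the quantitative claims you build it on are overstated, and in one place this makes the argument fail rather than merely leaving a gap. For $S={}^2B_2(t)$ the bound from \cite{Land-S} is $n \geqs (t-1)\sqrt{t/2} \approx t^{3/2}$, so \eqref{r:bd} only yields $r \gtrsim t^{3/4}/2$ (the paper only extracts $r>\log_2 t$, to exclude field automorphisms in $H_0\setminus S$, which you never address); your claim $r>t$ is false, so the case $r \mid t-1$ (e.g.\ $t=32$, $r=31$) is simply not covered by your argument. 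It is easily repaired -- the three maximal torus orders $t-1$, $t\pm\sqrt{2t}+1$ are pairwise coprime and each corresponds to a single class of cyclic tori, so $\kappa(S,r)=1$ for \emph{every} odd $r$ dividing $|S|$, which is exactly how the paper argues -- but as written the step is missing. Similarly for $S={}^2G_2(t)'$ with $t\geqs 27$: from $n \geqs t(t-1)$ one only gets $r>(t+1)/2$, not $r>t$, so your dismissal of $r\mid t\pm 1$ is unjustified; the paper rules these out by noting $r$ odd would divide $(t\mp 1)/2 < r$. The conclusion survives, but your stated justification does not.

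The $G_2(t)'$ case is worse. The cross-characteristic bound is of order $t^3$ (the paper uses $n \geqs t(t^2-1)$ for $t\geqs 5$), not $n \geqs t^6-t^4$ -- indeed $2.G_2(4)$ has a $12$-dimensional faithful representation in cross characteristic -- and the deduction ``$r>t$ and $r$ divides $|S|$ implies $j\in\{3,6\}$'' is false when $r=t+1$ (so $j=2$). Concretely, for $t=4$ and $r=5$ your argument concludes $\kappa(S,r)=1$, but $\kappa(G_2(4),5)=2$; this case genuinely requires a different argument, and the paper handles it by comparing with $\kappa(T,5)\geqs 3$ via Lemma \ref{l:floor}(iii), using that $T$ is symplectic when $n=12$. (For $t\geqs 5$ the correct bound gives $r>t+1$, which is what forces $j\in\{3,6\}$; your weaker threshold $r>t$ would also be defeated by, e.g., $t=16$, $r=17$ if the dimension bound were as weak as you need elsewhere.) Finally, the small-case reductions are loose: $G_2(2)'\cong{\rm PSU}_3(3)$ is not excluded by the hypothesis $H\notin\mathcal{B}$ (only the specific embedding in ${\rm PSp}_6(p)$ lies in $\mathcal{B}$), and appeals to Lemmas \ref{l:psud} and \ref{p:psl2} are only valid when the alternative Lie characteristic ($3$, resp.\ $2$) is coprime to $p$; the paper instead disposes of these cases directly by noting $r\in\{7,13\}$ and $\kappa(S,r)=1$.
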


\begin{proof}
First assume $S = {}^2B_2(t)$, so $t=2^{2m+1}$ and $m \geqs 1$.  If $t=8$ then $r \in \{5,7,13\}$ and $\kappa(S,r)=1$. Now assume $t \geqs 32$, so $n \geqs (t-1)\sqrt{t/2}$ by \cite{Land-S}. Therefore $r>\log_2t$ by \eqref{r:bd}, so every element in $H_0$ of order $r$ is contained in $S$. The maximal tori of $S$ are cyclic of order $t-1$, $t+\sqrt{2t}+1$ and $t-\sqrt{2t}+1$, and $S$ has a unique class of tori of each order. Since these  orders are pairwise coprime, it follows that $\kappa(S,r)=1$ and the result follows.

Next assume that $S = G_2(t)'$. If $t \in \{2,3\}$ then $r \in \{7,13\}$ and $\kappa(S,r)=1$. If $t=4$ then $n \geqs 12$ (see \cite[Table 5.3.A]{KL}) and $r \in \{5,7,13\}$. We have $\kappa(S,r)=1$ if $r \in \{7,13\}$, so we may assume $r=5$ and thus $\kappa(S,r)=2$. The  result now follows from Lemma \ref{l:floor}(iii) since $T$ is a symplectic group when $n=12$ (see \cite[Table 2]{HM2}). Now assume $t \geqs 5$. Here \cite{Land-S} gives $n \geqs t(t^2-1)$ and thus
$$r \geqs \left\lceil \frac{1}{2}\sqrt{t(t^2-1)}\right\rceil+1>t+1.$$
Set $j = \Phi(r,t)$ and note that $j \in \{3,6\}$ since $r$ divides $|S|$ and $r>t+1$. If $j=3$ then $r$ divides $t^2+t+1$ and we deduce that every subgroup of $S$ of order $r$ is contained in a cyclic maximal torus of order $t^2+t+1$. Since $S$ has a unique class of such tori, we see that $\kappa(S,r)=1$. An entirely similar argument applies if $j=6$.

Finally, let us assume $S = {}^2G_2(t)'$, where $t=3^{2m+1}$ and $m \geqs 0$. If $t=3$ then $r=7$ and $\kappa(S,r)=1$. Now assume $t \geqs 27$. By \cite{Land-S} we have $n \geqs t(t-1)$ and thus
$$r \geqs \left\lceil \frac{1}{2}\sqrt{t(t-1)}\right\rceil+1>\frac{1}{2}(t+1)$$
so $r>\log_3t$. Set $j = \Phi(r,t)$ and observe that $j \in \{1,2,6\}$. If $j=1$ then $r$ divides $(t-1)/2$, which is incompatible with the bound $r>(t+1)/2$. Similarly, if $j=2$ then $r$ divides $(t+1)/2$ and once again we have reached a contradiction. Finally, suppose that $j=6$, in which case $r$ divides 
$$\frac{t^3+1}{t+1} = t^2-t+1 = (t+\sqrt{3t}+1)(t-\sqrt{3t}+1).$$
If $r$ divides $t+\sqrt{3t}+1$ then every subgroup of $S$ of order $r$ is contained in a cyclic maximal torus of order $t+\sqrt{3t}+1$; there is a unique class of such tori, so $\kappa(S,r)=1$. A very similar argument applies if $r$ divides $t-\sqrt{3t}+1$.
\end{proof}

\begin{lem}\label{p:3d4cross}
Proposition \ref{p:c} holds if $S = {}^3D_4(t)$ and $(t,p)=1$. 
\end{lem}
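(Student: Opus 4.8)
The plan is to follow the template used for the other families of exceptional groups in Lemmas \ref{p:e78cross}--\ref{p:g2cross}. Write $\Phi_k = \Phi_k(t)$ for the value at $t$ of the $k$th cyclotomic polynomial, so that
$$|S| = t^{12}\,\Phi_1^2\Phi_2^2\Phi_3^2\Phi_6^2\Phi_{12},$$
and recall that $n \geqs t^3(t^2-1)$ by \cite{Land-S} (see also \cite[Table 5.3.A]{KL}). First I would observe that $r \geqs 5$ is coprime to $t$, and that the bound $r \geqs \lceil \sqrt{n}/2\rceil + 1$ in \eqref{r:bd} forces $r \nmid e$, where $t = \ell^e$; since ${\rm Out}(S)$ is cyclic of order dividing $3e$ and $S$ has no nontrivial diagonal automorphisms, it follows that every element of $H_0$ of order $r$ lies in $S$, so $\kappa(H_0,r) \leqs \kappa(S,r)$ and it suffices to find more $T$-classes than $S$-classes of subgroups of order $r$ (recall that $\kappa(T,r) \geqs 2$ by \eqref{e:box} and Lemma \ref{l:floor}(iii)). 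Setting $j = \Phi(r,t)$, the factorisation of $|S|$ shows that $j \in \{1,2,3,6,12\}$ and $r \mid \Phi_j$.

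There are then two cases. If $j = 12$, then $r$ is a primitive prime divisor of $t^{12}-1$ dividing $\Phi_{12} = t^4 - t^2 + 1$; inspecting the maximal tori of ${}^3D_4(t)$ (as recorded in \cite{KS}), the only class of maximal tori whose order is divisible by $\Phi_{12}$ is the cyclic one of order $\Phi_{12}$, so a Sylow $r$-subgroup of $S$ is cyclic, whence $\kappa(S,r) = 1 < \kappa(T,r)$ and $T$ contains a derangement of order $r$. If instead $j \leqs 6$, then $r \leqs \Phi_3 = t^2 + t + 1$, so $r-1 \leqs t^2+t$; moreover a Sylow $r$-subgroup of $S$ lies in a conjugate of the maximal torus of order $\Phi_3^2$ (the largest torus order divisible by $r$) and hence has rank at most $2$, giving $\kappa(S,r) \leqs r+1 \leqs t^2+t+2$. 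On the other hand, Lemma \ref{l:floor}(iii) gives
$$\kappa(T,r) \geqs \lfloor n/(r-1)\rfloor - 1 \geqs \lfloor t^3(t^2-1)/(t^2+t)\rfloor - 1 = t^2(t-1) - 1,$$
and $t^2(t-1)-1 > t^2+t+2$ for all $t \geqs 3$. This leaves only small values of $t$, where $r$ lies in a short explicit list of primes (for instance $r \in \{7,13\}$ if $t=2$), and here one argues directly using the character table of $S$ together with the lower bound on $\kappa(T,r)$ from Lemma \ref{l:floor}(iii), exactly as in the small cases handled in Lemmas \ref{p:e78cross}--\ref{p:g2cross}; since $j=12$ is already covered above, the only genuinely ad hoc computation is $S = {}^3D_4(2)$ with $r = 7$.

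I expect the main obstacle to be bookkeeping rather than anything deep: one must pin down the precise Landazuri--Seitz bound for ${}^3D_4(t)$ and the list of maximal torus orders (in particular, that $\Phi_{12}(t)$ divides exactly one torus order, which is cyclic, and that $\Phi_3(t)^2$ is the highest power of $\Phi_3(t)$ occurring), and then verify that the resulting numerical comparison $\kappa(T,r) > \kappa(S,r)$ holds for all $t \geqs 3$; dealing with the single small case $S = {}^3D_4(2)$, $r = 7$ by hand is the only place an explicit character-table calculation is unavoidable.
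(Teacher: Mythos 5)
Your proposal is correct in substance, but it organises the argument differently from the paper, so a comparison is worthwhile. The paper splits by $t$: it treats $t\in\{2,3,4\}$ directly (character tables/Sylow structure plus Lemma \ref{l:floor}(iii)), and for $t\geqs 5$ it exploits the strong lower bound $r>t^2+1$ coming from \eqref{r:bd} and $n\geqs t^3(t^2-1)$ to eliminate $j\in\{1,2,6\}$ outright and to reduce $j=3$ to the single case $(t,r)=(5,31)$, which is then handled by the rank-two Sylow count $\kappa(S,31)\leqs 32<99\leqs\kappa(T,31)$; the $j=12$ case is settled, as in your argument, by the unique class of cyclic maximal tori of order $t^4-t^2+1$. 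You instead use the lower bound on $r$ only to exclude field automorphisms, and then run a uniform counting argument for all $j\leqs 6$ and all $t\geqs 3$: $\kappa(S,r)\leqs r+1\leqs t^2+t+2$ from the rank-$\leqs 2$ Sylow $r$-subgroup versus $\kappa(T,r)\geqs \lfloor t^3(t^2-1)/(t^2+t)\rfloor-1=t^2(t-1)-1$, which beats $t^2+t+2$ exactly for $t\geqs 3$, leaving only ${}^3D_4(2)$ with $r=7$ for an explicit check (where indeed $\kappa(S,7)=2<3\leqs\kappa(T,7)$, which is what the paper computes). Your route is slightly more uniform and avoids the sharper bound on $r$, at the price of needing the full torus-order list to justify the Sylow structure for every $j\leqs 6$, whereas the paper only needs it for a handful of values of $(t,r)$; both rest on the same two structural facts about ${}^3D_4(t)$. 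One small slip to repair: for $j\in\{1,2,6\}$ the Sylow $r$-subgroup does \emph{not} lie in the torus of order $\Phi_3^2$ (whose order is then prime to $r$, since $r\geqs 5$); the relevant tori are those of order $(t^3-1)(t-1)=\Phi_1^2\Phi_3$, $(t^3+1)(t+1)=\Phi_2^2\Phi_6$ and $\Phi_6^2$ respectively, each of which captures the full $r$-part $\Phi_j^2$ of $|S|$ and has $r$-rank at most $2$, so your conclusion $\kappa(S,r)\leqs r+1$ stands once the parenthetical is corrected.
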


\begin{proof}
Here \cite{Land-S} gives $n \geqs t^3(t^2-1)$. First assume $t=2$, so $r \in \{7,13\}$ and $n \geqs 24$. Since $\kappa(S,7)=2$ and $\kappa(S,13)=1$, the result follows from Lemma \ref{l:floor}(iii). 

Next assume $t=3$, so $r \in \{7,13,73\}$ and $n \geqs 216$. If $r=73$ then $\kappa(S,r)=1$ by Sylow's Theorem. Similarly, if $r \in \{7,13\}$ then the Sylow $r$-subgroups of $S$ are isomorphic to $Z_r \times Z_r$, which implies that $\kappa(S,r) \leqs r+1$. But Lemma \ref{l:floor}(iii) implies that $T$ has at least $\lfloor 216/12\rfloor-1 = 17$ such classes, so the result follows.
The case $t=4$ is entirely similar (here $r \in \{5,7,13,241\}$ and $n \geqs 960$).

To complete the proof of the lemma, we may assume that $t \geqs 5$. First observe that
$$r \geqs \left\lceil \frac{1}{2}\sqrt{t^3(t^2-1)}\right\rceil+1>t^2+1.$$
In particular, $j = \Phi(r,t) \in \{3,6,12\}$. If $j=6$ then $r$ divides $(t^3+1)/(t+1) = t^2-t+1$, but this contradicts the bound $r>t^2+1$. Next suppose that $j=12$, so $r$ divides $(t^6+1)/(t^2+1) = t^4-t^2+1$. Every subgroup of $S$ of order $r$ is contained in a cyclic maximal torus of order $t^4-t^2+1$; since $S$ has a unique class of such tori, it follows that $\kappa(S,r)=1$.

Finally, let us assume that $j=3$, so $r$ divides $(t^3-1)/(t-1) = t^2+t+1$. If $t \geqs 7$ then the bound $r \geqs \left\lceil \sqrt{t^3(t^2-1)}/2 \right\rceil+1$ implies that $r>t^2+t+1$, so we may assume that $t=5$ and thus $r=31$. The Sylow $31$-subgroups of $S$ are isomorphic to $Z_{31} \times Z_{31}$, so $\kappa(S,31) \leqs 32$. But $n \geqs 3000$ so Lemma \ref{l:floor}(iii) implies that $\kappa(T,31) \geqs 99$.
\end{proof}

This completes the proof of Proposition \ref{p:c} in the case where $S$ is a simple group of Lie type in non-defining characteristic.

\subsection{Groups of Lie type: Defining characteristic}\label{ss:lie_def}

In this final section we complete the proof of Proposition \ref{p:c} by considering the case where $S$ is a simple group of Lie type over $\mathbb{F}_{p^e}$, for some positive integer $e$. 

Let $K$ be the algebraic closure of $\mathbb{F}_{p}$, let $M$ be a $K\hat{S}$-module affording a representation $\rho$ and let $\gamma$ be an automorphism of $\hat{S}$. Following \cite[p.192]{KL}, we write $M^{\gamma}$ for the space $M$ with $\hat{S}$-action given by the representation $\gamma\rho$ (acting on the right) and we say that the $K\hat{S}$-modules $M$ and $M^{\gamma}$ are quasiequivalent. In particular, if $\gamma$ is a field automorphism of $\hat{S}$ induced by the map $\lambda \mapsto \lambda^p$ on $K$ then we will write $M^{\gamma^z} = M^{(z)}$ for all $z \in \mathbb{N}$. 

By a theorem of Steinberg \cite{St}, the irreducible $K\hat{S}$-modules are parameterised by an appropriate set of weights for the ambient simple algebraic group $\bar{S}$ over $K$, with respect to a fixed set of fundamental dominant weights. We will write $\{\l_1, \ldots, \l_k\}$ for the latter weights, where we adopt the standard labelling given in Bourbaki \cite{Bou}. In addition,  $L(\l)$ will denote the irreducible $K\bar{S}$-module with highest weight $\l$. Note that if $M$ is an irreducible $K\hat{S}$-module and $\gamma$ is an automorphism of $\hat{S}$, then the highest weight of $M^{\gamma}$ can be read off from \cite[Proposition 5.4.2]{KL}. Similarly, the highest weight of the dual module $M^*$ is described in \cite[Proposition 5.4.3]{KL}. We refer the reader to \cite[Section 5.4]{KL} and the references therein for further details.

\subsubsection{$S$ is untwisted}\label{ss:untw}

To begin with, we will assume $S$ is an untwisted simple group of Lie type over $\mathbb{F}_{p^e}$. 
Recall that $T$ is a finite simple classical group over $\mathbb{F}_{q}$ with natural module $V$, where $q=p^f$. Set $q'=p^{f'}$, where $f'=2f$ if $T={\rm PSU}_{n}(q)$, otherwise $f'=f$. Also recall that $V$ is an absolutely irreducible $\mathbb{F}_{q'}\hat{S}$-module which cannot 
be realised over a proper subfield of $\mathbb{F}_{q'}$ (see Definition \ref{sdef}). By applying \cite[Proposition 5.4.6(i)]{KL} we deduce that $f'$ divides $e$ and there exists an irreducible $K\hat{S}$-module $M$ such that 
\begin{equation}\label{e:tensor}
\bar{V} = V \otimes K \cong M \otimes M^{(f')} \otimes M^{(2f')} \otimes \cdots \otimes M^{(e-f')}
\end{equation}
(with $e/f'$ factors) as $K\hat{S}$-modules. 
Set $\ell = \dim M$ and note that $\ell \geqs 2$ and $n=\ell^{e/f'}$. 

We need a couple of preliminary lemmas.

\begin{lem}\label{l:tensor}
Let $J$ be a finite group and let $V_1$ and $V_2$ be faithful finite dimensional $KJ$-modules, where $K$ is an algebraically closed field and $\dim V_i \geqs 2$, $i=1,2$. Let $x \in J$ be a nontrivial element such that the action of $x$ on $V_1$ has a repeated eigenvalue. Then $x$ has a nontrivial repeated eigenvalue on $V_1 \otimes V_2$.
\end{lem}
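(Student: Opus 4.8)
The statement is essentially linear algebra, so the plan is to work directly with the eigenvalues of $x$ on each factor together with their multiplicities. First I would fix an eigenvalue $\lambda$ of $x$ on $V_1$ of multiplicity $a \geqs 2$ (this is precisely the hypothesis) and choose any eigenvalue $\mu$ of $x$ on $V_2$, which exists because $\dim V_2 \geqs 2$; let $b \geqs 1$ be its multiplicity. Since $x$ acts on $V_1 \otimes V_2$ as the Kronecker product of its actions on $V_1$ and $V_2$, the eigenvalues of $x$ on $V_1 \otimes V_2$ are exactly the products of an eigenvalue on $V_1$ with one on $V_2$, and the multiplicity of any value is at least the sum, over those pairs whose product equals that value, of the products of the corresponding multiplicities. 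In particular $\lambda\mu$ occurs with multiplicity at least $ab \geqs 2$, so $x$ has a repeated eigenvalue on $V_1 \otimes V_2$.

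To obtain a \emph{nontrivial} repeated eigenvalue I would choose $\mu$ (or, failing that, a different product) so that $\lambda\mu \neq 1$, via a short case split. If $x$ has at least two distinct eigenvalues on $V_2$, then at most one of them equals $\lambda^{-1}$, so I may take $\mu \neq \lambda^{-1}$ and obtain $\lambda\mu \neq 1$ of multiplicity $\geqs ab \geqs 2$. If $x$ acts as a single scalar $\mu$ on $V_2$ with $\lambda\mu \neq 1$, the same works. The remaining case is that $x$ acts on $V_2$ as the scalar $\mu = \lambda^{-1}$: then if $x$ has some further eigenvalue $\lambda' \neq \lambda$ on $V_1$, the value $\lambda'\lambda^{-1} \neq 1$ is an eigenvalue of $x$ on $V_1 \otimes V_2$ of multiplicity at least $\dim V_2 \geqs 2$, and again we are done.

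The genuinely delicate point — and the step I expect to be the main obstacle — is the degenerate configuration in which $x$ acts as a scalar $\lambda$ on $V_1$ and simultaneously as the scalar $\lambda^{-1}$ on $V_2$, for then $x$ is the identity on $V_1 \otimes V_2$ and has no nontrivial eigenvalue whatsoever. This has to be excluded from the surrounding context rather than from the bare hypotheses of the lemma: in the intended application $V_1$ is the module $M$ of \eqref{e:tensor} and $V_1 \otimes V_2 \cong \bar{V}$, so if $x$ acted as a scalar on $V_1$ it would act as a scalar on $\bar{V}$ and hence be trivial in $T \leqs {\rm PGL}(V)$, contrary to $x \neq 1$; equivalently, the degenerate case cannot arise when $J$ is quasisimple and $x$ is noncentral, since then no faithful $KJ$-module can be acted on by $x$ as a scalar. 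With that case removed the case analysis above is exhaustive and the lemma follows, the remaining bookkeeping of multiplicities being routine.
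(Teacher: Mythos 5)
Your eigenvalue bookkeeping is exactly the ``easy exercise'' the authors have in mind -- the paper's own proof of Lemma \ref{l:tensor} consists of the single sentence ``This is an easy exercise'' -- and your case analysis is exhaustive and correct in every configuration in which the conclusion actually holds: the multiplicity of $\lambda\mu$ on $V_1\otimes V_2$ is at least the product of the multiplicities, and your three cases cover everything except the doubly scalar situation. More importantly, you are right that this last configuration is not excluded by the stated hypotheses, so it is a defect of the lemma as printed rather than of your argument: take $J=\langle x\rangle$ cyclic of order $n$ prime to ${\rm char}\,K$, let $x$ act on $V_1=K^2$ as the scalar $\zeta$ and on $V_2=K^2$ as the scalar $\zeta^{-1}$, with $\zeta$ a primitive $n$-th root of unity; both modules are faithful of dimension $2$, $x$ is nontrivial with a repeated eigenvalue on $V_1$, yet $x$ acts trivially on $V_1\otimes V_2$ and has no nontrivial eigenvalue there. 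So the lemma needs a mild extra hypothesis (for instance, that $x$ does not act as a scalar on $V_1$), and your closing observation is the correct way to reconcile this with its use in the paper: in the proof of Lemma \ref{p:untwist0} one has $V_1=M$ and $V_2$ equal to the product of Frobenius twists of $M$ in \eqref{e:tensor}, so an element acting as a scalar on $M$ acts as a scalar on every twist, hence on $\bar{V}$, hence is trivial in ${\rm PGL}(V)$, and the excluded case never arises. In short, there is no gap on your side; you have proved the corrected statement by the intended elementary route and identified a small slip in the paper's formulation.
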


\begin{proof}
This is an easy exercise.
\end{proof}

\begin{lem}\label{l:sldd}
Let $\bar{S} = {\rm SL}_{d}(K)$, where $d \geqs 6$ and $K$ is an algebraically closed field of characteristic $p \geqs 0$. Let $\bar{V} = L(\l)$ be an $n$-dimensional irreducible self-dual $K\bar{S}$-module with $n \leqs 4d^2$. Then either $\bar{V}$ is the adjoint module, or 
\begin{equation}\label{e:cas}
(d,\l) \in \{ (6,\l_3),  (6,2\l_3), (8,\l_4), (10,\l_5) \}
\end{equation}
up to quasiequivalence.
\end{lem}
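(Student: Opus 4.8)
The plan is to reduce, via the Steinberg tensor product theorem, to the case where $\l$ is $p$-restricted, and then to read off the answer from L\"ubeck's classification \cite{Lu} of low-dimensional irreducible modules for groups of type $A_{d-1}$, using the self-duality hypothesis to discard almost all of the entries. The first step is to translate self-duality into a condition on the weight: writing $\l = \sum_{i=1}^{d-1} a_i\l_i$, the dual $L(\l)^{*} = L(\l^{*})$ has highest weight $\l^{*} = \sum_i a_i\l_{d-i}$ by \cite[Proposition 5.4.3]{KL}, so $L(\l)$ is self-dual precisely when $a_i = a_{d-i}$ for all $i$; in particular a nonzero self-dual $\l$ has $\sum_i a_i \geqs 2$ unless $d$ is even and $\l = \l_{d/2}$.

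Next I would reduce to the restricted case. Suppose $p>0$ and write $\l = \sum_{j \geqs 0} p^{j}\mu_j$ with each $\mu_j$ restricted; by Steinberg's theorem $L(\l) \cong \bigotimes_j L(\mu_j)^{(j)}$, and comparing $p$-adic expansions shows that self-duality of $\l$ forces each $\mu_j$ to be self-dual. A nontrivial self-dual restricted $K{\rm SL}_d(K)$-module with $d \geqs 6$ has dimension at least $20$ (the minimum being $\dim L(\l_3) = 20$ when $d = 6$, and $\dim L(\l_1+\l_{d-1}) \geqs d^2-2$ in the remaining cases), so if two or more of the $\mu_j$ were nonzero then $\dim L(\l)$ would be a product of at least two such integers, which exceeds $4d^2$ for every $d \geqs 6$. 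Hence at most one $\mu_j$ is nonzero, and since twisting by a power of the field automorphism is a quasiequivalence we may assume that $\l$ is restricted (this step is vacuous when $p = 0$).

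It then remains to list the restricted self-dual irreducible ${\rm SL}_d(K)$-modules of dimension at most $4d^2$. For all but finitely many $d$ the bound $4d^2$ lies below the threshold in L\"ubeck's tables for type $A_{d-1}$, and the only such modules are the adjoint module $L(\l_1+\l_{d-1})$ (of dimension $d^2-1$ or $d^2-2$) and, when $d$ is even, $L(\l_{d/2}) = \L^{d/2}V$ of dimension $\binom{d}{d/2}$; the latter satisfies $\binom{d}{d/2} \leqs 4d^2$ only for $d \in \{6,8,10\}$, yielding the three cases $(6,\l_3)$, $(8,\l_4)$, $(10,\l_5)$. For the remaining small values of $d$ I would consult L\"ubeck's explicit low-rank tables directly, with the Weyl dimension formula used to confirm that the Weyl modules not already on the list are too large to collapse below $4d^2$; this produces in addition the case $(6,2\l_3)$, which arises only in those characteristics $p$ for which $\dim L(2\l_3) < 175 = \dim V(2\l_3)$, and nothing further.

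The main obstacle is precisely this last step: one needs a lower bound on $\dim L(\l)$ that is valid in \emph{every} characteristic, whereas the Weyl dimension formula only controls $\dim L(\l)$ from above and a Weyl module can collapse drastically when $p$ is small. Supplying such uniform lower bounds in the relevant range is exactly the content of \cite{Lu}, so the proof really amounts to organising the reduction above so that only the cases already covered by \cite{Lu} (together with a handful of direct small-rank checks) remain to be examined.
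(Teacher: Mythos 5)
Your overall outline (translate self-duality into the symmetry $a_i=a_{d-i}$ of the highest weight, reduce to the $p$-restricted case via Steinberg's tensor product theorem using self-duality of each twisted factor, then classify the restricted self-dual modules of dimension at most $4d^2$) is the same as the paper's, and the Steinberg reduction itself is essentially correct (the paper argues identically: three or more nonzero factors give $n\geqslant d^3>4d^2$, and two factors are excluded because each is a nontrivial self-dual restricted module, once the restricted case is known -- note your version quietly uses the restricted classification to get the ``minimal self-dual dimension'' bounds, so the argument must be ordered with the restricted case proved first).

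The genuine gap is in the restricted case for large $d$. You propose to read everything off from \cite{Lu}, on the grounds that ``for all but finitely many $d$ the bound $4d^2$ lies below the threshold'' there, with the leftover small $d$ handled by the explicit low-rank tables plus the Weyl dimension formula. But L\"ubeck's general classification for type $A_{d-1}$ only reaches dimensions of order $d^3/8$, which exceeds $4d^2$ only when $d$ is in the thirties, while his explicit tables exist only for small rank; so there is an intermediate range of $d$ (roughly $19\leqslant d\leqslant 34$) that neither source covers up to $4d^2$. Your proposed patch for missing table entries -- the Weyl dimension formula -- cannot close this, since (as you yourself note) it bounds $\dim L(\lambda)$ from above, not below, and Weyl modules can collapse in small characteristic. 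The missing ingredient, and the heart of the paper's proof, is a characteristic-free \emph{lower} bound: every weight in the $\mathcal{W}$-orbit of $\lambda$ occurs in $L(\lambda)$, so $\dim L(\lambda)\geqslant |\mathcal{W}:\mathcal{W}_{\lambda}|$ with $\mathcal{W}\cong S_d$; if some $a_j\neq 0$ with $2\leqslant j\leqslant d-2$ (and similarly for the middle node and for coefficients $a\geqslant 2$ at the end nodes), then $\mathcal{W}_{\lambda}$ lies in a small parabolic subgroup and this index already exceeds $4d^2$ once $d\geqslant 19$, forcing $\lambda=\lambda_1+\lambda_{d-1}$. This is how the paper (following the argument of \cite[Proposition 2.5]{Bur}) handles all $d\geqslant 19$ without \cite{Lu}, reserving the tables for $d\leqslant 18$, where they do reach $4d^2$ and produce exactly the exceptional cases in \eqref{e:cas}. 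Without this (or some equivalent uniform lower bound), your proof does not go through in the intermediate range.
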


\begin{proof}
We follow the proof of \cite[Proposition 2.5]{Bur}. Write $\l=\sum_{j=1}^{d-1}a_j\l_j$ where each $a_j$ is a non-negative integer. By self-duality, we have $a_{j} = a_{d-j}$ for all $j$. To begin with, let us assume that $\l$ is $p$-restricted (that is, $a_j<p$ for all $j$). If $d \leqs 18$ then the result can be checked by inspecting the relevant tables in \cite{Lu}, so we may assume that $d \geqs 19$. Let $\mathcal{W} \cong S_d$ be the Weyl group of $\bar{S}$, which acts naturally on the set of weights of $\bar{V}$.

Suppose $a_2 \ne 0$. By arguing as in the proof of \cite[Proposition 2.5]{Bur} we see that  the $\mathcal{W}$-stabiliser of $\l$ is contained in a parabolic subgroup of type $A_1 \times A_{d-5} \times A_1$ and thus
$$n \geqs |\mathcal{W}\cdot \l| = |\mathcal{W}:\mathcal{W}_{\lambda}| \geqs \frac{|S_d|}{|S_2|^2|S_{d-4}|} = \frac{1}{4}d(d-1)(d-2)(d-3)> 4d^2,$$
where $\mathcal{W}\cdot \l$ denotes the $\mathcal{W}$-orbit of $\l$. Therefore $a_2=a_{d-2}=0$. In this way, we quickly reduce the problem to the case where
$$\l = \left\{\begin{array}{ll}
a\l_1 + a\l_{d-1} & \mbox{$d$ odd} \\
a\l_1 + b\l_{d/2}+a\l_{d-1} & \mbox{$d$ even}
\end{array}\right.$$
If $d$ is even and $b \ne 0$ then the $\mathcal{W}$-stabiliser of $\l$ is contained in a parabolic subgroup of type $A_{d/2-1} \times A_{d/2-1}$ and thus $n \geqs d!/((d/2)!^2)>4d^2$. Finally, we can repeat the argument in the proof of \cite[Proposition 2.5]{Bur} to see that $a=1$ is the only option, so $\l=\l_1+\l_{d-1}$ and thus $\bar{V}$ is the adjoint module.

Finally, let us relax the assumption that $\l$ is $p$-restricted. Write $\l = \mu_0+p\mu_1+\cdots+p^{e-1}\mu_{e-1}$, where each $\mu_i$ is $p$-restricted, so by Steinberg's tensor product theorem we have
$$\bar{V} = L(\l) \cong L(\mu_0) \otimes L(\mu_1)^{(1)} \otimes \cdots \otimes L(\mu_{e-1})^{(e-1)}.$$
If three or more of the $\mu_i$ are nonzero then $n \geqs d^3>4d^2$, which is a contradiction. Next suppose two are nonzero, say $\l = p^i\mu_i+p^j\mu_j$ with $i \ne j$, so $n = \dim L(\mu_i) \cdot \dim L(\mu_j)$. The self-duality of $\bar{V}$ implies that $L(\mu_i)$ and $L(\mu_j)$ are also self-dual and thus the result in the $p$-restricted case rules out this situation for dimension reasons. Finally, if $\l = p^i\mu_i$ then $\mu_i$ is self-dual and $\bar{V}$ is quasiequivalent to $L(\mu_i)$. The result follows.
\end{proof}

\begin{lem}\label{p:untwist0}
Proposition \ref{p:c} holds if $S$ is untwisted and $e>f'$.
\end{lem}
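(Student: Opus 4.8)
The plan is to produce an explicit derangement of order $r$ in $T$. By Lemma~\ref{l:count0} it suffices to exhibit an element $x \in T$ of order $r$ that is not $T$-conjugate into $H_0$, and the natural candidate is the element furnished by Remark~\ref{r:cc} (available as $c \geqs 2$): it has order $r$, its $1$-eigenspace on $\bar{V}$ has dimension $n-c$, and its remaining $c$ eigenvalues are nontrivial and pairwise distinct. In particular $\hat{x}$ has a unique eigenvalue of multiplicity greater than $1$ and $\nu(x)=c$ (since $c \leqs n/2$). So I would assume, for a contradiction, that a $T$-conjugate of $x$ lies in $H_0$, and replace $x$ by that conjugate. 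The key structural input is \eqref{e:tensor}: as $e > f'$ it presents $\bar{V}$ as a tensor product of $k = e/f' \geqs 2$ modules $M^{(jf')}$ $(0 \leqs j \leqs k-1)$, each of dimension $\ell = \dim M \geqs 2$, with $n = \ell^k$, so $\ell^{k-1} \geqs 3$ because $n \geqs 6$. Since $x$ normalises $S$ it permutes these $k$ tensor factors, and as $|x|=r$ is prime the induced permutation is either trivial or a product of $r$-cycles.

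Suppose first the permutation is nontrivial, so $r \leqs k$ and $x$ cyclically permutes some set of $r$ of the factors; let $U$ be their tensor product, so $\dim U = \ell^r$ and $\bar{V} = U \otimes U'$ with $\dim U' = \ell^{k-r}$. A short computation using $|x|=r$ shows that $\hat{x}$ acts on $U$ as a scalar multiple of the cyclic permutation of the $r$ tensor slots. That permutation has orbits of size $1$ or $r$ on the natural basis of $U$, so its largest eigenspace -- and hence that of $\hat{x}$ on $U$ -- has dimension $\ell + (\ell^r-\ell)/r$; tensoring with the complementary factor, the largest eigenspace of $\hat{x}$ on $\bar{V}$ has dimension at most $(\ell + (\ell^r-\ell)/r)\,\ell^{k-r}$, which is strictly less than $\ell^k/2 = n/2$ whenever $r \geqs 5$ and $\ell \geqs 2$ (equivalently $(r-2)\ell^{r-1} > 2(r-1)$). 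This contradicts $\nu(x) = c \leqs n/2$, so I would conclude that $x$ fixes each tensor factor.

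Now I can adjust $\hat{x}$ by a scalar and write $\hat{x} = x_0 \otimes x_1 \otimes \cdots \otimes x_{k-1}$ with each $x_j \in {\rm GL}(M)$ semisimple of order dividing $r$. Set $W = M^{(0)} \otimes \cdots \otimes M^{((k-2)f')}$, so $\bar{V} = W \otimes M^{((k-1)f')}$ with $\dim W = \ell^{k-1} \geqs 2$ and $\dim M^{((k-1)f')} = \ell \geqs 2$. If $x$ had a repeated eigenvalue on $W$, then Lemma~\ref{l:tensor} would give a nontrivial repeated eigenvalue of $x$ on $\bar{V}$, contradicting the fact that $\hat{x}$ has a unique eigenvalue of multiplicity greater than $1$. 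Hence $x$ has $\ell^{k-1}$ distinct eigenvalues on $W$, so every eigenspace of $x$ on $\bar{V} = W \otimes M^{((k-1)f')}$ has dimension at most $\dim M^{((k-1)f')} = \ell$. But its largest eigenspace has dimension $n-c \geqs n/2 = \ell^k/2 > \ell$ (using $\ell^{k-1} \geqs 3$), a contradiction. Therefore $x$ is a derangement of order $r$ in $T$, completing the proof.

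The heart of the argument, and the step I expect to require the most care, is the last one: converting Lemma~\ref{l:tensor} into the sharp bound that every eigenspace of $x$ on $\bar{V}$ has dimension at most $\ell = n^{1/k}$. This works only because the explicit element $x$ of Remark~\ref{r:cc} has all of its nontrivial eigenvalues distinct; for a general element of order $r$ no comparable bound is available. By contrast, the factor-permuting case is routine, relying only on the orbit structure of a cyclic permutation matrix.
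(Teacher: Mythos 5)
Your proof is correct, and its skeleton matches the paper's: the same special element $x$ from Remark \ref{r:cc} with $\nu(x)=c$ and simple nontrivial eigenvalues, the tensor decomposition \eqref{e:tensor}, the dichotomy according to whether the putative conjugate in $H_0$ permutes the tensor factors or stabilises each of them, and Lemma \ref{l:tensor}. Where you genuinely diverge is the factor-stabilising case: the paper bounds $\nu(g)$ by the Liebeck--Shalev inequality $\nu(g)\geqs \nu_1(g)n/\ell$ in \eqref{e:nuV}, and since $(\ell-1)n/\ell>n/2$ fails for $\ell=2$ this forces a separate analysis of $S={\rm PSL}_{2}(p^e)$ with $c=n/2$, settled there by observing $r>e/f'$ and $\kappa(S,r)=1$; your device of grouping the first $k-1$ factors into $W$ and showing that either $\hat{x}$ acquires a second repeated eigenvalue (impossible, as its eigenvalue multiplicity partition is $(n-c,1^c)$) or every eigenspace of $\hat{x}$ on $\bar{V}$ has dimension at most $\ell<n/2\leqs n-c$ is elementary, needs no external citation, and works uniformly for $\ell=2$, so the ${\rm PSL}_{2}$ sub-case disappears. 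Similarly, in the factor-permuting case you compute the eigenvalue multiplicities of a scalar multiple of the slot permutation explicitly, where the paper simply asserts repeated nontrivial eigenvalues for field automorphisms; your version also covers nontrivial permutations that are not fixed-point-free, so you never need to classify the automorphism type of the conjugate. The one point requiring a touch of care is scalar bookkeeping: eigenvalues of elements of $T\leqs{\rm PGL}(V)$ are only defined up to scalars, so the contradiction should be read off from the scalar- and conjugation-invariant multiplicity partition (exactly one repeated eigenvalue for $\hat{x}$), which is what your phrase ``unique eigenvalue of multiplicity greater than $1$'' amounts to; this is the same level of precision as the paper's own appeal to Lemma \ref{l:tensor}.
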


\begin{proof}
First assume $\ell>2$, where $\ell$ denotes the dimension of $M$ in \eqref{e:tensor}. Fix an element $x \in T$ of order $r$ with $\nu(x) = c$, where $r \ne p$ and $r>2$ (see Remark \ref{r:cc}). We claim that $x$ is a derangement. In order to see this,  
we need to show that if $g \in H_0$ has order $r$, then $g$ is not $T$-conjugate to $x$. For instance, it suffices to show that $\nu(g) \ne c$, or that $g$ has a nontrivial repeated eigenvalue on $\bar{V}$. 

Let $g \in H_0$ be an element of order $r$. If $g$ is a field automorphism then it must induce a fixed point free permutation on the $e/f'$ factors in the tensor product decomposition \eqref{e:tensor} (in particular, $r$ divides $e/f'$). This implies that $g$ has nontrivial repeated eigenvalues on $\bar{V}$, so it is not conjugate to $x$. To complete the argument, we may assume that $g$ is an inner-diagonal automorphism (recall that $r \geqs 5$) and thus $g$ stabilises each of the tensor factors in \eqref{e:tensor}. Let $\nu_1(g)$ and $\nu(g)$ denote the codimension of the largest eigenspace of $g$ on $M$ and $\bar{V}$, respectively.
By applying \cite[Lemma 3.7]{LieShalev}, we deduce that
\begin{equation}\label{e:nuV}
\nu(g) \geqs \nu_1(g)n/\ell.
\end{equation}
If $\nu_1(g)<\ell-1$ then Lemma \ref{l:tensor} implies that $g$ has a nontrivial repeated eigenvalue on $\bar{V}$, so we may assume that $\nu_1(g)=\ell-1$.
Then \eqref{e:nuV} gives
$\nu(g)>n/2$
and thus $g$ is not $T$-conjugate to $x$ (since $\nu(x) = c \leqs n/2$).  

Now assume $\ell=2$, so $S = {\rm PSL}_{2}(p^e)$ is the only possibility. The previous  argument shows that the element $x \in T$ above is a derangement if $c<n/2$, so we may assume that $c=n/2$. Here \eqref{r:bd} implies that 
$$r \geqs c+1 = n/2+1 = 2^{e/f'-1}+1>e/f',$$
so every element in $H_0$ of order $r$ is contained in $S$ (indeed, if $g \in H_0 \setminus S$ has order $r$, then $g$ is a field automorphism and $r$ divides $e/f'$, as noted above). Since $S$ has a unique conjugacy class of subgroups of order $r$, we conclude that $T$ contains derangements of order $r$.
\end{proof}

\begin{lem}\label{p:untwist}
Proposition \ref{p:c} holds if $S$ is untwisted and $e=f'$. 
\end{lem}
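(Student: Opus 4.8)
We are now in the situation $S$ untwisted of Lie type over $\mathbb{F}_{p^e}$ with $e=f'$, so that $\bar V = V\otimes K = M$ is a single irreducible $K\hat S$-module of dimension $n=\ell\geqs 2$ (no tensor-induced structure), and the conditions in \eqref{e:box} hold; in particular $r\geqs 5$, $r\neq p$, $r$ divides $|H_0|$, and $\max\{2,\sqrt n/2\}<c\leqs n/2$. Since $e=f'$, there are no nontrivial field automorphisms of $\hat S$ acting on $V$ over $\mathbb{F}_{q'}$, so every element of $H_0$ of order $r$ is (modulo scalars) an inner-diagonal automorphism of $S$, i.e.\ it comes from the algebraic group $\bar S$. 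The plan is to produce a derangement of order $r$ in $T$ by the usual dichotomy: either exhibit an explicit element $x\in T$ of order $r$ with $\nu(x)=c$ (equivalently $\dim C_V(\hat x)=n-c$, as in Remark \ref{r:cc}) that cannot be $T$-conjugate to anything in $H_0$, or compare class numbers $\kappa(T,r)$ and $\kappa(H_0,r)$ via Corollary \ref{c:count} and Lemma \ref{l:floor}.

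**The eigenvalue/fixed-space argument.** For the first horn, fix $x\in T$ of order $r$ with $\dim C_V(\hat x)=n-c$ and nontrivial eigenvalues (over $\mathbb{F}_{q^i}$) as spread out as possible. Take $g\in H_0$ of order $r$; lift it to $\bar g\in\bar S$ semisimple of order $r$. I want to bound $\nu(g)$ from below, where $\nu(g)$ is the codimension of the largest eigenspace of $g$ on $\bar V=L(\lambda)$. The key tool is the Landazuri--Seitz type lower bound together with the observation that a semisimple element of order $r$ in $\bar S$ lies in a subsystem torus, and its action on $L(\lambda)$ decomposes according to the weights; the largest eigenspace corresponds to the largest union of weights lying in a single coset of the character group of $\langle\bar g\rangle$. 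When $\dim\bar S$ (or the rank) is large relative to $c$, the relevant minimal dimension bounds for nontrivial irreducibles of $\hat S$ — from \cite{Lu} in the defining characteristic, and the rank-vs-dimension estimates of \cite{KL} — force $n$ to be so large compared to $r-1$ that $\kappa(T,r)\geqs\lfloor n/(r-1)\rfloor-1$ (Lemma \ref{l:floor}(iii)) exceeds the number of order-$r$ classes in $S$, the latter being controlled by the rank of $\bar S$ (a semisimple class of order $r$ is determined by combinatorial data bounded in terms of the rank and $r$). This handles all cases of sufficiently large rank.

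**The bounded-rank cases.** What remains are the small-rank groups: $S$ of type $A_\ell$ with $\ell$ small, $B_\ell/C_\ell/D_\ell$ small, and the exceptional types $G_2,F_4,E_6,E_7,E_8$, each over $\mathbb{F}_{p^e}$ with $V$ a "small" irreducible module (by the constraint $c\leqs n/2$ and $c>\sqrt n/2$, i.e.\ $n<4c^2\leqs (r-1)^2$ roughly, combined with \eqref{r:bd}). Here I expect to lean on L\"ubeck's tables \cite{Lu} and Hiss--Malle \cite{HM} to enumerate the finitely many module possibilities, then in each case either name an explicit $x\in T$ of order $r$ with small $\nu$ — often the element with the largest $1$-eigenspace, whose $\nu$ value is $c$, smaller than $\nu_{\min}$ over nontrivial semisimple elements of $S$ on that module — or count classes directly (for instance using that Sylow $r$-subgroups are small, or that $S$ has a unique class of the relevant cyclic maximal torus, as in the cross-characteristic exceptional lemmas). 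The self-dual module analysis in Lemma \ref{l:sldd} is tailored precisely for the $\bar S={\rm SL}_d(K)$ subcase and will be invoked to cut the list of type-$A$ modules down to the adjoint module and the four exceptional weights in \eqref{e:cas}, which are then dispatched individually. I would also use Lemma \ref{l:tensor} to dismiss any $g$ with $\nu_1<\ell-1$ where a tensor-like internal structure survives (e.g.\ for $L(\lambda)$ with $\lambda$ a sum of "small" weights on a product subsystem).

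**Main obstacle.** The hard part is the bounded-rank bookkeeping: for each small simple algebraic group and each admissible small irreducible module $L(\lambda)$ one must understand the possible $\nu$-values of semisimple elements of order $r$ acting on $L(\lambda)$ precisely enough to separate them from the value $c$ realised inside $T$, or else count classes sharply. This is where a careful case analysis — reading weight multiplicities off \cite{Lu}, identifying the module type (orthogonal/symplectic/unitary/linear) to pin down $T$ via \cite{HM}, and checking the arithmetic constraint $c>\sqrt n/2$ rules out "too-small" $c$ — does all the work, and it is easy to miss a sporadic coincidence (e.g.\ an $A_7$-type or $G_2$-type module that coincidentally already appeared in Tables \ref{atab} or \ref{btab} and must be excluded by hypothesis). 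I expect the proof to proceed type-by-type, and to conclude in each surviving case that $T$ contains a derangement of order $r$, completing the proof of Proposition \ref{p:c} and hence of Theorem \ref{t:main}.
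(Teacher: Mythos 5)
Your sketch goes wrong at the step that is supposed to carry the bulk of the classical cases. You propose to handle "large rank" by comparing the crude lower bound $\kappa(T,r)\geqs\lfloor n/(r-1)\rfloor-1$ of Lemma \ref{l:floor}(iii) with the number of order-$r$ classes in $S$, claiming the latter is "bounded in terms of the rank and $r$". But that count genuinely grows with $r$ (for $S$ classical with natural module of dimension $d$ it is roughly of size $((r-1)/c')^{m'}$, where $c'$ is the analogue of $c$ for $S$ on its natural module and $m'=\lfloor d/c'\rfloor$), while your lower bound $\lfloor n/(r-1)\rfloor-1$ \emph{decreases} as $r$ grows; since $r$ is unbounded in this setting, the comparison fails. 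The paper avoids this entirely via Lemma \ref{l:csub}: after noting that every order-$r$ element of $H_0$ is inner-diagonal, one introduces $c'$ and shows that whenever $c'\geqs c\geqs 2$ and $n>2d$ one has $\kappa(T,r)>\kappa(S,r)$, uniformly in $r$; the self-duality (or not) of $V$ pins down the type of $T$ and hence whether $c'\geqs c$, and the dimension bounds of L\"ubeck give $n>2d$ except in a handful of small configurations. The only case where $c'<c$ is $S={\rm PSL}_d(q)$ with $V$ self-dual and $i$ odd, and it is precisely there (and only there) that Lemma \ref{l:sldd} and the adjoint-module centraliser bound $\dim C_{\bar S}(y)\leqs d^2-2d+1$ are needed. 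Your proposal never introduces $c'$ or Lemma \ref{l:csub}, so it has no working mechanism for the generic classical cases, and your appeal to Lemma \ref{l:sldd} is not anchored to the correct subcase.

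A second omission: you assert at the outset that $\bar V=M$ carries "no tensor-induced structure", but when $T={\rm PSU}_n(q)$ we have $f'=2f=e$ and $M\cong (M^{*})^{(f)}$, and comparing highest weights via Steinberg's tensor product theorem shows $\bar V$ \emph{is} a tensor product of at least two nontrivial twisted irreducibles (e.g.\ $L(\l_1)\otimes L(\l_2)^{(f)}$ for ${\rm PSL}_3(q^2)<{\rm PSU}_9(q)$). The paper disposes of this case first, by rerunning the argument of Lemma \ref{p:untwist0}: any $g\in H_0$ of order $r$ either has a repeated nontrivial eigenvalue on $\bar V$ or satisfies $\nu(g)>n/2\geqs c$, so the element $x\in T$ with $\nu(x)=c$ and distinct nontrivial eigenvalues is a derangement. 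Your passing mention of Lemma \ref{l:tensor} does not supply this reduction. Finally, your treatment of the exceptional types (maximal-torus uniqueness giving $\kappa(S,r)=1$, adjoint/minimal modules via centraliser dimension bounds and L\"ubeck's tables) does match the paper's strategy, but it is stated as a plan rather than carried out, so even there the proof is incomplete as written.
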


\begin{proof}
Set $q'=p^{f'}$ as before, so $f'=2f$ if $T={\rm PSU}_{n}(q)$, otherwise $f'=f$. Here 
$\bar{V} \cong M$ for some irreducible $K\hat{S}$-module $M$, which is not quasiequivalent to the natural module for $\hat{S}$ (see Definition \ref{sdef}). Note that every element of order $r$ in $H_0$ is inner-diagonal.

First assume $T = {\rm PSU}_{n}(q)$, in which case $M \cong (M^{*})^{(f)}$, where $M^*$ denotes the dual of $M$ (see \cite[Lemma 2.10.15(ii)]{KL}, for example). By considering this isomorphism at the level of highest weights,  
and by applying Steinberg's tensor product theorem, we deduce that $M$ is isomorphic to a tensor product of two or more nontrivial irreducible $K\hat{S}$-modules. For example, if $S = {\rm PSL}_{3}(q^2)$ and $M$ has highest weight $\l_1+q\l_{2}$, then $M \cong L(\l_1) \otimes L(\l_2)^{(f)}$ is $9$-dimensional and $M \cong (M^{*})^{(f)}$, so this yields an embedding of $S$ in ${\rm PSU}_{9}(q)$. By expressing $M$ as a tensor product in this way, we can repeat the argument in the proof of Lemma \ref{p:untwist0} to see that every element $x \in T$ of order $r$ with $\nu(x)=c$ is a derangement. 

For the remainder of the proof we may assume that $T \ne {\rm PSU}_{n}(q)$, so $q=q'$. We will start by assuming $S$ is a classical group, with a $d$-dimensional natural module. Set $i = \Phi(r,q)$ and 
\begin{equation}\label{e:eqcdash}
c' = \left\{\begin{array}{ll}
2i & \mbox{if $i$ is odd and $S \neq {\rm PSL}_d(q)$} \\
i/2 &  \mbox{if $i\equiv 2\imod{4}$ and $S = {\rm PSU}_d(q)$} \\
i & \mbox{otherwise}
\end{array}\right.
\end{equation}
Note that if $c'>d/2$ then $\kappa(S,r)=1$ by Lemma \ref{l:floor}(i), so we may assume that $c' \leqs d/2$. In addition, note that if $c' \geqs c$ and $n>2d$ then Lemma \ref{l:csub} implies that $\kappa(T,r)>\kappa(S,r)$ and thus $T$ contains derangements of order $r$.

\vs

\noindent \emph{Case 1. $S={\rm PSp}_{d}(q)$, $d \geqs 4$}

\vs

First observe that $V$ is self-dual and thus $T$ is a symplectic or orthogonal group (see \cite[Lemma 2.10.15(i) and Proposition 5.4.3]{KL}). In particular, $c'=c$ is even, so $c' \geqs 4$ and thus $d \geqs 8$ since we are assuming that $c' \leqs d/2$. If $d \geqs 12$ then \cite[Theorems 4.4 and 5.1]{Lu} imply that $n \geqs (d^2-d-4)/2>2d$ and the result follows from Lemma \ref{l:csub}. Similarly, if $d \in \{8,10\}$ then $n \geqs 2^{d/2}$ (see \cite[Tables A.33 and A.34]{Lu}) and we reduce to the case $d=8$ with $n=16$. Here $c'=c=4$ and it is easy to see that $\kappa(S,r)<\kappa(T,r)$.

\vs

\noindent \emph{Case 2. $S \in \{{\rm P\Omega}_{d}^{+}(q),\Omega_d(q)\}$, $d \geqs 7$}

\vs

First assume that $V$ is self-dual, so $T$ is symplectic or orthogonal, and $c'=c$ is even. In particular, note that $d \geqs 8$. Suppose $d$ is even. If $n \geqs (d^2-d-4)/2$ then $n>2d$ and the result follows as in Case 1. Therefore, by applying \cite[Theorems 4.4 and 5.1]{Lu}, we may assume that $d \in \{8,12\}$ and $n=2^{d/2-1}$. If $d=12$ then $n>2d$ and the result follows as before. We can discard the case $d=8$ since $S \not\cong T$. Now assume $d \geqs 9$ is odd. By arguing as above we may assume that $n < (d^2-d-4)/2$, so $d \leqs  23$ by \cite[Theorem 5.1]{Lu}. In each of the remaining cases it is easy to check that $n>2d$ by inspecting the relevant tables in \cite[Appendix A]{Lu}, unless $d=9$ and $n=16$. In the latter case, $c'=c=4$ and $\kappa(S,r)<\kappa(T,r)$. Finally, if $V$ is not self-dual then $S = {\rm P\Omega}_{d}^{+}(q)$ with $d \equiv 2 \imod{4}$ (see \cite[Proposition 5.4.3]{KL}) and $T = {\rm PSL}_{n}(q)$. Here $c' \geqs c$ and the above argument goes through. 

\vs

\noindent \emph{Case 3. $S={\rm PSL}_{d}(q)$, $d \geqs 2$}

\vs

If $d=2$ then $\kappa(S,r)=1$ so we may assume that $d \geqs 3$. If $V$ is not self-dual, then $T = {\rm PSL}_{n}(q)$, $c'=c=i$ and $d \geqs 6$. By applying \cite[Theorems 4.4 and 5.1]{Lu} we see that $n \geqs d(d-1)/2>2d$. Similarly, if $V$ is self-dual and $i$ is even, then $c'=c=i \leqs d/2$ and $n \geqs d^2-2>2d$. In both cases, the desired result follows from Lemma \ref{l:csub}. 

Finally, let us assume $V$ is self-dual and $i$ is odd. Here $c'=i<2i=c$ so we cannot appeal to Lemma \ref{l:csub}. First observe that $i \geqs 3$ and thus $d \geqs 6$. Also recall that $c \geqs \sqrt{n}/2$ and $c' \leqs d/2$, hence $n \leqs 4d^2$ and the possibilities for $V$ are recorded in Lemma \ref{l:sldd}.

First let us consider the exceptional cases in \eqref{e:cas}. Suppose $(d,\bar{V}) = (6,L(\l_3))$. Here $i=3$ and $V = \L^3(W)$ is $20$-dimensional, where $W$ is the natural $S$-module. A straightforward calculation shows that $\nu(y) \geqs 8$ for all $y \in S$ of order $r$ (see \cite[Section 7]{Burr}, for example), so every element $x \in T$ of order $r$ with $\nu(x)=6$ is a derangement. A very similar argument applies if $(d,\bar{V}) = (8,L(\l_4))$ or $(10,L(\l_5))$. Finally, suppose $(d,\bar{V}) = (6,L(2\l_3))$, so $p=3$ and $n=141$ (see \cite[Table A.9]{Lu}). Again, $i=3$ and thus $6$ divides $r-1$. Set $a=(r-1)/6$ and observe that $S$ has $4a+\binom{2a}{2}$ conjugacy classes of elements of order $r$. If $r=7$ then $T$ has $\lfloor 141/6 \rfloor = 23>5$ such classes, so we may assume that $r \geqs 13$. It is easy to check that $T$ has at least $2a+22\binom{a}{2}$ such classes, and the result follows by applying Corollary \ref{c:count}. (To obtain the latter lower bound, we simply count class representatives of the form $[X_1,I_{135}]$, $[X_1^2,I_{129}]$ and $[X_1^j,X_2,I_{141-6(j+1)}]$ with $1 \leqs j \leqs 22$.)

Now assume $V$ is the adjoint module, so $n = d^2-1$ or $d^2-2$ (according to whether or not $p$ divides $d$). Let $X$ be the Lie algebra of $\bar{S} = {\rm SL}_{d}(K)$, so $\bar{V}$ is the nontrivial irreducible constituent of $X$. Now $\dim C_X(y) = \dim C_{\bar{S}}(y)$ for every nontrivial semisimple element $y \in \bar{S}$ (see \cite[Section 1.10]{HCC}) and thus \cite[Proposition 2.9]{Burr} implies that 
$$\dim C_X(y) = \dim C_{\bar{S}}(y) \leqs d^2-2d+1.$$ 
It follows that the dimension of the $1$-eigenspace of any element in $S$ of order $r$ on $V$ is at most $d^2-2d+1$. 
But if $x \in T$ is an element of order $r$ with $\nu(x)=c$, then 
$$\dim C_V(x) = n-c \geqs d^2-d-2>d^2-2d+1$$
and we conclude that $x$ is a derangement.

\vs

\noindent \emph{Case 4. $S=E_8(q)$}

\vs

As before, $H_0$ does not contain any field automorphisms, so by considering the order of $S$ we deduce that $c \leqs 30$ and thus $n \leqs 3600$ since we have $c \geqs \sqrt{n}/2$. By inspecting \cite[Table A.53]{Lu}, we deduce that $n=248$ is the only possibility, so $V$ is the adjoint module. In particular, since $\bar{V}$ is the Lie algebra of $\bar{S}=E_8(K)$ we have  
$$\dim C_{\bar{V}}(y) = \dim C_{\bar{S}}(y) \leqs 136$$
for all nontrivial semisimple elements $y \in \bar{S}$. Therefore, every $x \in T$ of order $r$ with $\nu(x)=c$ is a derangement.

\vs

\noindent \emph{Case 5. $S=E_7(q)$}

\vs

Here $c \leqs 18$ and thus $n \leqs 1296$. Suppose $c \in \{14,18\}$. By inspecting the structure of the maximal tori in $S$ (see \cite[Section 2.9]{KS}, for example), we deduce that every element in $S$ of order $r$ belongs to a unique conjugacy class of maximal tori, which are cyclic. Since such a torus has a unique subgroup of order $r$, it follows that $\kappa(S,r)=1$ and thus $T$ contains derangements of order $r$.

By inspecting the order of $S$, we may assume that $c \leqs 12$ and thus $n \leqs 576$. By \cite[Table A.52]{Lu}, it follows that $n \in \{56,132,133\}$, so $V$ is either the minimal or adjoint module for $S$. Suppose 
$V$ is the adjoint module and let $X$ be the Lie algebra of $\bar{S}=E_7(K)$. Then 
$$\dim C_X(y) = \dim C_{\bar{S}}(y) \leqs 79$$
for all nontrivial semisimple elements $y \in \bar{S}$, so every $x \in T$ of order $r$ with $\nu(x)=c$ is a derangement. Finally, suppose $n=56$ and consider the restriction of $\bar{V}$ to a maximal rank subgroup $A_7$ of $\bar{S}$. By \cite[Proposition 2.3]{LSM} we have
$$\bar{V} \downarrow A_7 = L(\l_2) \oplus L(\l_2)^* = \Lambda^2(W) \oplus \Lambda^2(W)^*,$$
where $W$ is the natural $A_7$-module. By calculating directly with the exterior square $\L^2(W)$ we find that $\dim C_{L(\l_2)}(y) \leqs 21$ for all nontrivial semisimple elements $y \in A_7$, so the $1$-eigenspace of any element in $S$ of order $r$ on $V$ has dimension at most $42$. Since $c \leqs 12$, we conclude that each $x \in T$ of order $r$ with $\nu(x)=c$ is a derangement.

\vs

\noindent \emph{Case 6. $S=E_6(q)$}

\vs

This is very similar to the previous case. First observe that $c \leqs 12$ and thus $n \leqs 576$. If $c\in \{9,12\}$ then by considering the maximal tori of $S$ we deduce that $\kappa(S,r)=1$ and the result follows. In view of $|S|$, we may assume that $c \leqs 8$, so $n \leqs 256$ and thus $n \in \{27,77,78\}$ by \cite[Table A.51]{Lu}. If $n \in \{77,78\}$ then $V$ is the adjoint module and we see that every $x \in T$ of order $r$ with $\nu(x)=c$ is a derangement since $\dim C_X(y) = \dim C_{\bar{S}}(y) \leqs 46$ for all nontrivial semisimple elements $y \in \bar{S}$ (where $X$ is the Lie algebra of $\bar{S}=E_6(K)$). 

Finally, let us assume $c \leqs 8$ and $n=27$, so $V$ is the minimal module for $S$. Once again we claim that every element $x \in T$ of order $r$ with $\nu(x)=c$ is a derangement. To see this, first observe that  
$$\bar{V} \downarrow A_1A_5 = \left(L(\l_1) \otimes L(\l_1)\right) \oplus \left(0 \otimes L(\l_4) \right) = U_1 \oplus U_2,$$
where $0$ denotes the trivial $A_1$-module (see \cite[Proposition 2.3]{LSM}). Let $y=y_1y_2 \in A_1A_5$ be a nontrivial semisimple element. If one $y_j$ is trivial then it is clear that $y$ has a repeated nontrivial eigenvalue on $\bar{V}$. On the other hand, if both $y_1$ and $y_2$ are nontrivial then we calculate that $\dim C_{U_1}(y) \leqs 6$ and $\dim C_{U_2}(y) \leqs 10$ (note that $U_2 \cong \Lambda^2(W)^*$, where $W$ is the natural $A_5$-module), so $\dim C_{\bar{V}}(y) \leqs 16$. This justifies the claim. 

\vs

\noindent \emph{Case 7. $S=F_4(q)$}

\vs

Here $c \leqs 12$ and thus $n \leqs 576$. If $c \in \{8,12\}$ then by considering the structure of the maximal tori of $S$ we see that $\kappa(S,r)=1$, so we may assume that $c \leqs 6$. In particular, $n \leqs 144$ and thus $n \in \{25,26,52\}$ (see \cite[Table A.50]{Lu}). We claim that each $x \in T$ of order $r$ with $\nu(x)=c$ is a derangement.

If $n=52$ then $\bar{V}$ is the Lie algebra of $\bar{S}=F_4(K)$, so $\dim C_{\bar{V}}(y) \leqs 36$ and the claim follows. Finally, if $n \in \{25,26\}$ then the proof of \cite[Lemma 7.4]{Burr} implies that $\nu(y) \geqs 7$ for all nontrivial semisimple elements $y \in \bar{S}$, and once again the claim holds.

\vs

\noindent \emph{Case 8. $S=G_2(q)$}

\vs

Since $c \geqs 3$ and $|S|=q^6(q^2-1)(q^6-1)$, we see that $c = 6$ is the only possibility. By inspecting the maximal tori of $S$, we deduce that $\kappa(S,r)=1$  and the result follows.
\end{proof}

\subsubsection{$S$ is twisted}\label{ss:tw1}

To complete the proof of Proposition \ref{p:c} (and hence the proof of Theorem \ref{t:main}), we may assume that $S$ is a twisted group of Lie type over $\mathbb{F}_{p^e}$. 

For now, let us assume that $S$ is of type ${\rm PSU}_{d}(p^e)$ (with $d \geqs 3$), ${\rm P\Omega}_{d}^{-}(p^e)$ (with $d \geqs 8$) or ${}^2E_6(p^e)$. In each of these cases, the ambient simple algebraic group admits a graph automorphism $\tau$ of order $2$, which induces a symmetry of the corresponding Dynkin diagram. We will also write $\tau$ to  denote the restriction of this automorphism to the corresponding twisted group $\hat{S}$. Recall that if $M$ is a $K\hat{S}$-module affording the representation $\rho$, then $M^{\tau}$ denotes the space $M$ with $\hat{S}$ acting via $\tau\rho$.

As before, set $q=p^f$ and $q'=p^{f'}$, where $f'=2f$ if $T={\rm PSU}_{n}(q)$, otherwise $f'=f$. Since $V$ is an absolutely irreducible $\mathbb{F}_{q'}\hat{S}$-module which cannot 
be realised over a proper subfield of $\mathbb{F}_{q'}$, \cite[Proposition 5.4.6(ii)]{KL} implies that one of the following occurs:
\begin{itemize}\addtolength{\itemsep}{0.2\baselineskip}
\item[(a)] $f'$ divides $e$ and there is an irreducible $K\hat{S}$-module $M$ such that $M^{\tau} \cong M$ and \eqref{e:tensor} holds.
\item[(b)] $f'$ divides $2e$, but $f'$ does not divide $e$. Moreover, if we write $\bar{V}  = V \otimes K$ then there is an irreducible $K\hat{S}$-module $M$ such that $M^{\tau} \not\cong M$ and
\begin{equation}\label{e:tensor2}
\bar{V} 
 \cong M \otimes (M^{\tau})^{(f'/2)} \otimes M^{(f')} \otimes (M^{\tau})^{(3f'/2)} \otimes \cdots \otimes (M^{\tau})^{(e-f')} \otimes M^{(e-f'/2)}
\end{equation}
(with $2e/f'$ factors) as $K\hat{S}$-modules. 
\end{itemize}
Set $\ell = \dim M$ and note that $\ell \geqs 3$. Also note that $n=\ell^{e/f'}$ in (a), and $n = \ell^{2e/f'}$ in (b).

\begin{lem}\label{p:twist}
Proposition \ref{p:c} holds if $S$ is of type ${\rm PSU}_{d}(p^e)$, ${\rm P\Omega}_{d}^{-}(p^e)$ or ${}^2E_6(p^e)$.
\end{lem}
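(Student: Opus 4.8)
The plan is to mirror the proof of Lemma~\ref{p:untwist} (the untwisted case), adapting the tensor-decomposition argument to the two possibilities (a) and (b) coming from \cite[Proposition 5.4.6(ii)]{KL}. In both cases the basic strategy is the same: fix an element $x \in T$ of order $r$ with $\nu(x) = c$ (which exists by Remark~\ref{r:cc}, using $c \leqs n/2$ from \eqref{e:box}), and show it is a derangement by proving that no element $g \in H_0$ of order $r$ is $T$-conjugate to it. Since $r \geqs 5$, any such $g$ is either inner-diagonal or a field/graph-field automorphism whose order divides the number of tensor factors, hence permutes those factors nontrivially and so has a repeated nontrivial eigenvalue on $\bar V$; an inner-diagonal $g$ stabilises each factor, and then \cite[Lemma~3.7]{LieShalev} gives $\nu(g) \geqs \nu_1(g) n/\ell$ where $\nu_1(g)$ is the codimension of the largest eigenspace on $M$. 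If $\nu_1(g) < \ell - 1$, Lemma~\ref{l:tensor} produces a repeated nontrivial eigenvalue; otherwise $\nu_1(g) = \ell-1$ forces $\nu(g) \geqs n/\ell \cdot (\ell-1) \geqs n/2$ (note $\ell \geqs 3$ here, so in fact $\nu(g) \geqs 2n/3 > n/2 \geqs c$), so $g \not\sim_T x$.

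The key structural input that makes this cleaner than the untwisted case is that in (b) there are at least $2e/f' \geqs 2$ factors, and in (a) with $e > f'$ there are at least $2$ factors as well, so the argument above applies verbatim once $\ell \geqs 3$ — which is guaranteed for the twisted types under consideration (the smallest faithful module for $\hat S$ of type $A_{d-1}$, $D_d$ or $E_6$ has dimension $\geqs 3$). The only case not covered by a multi-factor tensor decomposition is (a) with $e = f'$, i.e.\ $\bar V \cong M$ is a single irreducible $K\hat S$-module. Here I would split further: if $M$ is a tensor product of two or more nontrivial irreducibles (which happens, for instance, whenever self-duality or the hypothesis $M^\tau \cong M$ forces it in the unitary setting, as in the argument of Lemma~\ref{p:untwist} for $T = {\rm PSU}_n(q)$), we are done as before. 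Otherwise $M$ is $p$-restricted and tensor-indecomposable, and $S$ is a twisted classical or exceptional group acting on a "small" module; then I would argue exactly as in Cases 1--8 of Lemma~\ref{p:untwist}, using L\"ubeck's tables \cite{Lu}, the bound $n \leqs 4c^2 \leqs \ldots$ coming from $c \geqs \sqrt n/2$ together with $c \leqs d/2$-type restrictions, Lemma~\ref{l:csub} (when $c' \geqs c$ and $n > 2d$, where $c'$ is the analogue of $c$ for $S$ defined as in \eqref{e:eqcdash}), and eigenspace bounds on adjoint or minimal modules via the relevant centraliser dimension estimates (e.g.\ \cite[Proposition 2.9]{Burr} and its exceptional-group analogues, \cite{HCC}).

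Concretely, for $S$ of type ${\rm PSU}_d(p^e)$ with $\bar V \cong M$ tensor-indecomposable and self-dual issues handled, one reduces to $M = L(\lambda)$ with $\lambda$ fixed by the graph automorphism $\tau$ and $p$-restricted; the condition $M^\tau \cong M$ together with Steinberg's tensor product theorem and \cite[Proposition 5.4.3]{KL} pins down $\lambda$ to a short list (adjoint module, $\Lambda^3$ of the natural module in small rank, and a handful of exceptional weights as in Lemma~\ref{l:sldd}), and each is dispatched by a direct eigenvalue/centraliser computation or by comparing $\kappa(T,r)$ with $\kappa(S,r)$ via Lemmas~\ref{l:floor} and \ref{l:csub} and Corollary~\ref{c:count}. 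For ${\rm P\Omega}_d^-(p^e)$ the module is automatically self-dual so $T$ is symplectic or orthogonal, $c' = c$ is even, and the Landazuri--Seitz/L\"ubeck dimension bounds force either $n > 2d$ (apply Lemma~\ref{l:csub}) or one of a few small configurations checked by hand. For ${}^2E_6(p^e)$, $H_0$ contains no field automorphisms, so $|S|$ forces $c \leqs 12$, hence $n \leqs 576$, so $n \in \{27, 78\}$ (minimal or adjoint module) by \cite[Table A.51]{Lu}, and the centraliser-dimension bound $\dim C_{\bar V}(y) \leqs 46$ for semisimple $y$ (resp.\ the $A_1 A_5$ restriction argument for $n=27$) shows any $x$ with $\nu(x) = c$ is a derangement. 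The main obstacle I anticipate is the bookkeeping in the $e = f'$, tensor-indecomposable subcase: one has to be careful that the self-duality/quasiequivalence constraints coming from the \emph{twisted} Frobenius (which differ from the untwisted situation by the extra $\tau$) still force the module into L\"ubeck's accessible range, and that the case $S = {\rm PSL}_2$-analogue (i.e.\ $d$ small) does not sneak in — but in the twisted world $\ell \geqs 3$ rules out the genuinely delicate $\ell = 2$ endgame that complicated Lemma~\ref{p:untwist0}, so this should ultimately be more uniform.
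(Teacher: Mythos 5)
Your skeleton is the right one (it is essentially the paper's: multi-factor tensor cases via the Lemma \ref{p:untwist0} argument, and a single-module case reduced to the analysis of Lemma \ref{p:untwist}), but there is a genuine gap. Your claim that in case (b) there are always $2e/f' \geqs 2$ tensor factors is false: since $f'$ divides $2e$ but not $e$, the integer $2e/f'$ is \emph{odd}, and $2e/f'=1$ (that is, $f'=2e$) certainly occurs -- for example $T={\rm PSU}_{n}(q)$ with $q=p^{e}$, so $f'=2f=2e$. In that subcase $\bar{V}\cong M$ is a single irreducible module with $M^{\tau}\not\cong M$, and this is exactly what your single-module discussion does not cover, since you reduce it to highest weights fixed by $\tau$ (which is case (a)). A substantial part of the actual proof lives in this omitted subcase: there $V^{(e)}\cong V^{\tau}$, and one must treat $S={\rm PSU}_{d}(q)\leqs T={\rm PSU}_{n}(q)$ (the module is conjugate-dual, not self-dual), $S={\rm P\Omega}_{d}^{-}(q_0)$ with $T$ symplectic or orthogonal over $\mathbb{F}_{q_0^2}$ when $d\equiv 0\imod{4}$, $S={\rm P\Omega}_{d}^{-}(q)\leqs {\rm PSU}_{n}(q)$ when $d\equiv 2\imod{4}$ (so your blanket assertion that the $D$-type modules are automatically self-dual also fails here), and $S={}^{2}E_6(q)\leqs{\rm PSU}_{n}(q)$. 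Each of these needs a comparison of the analogue $c'$ for $S$ with $c$ (e.g. $c'=2i\geqs c$ in the $q=q_0^2$ orthogonal case) before Lemma \ref{l:csub} or the Case 2/Case 6 arguments of Lemma \ref{p:untwist} can be applied, and none of it is present in your outline.

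Two smaller slips in the part you do treat: for $S$ of type ${}^{2}E_6$ the order of $S$ only forces $c\leqs 18$ (the factor $q^9+1$ permits $i=18$), not $c\leqs 12$, and one must first dispose of the range $8<c\leqs 18$ by the cyclic maximal torus argument (giving $\kappa(S,r)=1$) before concluding $n\leqs 256$; with your bound $n\leqs 576$ the $351$-dimensional modules are not excluded, and the correct short list is $n\in\{27,77,78\}$, not $\{27,78\}$. Your handling of case (a) with $e=f'$ (self-duality, reduction to the untwisted Cases, and Lemma \ref{l:sldd} for the unitary situation with $i\equiv 2\imod{4}$ where $c'=i/2<c$) does match the paper.
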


\begin{proof}
First let us assume that we are in case (a) above, so $f'$ divides $e$ and \eqref{e:tensor} holds with respect to an irreducible $K\hat{S}$-module $M$ such that $M^{\tau} \cong M$. If $f'<e$ then the proof of Lemma \ref{p:untwist0} goes through unchanged (note that we always have $\ell >2$) and we deduce that every element $x \in T$ of order $r$ with $\nu(x)=c$ is a derangement. 

Now assume that (a) holds and $f'=e$. Here $\bar{V} \cong M \cong M^{\tau}$, so $V$ is self-dual (see \cite[Proposition 5.4.3]{KL}). In particular, $T$ is either symplectic or orthogonal, and $q=q'$. Also note that every element in $H_0$ of order $r$ is inner-diagonal. Set $i=\Phi(r,q)$ as before and note that $c=2i$ if $i$ is odd, otherwise $c=i$. Define the integer $c'$ as in \eqref{e:eqcdash}. As noted in the proof of Lemma \ref{p:untwist}, we may assume that $c' \leqs d/2$. In addition, Lemma \ref{l:csub} implies that if $c' \geqs c$ then it suffices to show that $n>2d$.

Suppose $S = {\rm PSU}_{d}(q)$ with $d \geqs 3$. First assume that $i \not\equiv 2 \imod{4}$, so $3 \leqs c=c' \leqs d/2$ and \cite{Lu} implies that $n \geqs d(d-1)/2>2d$ as required. Now assume $i \equiv 2 \imod{4}$, so $c=i$ and $c'=i/2$. Since $c \geqs 3$ and $c' \leqs d/2$ we have $i,d \geqs 6$. In addition, since $c \geqs \sqrt{n}/2$ we deduce that $n \leqs 4d^2$. The rest of the argument is now identical to the analysis in Case 3 in the proof of Lemma \ref{p:untwist}. The reader can check the details.

Next assume $S = {\rm P\Omega}_{d}^{-}(q)$ and $d \geqs 8$. Here $c'=c$ and we can repeat the argument in Case 2 in the proof of Lemma \ref{p:untwist}. Finally, let us assume that $S = {}^2E_6(q)$. By inspecting the order of $S$ we see that $c \leqs 18$. If $c>8$ then by considering the structure of the maximal tori of $S$ we deduce that $\kappa(S,r)=1$ and the result follows. Now assume $c \leqs 8$ so $n \leqs 256$. By inspecting \cite[Table A.51]{Lu} we see that $n \in \{27,77,78\}$ and we can now repeat the argument presented in Case 6 in the proof of Lemma \ref{p:untwist}.

To complete the proof of the lemma we may assume that (b) holds so $f'$ divides $2e$, but $f'$ does not divide $e$, and there is an irreducible $K\hat{S}$-module $M$ such that $M^{\tau} \not\cong M$ and \eqref{e:tensor2} holds. Note that $n = \ell^{2e/f'}$, where $\ell= \dim M$. If $f'<2e$ then the argument in the proof of Lemma \ref{p:untwist0} goes through, so we may assume that $f'=2e$ and thus $\bar{V} \cong M$. Also note that $V^{(e)} \cong V^{\tau}$ (see \cite[Proposition 5.4.6(ii)]{KL}).

First assume $S = {\rm PSU}_{d}(p^e)$ and $d \geqs 3$. Here $V^{(e)} \cong V^{\tau} \cong V^*$ and thus $(V^*)^{(e)} \cong V$, so $T={\rm PSU}_{n}(q)$, $f'=2f$ and $S = {\rm PSU}_{d}(q)$. In particular, $c'=c$, $d \geqs 6$ and $n \geqs d(d-1)/2>2d$, so the result follows from Lemma \ref{l:csub}.

Next assume $S = {\rm P\Omega}_{d}^{-}(p^e)$ with $d \geqs 8$. Suppose $d \equiv 0 \imod{4}$. Then $V$ is self-dual (see \cite[Proposition 5.4.3]{KL}), so $T$ is a symplectic or orthogonal group and thus $S = {\rm P\Omega}_{d}^{-}(q_0)$ with $q=q_0^2$. Note that $n>d$ (if $n=d$ then $V$ is the natural module for $\hat{S}$, which is defined over a proper subfield of $\mathbb{F}_{q}$). Set $i = \Phi(r,q)$ and $i_0 = \Phi(r,q_0)$, so $i = i_0/2$ if $i_0$ is even, otherwise $i=i_0$. Also set $c'=2i_0$ if $i_0$ is odd, otherwise $c'=i_0$. Then $c'=2i \geqs c$ and by arguing as in Case 2 in the proof of Lemma \ref{p:untwist} we deduce that $n>2d$.

Now suppose $S = {\rm P\Omega}_{d}^{-}(p^e)$, where $d \geqs 10$ and $d \equiv 2 \imod{4}$. In this situation, $V$ is not self-dual. In fact, $(V^*)^{(e)} \cong V$ and thus $T={\rm PSU}_{n}(q)$ and $S = {\rm P\Omega}_{d}^{-}(q)$. Therefore $c' \geqs c$ and once again it is easy to check that $n>2d$.

Finally, let us assume that $S = {}^2E_6(p^e)$. As in the previous case, we have $T = {\rm PSU}_{n}(q)$ and $S = {}^2E_6(q)$. Note that every element of order $r$ in $H_0$ is contained in $S$. 
If $c>8$ then $c \in \{9,12\}$, $i \in \{12,18\}$ and thus $\kappa(S,r)=1$. On the other hand, if $c \leqs 8$ then $n \leqs 256$ and we can proceed as in Case 6 in the proof of Lemma \ref{p:untwist}.
\end{proof}

We now complete the proof of Proposition \ref{p:c} by dealing with the remaining twisted groups.

\begin{lem}\label{p:twist2}
Proposition \ref{p:c} holds if $S$ is of type ${}^3D_{4}(p^e)$, ${}^2B_{2}(2^e)$, ${}^2G_{2}(3^e)$ or ${}^2F_{4}(2^e)$.
\end{lem}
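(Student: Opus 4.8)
The plan is to follow the template already used for the exceptional groups in non-defining characteristic (Lemmas~\ref{p:e78cross}--\ref{p:3d4cross}) and for $E_8,\dots,G_2$ in Cases~4--8 of the proof of Lemma~\ref{p:untwist}: for large $t=p^{e}$ we push the dimension of $V$ up until the prime $r$ is forced to be a primitive prime divisor attached to a cyclic maximal torus of $S$, and the finitely many small $t$ are treated directly. Throughout we use the consequences of \eqref{e:box} recorded after the statement of Proposition~\ref{p:c}: $r\geqs 5$, $c$ divides $r-1$ (so $r\geqs c+1$) and $c\geqs\sqrt{n}/2$, whence $n\leqs 4c^{2}$; also $\kappa(T,r)\geqs 2$ by Lemma~\ref{l:floor}(iii), so by Corollary~\ref{c:count} it suffices either to produce a single derangement of order $r$ in $T$ or to show that $\kappa(H_0,r)=1$.

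First I would fix the module structure. Since $V$ is an absolutely irreducible $\mathbb{F}_{q'}\hat S$-module that cannot be realised over a proper subfield, \cite[Proposition~5.4.6]{KL} (together with a direct analysis of the fields of definition for these very twisted types) presents $\bar V=V\otimes K$ as a Frobenius-twisted tensor product of one irreducible $K\hat S$-module $M$ of dimension $\ell$, where $\ell\geqs 8$ for $S$ of type ${}^{3}D_{4}$, $\ell\geqs 7$ for ${}^{2}G_{2}$, $\ell\geqs 26$ for ${}^{2}F_{4}$, and $\ell\geqs 4$ for ${}^{2}B_{2}$ (so $n\geqs 16$ here, as $n\geqs 6$), using \cite{Lu}. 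If the tensor product has more than one factor, or if $H_0$ contains a field automorphism of order $r$, then the argument of Lemma~\ref{p:untwist0} (via Lemma~\ref{l:tensor} and \cite[Lemma~3.7]{LieShalev}) shows that the element $x\in T$ of order $r$ with $\nu(x)=c$ furnished by Remark~\ref{r:cc} is a derangement. Hence we may assume $\bar V\cong M$ and that every element of order $r$ in $H_0$ lies in $S$: the outer automorphism group of $S$ is cyclic, generated by a field automorphism whose order divides $e$ for the Suzuki--Ree types and $3e$ for ${}^{3}D_{4}$, so a non-inner element of order $r$ would force $r$ to divide $3e$, incompatible with $r\geqs\sqrt{n}/2+1$ unless $t$ is small.

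With $\bar V\cong M$ irreducible of dimension $n\leqs 4(r-1)^{2}$, I would then feed in the Landazuri--Seitz and L\"ubeck bounds \cite{Land-S,Lu} on $\dim M$ for each family. These grow like a fixed power of $t$, so $t$ is bounded unless $c$ (hence $r$) is large; and inspection of $|S|$ then forces $r$ to be a primitive prime divisor of $\Phi_{d}(t)$ with $d$ the largest admissible value ($d=12$ for ${}^{3}D_{4}$ and ${}^{2}F_{4}$, $d=4$ for ${}^{2}B_{2}$, $d=6$ for ${}^{2}G_{2}$), the smaller choices of $d$ being excluded exactly as in the proofs of Lemmas~\ref{p:3d4cross}, \ref{p:g2cross}, \ref{p:f4cross} (a prime dividing a proper factor of $t^{d}-1$ is too small to meet the bound). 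In each such case every subgroup of $S$ of order $r$ lies in a cyclic maximal torus of which $S$ has a unique conjugacy class, so $\kappa(S,r)=1$, and Corollary~\ref{c:count} applies. For the short explicit list of remaining small groups (roughly $S\in\{{}^{3}D_{4}(2),{}^{3}D_{4}(3),{}^{3}D_{4}(4),{}^{2}B_{2}(8),{}^{2}B_{2}(32),{}^{2}G_{2}(27),{}^{2}F_{4}(2)',{}^{2}F_{4}(8)\}$; the case $n=8$ with $S$ of type ${}^{3}D_{4}$ is $(\mathcal{B}3)$, already excluded), I would read off the admissible $(T,n)$ from \cite[Section~8.2]{BHR} and \cite{Lu} and finish by one of three routine devices: the character table of $S$ gives $\kappa(S,r)=1$; or $n$ is large enough that Lemma~\ref{l:floor}(iii) yields $\kappa(T,r)>\kappa(S,r)$; or $i=\Phi(r,q)$ is incompatible with the conditions on $q$ needed for $S$ to embed in $T$ (cf. the proof of Lemma~\ref{p:psp4}).

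The main obstacle, I expect, is the ${}^{3}D_{4}$ case: identifying the correct field of definition of each small module (the $8$-dimensional one requiring a cubic extension, which is why it reappears only as the excluded case $(\mathcal{B}3)$), and then, when $i\in\{3,6\}$ and $n$ is comparatively small (for instance $S={}^{3}D_{4}(q)$ on its $26$- or $28$-dimensional module with $r$ dividing $\Phi_{3}(q)$ or $\Phi_{6}(q)$), establishing $\kappa(T,r)>\kappa(S,r)$ by an honest count of the conjugacy classes of order-$r$ subgroups in both groups: here the Sylow $r$-subgroup of $S$ has rank $2$, so one must control the relevant Weyl-group orbits, much as in the $(d,\bar V)=(6,L(2\l_3))$ subcase of Case~3 of the proof of Lemma~\ref{p:untwist}. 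Everything else is a faithful repetition of arguments already in place for the untwisted and non-defining-characteristic exceptional groups.
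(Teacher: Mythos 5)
Your reduction of the multi-factor (twisted tensor product) case via the argument of Lemma \ref{p:untwist0} is sound and is a legitimate alternative to the paper's numerical treatment, but the core of your single-factor argument has a genuine gap. You claim that the Landazuri--Seitz/L\"ubeck bounds ``grow like a fixed power of $t$'', so that $r$ is forced to be a primitive prime divisor of the largest cyclotomic factor of $|S|$ and hence lies in a cyclic maximal torus with $\kappa(S,r)=1$. That mechanism is borrowed from the cross-characteristic lemmas, where $n$ really does grow with $t$; in defining characteristic it fails, because the relevant modules have dimension independent of $t$ (the $26$- or $28$-dimensional module for ${}^{3}D_{4}(t)$, the $26$-dimensional module for ${}^{2}F_{4}(t)$). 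Consequently primes $r$ with small $\Phi(r,t)$ survive all of the conditions in \eqref{e:box}: for example $S={}^{3}D_{4}(t)$ on its adjoint module with $r\mid t^{6}-1$ (so $c=6\leqs n/2$), or $S={}^{2}F_{4}(t)$ on its $26$-dimensional module with $r\mid t^{2}+1$ (so $c=4$). In these cases $r$ is not a primitive prime divisor of $\Phi_{12}(t)$ and the Sylow $r$-subgroup of $S$ has rank $2$ (tori of type $\Phi_{3}^{2}$, $\Phi_{6}^{2}$, respectively $(t\pm\sqrt{2t}+1)^{2}$), so $\kappa(S,r)=1$ is simply false. The paper disposes of exactly these cases by eigenspace-dimension arguments rather than class counts: for the ${}^{3}D_{4}$ adjoint module, $\dim C_{X}(y)=\dim C_{\bar S}(y)\leqs 16$ for all nontrivial semisimple $y$, so any $x\in T$ with $\nu(x)=c$ has $\dim C_V(x)\geqs 20$ and is a derangement; for the $26$-dimensional ${}^{2}F_{4}$-module, $\nu(y)\geqs 8$ for all nontrivial semisimple elements (from the proof of \cite[Lemma 7.4]{Burr}), so elements with $\nu(x)=4$ are derangements. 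You anticipate the ${}^{3}D_{4}$ difficulty and propose an honest count of $\kappa(T,r)$ versus $\kappa(S,r)$ -- which would require controlling fusion and the relative Weyl groups of the rank-$2$ tori and is left unproved -- but the analogous ${}^{2}F_{4}$ case with $\Phi(r,t)=4$ is not addressed at all, and your stated exclusion of the small cyclotomic degrees is invalid.

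A secondary problem is your treatment of outer elements. The assertion that a non-inner element of order $r$ in $H_0$ forces $r\mid 3e$, ``incompatible with $r\geqs\sqrt{n}/2+1$ unless $t$ is small'', is not a valid deduction: $e$ is unbounded, so $r\mid 3e$ alone contradicts nothing. The correct devices are either the observation that such an element must satisfy $V\cong V^{(z)}$ and hence permute the tensor factors, forcing $r$ to divide their number (which then clashes with the dimension bounds), or the paper's cruder but uniform count that the field automorphisms of order $r$ contribute at most one additional $S$-class of subgroups of order $r$, so that $\kappa(H_0,r)\leqs\kappa(S,r)+1$ and one aims for $\kappa(T,r)\geqs\kappa(S,r)+2$ (this is what the paper actually needs in the ${}^{2}B_{2}$ and ${}^{2}G_{2}$ cases, where it proves $\kappa(T,r)\geqs 3$, and in the ${}^{2}F_{4}$, $\Phi(r,t)=12$ case, where it checks $V\not\cong V^{(z)}$ directly). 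This part of your argument is fixable, but as written the justification does not hold.
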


\begin{proof}
Set $q=p^f$ and note that $V$ is self-dual (see \cite[p.192]{KL}), so $T = {\rm PSp}_{n}(q)$ or ${\rm P\Omega}_{n}^{\e}(q)$. As usual, we set $H_0 = H \cap T$ and $i=\Phi(r,q)$. We partition the proof into several cases.

\vs

\noindent \emph{Case 1. $S = {}^3D_{4}(p^e)$}

\vs

Set $t=p^e$ and note that $|S|=t^{12}(t^8+t^4+1)(t^6-1)(t^2-1)$. Since $r \geqs 5$, every element in $H$ of order $r$ is contained in $S.\la \varphi \ra$, where $\varphi$ is a field automorphism of order $r$. There are $r-1$ distinct $S$-classes of field automorphisms of order $r$ in ${\rm Aut}(S)$, represented by the elements 
$\varphi^j$ with $1 \leqs j < r$ (this follows from the fact that every element of order $r$ in the coset $S\varphi^j$ is $S$-conjugate to $\varphi^j$ -- see \cite[Proposition 4.9.1(d)]{GLS}). Therefore, there is at most one $S$-class of subgroups of order $r$ with elements in $H_0\setminus S$, so we may assume that $r$ divides $|S|$. As noted in \cite[Remark 5.4.7(a)]{KL}, either $f$ divides $e$, or $f$ divides $3e$ (and $f$ does not divide $e$).

First assume $f$ divides $e$, so $t=q^{e/f}$. According to \cite[Remark 5.4.7(a)]{KL}, there exists an irreducible $K\hat{S}$-module $M$ such that $M^{\tau} \cong M$ and
$$\bar{V} = V \otimes K \cong M \otimes M^{(f)} \otimes M^{(2f)} \otimes \cdots \otimes M^{(e-f)},$$
where $\tau$ denotes a triality graph automorphism of $\hat{S}$ of order $3$. Note that the condition $M^{\tau} \cong M$ implies that $\dim M \geqs 26$ (see \cite[Table A.41]{Lu}, for example), so $n \geqs 26^{e/f}$.

Suppose $r$ divides $t^8+t^4+1$.  Since $r$ divides $q^{12e/f}-1$ it follows that $i$ divides $12e/f$. Therefore,
$$12e/f \geqs c \geqs \lceil \sqrt{n}/2 \rceil \geqs \lceil \sqrt{26^{e/f}}/2 \rceil$$
and thus $e/f \leqs 2$. In particular, $e/f$ is indivisible by $r$, so $H_0$ does not contain any field automorphisms of order $r$. By inspecting the structure of the maximal tori of $S$ (see \cite[Section 2.4]{KS}) we deduce that $\kappa(S,r)=1$ and the result follows. 

Next assume $r$ divides $t^6-1$. Here $c \leqs 6e/f$ and by arguing as above we deduce that $e/f \leqs 2$. If $e/f = 2$ then $12 \geqs c \geqs \lceil 26/2 \rceil=13$, which is absurd, so we may assume that $e/f=1$, hence $i \in \{3,6\}$ and $c=6$. Moreover, the bound $6 \geqs \lceil \sqrt{n}/2 \rceil$ implies that $n \leqs 144$. By inspecting \cite[Table A.41]{Lu}, using the fact that the highest weight of $M$ is fixed under the induced action of $\tau$ on weights, it follows that $n = 28 - 2\delta_{2,p}$ and $V$ is the adjoint module. Let $X$ be the Lie algebra of $\bar{S} = D_4$ and observe that 
$$\dim C_X(y) = \dim C_{\bar{S}}(y) \leqs 16$$
for all nontrivial semisimple elements $y \in \bar{S}$ (indeed, the proof of \cite[Proposition 2.9]{Burr} implies that $\dim y^{\bar{S}} \geqs 12$). We immediately deduce that every element $x \in T$ of order $r$ with $\nu(x)=c$ is a derangement.

To complete the analysis of the case $S={}^3D_4(p^e)$ we may assume that $f$ divides $3e$, but $f$ does not divide $e$. Here there is an irreducible $K\hat{S}$-module $M$ such that $M^{\tau} \not\cong M$ and
$$V \otimes K \cong M \otimes (M^{\tau})^{(f/3)} \otimes (M^{\tau^2})^{(2f/3)} \otimes M^{(f)} \otimes \cdots$$
(with $3e/f$ factors in total). Note that $\dim M \geqs 8$.

Suppose $r$ divides $t^8+t^4+1$, in which case $r$ divides $q^{12e/f}-1$ and thus 
$$12e/f \geqs c \geqs \lceil \sqrt{n}/2 \rceil \geqs \lceil \sqrt{8^{3e/f}}/2 \rceil$$
since $n \geqs 8$. This implies that $3e/f=1$ or $2$. In particular, $r$ does not divide $3e/f$, so $\kappa(H_0,r)=1$ and the result follows. Finally, let us assume that $r$ divides $t^6-1$, so $c \leqs 6e/f$ and we deduce that $3e/f \leqs 2$ since $n \geqs 8$. If $3e/f=1$ then $c=2$, which is a contradiction. If $3e/f=2$ then $c=4$ and $n=8$ is the only possibility, but this can be ruled out by inspecting the relevant tables in \cite[Section 8.2]{BHR}.

\vs

\noindent \emph{Case 2. $S = {}^2B_{2}(2^e)$}

\vs

Set $t=2^e$, where $e \geqs 3$ is odd, and note that $|S|=t^2(t^2+1)(t-1)$ and ${\rm Aut}(S) = S.\la \phi \ra$, where $\phi$ is a field automorphism of order $e$. 
Now $S$ has exactly three conjugacy classes of maximal tori, all of which are cyclic. By considering the orders of the maximal tori, we deduce that $\kappa(S,r)=1$ for every odd prime divisor $r$ of $|S|$. As in the previous case, there is at most one $S$-class of subgroups of order $r$ with elements in $H_0 \setminus S$, whence $\kappa(H_0,r) \leqs 2$. The desired result follows immediately if $\kappa(H_0,r)=1$, so we may assume that $r$ divides $|S|$.  

Write $q=2^f$ and note that $f$ divides $e$ and $n = \dim V \geqs 4^{e/f}$ (see \cite[Remark 5.4.7(b)]{KL}). Since $r$ divides $|S|$, it divides either $t-1$ or $t^2+1$. Suppose $r$ divides $t-1 = q^{e/f}-1$, so $i$ divides $e/f$ and thus $i$ is odd, so
$$2e/f \geqs 2i = c \geqs \lceil \sqrt{n}/2 \rceil \geqs 2^{e/f-1}$$
and it follows that $e/f = 1$ or $3$. But we are assuming that $c \geqs 3$, so $e/f=3$ and thus $i=3$ and $c=6$. Since $n \geqs 4^{e/f} = 64$, we deduce that $\kappa(T,r) \geqs 3$ and thus $T$ contains derangements of order $r$. A similar argument applies when $r$ divides $t^2+1$. Here $r$ divides $q^{2e/f}+1$ and thus $i$ divides $4e/f$. If $i$ is odd then $i$ divides $e/f$, so $r$ divides $q^{e/f}-1$, which is not possible. Therefore, $i$ is even and thus
$$4e/f \geqs i = c \geqs \lceil \sqrt{n}/2 \rceil \geqs 2^{e/f-1},$$
so $e/f \in \{1,3,5\}$. If $e/f=3$ or $5$ then the bound $n \geqs 4^{e/f}$ quickly implies that $\kappa(T,r) \geqs 3$. Finally, if $e/f=1$ then $c=4$ and $n \geqs 16$ (indeed, every absolutely irreducible representation of $S$ over a field of characteristic $2$ has dimension $4^m$ for some $m$; see \cite[Lemma 1]{Mart}, for example) and once again we conclude that $\kappa(T,r) \geqs 3$.

\vs

\noindent \emph{Case 3. $S = {}^2G_2(3^e)$}

\vs

Write $t=3^e$, where $e \geqs 3$ is odd, and note that $|S| = t^3(t^3+1)(t-1)$. By inspecting the 
structure of the maximal tori of $S$, we deduce that $\kappa(S,r)=1$ for every prime $r \geqs 5$ dividing $|S|$. As in the previous case, we may assume that $r$ divides $|S|$ and it suffices to show that $\kappa(T,r) \geqs 3$. By \cite[Remark 5.4.7(b)]{KL}, $f$ divides $e$ and $n \geqs 7^{e/f}$. 

First assume $r$ divides $t-1$, so $i$ divides $e/f$ and thus $c=2i \leqs 2e/f$ and $e/f \geqs 3$. Since 
$$2e/f \geqs c \geqs \lceil \sqrt{n}/2 \rceil \geqs \lceil \sqrt{7^{e/f}}/2 \rceil$$
we deduce that $e/f=3$ is the only possibility, so $c=6$, $n \geqs 7^3$ and we clearly have $\kappa(T,r) \geqs 3$. Now assume $r$ divides $t^3+1$, so $i$ divides $6e/f$. If $i$ is odd then $r$ divides $q^{3e/f}-1$ and $q^{3e/f}+1$, which is absurd, so $i=c$ is even. If $c \leqs 2e/f$ then the previous argument shows that $e/f=3$, so $n \geqs 7^3$ and the result follows as above. Finally, suppose that $c=6e/f$. By the usual argument we deduce that $e/f \leqs 3$. If $e/f=3$ then $c=18$, $n \geqs 7^3$ and we see that $\kappa(T,r) \geqs 3$. Now assume $e/f=1$, so $c=6$ and $n \geqs 12$ (since $c \leqs n/2$). By inspecting \cite[Table A.49]{Lu}, noting that $p=3$, we deduce that $n \geqs 27$ and the desired result follows.

\vs

\noindent \emph{Case 4. $S = {}^2F_{4}(2^e)$}

\vs

Set $t=2^e$ and note that $|S| = t^{12}(t^6+1)(t^4-1)(t^3+1)(t-1)$, where $e \geqs 1$ is odd. As noted in \cite[Remark 5.4.7(b)]{KL}, $f$ divides $e$ and $n \geqs 26^{e/f}$, where $q=2^f$. As in the previous cases, there is at most one $S$-class of subgroups of order $r$ containing elements in $H_0 \setminus S$, so we may assume that $r$ divides $|S|$. Set $j = \Phi(r,t)$ and note that $j \in \{1,2,4,6,12\}$. 

First assume that $j=12$, so $r$ divides $t^4-t^2+1$. By considering the structure of the maximal tori of $S$ we deduce that $\kappa(S,r)=1$ so it suffices to show that $\kappa(T,r) \geqs 3$. Since $c \leqs 12e/f$ and $n \geqs 26^{e/f}$, it follows that $12e/f \geqs \lceil \sqrt{26^{e/f}}/2 \rceil$, so $e/f=1$, $c=12$ and $n \leqs 576$. 
By inspecting \cite[Table A.50]{Lu}, noting that $p=2$, we deduce that $n=26$ is the only possibility and thus $V$ is the minimal module for $S$. Since $V \not\cong V^{(z)}$ for any positive integer $z<f$, it follows that $H_0$ does not contain any field automorphisms, so $\kappa(H_0,r)=1$ and the result follows.

Next assume $j=6$, so $r$ divides $t^3+1$. Once again, we see that $\kappa(S,r)=1$. Since $c \leqs 6e/f$ and $n \geqs 26^{e/f}$ we deduce that $e/f=1$ is the only possibility, so $c=6$, $n \geqs 26$ and $\kappa(T,r) \geqs 3$ as required. If $j \leqs 2$ then $c \leqs 2e/f< \lceil \sqrt{26^{e/f}}/2\rceil$, so this case does not arise. 

Finally, let us assume $j=4$. Here $4e/f \geqs c \geqs \lceil \sqrt{26^{e/f}}/2\rceil$ and thus $e/f=1$, so $c=4$ and $n \leqs 64$. From \cite[Table A.50]{Lu}, we deduce that $n=26$ and thus $V$ is the minimal module for $S$. Set $\bar{S} = F_4(K)$ and $\bar{V} = L(\l_1)$ (or $L(\l_4)$), so $\bar{V}$ is a minimal module. From the proof of \cite[Lemma 7.4]{Burr}, we see that $\nu(y) \geqs 8$ for all nontrivial semisimple elements $y \in \bar{S}$ (with respect to the action of $\bar{S}$ on $\bar{V}$). We conclude that every element $x \in T$ of order $r$ with $\nu(x)=4$ is a derangement.
\end{proof}

This completes the proof of Proposition \ref{p:c}. By combining this result with Corollary \ref{p:star} and Propositions \ref{p:a}, \ref{p:b} and \ref{p:thm1}, we conclude that the proof of Theorem \ref{t:main} is complete.

\end{document}